\DeclareMathAlphabet\mathbb{U}{msb}{m}{n}
\tikzset{%
  treenode/.style = {shape=rectangle, rounded corners,%
                     draw, align=center,%
                     top color=white, bottom color=blue!20},%
  root/.style     = {treenode, font=\Large, bottom color=red!30},%
  env/.style      = {treenode, font=\ttfamily\normalsize},%
  dummy/.style    = {circle,draw,inner sep=0pt,minimum size=2mm}%
}%
\def\@testdef #1#2#3{%
  \def\reserved@a{#3}\expandafter \ifx \csname #1@#2\endcsname
  \reserved@a  \else
  \typeout{^^Jlabel #2 changed:^^J%
    \meaning\reserved@a^^J%
    \expandafter\meaning\csname #1@#2\endcsname^^J}%
  \@tempswatrue \fi}
\numberwithin{equation}{section} 
\numberwithin{figure}{section}
\newtheorem{theorem}[equation]{Theorem}%
\newtheorem*{theorem*}{Theorem}%
\newtheorem{lemma}[equation]{Lemma}%
\newtheorem{proposition}[equation]{Proposition}%
\newtheorem{corollary}[equation]{Corollary}%
\newtheorem{conjecture}[equation]{Conjecture}%
\newtheorem*{conjecture*}{Conjecture}%
\providecommand{\customgenericname}{}
\newcommand{\newcustomtheorem}[2]{%
  \newenvironment{#1}[1]
  {%
   \renewcommand\customgenericname{#2}%
   \renewcommand\theinnercustomgeneric{##1}%
   \innercustomgeneric
  }
  {\endinnercustomgeneric}
}
\theoremstyle{definition} 
\newtheorem{definition}[equation]{Definition}%
\newtheorem*{definition*}{Definition}%
\newtheorem{example}[equation]{Example}%
\newtheorem{remark}[equation]{Remark}%
\newtheorem{notation}[equation]{Notation}%
\newcommand{\set}[1]{\left\{#1\right\}}%
\newcommand{\sets}[2]{\left\{ #1 \;|\; #2\right\}}%
\newcommand{\longto}{\longrightarrow}%
\newcommand{\vect}[1]{\text{\overrightharp{\ensuremath{#1}}}}
\newcommand{\Sym}{\ensuremath{\mathsf{Sym}}}%
\newcommand{\Top}{\ensuremath{\mathsf{Top}}}
\newcommand{\sSet}{\ensuremath{\mathsf{sSet}}}%
\newcommand{\Cat}{\mathsf{Cat}}
\newcommand{\Op}{\mathsf{Op}}%
\newcommand{\sOp}{\ensuremath{\mathsf{sOp}}}%
\DeclareMathOperator{\colim}{colim}%
\DeclareMathOperator{\Ho}{Ho}
\newcommand{\F}{\ensuremath{\mathcal F}}
\newcommand{\V}{\ensuremath{\mathcal V}}
\renewcommand{\O}{\ensuremath{\mathcal O}}
\renewcommand{\P}{\ensuremath{\mathcal P}}
\newcommand{\C}{\ensuremath{\mathcal C}}
\newcommand{\1}{\ensuremath{\mathbbm 1}}
\title{On the homotopy theory of equivariant colored operads}
\author{Peter Bonventre, Lu\'is A. Pereira}%
\date{\today}
\begin{document}

\maketitle

\begin{abstract}
	We build model structures 
	on the category of equivariant simplicial operads
	with weak equivalences determined by families of subgroups,
	in the context of operads with a varying set of colors
	(and building on the fixed color model structures in the prequel).
	In particular, by specifying to the family of graph subgroups 
	(or, more generally, one of the indexing systems of Blumberg-Hill),
	we obtain model structures on the category of equivariant simplicial operads
	whose weak equivalences are determined by norm map data.
\end{abstract}

\tableofcontents

\section{Introduction}

This paper is a direct sequel to \cite{BP_FCOP},
assembling the model structures on the categories
$\sOp_{\mathfrak C}^G$
of equivariant simplicial operads with a fixed 
$G$-set of colors $\mathfrak C$
\cite[Thm. \ref{OC-THMI}]{BP_FCOP}
to a Dwyer-Kan style model structure on the category
$\sOp_\bullet^G$
of equivariant simplicial operads with any set of colors.

More broadly, this paper follows
\cite{Per18,BP21,BP20,BP_FCOP}
as part of a larger project culminating in 
the sequel \cite{BP_TAS}
with the existence of a Quillen equivalence
\begin{equation}\label{BPMAINTHM_EQ}
\mathsf{dSet}^G \rightleftarrows \mathsf{sOp}_{\bullet}^G
\end{equation}
where $\mathsf{dSet}^G$ is the category
of equivariant dendroidal sets with the model structure
from \cite{Per18}
and 
$\mathsf{sOp}_{\bullet}^G$
has the model structure from 
Theorem \ref{THMA} herein,
thereby generalizing the 
analogous Cisinski-Moerdijk-Weiss project
\cite{MW09,CM11,CM13a,CM13b} to the equivariant context.

Much of the challenge in building the model structures 
in \eqref{BPMAINTHM_EQ} 
comes from the fact that they model the homotopy theory of
equivariant operads \emph{with norm maps},
which are an extra piece of data
not present non-equivariantly
(and recalled in \eqref{GRAPHEQ EQ},\eqref{ALGNORM EQ} below),
the importance of which was made clear by
Hill, Hopkins, Ravenel 
in their solution to the Kervaire invariant one problem \cite{HHR16}.
Notably, the presence of norm maps causes the existence
of the model structures in \eqref{BPMAINTHM_EQ}
not to be a formal consequence of the 
existence of their non-equivariant analogues
(this is further discussed at the end of this introduction).

We now briefly recall 
the notion of norm maps.
For simplicity,
fix a finite group $G$
and consider the category
$\mathsf{sOp}^{G}_{\**} = \mathsf{Op}_{\**}(\mathsf{sSet}^G)$
of single colored (symmetric) operads
on $G$-equivariant simplicial sets $\mathsf{sSet}^G$.
Note that, for $\O \in \mathsf{sOp}^{G}_{\**}$,
the $n$-th operadic level $\O(n)$ has both a $\Sigma_n$-action and a $G$-action, commuting with each other, or, equivalently, 
a $G \times \Sigma_n$-action.
A key upshot
of Blumberg and Hill's work \cite{BH15}
is then that 
the preferred notion of weak equivalence in $\mathsf{sOp}^{G}_{\**}$
is that of \emph{graph equivalence},
i.e. those maps 
$\O \to \mathcal{P}$
such that the fixed point maps
\begin{equation}\label{GRAPHEQ EQ}
\O(n)^{\Gamma} \xrightarrow{\sim} \mathcal{P}(n)^{\Gamma}
\qquad
\text{for }
\Gamma \leq G \times \Sigma_n
\text{ such that }
\Gamma \cap \Sigma_n = \**
\end{equation}
are Kan equivalences in $\mathsf{sSet}$.
Here, the term ``graph'' comes from a neat characterization of the $\Gamma$
as in \eqref{GRAPHEQ EQ}:
such a $\Gamma$ must be the graph of a partial homomorphism
$\phi \colon H \to \Sigma_n$ for some subgroup $H \leq G$,
i.e.
$\Gamma = \sets{(h,\phi(h))}{h \in H}$. 
Note that one then has a canonical isomorphism $\Gamma \simeq H$.
Briefly, the need to consider such \emph{graph subgroups} $\Gamma$ comes from the study of algebras.
Suppose $X \in \mathsf{sSet}^G$ is an algebra over
$\O$,
so that one has algebra multiplication maps
as on the left below
\begin{equation}\label{ALGNORM EQ}
\O(n) \times X^n \to X
\qquad \qquad
\O(n)^{\Gamma} \times N_{\Gamma}X \to X
\end{equation}
that need to be 
$G \times \Sigma_n$-equivariant
(the target $X$ is given the trivial $\Sigma_n$-action).
One then has induced 
$H$-equivariant maps 
on the right in \eqref{ALGNORM EQ},
where the \emph{norm object} $N_{\Gamma} X$
denotes $X^{n}$ with the $H$-action given by $\Gamma \simeq H$.
In particular, each point
$\rho \in \O(n)^{\Gamma}$
encodes a $H$-equivariant \emph{norm map}
$\rho \colon N_{\Gamma} X \to X$,  
and such maps are a key piece of data
for algebras. 
Thus, the advantage of the graph equivalences \eqref{GRAPHEQ EQ}
is that equivalent operads have
equivalent ``spaces of norm maps''.

In the single colored case,
the existence of a model structure 
on $\mathsf{sOp}^G_{\**}$
with weak equivalences the graph equivalences
in \eqref{GRAPHEQ EQ}
was established in both 
\cite[Thm. I]{BP21} and \cite[Thm. 3.1]{GW18}.

Our main result, Theorem \ref{THMA}, 
then extends the graph equivalence model structures of
\cite[Thm. I]{BP21}, \cite[Thm. 3.1]{GW18}
from the context of single colored operads 
to the context of operads with a varying set of colors
(also known as multicategories).
Alternatively, one may view Theorem \ref{THMA} as extending the well-known Dwyer-Kan model structures on all colored operads
in \cite[Thm 1.14]{CM13b}, \cite[Thm. 2]{Rob} (or more generally \cite{Cav}, \cite{Yau})
from the non-equivariant context to the 
equivariant context.

It is worth noting that 
Theorem \ref{THMA} is non-formal,
as discussed below
(a similar discussion for the 
single color and fixed color contexts
can be found in the introduction to \cite{BP_FCOP}).

First,
we note that our category of interest
is the category 
$\mathsf{sOp}^G_{\bullet}=
\left(\mathsf{sOp}_{\bullet}\right)^G$
of $G$-objects on the usual category
$\mathsf{sOp}_{\bullet}$
of simplicial (symmetric) operads studied in 
\cite{CM13b},\cite{Rob},\cite{Cav}.
In particular, the group $G$ is allowed to act non-trivially
on the objects of an equivariant operad
$\O \in \mathsf{sOp}^G_{\bullet}$.
By contrast, in the category
$\mathsf{Op}_{\bullet}(\mathsf{sSet}^G)$
of colored operads in $\mathsf{sSet}^G$
the group $G$ never acts on objects (or, equivalently, acts trivially).
As such, one only has a proper inclusion
$\mathsf{Op}_{\bullet}(\mathsf{sSet}^G)
\subsetneq \mathsf{sOp}_{\bullet}^G$,
so a model structure on 
$\mathsf{sOp}_{\bullet}^G$
can not be built using the enriched colored operad results of \cite{Cav}.

Alternatively, one could also try to build a model structure
on $\mathsf{sOp}^G_{\bullet}$
by applying the formalism in
\cite[Prop 2.6]{Ste16},
which builds model structures on $G$-objects,
to the model structure on $\mathsf{sOp}_\bullet$
from \cite{CM13b},\cite{Rob}.
However, this approach does not produce
the desired notion of weak equivalence suggested by graph subgroups (more precisely, this approach ignores
``non-trivial norm maps'', i.e.
it ignores any graph subgroups $\Gamma$ in \eqref{GRAPHEQ EQ}
associated to non-trivial homomorphisms $\phi\colon H \to \Sigma_n$).
It is worth noting that the issue with this latter approach is intrinsically operadic
and does not occur when working with categories.
Indeed, the inclusion
$\mathsf{sCat}^G_{\bullet} \subset 
\mathsf{sOp}^G_{\bullet}$
of colored $G$-categories into 
colored $G$-operads identifies
$\mathsf{sCat}^G_{\bullet} = 
\mathsf{sOp}^G_{\bullet} \downarrow \**$
as the overcategory over the terminal category $\**$.
Hence, our model structure on 
$\mathsf{sOp}^G_{\bullet}$
induces a model structure on 
$\mathsf{sCat}^G_{\bullet}$ which \emph{does}
in fact coincide with the model structure obtained by applying
\cite[Prop. 2.6]{Ste16}
to the usual model structure on $\mathsf{sCat}_{\bullet}$.

\subsection{Main Results}

Before stating our main result, Theorem \ref{THMA},
we require some preliminary setup.

First, as noted in \eqref{GRAPHEQ EQ},
our preferred notion of equivalence of equivariant operads is determined by the graph subgroups.
However, as in \cite{BP_FCOP}, we will work with general collections of subgroups.

\begin{definition}\label{FAM1ST DEF}
	A \emph{$(G,\Sigma)$-family} is a
	a collection
	$\mathcal{F} = \{\mathcal{F}_n\}_{n \leq 0}$,
	where each $\mathcal{F}_n$
	is a family of subgroups of $G \times \Sigma_n^{op}$.
\end{definition}
The use of $\Sigma_n^{op}$ rather than $\Sigma_n$
in Definition \ref{FAM1ST DEF} 
(and throughout) 
is motivated by regarding $\Sigma$
as the category of corollas (i.e. trees with a single node; 
see \eqref{OPSSYMS EQ},\eqref{CSYM EQ1},\eqref{CSYM EQ2}),
and the fact that the dendroidal nerve \cite[\S 1]{MW07} of an operad is contravariant on the category of trees.

In contrast to \cite[Thm. \ref{OC-THMI}]{BP_FCOP},
Theorem \ref{THMA} requires a minor restriction on the 
$(G,\Sigma)$-family $\mathcal{F}$.
Noting that $\F_1$ is a family of subgroups of 
$G \times \Sigma_1^{op} \simeq G$,
we regard 
$H \in \F_1$ as a subgroup $H \leq G$.

\begin{definition}\label{FAMRESUNI DEF}
	Write $G \times \Sigma_n^{op} \xrightarrow{\pi_n} 
	G$
	for the natural projection.
	
	We say that a $(G,\Sigma)$-family $\F$ 
	\emph{has enough units}	if,
	for all $H \in \F_n$, $n\geq 0$,
	it is $\pi_n(H) \in \F_1$.
\end{definition}
 The motivation for the condition
in Definition \ref{FAMRESUNI DEF} is discussed in Remark \ref{WHYEU REM}.

Next, recall that a colored operad $\O$	
with color set $\mathfrak{C}$ has levels 	
$	
\O(\vect{C})=	
\O(\mathfrak{c}_1,\cdots,\mathfrak{c}_n;\mathfrak{c}_0)$	
indexed by tuples	
$\vect{C} = (\mathfrak{c}_1,\cdots,\mathfrak{c}_n;\mathfrak{c}_0) = (\mathfrak c_i)_{0 \leq i \leq n}$	
of elements in $\mathfrak{C}$, called \emph{$\mathfrak{C}$-profiles}.	
If the operad is symmetric one has associative and unital isomorphisms	
$	
\O(\mathfrak{c}_1,\cdots,\mathfrak{c}_n;\mathfrak{c}_0) \to 	
\O(\mathfrak{c}_{\sigma(1)},\cdots,\mathfrak{c}_{\sigma(n)};\mathfrak{c}_0)	
$	
for each permutation $\sigma \in \Sigma_n$.	
On the other hand, if 	
$\O \in \mathsf{sOp}^G_{\mathfrak{C}}$	
is a $G$-equivariant operad, 	
the color set $\mathfrak{C}$ is itself a $G$-set,	
and one similarly has associative and unital isomorphisms	
$	
\O(\mathfrak{c}_1,\cdots,\mathfrak{c}_n;\mathfrak{c}_0) \to 	
\O(g\mathfrak{c}_{1},\cdots,g\mathfrak{c}_{n};g\mathfrak{c}_0)	
$ for $g \in G$.
All together, one thus has isomorphisms	
\begin{equation}\label{OPSSYMS EQ}	
\O(\mathfrak{c}_1,\cdots,\mathfrak{c}_n;\mathfrak{c}_0)	
\to 	
\O(g \mathfrak{c}_{\sigma(1)},\cdots,g \mathfrak{c}_{\sigma(n)};g\mathfrak{c}_0)	
\end{equation}	
for $(g,\sigma) \in G \times \Sigma_n^{op}$.	
Note that these isomorphisms 	
are associated with an action of 	
$G \times \Sigma_n^{op}$	
on the set $\mathfrak{C}^{n+1}$ of $n$-ary profiles via	
$(g,\sigma) (\mathfrak{c}_i)_{0\leq i \leq n}	
= (g \mathfrak{c}_{\sigma(i)})_{0\leq i \leq n}$,	
where we implicitly write $\sigma(0)=0$.

As such, we say that a subgroup 	
$\Lambda \leq G \times \Sigma_n^{op}$	
\emph{stabilizes} a profile $\vect{C}=(\mathfrak{c}_i)_{0 \leq i \leq n}$ if,	
for any $(g,\sigma) \in \Lambda$,	
it is 	
$\mathfrak{c}_i = g \mathfrak{c}_{\sigma(i)}$ for $0 \leq i \leq n$.	
Note that,	
for $\O \in \mathsf{sOp}^G_{\mathfrak{C}}$,	
the level $\O(\vect{C})$ has a $\Lambda$-action.

Lastly, we need a notion of 
\emph{essential surjectivity}.
For this purpose, we recall the following construction, 
which associates to a $\V$-category $\mathcal{C}$
a \emph{category of components} $\pi_0 \mathcal{C}$.

\begin{definition}
	Suppose $\V$ is as in Theorem \ref{THMA}
	(in particular, $\V$ has a cofibrant unit).
	Given $\mathcal C \in \Cat_{\mathfrak{C}}(\V)$, 
	define 
	$\pi_0 \mathcal C \in \Cat_{\mathfrak{C}} = \Cat_{\mathfrak{C}}(\mathsf{Set})$ 
	to be the ordinary category with the same objects and
        \[
                \pi_0(\mathcal{C})(c,d)=
                \Ho(\V)(1_\V, \mathcal C(c,c'))=
                [1_\V, \mathcal{C}_f(c,c')]
        \]
        where $[-,-]$ denotes homotopy equivalence classes of maps
        and $\mathcal{C}_f$ denotes a fibrant replacement of
        $\mathcal C$ in the \emph{canonical model structure} on $\Cat_{\mathfrak{C}}(\V)$
        \cite{BM13} (also, see Remarks 
        \ref{RESTTOCATS REM} and \ref{GTRIV REM}).
\end{definition}

Further writing
$j^{\**} \colon 
\mathsf{Op}^G_{\bullet}(\V) \to \mathsf{Cat}^G_{\bullet}(\V)$
for the ``underlying category''
functor which forgets the non-unary operations,
we can now state the main result.

\begin{customthm}{A}\label{THMA}
Fix a finite group $G$ 
and a $(G,\Sigma)$-family $\F = \set{\F_n}_{n \geq 0}$
that has enough units.

Then there exists a cofibrantly generated model structure on
$\mathsf{sOp}^G_{\bullet} = 
\mathsf{Op}^G_{\bullet}(\mathsf{sSet})$,
which we call the \emph{$\F$-model structure},
such that a map
$F\colon \mathcal{O} \to \mathcal{P}$
is a weak equivalence (resp. trivial fibration) if
\begin{itemize}
\item the maps
\begin{equation}\label{THMIII1ST EQ}
	\O(\vect{C})^{\Lambda} \to \mathcal{P}(F(\vect{C}))^{\Lambda}
\end{equation}
are Kan equivalences (trivial Kan fibrations)
in $\mathsf{sSet}$
for all $\mathfrak{C}$-profiles $\vect{C}$
and $\Lambda \in \F$ that stabilizes $\vect{C}$; and
\item 
the maps of unenriched categories
\begin{equation}\label{THMIII2ND EQ}
\pi_0 j^{\**} \O^H 
\to 
\pi_0 j^{\**} \mathcal{P}^H 
\end{equation}
are essentially surjective (surjective on objects)
for all $H \in \F_1$.
\end{itemize}
More generally, a $\F$-model structure on 
$\Op^G_{\bullet}(\V)$
with weak equivalences/trivial fibrations as in 
\eqref{THMIII1ST EQ},\eqref{THMIII2ND EQ}
exists provided $(\V,\otimes)$ satisfies:
\begin{enumerate}[label = (\roman*)]
	\item $\V$ is a cofibrantly generated model category
	such that the domains of the generating (trivial) cofibrations are small;
	\item for any finite group $G$, the $G$-object category $\V^G$ admits the genuine model structure 
	\cite[Def. \ref{OC-GENMOD DEF}]{BP_FCOP};
	\item $(\V, \otimes)$ is a closed symmetric monoidal model category with cofibrant unit;
	\item $(\V, \otimes)$ satisfies the global monoid axiom \cite[Def. \ref{OC-GLOBMONAX_DEF}]{BP_FCOP};
	\item $(\V, \otimes)$ has cofibrant symmetric pushout powers \cite[Def. \ref{OC-CSPP_DEF}]{BP_FCOP}.
	\item[(vi)] $\V$ is right proper;
	\item[(vii)]
	for any finite group $G$, fixed points
	$(-)^{G} \colon \V^G \to \V$
	send genuine trivial cofibrations 
	(cf. \cite[Def. \ref{OC-GENMOD DEF}]{BP_FCOP})
	to trivial cofibrations;
	\item[(viii)] $(\V, \otimes)$ has a generating set of intervals
	(Definition \ref{INTGENSET DEF}).
\end{enumerate}
\end{customthm}

Conditions (i) through (v) above  
are the conditions in \cite[Thm. \ref{OC-THMI}]{BP_FCOP},
which builds the model structures on
fixed color operads
$\Op^G_{\mathfrak{C}}(\V)$, 
one of the key ingredients in the proof
of Theorem \ref{THMA}.
In particular,
herein the technical conditions
(iv),(v) will only be needed to cite
results in \cite{BP_FCOP}.

\begin{remark}
Maps satisfying both of the weak equivalence conditions in 
\eqref{THMIII1ST EQ},\eqref{THMIII2ND EQ}
are called \emph{Dwyer-Kan equivalences},
while maps satisfying only \eqref{THMIII1ST EQ}
are called \emph{local equivalences}.
\end{remark}

\begin{remark}\label{WHYEU REM}
The ``enough units'' condition in Definition \ref{FAMRESUNI DEF}
ensures compatibility of the \emph{local equivalences}
in \eqref{THMIII1ST EQ}
with the essential surjectivity in	\eqref{THMIII2ND EQ}.
Informally, this guarantees that the spaces 
$\O(\vect{C})^{\Lambda}$
are homotopically well-behaved when replacing colors 
in $\vect{C}$
(for details, see \S \ref{HMTYEQ SEC}).
\end{remark}

\begin{remark}\label{WETRFCAN REM}
The requirement that the maps in 
\eqref{THMIII1ST EQ}
are weak equivalences implies that the maps in
\eqref{THMIII2ND EQ}
are fully faithful.
Therefore, the condition following \eqref{THMIII2ND EQ}
can be restated as saying that those maps are
equivalences of categories (resp. equivalences of categories that are surjective on objects) or, in other words,
that the maps in \eqref{THMIII2ND EQ}
are weak equivalences/trivial fibrations in the canonical model structure on the category $\mathsf{Cat}$ of unenriched categories
\cite{Rez}.
\end{remark}

\begin{remark}\label{FIBSALT REM}
	In light of Remark \ref{WETRFCAN REM}, 
	it is natural to ask if the fibrations in Theorem \ref{THMA}
	admit an analogous description.
	That is, we may ask if a map $F\colon \O \to \mathcal{P}$
	is a fibration in the sense of Theorem \ref{THMA}
	iff the maps in 
	\eqref{THMIII1ST EQ}
	are Kan fibrations in $\mathsf{sSet}$
	and the maps in
	\eqref{THMIII2ND EQ}
	are isofibrations (i.e. fibrations in the canonical model structure in $\mathsf{Cat}$).
	
	However, at our level of generality we can only guarantee the 
	``only if'' direction of this characterization.
	For the ``if'' direction to hold we need to either demand
	that $\P$ itself is fibrant or
	impose an extra condition on the unit of $\V$ (which happens to be satisfied by $\mathsf{sSet}$).
	See Propositions \ref{ISOFIBEASY PROP} and \ref{ISOFIBHARD PROP} for more details.
\end{remark}

\begin{remark}\label{RESTTOCATS REM}
	As noted at the end of the introduction, 
	there is an identification
	$\Cat_\bullet^G(\V) \simeq \Op_\bullet^G(\V) \downarrow \**$,
	where $\**$ denotes the terminal $\V$-category,
	so the $\F$-model structure on $\Op_\bullet^G(\V)$
	also induces a model structure on $\Cat_\bullet^G(\V)$.
	Since $\Cat_\bullet^G(\V)$ contains only unary operations,
	this latter model structure depends only on $\F_1$,
	which is identified with a family of subgroups of $G$ itself.
	In fact, the resulting model structure on 
	$\Cat_\bullet^G(\V)$ matches the model structure 
	obtained by applying \cite{Ste16}
	to the family $\F_1$ and the canonical model structure on
	$\Cat_\bullet(\V)$.

	Moreover, we note that the analogues for $\Cat_\bullet^G(\V)$
	of all three of \cite[Thms. \ref{OC-THMI} and \ref{OC-THMII}]{BP_FCOP} and Theorem \ref{THMA}
	follow from our proofs without using either
	the cofibrant pushout power condition (v)
	or (vii) in Theorem \ref{THMA},
	and without additional restrictions on $\F_1$
	(i.e. no analogues of the  pseudo indexing system
	(cf. \cite[Thm. \ref{OC-THMII}]{BP_FCOP})
	and ``enough units'' (cf. Definition \ref{FAMRESUNI DEF})
	conditions are needed).
	For details, see \cite[Rem. \ref{OC-CSPNTHI REM}]{BP_FCOP} and
	Remark \ref{ALBEETA_REM}.
\end{remark}

\begin{remark}\label{SEMI_REM}
	When working with operads, some authors (e.g. \cite{Spi,Whi17,WY18})
	discuss \emph{semi-model structures}.
	Briefly, these are a weakening of Quillen's original definition
	where those factorization and lifting axioms
	that involve trivial cofibrations
	are only required to hold if the trivial cofibration 
	has cofibrant source \cite[\S 2.2]{WY18}.
	We note that, in particular, semi-model structures suffice for 
	performing 
	bifibrant replacements.
        
	The semi-model structure analogues of 
	\cite[Thms. \ref{OC-THMI} and \ref{OC-THMII}]{BP_FCOP}
	and Theorem \ref{THMA}
	can be obtained by slight variants of our proofs
	without using the global monoid axiom (iv).
	For details, see \cite[Rem. \ref{OC-THMISM REM}]{BP_FCOP}
	and Remarks \ref{JCELLSM REM}, \ref{MONAXSUP REM}.
\end{remark}

\begin{remark}\label{GTRIV REM}
	It seems tempting to think that, for trivial $G=\**$,
	one can omit the existence of genuine model structures condition (ii) in Theorem \ref{THMA}.
	However, this is not so, since 
	our work uses 
	\cite[Thm. \ref{OC-THMI}]{BP_FCOP},
	whose proof needs
	the cofibrant pushout powers condition (v)
	\cite[Rem. \ref{OC-GTRIV REM}]{BP_FCOP}.

	If one further specifies to 
	$G=\**$ and the categorical case $\Cat_\bullet(\V)$,
	there is only one interesting choice of family $\F_1$,
	i.e. the non-empty family of subgroups of $\Sigma_1$,
	which recovers the \emph{canonical model structure} on 
	$\Cat_\bullet(\V)$ discussed in \cite{BM13}.
	In this case, an analysis of our proofs shows that
	one \emph{can}
	drop assumptions (iii),(v)(vii) of Theorem \ref{THMA},
	and replace the global monoid axiom in (iv) 
	with the usual Schwede-Shipley
	monoid axiom \cite{SS00}
	(see \cite[Rem. \ref{OC-MONAX_REM}]{BP_FCOP}),
	so that our assumptions are then a close variation on those in \cite{BM13}.
\end{remark}

\begin{remark}
While technical, the ``generating set of intervals''
condition (viii) is not overly restrictive since, 
by \cite[Lemma 1.12]{BM13},
this condition is satisfied by any 
combinatorial monoidal model category.
\end{remark}

\subsection{Examples}\label{EXAMPLES SEC}

The examples of 
model categories satisfying 
all of conditions (i) through (viii)
in Theorem \ref{THMA}
are fairly limited, 
mostly due to 
the cofibrant pushout powers axiom (v),
which is rather restrictive.
For a discussion of the role of this condition,
see \cite[Rems. \ref{OC-CPPWHY REM} and
\ref{OC-SPNONEX REM}]{BP_FCOP}.

Below we list those examples of categories satisfying all conditions that we are aware of.
\begin{enumerate}[label = (\alph*)]
\item $(\mathsf{sSet},\times)$ or $(\mathsf{sSet}_{\**},\wedge)$
with the Kan model structure.
\item $(\mathsf{Top},\times)$ or $(\mathsf{Top}_{\**},\wedge)$
with the usual Serre model structure.
\item $(\mathsf{Set},\times)$ the category of sets with its canonical model structure,
where weak equivalences are the bijections and all maps are both cofibrations and fibrations.
\item $(\Cat,\times)$ the category of usual categories
with the ``folk'' or canonical model structure (e.g. \cite{Rez})
where weak equivalences are the equivalences of categories,
cofibrations are the functors that are injective on objects,
and fibrations are the isofibrations.
\end{enumerate}
In all these cases, conditions (i) through (v)
were discussed in \cite[\S \ref{OC-EXAMPLES SEC}]{BP_FCOP},
(vi) is well-known,
and (viii) follows from either \cite[Lemma 1.12]{BM13} 
or \cite[Lemma 2.1]{BM13}.

The following is a noteworthy 
non-example.

\begin{remark}
	The category $(\mathsf{Sp}^{\Sigma}(\mathsf{sSet}),\wedge)$
	of symmetric spectra (on simplicial sets),
	with the positive $S$ model structure,
	satisfies most of axioms in Theorem
	\ref{THMA}, 
	with the exceptions being 
	the cofibrant unit requirement in (iii)
	and the cofibrant pushout powers axiom in (v).
	
	Nonetheless, $(\mathsf{Sp}^{\Sigma}(\mathsf{sSet}),\wedge)$
	does seem to satisfy variants of axioms 
	(iii) and (v). For further discussion, 
	see \cite[Rem. \ref{OC-SPNONEX REM}]{BP_FCOP}.
\end{remark}

\subsection{Outline}

\S \ref{SUM SEC} mostly recalls the notions and results from
\cite{BP_FCOP} that we will use,
recalling in particular the model structures
on fixed color operads 
$\mathsf{Op}_{\mathfrak{C}}(\V)$ from
\cite[Thm. \ref{OC-THMI}]{BP_FCOP},
which form the basis of the model structures on
$\mathsf{Op}_{\bullet}(\V)$
in Theorem \ref{THMA}.

\S \ref{MS_SEC} is dedicated to proving 
Theorem \ref{THMA}.
In \S \ref{MAPSOPG_SEC} we identify the relevant classes of maps
in $\mathsf{Op}^G_{\bullet}(\V)$.
Then, in \S \ref{GENCOF SEC} we identify
the necessary sets of generating (trivial) cofibrations,
and outline the overall proof of Theorem \ref{THMA},
with \S \ref{TRIVCOF_SEC},\ref{EQUIVOBJ_SEC},\ref{HMTYEQ SEC}
concluding the proof by addressing the hardest steps.
Lastly, \S \ref{ISOFIB_SEC} discusses an alternative description
of the fibrations in
$\mathsf{Op}^G_{\bullet}(\V)$,
elaborating on 
Remark \ref{FIBSALT REM}.

\section{Summary of previous work}
\label{SUM SEC}

This section is mostly expository, 
recalling the key definitions and results 
in \cite{BP_FCOP} that we need to 
prove Theorem \ref{THMA},
while converting some technical results therein 
to a more convenient format.

In \S \ref{COSYMSEQ SEC} and \S \ref{EQCOSYMSEQ SEC} 
we recall the definitions
of the categories
$\mathsf{Sym}^G_{\bullet}(\V)$
of equivariant symmetric sequences and
$\mathsf{Op}^G_{\bullet}(\V)$
of equivariant operads.
Of particular importance is the discussion on 
representable functors in 
$\mathsf{Sym}^G_{\bullet}
=
\mathsf{Sym}^G_{\bullet}(\mathsf{Set})$,
culminating in \eqref{REPALTDESC EQ},
which are needed in \S \ref{GENCOF SEC}
when describing the generating (trivial) cofibrations
in $\mathsf{Op}^G_{\bullet}(\V)$.

\S \ref{COLFIXMOD SEC} then recalls
\cite[Thm. \ref{OC-THMI}]{BP_FCOP}
as Theorem \ref{THMIREST},
which discusses the model structures on the categories
$\mathsf{Op}^G_{\mathfrak{C}}(\V)$
of fixed color operads
that are one of the main ingredients to building 
the model structure on 
$\mathsf{Op}^G_{\bullet}(\V)$
in Theorem \ref{THMA}.
The (rather technical) condition in
Remark \ref{GOTC_REM} is of particular importance,
as it plays a key role in proving Theorem \ref{THMA}
(more concretely, it is needed in the
proof of Proposition \ref{J_CELL_PROP},
which is one of the key claims needed for Theorem \ref{THMA}).

\subsection{Colored symmetric sequences and colored operads}
\label{COSYMSEQ SEC}

\subsubsection*{Colored symmetric sequences}

\begin{definition}\label{CSYM DEF}
	Let $\mathfrak {C} \in \mathsf{Set}$ be a fixed set of colors (or objects).
	A tuple
	$\vect C = (\mathfrak c_1, \dots, \mathfrak c_n; \mathfrak c_0) \in \mathfrak{C}^{\times n+1}$
	is called a \textit{$\mathfrak {C}$-profile} of \textit{arity} $n$.
	The \textit{$\mathfrak C$-symmetric category} $\Sigma_{\mathfrak C}$ is the category whose objects are the $\mathfrak{C}$-profiles and 
	whose morphisms are action maps
	\begin{equation}\label{CSYM EQ1}
	\vect{C} =
	(\mathfrak c_1, \dots, \mathfrak c_n; \mathfrak c_0) \xrightarrow{\sigma} (\mathfrak c_{\sigma^{-1}(1)}, \dots, \mathfrak c_{\sigma^{-1}(n)}; \mathfrak c_0)
	= \vect{C} \sigma^{-1}
	\end{equation}
	for each permutation $\sigma \in \Sigma_n$, with the natural notion of composition.
	
	Alternatively, one can visualize profiles as corollas (i.e. trees with a single node)
	with edges decorated by colors in $\mathfrak{C}$, as depicted below, so that the map labeled $\sigma$
	is the unique map of trees indicated such that the coloring of an edge equals the coloring of its image.
	\begin{equation}\label{CSYM EQ2}
	\begin{tikzpicture}
	[grow=up,auto,level distance=2.3em,every node/.style = {font=\footnotesize},dummy/.style={circle,draw,inner sep=0pt,minimum size=1.75mm}]
	
	\node at (0,0) [font=\normalsize]{$\vect{C}$}
	child{node [dummy] {}
		child{
			edge from parent node [swap,near end] {$\mathfrak c_n$} node [name=Kn] {}}
		child{
			edge from parent node [near end] {$\mathfrak c_1$}
			node [name=Kone,swap] {}}
		edge from parent node [swap] {$\mathfrak c_0$}
	};
	\draw [dotted,thick] (Kone) -- (Kn) ;
	\node at (5,0) [font=\normalsize] {$\vect{C} \sigma^{-1}
		$}
	child{node [dummy] {}
		child{
			edge from parent node [swap,near end] {$\mathfrak c_{\sigma^{-1}(n)}$} node [name=Kn] {}}
		child{
			edge from parent node [near end] {$\mathfrak c_{\sigma^{-1}(1)}$}
			node [name=Kone,swap] {}}
		edge from parent node [swap] {$\mathfrak c_0$}
	};
	\draw [dotted,thick] (Kone) -- (Kn) ;
	
	\draw[->] (1.5,0.8) -- node{$\sigma$} (3,0.8);
	\end{tikzpicture}
	\end{equation}
	Given any map of color sets $\varphi \colon \mathfrak{C} \to \mathfrak{D}$,
	there is a functor (abusively written)
	$\varphi \colon \Sigma_{\mathfrak{C}} \to \Sigma_{\mathfrak{D}}$,
	given by 
	$\varphi (\mathfrak c_1, \dots, \mathfrak c_n; \mathfrak c_0) = (\varphi(\mathfrak c_1),\cdots,\varphi(\mathfrak c_n);\varphi(\mathfrak c_0))$. 
\end{definition}

\begin{remark}\label{GLOBSIG REM}
	The notation $\vect{C} \sigma^{-1}$
	in \eqref{CSYM EQ1},\eqref{CSYM EQ2}
	reflects the fact that $\Sigma_n$
	acts on the right on $\mathfrak{C}$-profiles of arity $n$
	via 
	$\vect{C} \sigma = (\mathfrak{c}_i)\sigma = 
	(\mathfrak{c}_{\sigma(i)})$,
	where we make the convention that $\sigma(0)=0$.
\end{remark}

\begin{definition}\label{CSSYM DEF}
	Let $\mathcal{V}$ be a category.
	The category $\mathsf{Sym}_\bullet(\mathcal{V})$ of
	\textit{symmetric sequences on $\mathcal{V}$} 
	(on all colors) is the category with:
	\begin{itemize}
		\item objects given by pairs $(\mathfrak C, X)$ with
		$\mathfrak{C} \in \mathsf{Set}$ a set of colors and
		$X \colon \Sigma_{\mathfrak{C}}^{op} \to \mathcal{V}$ a functor;
		\item arrows $(\mathfrak C, X) \to (\mathfrak D, Y)$ given by a map 
		$\varphi \colon \mathfrak{C} \to \mathfrak{D}$ of colors and a natural transformation $X \Rightarrow Y \varphi$ as below.
	\begin{equation}\label{CSSYM EQ}
		\begin{tikzcd}[row sep = tiny, column sep = 45pt]
		\Sigma_{\mathfrak{C}}^{op} \arrow[dr, "X"{name=U}] 
		\arrow{dd}[swap]{\varphi}
	\\
		& \mathcal{V}
	\\
		|[alias=V]| \Sigma_{\mathfrak{D}}^{op} \arrow[ur, "Y"']
		\arrow[Leftarrow, from=V, to=U,shorten >=0.25cm,shorten <=0.25cm]
		\end{tikzcd}
	\end{equation}
	\end{itemize}
\end{definition}

\begin{notation}\label{SIGMABULL NOT}
	We write
	$\Sigma_{\bullet} \to \mathsf{Set}$
	for the Grothendieck construction 
	\cite[Not. \ref{OC-GROTHCONS NOT}]{BP_FCOP}
	of the functor
	$\mathsf{Set} \to \mathsf{Cat}$ given by
	$\mathfrak{C} \mapsto \Sigma_{\mathfrak{C}}$.
	Explicitly, 
	the objects of $\Sigma_{\bullet}$
	are the $\vect{C} \in \Sigma_{\mathfrak{C}}$ 
	for some set of colors $\mathfrak{C}$
	and an arrow from
	$\vect{C} \in \Sigma_{\mathfrak{C}}$ to
	$\vect{D} \in \Sigma_{\mathfrak{D}}$
	over $\varphi \colon \mathfrak{C} \to \mathfrak{D}$
	is an arrow
	$\varphi \vect{C} \to \vect{D}$ in $\Sigma_{\mathfrak{D}}$.
\end{notation}

\begin{remark}\label{COLCHADJ REM}
	We caution that 
	$\mathsf{Sym}_{\bullet}(\V)$
	is quite different from the presheaf category 
	$\mathsf{Fun}(\Sigma_{\bullet}^{op},\V)$.
	
	Instead,  
	$\mathsf{Sym}_{\bullet}(\V)$
	can be regarded as a category of ``fibered presheaves''.
	More precisely, 
	the \emph{color set functor}
	$\mathsf{Sym}_{\bullet}(\V) \to \mathsf{Set}$
	is both a Grothendieck fibration
	and opfibration
	(cf., e.g. \cite[\S \ref{OC-GROTFIB SEC}]{BP_FCOP}),
	with fibers the 
	presheaf categories
	$\Sym_{\mathfrak C}(\V)=
	\mathsf{Fun}(\Sigma_{\mathfrak{C}}^{op},\mathcal{V})$
	and cartesian (resp. cocartesian)
	arrows the diagrams \eqref{CSSYM EQ}
	that are natural isomorphisms (resp. left Kan extensions).

	In particular \cite[Rem. \ref{OC-ALSOOPADJ REM}]{BP_FCOP},
	for any map $\varphi \colon \mathfrak{C} \to \mathfrak{D}$
	one has adjunctions
	\begin{equation}\label{COLCHADJ EQ}
	\varphi_! \colon \Sym_{\mathfrak C}(\V) 
	\rightleftarrows 
	\Sym_{\mathfrak D}(\V) \colon \varphi^{\**}
	\end{equation}
	where $\varphi^{\**}$
	(resp. $\varphi_!$)
	is precomposition with (resp. left Kan extension along)
	$\varphi\colon 
	\Sigma^{op}_{\mathfrak{C}}
	\to 
	\Sigma^{op}_{\mathfrak{D}}$.
\end{remark}

\subsubsection*{Representable functors}

The description of the model structures 
on $\Sym_{\mathfrak C}(\V) $ in 
\S \ref{COLFIXMOD SEC}
will require us to identify certain
representable functors in 
$\mathsf{Sym}_{\bullet} = \mathsf{Sym}_{\bullet}(\mathsf{Set})$.
We start with the following.

\begin{notation}\label{FIBYON NOT}
Let $\mathfrak{C} \in \mathsf{Set}$, $\vect{C} \in \Sigma_{\mathfrak{C}}$.
We write
$\Sigma_{\mathfrak{C}}[\vect{C}] 
\in \mathsf{Sym}_{\mathfrak{C}} = \mathsf{Set}^{\Sigma_{\mathfrak{C}}^{op}}$ for the representable presheaf
\[\Sigma_{\mathfrak{C}}[\vect{C}](-)
= \Sigma_{\mathfrak{C}}(-,\vect{C}).\]
Moreover, this defines a 
\emph{fibered Yoneda functor}
$
\Sigma_{\bullet} \xrightarrow{\Sigma_{\bullet}[-]} \mathsf{Sym}_{\bullet}
$
by mapping an arrow
$\varphi \vect{C} \to \vect{D}$
over $\varphi \colon \mathfrak{C} \to \mathfrak{D}$
to the natural transformation 
$\Sigma_{\mathfrak{C}}[\vect{C}]
\Rightarrow
\varphi^{\**}
\Sigma_{\mathfrak{D}}[\vect{D}]
$ given by the composites
\[\Sigma_{\mathfrak{C}}[\vect{C}](-)
= \Sigma_{\mathfrak{C}}(-,\vect{C})
\to 
\Sigma_{\mathfrak{D}}(\varphi(-),\varphi\vect{C})
\to
\Sigma_{\mathfrak{D}}(\varphi(-),\vect{D})
=
\varphi^{\**} \Sigma_{\mathfrak{D}}[\vect{D}](-).
\]
\end{notation}

\begin{proposition}[{\cite[Prop. \ref{OC-FIBYONPUSH PROP}]{BP_FCOP}}]
        \label{FIBYONPUSH PROP}
        Let $\vect{C} \in \Sigma_{\mathfrak{C}}$,
        $\varphi \colon \mathfrak{C} \to \mathfrak{D}$
        be a map of colors.
        
        Then there is an identification
        $\varphi_{!} \Sigma_{\mathfrak{C}}[\vect{C}]
        \xrightarrow{\simeq}
        \Sigma_{\mathfrak{D}}[\varphi\vect{C}]$,
        adjoint to the canonical map
        $\Sigma_{\mathfrak{C}}[\vect{C}]
        \to
        \varphi^{\**}\Sigma_{\mathfrak{D}}[\varphi\vect{C}]$).
        In other words,
        $\Sigma_{\bullet}[-] \colon 
        \Sigma_{\bullet} \to \mathsf{Sym}_{\bullet}$
        preserves cocartesian arrows.
\end{proposition}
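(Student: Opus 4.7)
The plan is to verify this by the standard Yoneda-style argument showing that left Kan extensions preserve representables. Since the ambient $\V = \mathsf{Set}$ and both $\Sym_{\mathfrak{C}}$ and $\Sym_{\mathfrak{D}}$ are honest presheaf categories, everything reduces to a direct application of the Yoneda lemma together with the adjunction \eqref{COLCHADJ EQ} recalled in Remark \ref{COLCHADJ REM}.

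Concretely, I would first establish existence of the isomorphism by checking that both sides corepresent the same functor $\Sym_{\mathfrak{D}} \to \mathsf{Set}$. For any $Y \in \Sym_{\mathfrak{D}}$ there is a natural chain of bijections
\[
\Sym_{\mathfrak{D}}\!\bigl(\varphi_! \Sigma_{\mathfrak{C}}[\vect{C}], Y\bigr)
\cong \Sym_{\mathfrak{C}}\!\bigl(\Sigma_{\mathfrak{C}}[\vect{C}], \varphi^{\**} Y\bigr)
\cong (\varphi^{\**} Y)(\vect{C})
= Y(\varphi \vect{C})
\cong \Sym_{\mathfrak{D}}\!\bigl(\Sigma_{\mathfrak{D}}[\varphi \vect{C}], Y\bigr),
\]
using, in order, the adjunction of \eqref{COLCHADJ EQ}, Yoneda in $\Sym_{\mathfrak{C}}$, the definition of $\varphi^{\**}$ as precomposition along $\varphi$, and Yoneda in $\Sym_{\mathfrak{D}}$. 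A final application of Yoneda then produces the claimed isomorphism $\varphi_! \Sigma_{\mathfrak{C}}[\vect{C}] \xrightarrow{\simeq} \Sigma_{\mathfrak{D}}[\varphi \vect{C}]$.

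To identify this isomorphism with the adjoint of the canonical map from Notation \ref{FIBYON NOT}, I would specialize to $Y = \Sigma_{\mathfrak{D}}[\varphi \vect{C}]$ and trace the identity backward through the chain above: Yoneda sends $\mathrm{id}$ to the tautological element $\mathrm{id}_{\varphi\vect{C}} \in \Sigma_{\mathfrak{D}}(\varphi\vect{C}, \varphi\vect{C}) = (\varphi^{\**}\Sigma_{\mathfrak{D}}[\varphi\vect{C}])(\vect{C})$, which in turn classifies the natural transformation in $\Sym_{\mathfrak{C}}$ whose $\vect{C}'$-component sends an arrow $\vect{C}' \to \vect{C}$ to its image under $\varphi$. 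This is precisely the canonical map described in Notation \ref{FIBYON NOT}. The closing assertion that $\Sigma_{\bullet}[-] \colon \Sigma_{\bullet} \to \Sym_{\bullet}$ preserves cocartesian arrows is then immediate, since Remark \ref{COLCHADJ REM} identifies cocartesian lifts in the opfibration $\Sym_{\bullet} \to \mathsf{Set}$ with the left Kan extension maps. There is essentially no obstacle here; the only subtlety is ensuring that each bijection in the chain above is natural in $Y$, which is the usual content of the Yoneda lemma.
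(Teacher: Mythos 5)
Your argument is correct: this is the standard Yoneda-lemma computation showing that the left Kan extension $\varphi_!$ of a representable presheaf is the representable at the image object, together with tracing the identity through the adjunction to pin down the map as the one adjoint to the canonical natural transformation of Notation \ref{FIBYON NOT}. The paper itself does not reprove this statement but cites it from the prequel \cite{BP_FCOP}; your proof is the expected one and fills in exactly the details (corepresenting the same functor of $Y$, naturality in $Y$, and the identification with the canonical map) that such a citation elides.
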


The fibered Yoneda functor
$
\Sigma_{\bullet} \xrightarrow{\Sigma_{\bullet}[-]} \mathsf{Sym}_{\bullet}
$
does not quite suffice for our purposes, 
due to the domain $\Sigma_{\bullet}$
lacking enough colimits.
To extend $\Sigma_{\bullet}[-]$,
we now discuss colored forests.
In the following, $\Phi$ denotes the category of forests
(i.e. formal coproducts of trees; see \cite[\S 5.1]{Per18}).

\begin{definition}\label{COLFOR DEF}
Let $\mathfrak{C}$ be a set of colors.
The category $\Phi_{\mathfrak{C}}$ of $\mathfrak{C}$-colored forests has
\begin{itemize}
\item objects pairs
$\vect{F} = (F,\mathfrak{c})$
where 
$F\in \Phi$ is a forest
and 
$\mathfrak{c}\colon \boldsymbol{E}(F) \to \mathfrak{C}$ 
is a coloring of its edges;
\item a map
$\vect{F}=(F,\mathfrak{c}) \to 
(F',\mathfrak{c}') = \vect{F'}$
is a map $\rho \colon F \to F'$ in $\Phi$
such that
$\mathfrak{c} = \mathfrak{c}' \rho$.
\end{itemize}
For a map of colors,
$\varphi\colon \mathfrak{C} \to \mathfrak{D}$
we again write
$\varphi \colon \Phi_{\mathfrak{C}} \to \Phi_{\mathfrak{D}}$
for the functor  
\begin{equation}
        \label{PHIF_EQ}
        \vect{F} = (F,\mathfrak{c})
        \mapsto (F,\varphi\mathfrak{c}) = \varphi\vect{F}.
\end{equation}
Adapting Notation \ref{SIGMABULL NOT},
we likewise write
$\Phi_{\bullet} \to \mathsf{Set}$
for the Grothendieck construction of the functor 
$\mathfrak{C} \mapsto \Phi_{\mathfrak{C}}$.
Explicitly, 
the objects of $\Phi_{\bullet}$
are the $\vect{F} \in \Phi_{\mathfrak{C}}$ 
for some set of colors $\mathfrak{C}$
and an arrow from
$\vect{F} \in \Phi_{\mathfrak{C}}$ to
$\vect{F'} \in \Phi_{\mathfrak{D}}$
over $\varphi \colon \mathfrak{C} \to \mathfrak{D}$
is an arrow
$\varphi \vect{F} \to \vect{F'}$ in $\Phi_{\mathfrak{D}}$.
\end{definition}

For each vertex $v \in \boldsymbol{V}(F)$ in a forest,
we write $F_v$ for the associated corolla.
Note that, given a $\mathfrak{C}$-coloring $\vect{F}$ on $F$,
one one likewise obtains colorings $\vect{F}_v$ on $F_v$.

\begin{notation}
Given $\vect{F} \in \Phi_{\mathfrak{C}}$
we define
\begin{equation}\label{GENSIGC EQ}
\Sigma_{\mathfrak{C}}[\vect{F}]=
\coprod\nolimits^{\mathfrak{C}}_{v \in \boldsymbol{V}(F)} 
\Sigma_{\mathfrak{C}}[\vect{F}_v]
\end{equation}
where we highlight that the coproduct $\amalg^{\mathfrak{C}}$ is fibered, i.e. it takes place in $\mathsf{Sym}_{\mathfrak{C}}$
rather than $\mathsf{Sym}_{\bullet}$.
\end{notation}

\begin{example}\label{COLFORES EX}
Let 
$\mathfrak{C} = \{ \mathfrak{a}, \mathfrak{b}, \mathfrak{c} \}$.
On the left below we depict a $\mathfrak{C}$-colored forest 
$\vect{F} = \vect{T} \amalg \vect{S}$
with tree components $\vect{T},\vect{S}$.
\begin{equation}
	\begin{tikzpicture}[auto,grow=up, level distance = 2.2em,
	every node/.style={font=\scriptsize,inner sep = 2pt}]%
		\tikzstyle{level 2}=[sibling distance=3em]%
			\node at (0,0) [font = \normalsize] {$\vect{T}$}%
				child{node [dummy] {}%
					child[level distance = 2.9em]{node [dummy] {}%
						child{node {}%
						edge from parent node [swap] {$\mathfrak{c}$}}%
					edge from parent node [swap,near end] {$\mathfrak{b}$}}%
					child[level distance = 2.9em]{node [dummy] {}%
						child[level distance = 2.9em]{node [dummy] {}%
						edge from parent node [swap,	near end] {$\mathfrak{a}\phantom{\mathfrak{b}}$}}%
						child[level distance = 2.9em]{node {}%
						edge from parent node [near end] {$\mathfrak{b}$}}%
					edge from parent node [near end] {$\phantom{\mathfrak{b}}\mathfrak{a}$}}%
				edge from parent node [swap] {$\mathfrak{a}$}};%
			\node at (2,0) [font = \normalsize] {$\vect{S}$}%
				child{node [dummy] {}%
					child[level distance = 2.9em]{node [dummy] {}%
						child{node [dummy] {}%
						edge from parent node [swap] {$\mathfrak{c}$}}%
					edge from parent node [swap] {$\mathfrak{b}$}}%
				edge from parent node [swap] {$\mathfrak{a}$}};%
			\node at (4.5,1.5) [font = \normalsize] {$\vect{T}_1$}%
				child{node [dummy] {}%
				edge from parent node [swap] {$\mathfrak{a}$}};%
			\node at (6.5,1.5) [font = \normalsize] {$\vect{T}_2$}%
				child{node [dummy] {}%
					child{node {}%
					edge from parent node [swap, near end] {$\mathfrak{a}\phantom{\mathfrak{b}}$}}%
					child{node {}%
					edge from parent node [near end] {$\mathfrak{b}$}}%
				edge from parent node [swap] {$\mathfrak{a}$}};%
			\node at (8.5,1.5) [font = \normalsize] {$\vect{T}_3$}%
				child{node [dummy] {}%
					child{node {}%
					edge from parent node [swap] {$\mathfrak{c}$}}%
				edge from parent node [swap] {$\mathfrak{b}$}};%
			\node at (10.5,1.5) [font = \normalsize] {$\vect{T}_4$}%
				child{node [dummy] {}%
					child{node {}%
					edge from parent node [swap, near end] {$\mathfrak{b}$}}%
					child{node {}%
					edge from parent node [near end] {$\phantom{\mathfrak{b}}\mathfrak{a}$}}%
				edge from parent node [swap] {$\mathfrak{a}$}};%
			\node at (4.5,-0.7) [font = \normalsize] {$\vect{S}_1$}%
				child{node [dummy] {}%
				edge from parent node [swap] {$\mathfrak{c}$}};%
			\node at (6.5,-0.7) [font = \normalsize] {$\vect{S}_2$}%
				child{node [dummy] {}%
					child{node {}%
					edge from parent node [swap] {$\mathfrak{c}$}}%
				edge from parent node [swap] {$\mathfrak{b}$}};%
			\node at (8.5,-0.7) [font = \normalsize] {$\vect{S}_3$}%
				child{node [dummy] {}%
					child{node {}%
					edge from parent node [swap] {$\mathfrak{b}$}}%
				edge from parent node [swap] {$\mathfrak{a}$}};%
		\draw[decorate,decoration={brace,amplitude=2.5pt}] (2.1,-0.5) -- (-1.1,-0.5) 
		node[midway,inner sep=4pt,font=\normalsize]{$\vect{F}$}; %
	\end{tikzpicture}%
\end{equation}%
Moreover, on the right we depict the $\mathfrak{C}$-profiles/corollas
$\vect{T}_i$ and $\vect{S}_j$
corresponding to the vertices of $T,S$, so that
\[
	\Sigma_{\mathfrak{C}}[\vect{F}]
=
	\Sigma_{\mathfrak{C}}[\vect{T}] 
	\amalg^{\mathfrak{C}}
	\Sigma_{\mathfrak{C}}[\vect{S}]
=
	\left(
	\coprod_{1\leq i \leq 4}^{\mathfrak{C}}
	\Sigma_{\mathfrak{C}}[\vect{T}_i] 
	\right)
	\amalg^{\mathfrak{C}}
	\left(
	\coprod_{1\leq j \leq 3}^{\mathfrak{C}}
	\Sigma_{\mathfrak{C}}[\vect{S}_j]
	\right)
\]
\end{example}

\begin{remark}
	Writing $\Phi^o_{\bullet} \hookrightarrow \Phi_{\bullet}$
	for the wide subcategory whose arrows are the outer face maps
	\cite[\S 3.2]{BP21} in each tree component
	(these are the maps sending vertices to vertices),
	\eqref{GENSIGC EQ} defines a functor
\begin{equation}\label{REPSFUN EQ}
	\Phi_{\bullet}^o 
	\xrightarrow{\Sigma_{\bullet}[-]}
	\mathsf{Sym}_{\bullet},
\end{equation}
and, by formula \eqref{GENSIGC EQ},
Proposition \ref{FIBYONPUSH PROP} immediately generalizes, i.e. one has identifications
\begin{equation}\label{PUSHEQAG EQ}
\varphi_! \Sigma_{\mathfrak{C}}[\vect{F}] \simeq 
\Sigma_{\mathfrak{D}}[\varphi \vect{F}].
\end{equation}
\end{remark}

\begin{notation}\label{TAUTCOL NOT}
We write $(-)^{\tau} \colon \Phi \to \Phi_{\bullet}$
for the \emph{tautological coloring} functor
that sends $F \in \Phi$ to 
$F^{\tau} \in \Phi_{\boldsymbol{E}(T)}$
where
$F^{\tau} = (F,\mathfrak{t})$ is the underlying forest $F$
together with the identity coloring
$\mathfrak{t} \colon \boldsymbol{E}(T) \xrightarrow{=} \boldsymbol{E}(T)$.
Moreover, we then abbreviate 
$\Sigma_{\tau}[F] = \Sigma_{\boldsymbol{E}(F)}[F^{\tau}]$.
\end{notation}

\begin{remark}
For any colored forest $\vect{F}=(F,\mathfrak{c})$,
regarding $\mathfrak{c} \colon \boldsymbol{E}(T) \to \mathfrak{C}$
as a change of color map, 
one has $\vect{F} = \mathfrak{c} F^{\tau}$,
so that \eqref{PUSHEQAG EQ} yields 
\begin{equation}\label{CANPUSH EQ}
\Sigma_{\mathfrak{C}}[\vect{F}] = 
\mathfrak{c}_! \Sigma_{\tau}[F]
\end{equation}
\end{remark}

\subsubsection*{Colored operads}

We now describe the category 
$\mathsf{Op}_{\bullet}(\V)$
of colored operads as the fiber algebras
(cf. \cite[Def. \ref{OC-FIBMON DEF}]{BP_FCOP};
see Remark \ref{FIBMON REM} below) over a 
certain fibered monad $\mathbb{F}$ on 
$\mathsf{Sym}_{\bullet}(\V)$, 
described using trees.

Following Definition \ref{COLFOR DEF},
we write 
$\Omega_{\mathfrak{C}} \subset \Phi_{\mathfrak{C}}$
for the subcategory of $\mathfrak{C}$-colored forests that are trees,
as well as 
$\Omega^0_{\mathfrak{C}} \subseteq \Omega_{\mathfrak{C}}$
for the wide subcategory whose arrows are the isomorphisms.
Next, as in \cite[Not. 3.38]{BP21},
there is an ``arity functor'',
which we call the \emph{leaf-root functor}, described as follows
\begin{equation}\label{LRDEF EQ}
\begin{tikzcd}[row sep = 0pt]
	\Omega_{\mathfrak C}^0 
	\ar{r}{\mathsf{lr}} &
	\Sigma_{\mathfrak{C}}
\\
	\vect{T} = 
	(T,\boldsymbol{E}(T) \xrightarrow{\mathfrak{c}} \mathfrak{C})
	\ar[mapsto]{r} &
	\left(
	\mathfrak{c}(l_1),\cdots,\mathfrak{c}(l_n);\mathfrak{c}(r)
	\right)
\end{tikzcd}
\end{equation}
where $r$ is the root of $T$ and
$l_1,\cdots,l_n$ the leaves
(ordered left to right following the planarization).

\begin{example}
	For $\vect{T},\vect{S}$ the trees in Example \ref{COLFORES EX}
	we have
	$\mathsf{lr}(\vect{T}) = (\mathfrak{b},\mathfrak{c};\mathfrak{a})$
	and 
	$\mathsf{lr}(\vect{S}) = (;\mathfrak{a})$.
\end{example}

For each $\mathfrak{C}$-profile $\vect{C}$,
we write $\vect{C} \downarrow \Omega_{\mathfrak{C}}^0$
for the undercategory with respect to $\mathsf{lr}$, 
whose objects consist of a tree $\vect{T}\in \Omega_{\mathfrak{C}}^0$
together with a choice of isomorphism 
$\vect{C} \to \mathsf{lr}(\vect{T})$.
Morally, $\vect{C} \downarrow \Omega_{\mathfrak{C}}^0$
is the ``groupoid of trees with arity $\vect{C}$''.
Adapting \cite[page 816]{BM07} we now have the following.

\begin{definition}\label{FREEOP DEF}
	Let $\mathcal{V}$ be a closed symmetric monoidal category.
	
	The \textit{fibered free operad monad} $\mathbb{F}$ on $\mathsf{Sym}_\bullet(\mathcal{V})$ 
	assigns to 
	$X \colon \Sigma_{\mathfrak{C}}^{op} \to \mathcal{V}$
	the functor
\begin{equation}\label{FROPEXP EQ}
\mathbb{F} X (\vect{C})
=
\coprod_{[\vect{T}] \in 
	\mathsf{Iso}(\vect{C} \downarrow \Omega^0_{\mathfrak{C}})}
\left(
\left(
\bigotimes_{v \in \boldsymbol{V}(T)} X(\vect{T}_v)
\right)
\cdot_{\mathsf{Aut}_{\Omega_{\mathfrak{C}}}(\vect{T})}
\mathsf{Aut}_{\Sigma_{\mathfrak{C}}}(\vect{C})
\right)
\end{equation}
where $\mathsf{Iso}(-)$ denotes isomorphism classes of objects.
\end{definition}

Formula \eqref{FROPEXP EQ}
is presented here only for the sake of completeness,
as this paper will not require a full understanding of 
$\mathbb{F}$.
A complete description of the monad 
$\mathbb{F}$
is given in the prequel \cite{BP_FCOP},
with \cite[Def. \ref{OC-FREEOP DEF}]{BP_FCOP}
providing an alternative description of
\eqref{FROPEXP EQ},
and the structure maps
$\mathbb{F}\mathbb{F} \Rightarrow \mathbb{F}$,
$id \Rightarrow \mathbb{F}$ 
discussed in \cite[App. \ref{OC-MONAD_APDX}]{BP_FCOP},
culminating in \cite[Def. \ref{OC-COLORMON_DEF}]{BP_FCOP}.

\begin{remark}\label{FIBMON REM}
	Following the construction in \cite[Def. \ref{OC-COLORMON_DEF}]{BP_FCOP},
	one has that the monad $\mathbb{F}$
	on $\mathsf{Sym}_{\bullet}(\V)$
	preserves color sets,
	and that the structure maps
	$\mathbb{F}\mathbb{F} \Rightarrow \mathbb{F}$,
	$id \Rightarrow \mathbb{F}$
	are the identity on colors.
	In other words,
	$\mathbb{F}$ is a \emph{fibered monad} for
	$\mathsf{Sym}_{\bullet} \to \mathsf{Set}$
	in the sense of 
	\cite[Def. \ref{OC-FIBMON DEF}]{BP_FCOP}.
	
	In particular, 
	by restriction one obtains monads
	$\mathbb{F}_{\mathfrak{C}}$
	on the fixed color fibers 
	$\mathsf{Sym}_{\mathfrak{C}}(\V)$.
\end{remark}

Recall \cite[Def. \ref{OC-FIBMON DEF}]{BP_FCOP},
that an $\mathbb{F}$ algebra $X$
is called a \emph{fiber algebra}
if the multiplication 
$\mathbb{F} X \to X$
is an identify on colors.

\begin{definition}
	The category 
	$\mathsf{Op}_{\bullet}(\V)$
	of colored operads
	is the category of fiber algebras for
	the fibered monad $\mathbb{F}$ on
	$\mathsf{Sym}_{\bullet}(\V) \to \mathsf{Set}$.
\end{definition}

\subsection{Equivariant colored symmetric sequences and colored operads}
\label{EQCOSYMSEQ SEC}

We now extend the discussion in the previous section to the equivariant context.

Letting $G$ be a group,
we will write
$\mathsf{Sym}^G_{\bullet}(\V)$,
which we call the category of
\emph{equivariant symmetric sequences},
for the category of $G$-objects in $\mathsf{Sym}_{\bullet}(\V)$.

By abstract nonsense, 
the color set functor
$\mathsf{Sym}^G_{\bullet}(\V) 
\to \mathsf{Set}^G$
is again a Grothendieck fibration
\cite[Rem. \ref{OC-FUNISGROTH REM}]{BP_FCOP},
and one has a fibered monad
$\mathbb{F}^G$ on $\mathsf{Sym}^G_{\bullet}(\V)$
(explicitly, $\mathbb{F}^G$
is simply $\mathbb{F}$ applied to $G$-objects)
whose fiber algebras
are the category 
$\mathsf{Op}^G_{\bullet}(\V)$
of $G$-objects on $\mathsf{Op}_{\bullet}(\V)$
\cite[Prop. \ref{OC-DIAGRAMFM_PROP}]{BP_FCOP}.
As a side note, we observe that, 
though $\mathbb{F}^G$ is again described by 
\eqref{FROPEXP EQ},
it can be tricky to describe the $G$-actions via that formula, 
since those are in general not the identity on colors.
Alternative descriptions can be found in 
\cite[Prop. \ref{OC-FGC PROP} and Rem. \ref{OC-FROPEXPG REM}]{BP_FCOP}.

For $\mathfrak{C} \in \mathsf{Set}^G$,
we then write
$\mathsf{Sym}^G_{\mathfrak{C}}(\V)$,
$\mathsf{Op}^G_{\mathfrak{C}}(\V)$
for the associated fibers of
$\mathsf{Sym}^G_{\bullet}(\V)$,
$\mathsf{Op}^G_{\bullet}(\V)$.

Extending \eqref{COLCHADJ EQ},
we then have the following, 
cf. \cite[Rem. \ref{OC-OP_MAP REM}]{BP_FCOP}.

\begin{remark}\label{OP_MAP REM}
	For any map of $G$-sets 
	$\varphi \colon \mathfrak C \to \mathfrak D$
	one has a pair of adjunctions
	\begin{equation}\label{GC_CHANGE_EQ}
	\begin{tikzcd}
	\Op^G_{\mathfrak C}(\V) 
	\arrow[shift left]{r}{\check{\varphi}_!}
	\arrow[d, "\mathsf{fgt}"']
	&
	\Op^G_{\mathfrak D}(\V) 
	\arrow[shift left]{l}{\varphi^{\**}}
	\arrow[d, "\mathsf{fgt}"]
	\\
	\Sym^G_{\mathfrak C}(\V) 
	\arrow[shift left]{r}{\varphi_!}
	&
	\Sym^G_{\mathfrak D}(\V) 
	\arrow[shift left]{l}{\varphi^{\**}}
	\end{tikzcd}
	\end{equation}
	where the right adjoints $\varphi^{\**}$ are
	both given by precomposition with
	$\varphi \colon 
	\Sigma_{\mathfrak{C}} \to \Sigma_{\mathfrak{D}}$,
	and are thus compatible with the forgetful functors, 
	i.e. $\varphi^{\**} \circ \mathsf{fgt} = 
	\mathsf{fgt} \circ \varphi^{\**}$, 
	while the left adjoints are not:
	$\varphi_!$ is simply a left Kan extension, while $\check{\varphi}_!$ is given by the coequalizer
	\begin{equation}\label{CFS_EQ}
	\check{\varphi}_! \O \simeq \mathop{\mathrm{coeq}}(\mathbb F_{\mathfrak D} \varphi_! \mathbb F_{\mathfrak C}\O \rightrightarrows \mathbb F_{\mathfrak D} \varphi_! \O).
	\end{equation}
	In general, we do not have a more explicit description of $\check{\varphi}_!$.
	However, when $\varphi$ is injective, 
	$\varphi_!X$ is the extension by $\emptyset$,
	from which it follows that 
	$\mathbb F_{\mathfrak D} \varphi_! = \varphi_! \mathbb F_{\mathfrak C}$,
	and \eqref{CFS_EQ}
	then says that
	$\check{\varphi}_! \O
	\simeq
	\mathop{\mathrm{coeq}} \left( \varphi_! \mathbb{F}_{\mathfrak{C}}  \mathbb{F}_{\mathfrak{C}} \O
	\rightrightarrows 
	\varphi_!  \mathbb{F}_{\mathfrak{C}} \O
	\right)
	\simeq 
	\varphi_! \left( \mathop{\mathrm{coeq}} \left( 
	\mathbb{F}_{\mathfrak{C}}  \mathbb{F}_{\mathfrak{C}} \O
	\rightrightarrows 
	\mathbb{F}_{\mathfrak{C}} \O
	\right) \right)
	\simeq 
	\varphi_! \O$,
	so that  	
	$\varphi_! \circ \mathsf{fgt} \simeq 
	\mathsf{fgt} \circ \check{\varphi}_!$.
\end{remark}

In \S \ref{GENCOF SEC} we will make use of the following,
which is an instance of 
\cite[Rem. \ref{OC-LIMINFIBSUP REM}]{BP_FCOP}.

\begin{remark}
	Given a diagram
	$I \xrightarrow{X_{\bullet}} 
	\mathsf{Sym}_{\bullet}^G(\V)$,
	and writing
	$\mathfrak{C} = \colim_{i \in I} \mathfrak{C}_{X_i}$
	and 
	$\varphi_i \colon \mathfrak{C}_{X_i} \to \mathfrak{C}$
	for the canonical maps,
	one has
	$	
	\colim_{i \in I} X_i = 
	\colim_{i \in I} \varphi_{i,!} X_i,
	$	
	where the second colimit is computed in 
	$\mathsf{Sym}_{\mathfrak{C}}^G(\V)$.
	Thus, for an arbitrary cocone
	$\psi_i \colon \mathfrak{C}_{X_i} \to \mathfrak{D}$
	with  
	$\psi \colon \mathfrak{C} \to \mathfrak{D}$ the induced map, one has
	\begin{equation}\label{LIMINFIBSUP EQ}
	\psi_!\left(\colim_{i \in I} X_i\right) 
	= 
	\psi_!
	\left(\colim_{i \in I} \varphi_{i,!} X_i\right)
	=
	\colim_{i \in I} \psi_! \varphi_{i,!} X_i
	=
	\colim_{i \in I} \psi_{i,!} X_i.
	\end{equation}
\end{remark}

To recall the model structures on 
$\mathsf{Sym}^G_{\mathfrak{C}}(\V)$,
$\mathsf{Op}^G_{\mathfrak{C}}(\V)$
in \S \ref{COLFIXMOD SEC}
we will need a more explicit description of 
$\mathsf{Sym}^G_{\mathfrak{C}}(\V)$
(the discussion above
only describes $\mathsf{Sym}^G_{\mathfrak{C}}(\V)$
abstractly as a fiber of $\mathsf{Sym}^G_{\bullet}(\V)$;
note that this 
\emph{is not} the category 
of $G$-objects on $\mathsf{Sym}_{\mathfrak{C}}(\V)$
unless the $G$-action on $\mathfrak{C}$
is trivial).

By \cite[Prop. \ref{OC-EQUIVFNCON PROP}]{BP_FCOP},
there is an identification 
$\mathsf{Sym}^G_{\mathfrak{C}}(\mathcal{V}) 
\simeq 
\V^{G \ltimes \Sigma^{op}_{\mathfrak{C}}}$
as the functors from a certain groupoid
$G \ltimes \Sigma^{op}_{\mathfrak{C}}$,
which can be described as an instance of
\cite[Ex. \ref{OC-GLTIMES EQ}]{BP_FCOP}.
Here we prefer an alternative description
of $G \ltimes \Sigma^{op}_{\mathfrak{C}}$,
which follows from 
\cite[Rem. \ref{OC-SIGACT REM}]{BP_FCOP},
and adapts Definition \ref{CSYM DEF} and
Remark \ref{GLOBSIG REM}.

\begin{remark}\label{GLTIMESSIG REM}
	Let $\mathfrak {C} \in \mathsf{Set}^G$ be a fixed $G$-set of colors.
	The groupoid $G \ltimes \Sigma^{op}_{\mathfrak{C}}$
	has objects the $\mathfrak {C}$-profiles
	$\vect C = (\mathfrak c_1, \dots, \mathfrak c_n; \mathfrak c_0)$
	and morphisms the action maps
	\begin{equation}\label{CSYMG EQ}
	\vect{C} =
	(\mathfrak c_1, \dots, \mathfrak c_n; \mathfrak c_0) \xrightarrow{(g,\sigma)} 
	(g \mathfrak c_{\sigma(1)}, \dots, 
	g \mathfrak c_{\sigma(n)}; 
	g \mathfrak c_0)
	= g \vect{C} \sigma
	\end{equation}
	for $(g,\sigma) \in G \times \Sigma_n^{op}$,
	with the natural notion of composition.
\end{remark}

\begin{remark}
	Setting $G=\**$ in Remark \ref{GLTIMESSIG REM}
	recovers the \emph{opposite} 
	$\Sigma_{\mathfrak{C}}^{op}$ of 
	Definition \ref{CSYM DEF}.
\end{remark}

\begin{remark}\label{SIGACT REM}
	Extending Remark \ref{GLOBSIG REM}, 
	the notation $g \vect{C} \sigma$ in \eqref{CSYMG EQ}
	encodes a $(G \times \Sigma_n^{op})$-action
	(i.e. $G$ acts on the left and $\Sigma_n$ on the right)
	on the set of $n$-ary $\mathfrak{C}$-profiles,
	via
	$g(\mathfrak{c}_i)\sigma =
	(g\mathfrak{c}_{\sigma(i)})$.
\end{remark}

\begin{definition}\label{STABS DEF}
	If a subgroup $\Lambda \leq G \times \Sigma_n^{op}$
	fixes a profile 
	$\vect{C} = (\mathfrak{c}_1,\cdots,\mathfrak{c}_n;\mathfrak{c}_0)$,
	i.e. if
	$g\mathfrak{c}_{\sigma(i)} = \mathfrak{c}_i$
	for all $(g, \sigma) \in \Lambda, 0 \leq i \leq n$,
	we say that \textit{$\Lambda$ stabilizes $\vect C$}. 
\end{definition}

\begin{remark}\label{CHOOSESIGN REM}
	For $\Lambda \leq G \times \Sigma_n^{op}$ 
	the projection to $\Sigma_n^{op}$ yields a
	right action of $\Lambda$ on 
	$\underline{n}_+ = \{0,1,\cdots,n\}$.
	
	Writing $\Lambda_i\leq \Lambda$ for the stabilizer of $i \in \underline{n}_{+}$ and $H_i = \pi_G(\Lambda_i)$
	for its projection onto $G$,
	one then has $H_i = g H_{\sigma(i)} g^{-1}$ for all
	$(g, \sigma) \in \Lambda$.
	Moreover, the profiles $\vect{C}$  
	stabilized by $\Lambda$ are in bijection with choices of 
	$H_i$-fixed colors $\mathfrak{c}_i$ 
	for $i$ ranging over a set of representatives of
	the orbits $\underline{n}_+ /\Lambda$.
\end{remark}

We now discuss the representable functors in 
$\mathsf{Sym}^G_{\mathfrak{C}} =
\mathsf{Sym}^G_{\mathfrak{C}}(\mathsf{Set}) \simeq 
\mathsf{Set}^{G \ltimes \Sigma^{op}_{\mathfrak{C}}}$.
However, some caution is needed,
as though $G \ltimes \Sigma^{op}_{\mathfrak{C}}$
and $\Sigma^{op}_{\mathfrak{C}}$
have the same objects $\vect{C}$,
the representable functor for $\vect{C}$ in
$\mathsf{Sym}^G_{\mathfrak{C}}$
is \emph{not} simply 
$\Sigma_{\mathfrak{C}}[\vect{C}]
\in \mathsf{Sym}_{\mathfrak{C}}$.
We first need the following construction,
where
$(-)^{\tau}\colon \Phi \to \Phi_{\bullet}$
is the tautological coloring, 
cf. Notation \ref{TAUTCOL NOT}.

\begin{definition}
        \label{GCC_DEF}
Let $G$ be a group,
$\mathfrak{C} \in \mathsf{Set}^G$
be a $G$-set of colors, 
and $\vect{C}  = (C, \mathfrak c) \in \Sigma_{\mathfrak{C}}$ be a $\mathfrak{C}$-profile/corolla,
with
$C\in \Sigma$ the underlying corolla
and 
$\mathfrak{c} \colon \boldsymbol{E}(C) \to \mathfrak{C}$ the coloring.

Writing 
$G \cdot \mathfrak{c} \colon G \cdot \boldsymbol{E}(C) \to \mathfrak{C}$
for the $G$-equivariant map adjoint to $\mathfrak c$, 
and $G \cdot C  \in \Phi^G$
for the $G$-free forest determined by $C$ 
(so that $\boldsymbol{E}(G \cdot C) = 
G \cdot \boldsymbol{E}(C)$),
we define $G \cdot_{\mathfrak{C}} \vect{C} \in \Phi^G_{\mathfrak{C}}$ to be
\begin{equation}\label{GCDTCC EQ}
G \cdot_{\mathfrak{C}} \vect{C} = 
(G \cdot \mathfrak{c})(G \cdot C)^{\tau},
\end{equation}
where we make use of the functor \eqref{PHIF_EQ}
for $\varphi = G \cdot \mathfrak{c}$.
\end{definition}

\begin{remark}
Writing $g \colon \mathfrak{C} \to \mathfrak{C}$
for the $G$-action maps,
one has the more explicit formulas 
(see Example \ref{GDOTCC EX})
\[
        G \cdot C = \coprod_{g \in G} C,
        \qquad
        \qquad
G \cdot_{\mathfrak{C}} \vect{C}
= 
\coprod_{g \in G}
g \vect{C}
\]
However, in practice we will prefer to use 
\eqref{GCDTCC EQ} for technical purposes.
\end{remark}

\begin{remark}
The definition \eqref{GCDTCC EQ}
extends to a functor
$\Phi_{\mathfrak{C}}
\xrightarrow{G \cdot_{\mathfrak{C}} (-)}
\Phi_{\mathfrak{C}}^G$,
left adjoint to the forgetful functor
$ \Phi_{\mathfrak{C}}^G
\to \Phi_{\mathfrak{C}}$.
\end{remark}

\begin{example}\label{GDOTCC EX}
Let $G = \{1,i,-1,-i\} \simeq \mathbb{Z}_{/4}$ 
be the group of quartic roots of unit and
$\mathfrak{C} = \{\mathfrak{a}, -\mathfrak{a}, i\mathfrak{a}, -i\mathfrak{a}, \mathfrak{b}, i \mathfrak{b} \}$ where we implicitly have
$-\mathfrak{b} = \mathfrak{b}$.
The following depicts the forest (of corollas)
$G \cdot_{\mathfrak{C}} \vect{C}$
in $\Phi_{\mathfrak{C}}^G$
for $\vect{C}$ in $\Sigma_{\mathfrak{C}}$ the leftmost corolla.
\begin{equation}
	\begin{tikzpicture}[auto,grow=up, level distance = 2.2em,
	every node/.style={font=\scriptsize,inner sep = 2pt}]%
		\tikzstyle{level 2}=[sibling distance=3em]%
			\node at (0,0) [font = \normalsize] {$\vect{C}$}%
				child{node [dummy] {}%
					child{node {}%
					edge from parent node [swap] {$-\mathfrak{a}$}}%
					child[level distance = 2.9em]{node {}%
					edge from parent node [swap,	near end] {$i\mathfrak{b}$}}%
					child[level distance = 2.9em]{node {}%
					edge from parent node [near end] {$i\mathfrak{b}$}}%
					child{node {}%
					edge from parent node  {$\mathfrak{a}$}}%
				edge from parent node [swap] {$\mathfrak{b}$}};%
			\node at (3.5,0) [font = \normalsize] {$i\vect{C}$}%
				child{node [dummy] {}%
					child{node {}%
					edge from parent node [swap] {$-i\mathfrak{a}$}}%
					child[level distance = 2.9em]{node {}%
					edge from parent node [swap,	near end] {$\mathfrak{b}$}}%
					child[level distance = 2.9em]{node {}%
					edge from parent node [near end] {$\mathfrak{b}$}}%
					child{node {}%
					edge from parent node  {$i\mathfrak{a}$}}%
				edge from parent node [swap] {$i\mathfrak{b}$}};%
			\node at (7,0) [font = \normalsize] {$-\vect{C}$}%
				child{node [dummy] {}%
					child{node {}%
					edge from parent node [swap] {$\mathfrak{a}$}}%
					child[level distance = 2.9em]{node {}%
					edge from parent node [swap,	near end] {$i\mathfrak{b}$}}%
					child[level distance = 2.9em]{node {}%
					edge from parent node [near end] {$i\mathfrak{b}$}}%
					child{node {}%
					edge from parent node  {$-\mathfrak{a}$}}%
				edge from parent node [swap] {$\mathfrak{b}$}};%
			\node at (10.5,0) [font = \normalsize] {$-i\vect{C}$}%
				child{node [dummy] {}%
					child{node {}%
					edge from parent node [swap] {$i\mathfrak{a}$}}%
					child[level distance = 2.9em]{node {}%
					edge from parent node [swap,	near end] {$\mathfrak{b}$}}%
					child[level distance = 2.9em]{node {}%
					edge from parent node [near end] {$\mathfrak{b}$}}%
					child{node {}%
					edge from parent node  {$-i\mathfrak{a}$}}%
				edge from parent node [swap] {$i\mathfrak{b}$}};%
		\draw[decorate,decoration={brace,amplitude=2.5pt}] (11.2,-0.4) -- (-0.7,-0.4) 
		node[midway,inner sep=4pt,font=\normalsize]{$G \cdot_{\mathfrak{C}} \vect{C}$}; %
	\end{tikzpicture}%
\end{equation}%
Note that the pairs $\vect{C},-\vect{C}$
and $i\vect{C},-i\vect{C}$ are isomorphic in $\Sigma_{\mathfrak{C}}$
while any other pair,
such as $\vect{C},i\vect{C}$, is not.
In general, it is moreover possible for two or more tree components of
$G \cdot_{\mathfrak{C}} \vect{C}$ to be equal.
\end{example}

Applying \eqref{REPSFUN EQ} to $G$-objects,
\cite[Prop. \ref{OC-REPALTDESC PROP}]{BP_FCOP}
gives, for each $\mathfrak{C}$-profile $\vect{C}$,
an identification
\begin{equation}\label{REPALTDESC EQ}
(G \ltimes \Sigma^{op}_{\mathfrak{C}})(\vect{C},-)
\simeq 
\Sigma_{\mathfrak{C}} [G \cdot_{\mathfrak{C}} \vect{C}].
\end{equation}
In other words, 
this coproduct
$\Sigma_{\mathfrak{C}} [G \cdot_{\mathfrak{C}} \vect{C}]$
of non-equivariant representables (see \eqref{GENSIGC EQ})
with its inherited $G$-action
is in fact
the representable functor for $\vect{C}$ in
$\mathsf{Sym}^G_{\mathfrak{C}} \simeq
\mathsf{Set}^{G \ltimes \Sigma^{op}_{\mathfrak{C}}}$.

\begin{remark}\label{GCDOTCATS REM}
If  $C \in \Sigma$ is the $n$-corolla,
one has a natural identification
$\boldsymbol{E}(G\cdot C) = G \times \underline{n}_+$
where
$\underline{n}_+ = \{0,1,\cdots,n\}$.
The automorphisms of
$G \cdot C$ in $\Phi^G$
are then naturally identified with the group
$G^{op} \times \Sigma_n$,
with the automorphism
${(g,\sigma)} \colon 
G \cdot C \to G \cdot C$
given on edges by
$(\bar{g},i) \mapsto (\bar{g}g,\sigma(i))$.
\end{remark}

\begin{remark}\label{COLCHSQ REM}
Let $g\vect{C} \sigma = \vect{C'}$
be as in \eqref{CSYMG EQ}.
Then $\vect{C},\vect{C'}$
have the same underlying corolla $C$ and, 
writing $\mathfrak{c},\mathfrak{c}'\colon 
\boldsymbol{E}(C)=\underline{n}_+ \to \mathfrak{C}$
for the colorings,
one can rewrite $g\vect{C} \sigma = \vect{C'}$
as 
$g \mathfrak{c} \sigma = \mathfrak{c}'$.

Definition \ref{GCC_DEF} then induces a diagram in $\Phi^G_{\bullet}$
as below,
with the vertical maps given by  
$G \cdot C \xrightarrow{(g,\sigma)} G \cdot C$
on the underlying forest.
\begin{equation}\label{COLCHSQ EQ}
\begin{tikzcd}
G \cdot C^{\tau} \ar{d}[swap]{(g,\sigma)} 
\ar{r}{G \cdot \mathfrak{c}'}
&
G \cdot_{\mathfrak{C}} \vect{C'}
\ar{d}{(g,\sigma)}
\\
G \cdot C^{\tau} \ar{r}[swap]{G \cdot \mathfrak{c}}
&
G \cdot_{\mathfrak{C}} \vect{C}
\end{tikzcd}
\end{equation}
The right vertical map 
is in fact in $\Phi^G_{\mathfrak{C}}$,
i.e. it respects colors.
Note that this reflects \eqref{REPALTDESC EQ},
which identifies a map $\vect{C} \to \vect{C'}$ in
$G \ltimes \Sigma_{\mathfrak{C}}^{op}$
with a map 
$\Sigma_{\mathfrak{C}}[G \cdot_{\mathfrak{C}} \vect{C'}]
\to
\Sigma_{\mathfrak{C}}[G \cdot_{\mathfrak{C}} \vect{C}]$
in $\mathsf{Sym}^G_{\mathfrak{C}}$.
\end{remark}

\begin{example}
In Example \ref{GDOTCC EX}
the permutation $(14)(23) \in \Sigma_4$
gives a map $\vect{C} \to -\vect{C}$ in $\Sigma_{\mathfrak{C}}$,
and thus induces an automorphism of
$\vect{C}$ in $G \ltimes \Sigma_{\mathfrak{C}}^{op}$.
\end{example}

\subsection{Homotopy theory of equivariant operads with fixed colors}\label{COLFIXMOD SEC}

In this section we recall the model structures on fixed color operads 
$\mathsf{Sym}^G_{\mathfrak{C}}(\V)$
in 
\cite[Thm. \ref{OC-THMI}]{BP_FCOP},
which was the main result therein.
We first recall and elaborate on the $(G,\Sigma)$-families in Definition \ref{FAM1ST DEF}.

\begin{definition}\label{GSFAM_DEF}
	A \emph{$(G,\Sigma)$-family} $\mathcal{F}$ is 
	a collection $\{\mathcal{F}_n\}_{n \geq 0}$ of families $\F_n$ of the groups 
	$G \times \Sigma_n^{op}$.
	
	Further, for $G \ltimes \Sigma^{op}_{\mathfrak{C}}$
	and $n$-ary $\mathfrak{C}$-profile $\vect{C}$,
	we write
	\begin{equation}\label{FVECTC EQ}
                \mathcal{F}_{\vect{C}} =
                \{\Lambda \in \mathcal{F}_n \ | \ \Lambda \text{ stabilizes } \vect{C}\}.
	\end{equation}
\end{definition}

\begin{remark}
	The collection $\mathcal{F} = \{\mathcal{F}_n\}_{n \geq 0}$
	can also be viewed as a family of subgroups
	in the groupoid $G \times \Sigma^{op}$,
	cf. \cite[Def. \ref{OC-FAMGROUPOID DEF}]{BP_FCOP}.
	Similarly,
	by \eqref{CSYMG EQ},
	the subgroups in $\mathcal{F}_{\vect{C}}$
	can be regarded as automorphisms of $\vect{C}$
	in $G \ltimes \Sigma^{op}_{\mathfrak{C}}$,
	so that
	$\mathcal{F}_{\mathfrak{C}}
	=\{ \mathcal{F}_{\vect{C}}\}_{\vect{C} \in \Sigma_{\mathfrak{C}}}$
	similarly defines a family in the groupoid
	$G \ltimes \Sigma^{op}_{\mathfrak{C}}$.
	For further discussion,
	see \cite[Def. \ref{OC-GSFAM_DEF} and Rem. \ref{OC-FAMC_DEF_EQ}]{BP_FCOP}.
\end{remark}

In this paper and the sequel \cite{BP_TAS}, 
we are interested in three main examples of $(G,\Sigma)$-families:

\begin{enumerate}[label = (\alph*)]
\item First, there is the family $\F_{all}$ of all the subgroups of $G \times \Sigma^{op}$
      (in which case the $\F_{all,\mathfrak{C}}$ are also the families of all subgroups), which is useful mainly for technical purposes.
      
\item Secondly, there is the family of $\F^{\Gamma}$
      of $G$-graph subgroups (e.g. \cite[Def. 6.36]{BP21}),
      where $\F^{\Gamma}_n$ consists of the subgroups
      $\Gamma \leq G \times \Sigma_n^{op}$
      such that $\Gamma \cap \Sigma_n^{op} = \{\**\}$.
      We note that the elements of such $\Gamma$
      have the form $(h,\phi(h)^{-1})$
      for $h$ ranging over some subgroup $H \leq G$
      and $\phi \colon H \to \Sigma_n$
      a homomorphism,
      motivating the ``graph subgroup'' terminology.
      Though secondary for the current paper, 
      we regard $\F^{\Gamma}$ as the ``canonical choice'' of
      $(G,\Sigma)$-family, 
      as it is the family featured in the Quillen equivalence
      $W_! \colon 
      \mathsf{dSet}^G \rightleftarrows 
      \mathsf{dSet}^G \colon hcN$
      in \cite[Thm. I]{BP_TAS} 
      (see \eqref{BPMAINTHM_EQ}).
      
\item Lastly, there are the indexing systems of Blumberg and Hill,
      which are special subfamilies of $\F^{\Gamma}$
      that share the key technical properties of 
      $\F^{\Gamma}$ itself,
      and are discussed in 
      \cite[\S \ref{OC-INDSYS_SEC}]{BP_FCOP}.
\end{enumerate}

\begin{example}
Let $G = \mathbb{Z}_{/2} = \{\pm 1\}$ and 
$\mathfrak{C} = \{\mathfrak{a}, -\mathfrak{a}, \mathfrak{b}\}$ where we implicitly have
$-\mathfrak{b} = \mathfrak{b}$.
Consider the two $\mathfrak{C}$-corollas 
$\vect{C},\vect{D} \in \Sigma_{\mathfrak{C}}$ below.
\begin{equation}
	\begin{tikzpicture}[auto,grow=up, level distance = 2.2em,
	every node/.style={font=\scriptsize,inner sep = 2pt}]%
		\tikzstyle{level 2}=[sibling distance=3em]%
			\node at (0,0) [font = \normalsize] {$\vect{C}$}%
				child{node [dummy] {}%
					child{node {}%
					edge from parent node [swap] {$-\mathfrak{a}$}}%
					child[level distance = 2.9em]{node {}%
					edge from parent node [swap,	near end] {$\mathfrak{b}$}}%
					child[level distance = 2.9em]{node {}%
					edge from parent node [near end] {$\mathfrak{b}$}}%
					child{node {}%
					edge from parent node  {$\mathfrak{a}$}}%
				edge from parent node [swap] {$\mathfrak{b}$}};%
			\node at (7,0) [font = \normalsize] {$\vect{D}$}%
				child{node [dummy] {}%
					child{node {}%
					edge from parent node [swap] {$-\mathfrak{a}$}}%
					child[level distance = 2.9em]{node {}%
					edge from parent node [swap,	near end] {$-\mathfrak{a}$}}%
					child[level distance = 2.9em]{node {}%
					edge from parent node [near end] {$\mathfrak{a}$}}%
					child{node {}%
					edge from parent node  {$\mathfrak{a}$}}%
				edge from parent node [swap] {$\mathfrak{b}$}};%
	\end{tikzpicture}%
\end{equation}%
The non-trivial $G$-graph subgroups of
$\F^{\Gamma}_{\vect{C}}$,
$\F^{\Gamma}_{\vect{D}}$
correspond to the possible $\mathbb{Z}_{/2}$-actions on the underlying trees $C,D$ that are compatible with the action on labels
(in that the composites
$\boldsymbol{E}(C) \xrightarrow{-1} \boldsymbol{E}(C) \to \mathfrak{C}$
and 
$\boldsymbol{E}(C) \to \mathfrak{C} \xrightarrow{-1} \mathfrak{C}$ coincide).
In this case, both 
$\F^{\Gamma}_{\vect{C}}$,
$\F^{\Gamma}_{\vect{D}}$
have exactly two non-trivial groups,
corresponding to the $\mathbb{Z}_{/2}$-actions on the underlying corollas depicted below.
\begin{equation}
	\begin{tikzpicture}[auto,grow=up, level distance = 2.2em,
	every node/.style={font=\scriptsize,inner sep = 2pt}]%
		\tikzstyle{level 2}=[sibling distance=3em]%
			\node at (-1.6,0) [font = \normalsize] {$C_1$}%
				child{node [dummy] {}%
					child{node {}%
					edge from parent node [swap] {$-a$}}%
					child[level distance = 2.9em]{node {}%
					edge from parent node [swap,	near end] {$c\phantom{b}$}}%
					child[level distance = 2.9em]{node {}%
					edge from parent node [near end] {$b$}}%
					child{node {}%
					edge from parent node  {$a$}}%
				edge from parent node [swap] {$r$}};%
			\node at (1.6,0) [font = \normalsize] {$C_2$}%
				child{node [dummy] {}%
					child{node {}%
					edge from parent node [swap] {$-a$}}%
					child[level distance = 2.9em]{node {}%
					edge from parent node [swap,	near end] {$-b$}}%
					child[level distance = 2.9em]{node {}%
					edge from parent node [near end] {$b$}}%
					child{node {}%
					edge from parent node  {$a$}}%
				edge from parent node [swap] {$r$}};%
			\node at (5.4,0) [font = \normalsize] {$D_1$}%
				child{node [dummy] {}%
					child{node {}%
					edge from parent node [swap] {$-a$}}%
					child[level distance = 2.9em]{node {}%
					edge from parent node [swap,	near end] {$-b$}}%
					child[level distance = 2.9em]{node {}%
					edge from parent node [near end] {$b$}}%
					child{node {}%
					edge from parent node  {$a$}}%
				edge from parent node [swap] {$r$}};%
			\node at (8.6,0) [font = \normalsize] {$D_2$}%
				child{node [dummy] {}%
					child{node {}%
					edge from parent node [swap] {$-b$}}%
					child[level distance = 2.9em]{node {}%
					edge from parent node [swap,	near end] {$-a$}}%
					child[level distance = 2.9em]{node {}%
					edge from parent node [near end] {$b$}}%
					child{node {}%
					edge from parent node  {$a$}}%
				edge from parent node [swap] {$r$}};%
	\end{tikzpicture}%
\end{equation}%
\end{example}

As discussed in 
\cite[Def. \ref{OC-SYMGFV DEF} and Rem. \ref{OC-VGSIGF REM}]{BP_FCOP},
we then have the following instance of 
\cite[Prop. \ref{OC-ALLEQ PROP}]{BP_FCOP},
where
$\mathcal{I}$ (resp. $\mathcal{J}$)
denotes the generating (trivial) cofibrations of $\V$.

\begin{proposition}\label{SYMGFV PROP}
	Let $\V$ satisfy
	(i),(ii) in Theorem \ref{THMA}.
	Fix $\mathfrak{C} \in \mathsf{Set}^G$
	and $(G,\Sigma)$-family $\F$.
	
	Then there exists a model structure on
	$\mathsf{Sym}^G_{\mathfrak{C}}(\mathsf{\V})$,
	which we call the \emph{$\mathcal{F}$-model structure}
	and denote $\mathsf{Sym}^G_{\mathfrak{C},\F}(\V)$,
	such that a map $X \to Y$
	is a weak equivalence (resp. fibration) if the maps
\begin{equation}
	X(\vect{C})^{\Lambda} \to Y(\vect{C})^{\Lambda}
\end{equation}
	are weak equivalences (fibrations) in $\V$
	for all $\mathfrak{C}$-profiles $\vect{C}$
	and $\Lambda \in \F_{\vect{C}}$.	
	
	Moreover, the generating (trivial) cofibrations of
	$\Sym^{G}_{\mathfrak{C},\F}(\V)$
	are the sets of maps
	\begin{equation}\label{VGSIGF EQ}
	\mathcal{I}_{\mathfrak{C},\mathcal{F}}
	=
	\left\{
	\Sigma_{\mathfrak{C}}[G \cdot_{\mathfrak{C}} \vect{C}]/\Lambda \cdot i
	\right\}
	\qquad \qquad
	\mathcal{J}_{\mathfrak{C},\mathcal{F}}
	=
	\left\{
	\Sigma_{\mathfrak{C}}[G \cdot_{\mathfrak{C}} \vect{C}]/\Lambda \cdot j
	\right\}
	\end{equation}
	where $\vect{C}$ ranges over $\Sigma_{\mathfrak{C}}$,
	$\Lambda$ ranges over $\F_{\vect{C}}$,
	$i$ ranges over $\mathcal{I}$ and
	$j$ ranges over $\mathcal{J}$.
\end{proposition}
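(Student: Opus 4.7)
The plan is to recognize Proposition \ref{SYMGFV PROP} as a direct instance of the general result \cite[Prop. \ref{OC-ALLEQ PROP}]{BP_FCOP}, which constructs, for a small groupoid $D$, a model category $\V$ satisfying (i),(ii), and a family $\F_D$ of subgroups in $D$, a cofibrantly generated model structure on $\V^D$ whose weak equivalences (resp.\ fibrations) are the maps inducing weak equivalences (resp.\ fibrations) of $\V$ on all $\Lambda$-fixed points for $\Lambda \in \F_D$.

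First, I would invoke the identification $\mathsf{Sym}^G_{\mathfrak{C}}(\V) \simeq \V^{G \ltimes \Sigma^{op}_{\mathfrak{C}}}$ mentioned just before Remark \ref{GLTIMESSIG REM}, putting us squarely in the diagram category situation with $D = G \ltimes \Sigma^{op}_{\mathfrak{C}}$. Next, I need to explain how the $(G,\Sigma)$-family $\F$ gives rise to a family of subgroups in this groupoid: by Definition \ref{STABS DEF} and the action description \eqref{CSYMG EQ}, a subgroup $\Lambda \in \F_n$ that stabilizes $\vect{C}$ is precisely a subgroup of the automorphism group of $\vect{C}$ in $G \ltimes \Sigma^{op}_{\mathfrak{C}}$. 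Thus $\F_{\mathfrak{C}} = \{\F_{\vect{C}}\}_{\vect{C}}$ defined by \eqref{FVECTC EQ} is a genuine family of subgroups in the groupoid, so the hypotheses of \cite[Prop. \ref{OC-ALLEQ PROP}]{BP_FCOP} are satisfied and the model structure with the stated weak equivalences and fibrations exists.

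It remains to identify the generating (trivial) cofibrations. In the abstract diagram setup of \cite[Prop. \ref{OC-ALLEQ PROP}]{BP_FCOP}, these take the form $y(d)/\Lambda \cdot i$ (resp.\ $y(d)/\Lambda \cdot j$) where $y(d)$ is the representable functor at $d \in D$, $\Lambda$ ranges over the subgroups of $\Aut(d)$ belonging to $\F_D$, and $i$ (resp.\ $j$) ranges over $\mathcal{I}$ (resp.\ $\mathcal{J}$). Applying this with $D = G \ltimes \Sigma^{op}_{\mathfrak{C}}$, the key input is the identification \eqref{REPALTDESC EQ}, which states that the representable functor at $\vect{C}$ is precisely $\Sigma_{\mathfrak{C}}[G \cdot_{\mathfrak{C}} \vect{C}]$. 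Combined with the reinterpretation of $\Lambda \in \F_{\vect{C}}$ as a subgroup of $\Aut_{G \ltimes \Sigma^{op}_{\mathfrak{C}}}(\vect{C})$, this yields exactly the sets \eqref{VGSIGF EQ}.

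The main (relatively minor) obstacle is verifying the bookkeeping that relates ``subgroup of $G \times \Sigma_n^{op}$ stabilizing $\vect{C}$'' with ``subgroup of $\Aut(\vect{C})$ in the groupoid $G \ltimes \Sigma^{op}_{\mathfrak{C}}$''. This is handled by Remark \ref{GLTIMESSIG REM} together with Definition \ref{STABS DEF}, so once the general existence result \cite[Prop. \ref{OC-ALLEQ PROP}]{BP_FCOP} is in hand the proposition reduces to careful translation of notation.
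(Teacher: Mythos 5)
Your proposal is correct and takes essentially the same approach as the paper: the paper explicitly presents Proposition \ref{SYMGFV PROP} as an instance of \cite[Prop. \ref{OC-ALLEQ PROP}]{BP_FCOP}, using the identification $\mathsf{Sym}^G_{\mathfrak{C}}(\V) \simeq \V^{G \ltimes \Sigma^{op}_{\mathfrak{C}}}$, the reinterpretation of $\F$ as a family in the groupoid $G \ltimes \Sigma^{op}_{\mathfrak{C}}$, and the representable-functor identification \eqref{REPALTDESC EQ} for the generating sets, exactly as you outline.
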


Transfer along the adjunction
$
\mathbb{F}^G_{\mathfrak{C}} \colon
\mathsf{Sym}^G_{\mathfrak{C}}(\V)
\rightleftarrows
\mathsf{Op}^G_{\mathfrak{C}}(\V)
\colon \mathsf{fgt}
$
then yields the following.

\begin{theorem}
	[{\cite[Thm. \ref{OC-THMI}]{BP_FCOP}}]
	\label{THMIREST}
	Let $\V$ satisfy (i),(ii),(iii),(iv),(v) in Theorem \ref{THMA}.
	Fix $\mathfrak{C} \in \mathsf{Set}^G$
	and $(G,\Sigma)$-family $\F$.
	
	Then there exists a model structure on
	$\mathsf{Op}^G_{\mathfrak{C}}(\mathsf{\V})$,
	which we call the \emph{$\mathcal{F}$-model structure}
	and denote $\mathsf{Op}^G_{\mathfrak{C},\F}(\V)$,
	such that a map
	$\mathcal{O} \to \mathcal{P}$
	is a weak equivalence (resp. fibration) if the maps
	\begin{equation}\label{THMI_EQ}
	\O(\vect{C})^{\Lambda} \to \mathcal{P}(\vect{C})^{\Lambda}
	\end{equation}
	are weak equivalences (fibrations)
	in $\V$ for all $\mathfrak{C}$-profiles $\vect{C}$
	and $\Lambda \in \F_{\vect{C}}$.
	
	Moreover, the generating (trivial) cofibrations in
	$\mathsf{Op}^G_{\mathfrak{C},\F}(\V)$
	are the sets
	\begin{equation}\label{FVGSIGF EQ}
	\mathbb{F}^G_{\mathfrak{C}}\mathcal{I}_{\mathfrak{C},\mathcal{F}}
	=
	\left\{
	\mathbb{F}^G_{\mathfrak{C}}
	\left(\Sigma_{\mathfrak{C}}[G \cdot_{\mathfrak{C}} \vect{C}]/\Lambda \cdot i \right)
	\right\}
	\qquad \qquad
	\mathbb{F}^G_{\mathfrak{C}}\mathcal{I}_{\mathfrak{C},\mathcal{F}}
	=
	\left\{
	\mathbb{F}^G_{\mathfrak{C}}
	\left(\Sigma_{\mathfrak{C}}[G \cdot_{\mathfrak{C}} \vect{C}]/\Lambda \cdot j \right)
	\right\}
	\end{equation}
	where $\vect{C}$ ranges over $\Sigma_{\mathfrak{C}}$,
	$\Lambda$ ranges over $\F_{\vect{C}}$,
	$i$ ranges over $\mathcal{I}$,
	and $j$ ranges over $\mathcal{J}$.
\end{theorem}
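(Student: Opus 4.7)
The natural approach is to apply the classical Kan (or Crans) transfer principle along the free--forgetful adjunction
\[
\mathbb{F}^G_{\mathfrak{C}} \colon \mathsf{Sym}^G_{\mathfrak{C}}(\V) \rightleftarrows \mathsf{Op}^G_{\mathfrak{C}}(\V) \colon \mathsf{fgt}
\]
starting from the model structure $\mathsf{Sym}^G_{\mathfrak{C},\F}(\V)$ provided by Proposition \ref{SYMGFV PROP}. Since the weak equivalences and fibrations on $\mathsf{Op}^G_{\mathfrak{C}}(\V)$ described by \eqref{THMI_EQ} are defined exactly as those maps whose image under $\mathsf{fgt}$ is a weak equivalence/fibration in $\mathsf{Sym}^G_{\mathfrak{C},\F}(\V)$, transferring would produce precisely the desired model structure, and the generating sets in \eqref{FVGSIGF EQ} are then by construction the images under $\mathbb{F}^G_{\mathfrak{C}}$ of the generators in \eqref{VGSIGF EQ}.

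To carry the transfer out, three conditions must be verified. First, the domains of $\mathbb{F}^G_{\mathfrak{C}} \mathcal{I}_{\mathfrak{C},\F}$ and $\mathbb{F}^G_{\mathfrak{C}} \mathcal{J}_{\mathfrak{C},\F}$ must be small relative to the cell complexes. This reduces via the adjunction to the smallness of the domains of $\mathcal{I}_{\mathfrak{C},\F}$, $\mathcal{J}_{\mathfrak{C},\F}$, which in turn follows from condition (i) on $\V$ since the underlying sets of symmetric sequences involved are $G$-finite coproducts of smash-type copies of the domains of $\mathcal{I}, \mathcal{J}$, and the forgetful functor $\mathsf{fgt} \colon \mathsf{Op}^G_{\mathfrak{C}}(\V) \to \mathsf{Sym}^G_{\mathfrak{C}}(\V)$ preserves filtered colimits (this is standard since $\mathbb{F}^G_{\mathfrak{C}}$ is described pointwise by a colimit formula as in \eqref{FROPEXP EQ}, and the fiber monad structure of Remark \ref{FIBMON REM} behaves well with filtered colimits). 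Second, the existence of limits and filtered colimits in $\mathsf{Op}^G_{\mathfrak{C}}(\V)$ follows from the monadic description.

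The main obstacle is the third condition: showing that every relative $\mathbb{F}^G_{\mathfrak{C}}\mathcal{J}_{\mathfrak{C},\F}$-cell complex is an underlying weak equivalence in $\mathsf{Sym}^G_{\mathfrak{C},\F}(\V)$. This amounts to understanding pushouts of free operad maps $\mathbb{F}^G_{\mathfrak{C}}\bigl(\Sigma_{\mathfrak{C}}[G \cdot_{\mathfrak{C}} \vect{C}]/\Lambda \cdot j\bigr)$ at the level of symmetric sequences, which requires a careful filtration of the pushout $\O \amalg_{\mathbb{F}^G_{\mathfrak{C}} A} \mathbb{F}^G_{\mathfrak{C}} B$ by trees with a growing number of ``new'' vertices labeled by $B$. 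At each filtration stage, one gets a pushout in $\mathsf{Sym}^G_{\mathfrak{C}}(\V)$ whose attaching map is built from symmetric pushout powers of the trivial cofibration $j$, tensored with fixed point data from $\O$. Condition (v) (cofibrant symmetric pushout powers) guarantees these attaching maps are themselves cofibrations of the right equivariant type, while the global monoid axiom (iv) ensures that the tensor of such a (trivial) cofibration with an arbitrary object remains a trivial cofibration in the $\F$-model structure, so the filtration stages are $\F$-trivial cofibrations and their transfinite composite is an $\F$-trivial cofibration.

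Finally, the resulting model structure is cofibrantly generated with the sets in \eqref{FVGSIGF EQ} as generating (trivial) cofibrations, the fibrations and weak equivalences match the characterizations in \eqref{THMI_EQ} by the transfer mechanism, and one concludes by invoking the prequel \cite{BP_FCOP}. The hardest technical step is the filtration analysis of operadic pushouts and the verification that symmetric pushout powers of trivial cofibrations remain in the ``trivial'' class with respect to all stabilizers $\Lambda \in \F_{\vect{C}}$ simultaneously; this is precisely the content that motivates assumptions (iv) and (v) and is the main reason Theorem \ref{THMIREST} is non-formal.
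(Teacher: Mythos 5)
Your proposal correctly identifies the transfer argument along the free--forgetful adjunction $\mathbb{F}^G_{\mathfrak{C}} \dashv \mathsf{fgt}$ starting from Proposition~\ref{SYMGFV PROP}, which is exactly the route the paper indicates (in the sentence immediately preceding the theorem statement), with the technical details carried out in the prequel \cite{BP_FCOP}. Your account of the smallness verification, the tree filtration of operadic pushouts, and the division of labor between the global monoid axiom (iv) and cofibrant symmetric pushout powers (v) in verifying the key transfer condition matches that strategy (one small imprecision: the global monoid axiom shows the relevant maps are underlying weak equivalences in $\mathsf{Sym}^G_{\mathfrak{C},\F}(\V)$, not that they remain trivial cofibrations).
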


\begin{remark}\label{FALLMAXMIN REM}
	When $\mathcal{F}=\mathcal{F}_{all}$ is the family of all subgroups,
	we refer to these model structures on 
	$\mathsf{Sym}^G_{\mathfrak{C}}(\V)$, $\mathsf{Op}^G_{\mathfrak{C}}(\V)$
	as the \emph{genuine model structures}.

	Further, note that the genuine model structures 
	minimize the classes of weak equivalences and fibrations
	and thus, conversely, 
	they maximize the classes of cofibrations and trivial cofibrations.
\end{remark}

We now recall the following,
where the $\phi^{\**} \bar{\F}$ families
are as defined in \cite[Rem. \ref{OC-PULLFAM REM}]{BP_FCOP}.

\begin{corollary}[{\cite[Cor. \ref{OC-OPADJ_COR}]{BP_FCOP}}]
	\label{OPADJ_COR}
	\begin{enumerate}[label=(\roman*)]
		\item \label{OPCOCHADJ_LBL}
		For any $(G,\Sigma)$-family $\F$ and map of colors 
		$\varphi \colon \mathfrak C \to \mathfrak D$, the induced adjunction
		\[
		\check{\varphi}_! \colon \mathsf{Op}^G_{\mathfrak{C},\F}
		\rightleftarrows
		\mathsf{Op}^G_{\mathfrak{D},\F} \colon \varphi^{\**}
		\]
		is a Quillen adjunction.
		\item \label{OPFIXSETCHGR_LBL}
		For any homomorphism $\phi \colon G \to \bar G$,
		$(G,\Sigma)$-family $\F$ and $(\bar G,\Sigma)$-family $\bar{\F}$,
		and $\bar G$-set of colors $\mathfrak C$,
		the adjunction
		\[
		\check{\phi}_! \colon \mathsf{Op}^G_{\mathfrak{C},\F}
		\rightleftarrows
		\mathsf{Op}^{\bar{G}}_{\mathfrak{C},\bar{\F}} \colon \phi^{\**}
		\]
		is a Quillen adjunction whenever $\F \subseteq \phi^{\**} \bar{\F}$, i.e.
		if $\Lambda \in \F_n$ implies 
		$\phi(\Lambda) \in \bar{\F}_n$.
		\item \label{OPCOMBADJ_LBL}
		For any homomorphism $\phi \colon G \to \bar G$,
		$(G,\Sigma)$-family $\F$ and $(\bar G,\Sigma)$-family $\bar{\F}$,
		and $G$-set of colors $\mathfrak C$,
		the adjunction
		\[
		\bar{G} \cdot_G (-) \colon \mathsf{Op}^G_{\mathfrak{C},\F}
		\rightleftarrows
		\mathsf{Op}^{\bar{G}}_{\bar{G} \cdot_G \mathfrak{C},\bar{\F}} \colon \mathsf{fgt}
		\]
		is a Quillen adjunction whenever $\F \subseteq \phi^{\**} \bar{\F}$, i.e.
		if $\Lambda \in \F_n$ implies 
		$\phi(\Lambda) \in \bar{\F}_n$.
	\end{enumerate}
\end{corollary}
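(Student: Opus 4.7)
The plan is to verify directly, for each of the three adjunctions, that the right adjoint preserves both fibrations and trivial fibrations; this suffices to establish the Quillen adjunction. The main leverage comes from Theorem \ref{THMIREST}, which characterizes (trivial) fibrations level-wise via the fixed point maps $\mathcal{O}(\vect{C})^\Lambda \to \mathcal{P}(\vect{C})^\Lambda$ for $\Lambda \in \mathcal{F}_{\vect{C}}$. So in each case the task reduces to bookkeeping: checking that the right adjoint transports fixed-point data compatibly with the stabilizer conditions.

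For part \ref{OPCOCHADJ_LBL}, the right adjoint $\varphi^{\**}$ is precomposition with $\varphi \colon \Sigma_{\mathfrak{C}} \to \Sigma_{\mathfrak{D}}$, so $\varphi^{\**}\mathcal{O}(\vect{C}) = \mathcal{O}(\varphi\vect{C})$. The key observation is that if $\Lambda \leq G \times \Sigma_n^{op}$ stabilizes $\vect{C}$ (i.e.\ $g \mathfrak{c}_{\sigma(i)} = \mathfrak{c}_i$ for every $(g,\sigma) \in \Lambda$), then applying $\varphi$ pointwise shows that $\Lambda$ also stabilizes $\varphi\vect{C}$. Hence $\Lambda \in \mathcal{F}_{\vect{C}}$ implies $\Lambda \in \mathcal{F}_{\varphi\vect{C}}$, and the (trivial) fibration condition on $\mathcal{O} \to \mathcal{P}$ at level $\varphi\vect{C}$ descends to the analogous condition on $\varphi^{\**}\mathcal{O} \to \varphi^{\**}\mathcal{P}$ at level $\vect{C}$.

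For part \ref{OPFIXSETCHGR_LBL}, the right adjoint $\phi^{\**}$ restricts the $\bar{G}$-action to a $G$-action along $\phi \colon G \to \bar{G}$, leaving the underlying levels unchanged. For $\Lambda \leq G \times \Sigma_n^{op}$ one has $(\phi^{\**} \mathcal{O})(\vect{C})^\Lambda = \mathcal{O}(\vect{C})^{\phi(\Lambda)}$, and the hypothesis $\mathcal{F} \subseteq \phi^{\**}\bar{\mathcal{F}}$ says precisely that $\phi(\Lambda) \in \bar{\mathcal{F}}_n$ whenever $\Lambda \in \mathcal{F}_n$. One checks moreover that if $\Lambda$ stabilizes $\vect{C}$ then so does $\phi(\Lambda)$, since $g \mathfrak{c}_{\sigma(i)} = \mathfrak{c}_i$ in $\mathfrak{C}$ regarded as a $G$-set is the same equation as $\phi(g)\mathfrak{c}_{\sigma(i)} = \mathfrak{c}_i$ regarded as a $\bar{G}$-set. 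Thus $\phi(\Lambda) \in \bar{\mathcal{F}}_{\vect{C}}$ and the (trivial) fibration condition transfers as required.

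Part \ref{OPCOMBADJ_LBL} is simply the combination of the previous two arguments. The right adjoint $\mathsf{fgt}$ factors as first restricting the $\bar{G}$-action to a $G$-action on the operad (as in \ref{OPFIXSETCHGR_LBL}), and then restricting the color set along the canonical $G$-equivariant inclusion $\mathfrak{C} \hookrightarrow \bar{G} \cdot_G \mathfrak{C}$ (as in \ref{OPCOCHADJ_LBL}). Each step preserves (trivial) fibrations under the hypothesis $\mathcal{F} \subseteq \phi^{\**}\bar{\mathcal{F}}$, so their composite does as well. There is no substantive obstacle throughout: once Theorem \ref{THMIREST} is in hand, the whole corollary amounts to the elementary observation that stabilizing subgroups behave covariantly under color changes and under group homomorphisms.
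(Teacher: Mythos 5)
This corollary is imported verbatim from the prequel \cite[Cor. 2.44]{BP_FCOP} and is not reproved in the present paper, so there is no in-paper argument to compare against. Your reconstruction is correct and follows the expected route: reducing Quillen-ness to the right adjoint preserving (trivial) fibrations via the level-wise characterization of Theorem \ref{THMIREST}, and then tracking stabilizers. The individual verifications are sound, in particular the observation in (i) that $G$-equivariance of $\varphi$ forces $\F_{\vect{C}} \subseteq \F_{\varphi\vect{C}}$, the identity $(\phi^{\ast}\O)(\vect{C})^{\Lambda} = \O(\vect{C})^{\phi(\Lambda)}$ in (ii), and the factorization $\mathsf{fgt} = \iota^{\ast}\circ\phi^{\ast}$ in (iii), where $\iota\colon\mathfrak{C}\hookrightarrow\bar{G}\cdot_G\mathfrak{C}$ is the canonical $G$-map, so that (iii) indeed follows from applying (i) and (ii) in sequence.
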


The proof of Theorem \ref{THMA}
(cf. Proposition \ref{J_CELL_PROP})
will use an additional class of maps in
$\mathsf{Sym}^G_{\mathfrak C}(\V)$.

\begin{definition}[{\cite[Def. \ref{OC-GGENOTITC DEF}]{BP_FCOP}}]
	\label{GGENOTITC DEF}
	We write 
	$\mathcal{J}^{\otimes}_{\mathfrak{C}}=
	\{j \otimes X \ | \ j \in \mathcal{J}, X \in \mathsf{Sym}^G_{\mathfrak C}(\V)\}$,
	and refer to the saturation
	$\mathcal{J}^{\otimes}_{\mathfrak{C}}$-cof
	as the \emph{genuine $\otimes$-trivial cofibrations}
	in $\mathsf{Sym}^G_{\mathfrak C}(\V)$.	
\end{definition}

\begin{remark}[{\cite[Rem. \ref{OC-GOTC_REM}]{BP_FCOP}}]
	\label{GOTC_REM}
	$\F$-trivial cofibrations in $\Op^G_{\mathfrak C}(\V)$ are underlying genuine $\otimes$-trivial cofibrations
	in $\mathsf{Sym}^G_{\mathfrak C}(\V)$.
\end{remark}

\begin{proposition}\label{GOTC_PROP}
	\begin{itemize}
		\item [(i)] 
		If $\V$ satisfies the global monoid axiom
		((iv) in Theorem \ref{THMA}),
		genuine $\otimes$-trivial cofibrations
		in $\mathsf{Sym}^G_{\mathfrak C}(\V)$
		are genuine weak equivalences.
		\item[(ii)]
		The functors
		$\varphi_! \colon 
		\mathsf{Sym}^G_{\mathfrak C}(\V)
		\to 
		\mathsf{Sym}^G_{\mathfrak D}(\V)$
		and
		$\varphi^{\**} \colon 
		\mathsf{Sym}^G_{\mathfrak D}(\V)
		\to 
		\mathsf{Sym}^G_{\mathfrak C}(\V)$
		preserve genuine $\otimes$-trivial cofibrations
		for any color map
		$\varphi \colon \mathfrak{C} \to \mathfrak{D}$.
	\end{itemize}
\end{proposition}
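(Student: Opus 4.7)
My plan is to deduce (i) from the global monoid axiom applied to the generators of $\mathcal{J}^{\otimes}_{\mathfrak C}$, and to deduce (ii) from the fact that both $\varphi_!$ and $\varphi^{\**}$ commute suitably with the outer $\V$-tensoring used to define those generators.

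For part (i), I first rewrite the generators. By Proposition \ref{SYMGFV PROP} applied to $\F = \F_{all}$, the generating trivial cofibrations in $\mathsf{Sym}^G_{\mathfrak C, \F_{all}}(\V)$ all have the form $\Sigma_{\mathfrak C}[G \cdot_{\mathfrak C} \vect{C}]/\Lambda \cdot j$ for $j \in \mathcal J$, so each element of $\mathcal J^{\otimes}_{\mathfrak C}$ can be rewritten as
\[
\bigl(\Sigma_{\mathfrak C}[G \cdot_{\mathfrak C} \vect{C}]/\Lambda \cdot j\bigr) \otimes X
= \bigl(\Sigma_{\mathfrak C}[G \cdot_{\mathfrak C} \vect{C}]/\Lambda \otimes X\bigr) \cdot j,
\]
i.e. a map obtained by $\V$-tensoring a generating trivial cofibration $j \in \mathcal J$ against an arbitrary object of $\mathsf{Sym}^G_{\mathfrak C}(\V)$. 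The global monoid axiom \cite[Def. \ref{OC-GLOBMONAX_DEF}]{BP_FCOP}, applied in the genuine model structure, is exactly the statement that saturations of such maps consist of genuine weak equivalences, which yields (i).

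For part (ii), I observe that $\mathcal J^{\otimes}$-cof is closed under the small colimits---pushouts, transfinite compositions, retracts---preserved by both $\varphi_!$ (a left adjoint) and $\varphi^{\**}$ (both a left and a right adjoint, by Remark \ref{COLCHADJ REM}). So it suffices to check that each functor sends the generators of $\mathcal J^{\otimes}$ on one side into the saturation on the other. The functor $\varphi^{\**}$ commutes with the pointwise internal tensor product, giving $\varphi^{\**}\bigl((Z \otimes X) \cdot j\bigr) = \bigl(\varphi^{\**} Z \otimes \varphi^{\**} X\bigr) \cdot j$, while $\varphi_!$, being a left adjoint, commutes with the outer $\V$-tensoring, giving $\varphi_!\bigl((Z \otimes X) \cdot j\bigr) = \varphi_!(Z \otimes X) \cdot j$. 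In both cases the image is a map of the form $Y \cdot j$ for some $Y$ and $j \in \mathcal J$, which I then recognize as lying in $\mathcal J^{\otimes}$-cof by writing $Y$ as a cell complex of the Yoneda-type generators $\Sigma_{\mathfrak C}[G \cdot_{\mathfrak C} \vect{C}]/\Lambda \cdot i$ ($i \in \mathcal I$) and transporting that cell structure along $(-) \cdot j$.

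The main obstacle is this last verification: cleanly tracking the interaction between the outer $\V$-tensoring $\cdot j$ and the transfinite cell structure of $Y$, so that each cell in the resulting decomposition of $Y \cdot j$ is recognized as a generator of $\mathcal J^{\otimes}$ (with the appropriate factor of $Y$ absorbed into the ``$X$ slot''). This is the same structural input already needed for (i), and it relies on the compatibility between the pointwise tensor product and the outer $\V$-tensoring guaranteed by conditions (iii)--(iv) of Theorem \ref{THMA}.
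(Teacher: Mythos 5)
Your overall approach is correct and essentially matches the paper's: part (i) is a restatement of the global monoid axiom applied pointwise in the genuine model structure, and part (ii) follows by checking the generators and using closure under the saturation operations (the paper simply cites the corresponding result from the prequel). However, there is a persistent misreading of Definition \ref{GGENOTITC DEF} that overcomplicates your write-up.

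Per that definition, the elements of $\mathcal{J}^{\otimes}_{\mathfrak C}$ are exactly $j \otimes X$ with $j \in \mathcal{J}$ a generating trivial cofibration \emph{of $\V$} and $X$ an arbitrary object of $\mathsf{Sym}^G_{\mathfrak C}(\V)$. There is no representable $\Sigma_{\mathfrak{C}}[G \cdot_{\mathfrak{C}}\vect{C}]/\Lambda$ factor in the generator; the "rewriting" $\mathcal{J}^{\otimes}_{\mathfrak C} \ni (\Sigma_{\mathfrak C}[\cdots]/\Lambda \cdot j) \otimes X$ does not reflect the definition (and in fact such a set would not even cover $\mathcal{J}^{\otimes}_{\mathfrak C}$). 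You correctly arrive at the characterization "$\V$-tensoring a generating trivial cofibration $j$ against an arbitrary object" at the end of that sentence, but that is the definition, not a consequence.

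The same confusion is responsible for your perceived "main obstacle" in part (ii). Once you know $\varphi^{\**}(j \otimes X) = j \otimes \varphi^{\**}X$ (precomposition is pointwise) and $\varphi_!(j \otimes X) = j \otimes \varphi_! X$ ($\varphi_!$ is a pointwise colimit and $A \otimes -$ preserves colimits in $\V$), the images of generators are again \emph{generators}, not merely elements of the saturation, so no cell decomposition of the "$X$ slot" is needed. Combined with the observation (via Remark \ref{COLCHADJ REM}) that both $\varphi_!$ and $\varphi^{\**}$ preserve pushouts, transfinite compositions, and retracts, this already completes (ii). Strip the extraneous $Z$'s and the final paragraph, and the argument is clean and correct.
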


\begin{proof}
	(i) just restates the global monoid axiom
	\cite[Def. \ref{OC-GLOBMONAX_DEF}]{BP_FCOP} in light of the observation that genuine $\otimes$-trivial cofibrations/genuine weak equivalences 
	are characterized levelwise (see
	\cite[Rem. \ref{OC-SIGMACOF_REM}, Def. \ref{OC-GGENOTITC DEF}]{BP_FCOP}).
	Part (ii) is \cite[Prop. \ref{OC-REGEOTCOF PROP}]{BP_FCOP}.
\end{proof}

\begin{remark}
        \label{GOTC_REM2}
	In practice, 
	the properties of genuine $\otimes$-trivial cofibrations
	given above
	serve a similar function to the compact generation condition
	in \cite[Def. 1.2]{BM13}.

    More broadly, the global monoid axiom, together with the cofibrant symmetric pushout powers condition
    (conditions (iv),(v) in Theorem \ref{THMA}),
    play a role analogous to the hypotheses of \textit{adequacy} in \cite[Prop. 1.4]{BM13},
    or, for example,
    \textit{$h$-monoidality} in \cite{BB17}.
	However, these conditions differ in their goals:
	the conditions herein serve to build model structures 
	with weak equivalences determined by fixed point conditions (cf. Theorem \ref{THMA}),
	whereas adequacy, $h$-monoidality, and its variants serve to build projective model structures.
	As such, from a conceptual point of view one may regard the conditions herein as ``genuine analogues'' of adequacy and $h$-monoidality,
	though we caution that in practice
	our conditions exhibit different formal properties
	that often require notable modifications to proofs
	(cf. Remark \ref{GTRIV REM}, \cite[Rem. 6.18]{BP21}, \cite[Rem. 4.8]{BP_FCOP}).
\end{remark}

\section{Model structures on all equivariant colored operads}
\label{MS_SEC}

\renewcommand{\C}{\mathfrak C}

Theorem \ref{THMIREST} provides,
for each $(G,\Sigma)$-family $\F$,
a model structure on each category
$\mathsf{Op}_{\mathfrak{C}}^G(\V)$
of $G$-equivariant operads with a fixed $G$-set of colors $\mathfrak{C}$.
Adapting \cite{BM13,Cav,CM13b},
our main goal in this section is 
to prove our main result, Theorem \ref{THMA},
which uses the model structures of Theorem \ref{THMIREST}
to build,
for suitable $(G,\Sigma)$-families $\F$ 
(see Definition \ref{FAMRESUNI DEF}),
a model structure on the full category $\mathsf{Op}^G_\bullet(\mathcal{V})$
of $G$-equivariant operads with varying $G$-sets of colors.

As stated in the formulation of Theorem \ref{THMA},
the weak equivalences and trivial fibrations 
in $\mathsf{Op}^G_\bullet(\V)$
are described by combining
a ``local condition'' that involves the fixed color categories $\mathsf{Op}_{\mathfrak{C}}^G(\V)$,
as in \eqref{THMIII1ST EQ},
with a form of ``surjectivity on objects'',
as in \eqref{THMIII2ND EQ}.
However, the essential surjectivity condition for 
weak equivalences in \eqref{THMIII2ND EQ}
has a key technical drawback: 
when using this condition 
it is unclear how to select a generating set of trivial cofibrations 
for $\mathsf{Op}^G_\bullet(\V)$.

For this reason, 
throughout the bulk of this section we will actually work with 
an alternate (and a priori distinct)
notion of weak equivalence,
defined using a more abstract notion of essential surjectivity,
which will also allow us to characterize the fibrations (Definition \ref{MODEL_DEFN}).

This model structure will be built in Sections \ref{MAPSOPG_SEC} through \ref{HMTYEQ SEC}.
\S \ref{MAPSOPG_SEC} introduces the relevant classes of maps of operads,
\S \ref{GENCOF SEC} produces a generating set of (trivial) cofibrations in Definition \ref{OPGENCOF DEF},
and \S \ref{TRIVCOF_SEC} proves in Proposition \ref{J_CELL_PROP} that trivial cofibrations are in fact weak equivalences.
Sections \ref{EQUIVOBJ_SEC} and \ref{HMTYEQ SEC} explore several notions of essential surjectivity in order to prove 2-out-of-3
(Proposition \ref{2OUTOF3 PROP})
and show that the weak equivalences in Definition \ref{MODEL_DEFN}
and Theorem \ref{THMA} indeed match
(Corollary \ref{WEDKEQ COR}).

Lastly, \S \ref{ISOFIB_SEC}
gives (under mild conditions) a more familiar description of fibrations in $\mathsf{Op}^G_\bullet(\V)$.

\subsection{Classes of maps in $\mathsf{Op}^G_\bullet(\mathcal V)$}
\label{MAPSOPG_SEC}

We now discuss the several types of maps in 
$\mathsf{Op}^G_\bullet(\V)$ we will be interested in,
starting with the ``local'' notions, 
i.e. those notions determined by the fixed color categories $\mathsf{Op}_{\mathfrak{C}}^G(\V)$.

\begin{definition}
Let $\F$ be a $(G, \Sigma)$-family.
We say a map $F: \O \to \P$ in $\mathsf{Op}^G_\bullet(\V)$
is a \emph{local $\F$-weak equivalence (resp. local $\F$-fibration, local $\F$-trivial fibration)}
if the induced fixed color map
($F^{\**}$ is as in \eqref{GC_CHANGE_EQ})
\[\O \to F^{\**} \P\]
is a $\F_{\mathfrak{C}_{\O}}$-weak equivalence (resp. $\F_{\mathfrak{C}_{\O}}$-fibration, $\F_{\mathfrak{C}_{\O}}$-trivial fibration) in the fiber $\mathsf{Op}^G_{\mathfrak{C}_{\O}}(\V)$.
\end{definition}

Local (trivial) $\F$-fibrations admit the following alternative characterization.

\begin{proposition}\label{LOCALTCHAR PROP}
Suppose $\V$ is as in Theorem \ref{THMIREST}.
The local $\F$-fibrations
and local trivial $\F$-fibrations
in $\mathsf{Op}^G_\bullet(\V)$
are characterized as the maps with the right lifting property against the generating sets of maps 
$\mathbb{F}^G_{\mathfrak{C}}\mathcal{J}_{\mathfrak{C},\mathcal{F}}$ and  $\mathbb{F}^G_{\mathfrak{C}}\mathcal{I}_{\mathfrak{C},\mathcal{F}}$
(cf. \eqref{FVGSIGF EQ})
of the fibers 
$\mathsf{Op}^G_{\mathfrak{C}}(\V) \hookrightarrow \mathsf{Op}^G_\bullet(\V)$
for all $\mathfrak{C} \in \mathsf{Set}^G$.
\end{proposition}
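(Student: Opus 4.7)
The plan is to use the fixed-color adjunctions $\check{\varphi}_! \dashv \varphi^{\**}$ from Remark \ref{OP_MAP REM} to convert lifting problems in $\mathsf{Op}^G_\bullet(\V)$ into lifting problems in the fixed-color fibers $\mathsf{Op}^G_{\mathfrak{C}}(\V)$, where Theorem \ref{THMIREST} characterizes the $\F_{\mathfrak{C}}$-fibrations and $\F_{\mathfrak{C}}$-trivial fibrations via the RLP against the sets $\mathbb{F}^G_{\mathfrak{C}} \mathcal{J}_{\mathfrak{C},\F}$ and $\mathbb{F}^G_{\mathfrak{C}} \mathcal{I}_{\mathfrak{C},\F}$ respectively. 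I will write out the fibration case; the trivial fibration case is formally identical after exchanging $\mathcal{J}_{\mathfrak{C},\F}$ with $\mathcal{I}_{\mathfrak{C},\F}$. The essential observation is that every generator $i \colon A \to B$ in $\mathbb{F}^G_{\mathfrak{C}}\mathcal{J}_{\mathfrak{C},\F}$ has both source and target in the same fiber $\mathsf{Op}^G_{\mathfrak{C}}(\V)$, and is thus an identity on color sets.

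For the ``if'' direction, assume $F \colon \O \to \P$ has the RLP against $\mathbb{F}^G_{\mathfrak{C}}\mathcal{J}_{\mathfrak{C},\F}$ for every $\mathfrak{C}$. One must show the canonical map $\O \to F^{\**}\P$ is an $\F_{\mathfrak{C}_\O}$-fibration in $\mathsf{Op}^G_{\mathfrak{C}_\O}(\V)$, i.e.\ has the RLP against $\mathbb{F}^G_{\mathfrak{C}_\O}\mathcal{J}_{\mathfrak{C}_\O,\F}$. Any fiber lifting square against $\O \to F^{\**}\P$ embeds into $\mathsf{Op}^G_\bullet(\V)$ by postcomposing its bottom edge with the natural map $F^{\**}\P \to \P$, producing a lifting square against $F$. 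The resulting lift $B \to \O$ automatically has color component $\mathrm{id}_{\mathfrak{C}_\O}$ (since $i$ is identity on colors and $A \to \O$ already is), so it lies in the fiber; and its postcomposition with $\O \to F^{\**}\P$ equals the original $B \to F^{\**}\P$ because both correspond, under the adjunction $\check{(F_{\mathrm{col}})}_! \dashv F^{\**}$, to the same composite $B \to \O \xrightarrow{F} \P$ in $\mathsf{Op}^G_\bullet(\V)$.

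For the ``only if'' direction, fix any $\mathfrak{C} \in \mathsf{Set}^G$ and any lifting square
\begin{equation*}
\begin{tikzcd}
A \ar[r, "\alpha"] \ar[d, "i"'] & \O \ar[d, "F"] \\
B \ar[r, "\beta"'] & \P
\end{tikzcd}
\end{equation*}
in $\mathsf{Op}^G_\bullet(\V)$ with $i \in \mathbb{F}^G_{\mathfrak{C}}\mathcal{J}_{\mathfrak{C},\F}$, and let $\varphi \colon \mathfrak{C} \to \mathfrak{C}_\O$ be the color component of $\alpha$. Since $i$ is identity on colors, the color component of $\beta$ factors as $F_{\mathrm{col}} \circ \varphi$, so the adjunction $\check{\varphi}_! \dashv \varphi^{\**}$ translates the square into a lifting problem in $\mathsf{Op}^G_{\mathfrak{C}}(\V)$ against $\varphi^{\**}(\O \to F^{\**}\P)$. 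Since $\O \to F^{\**}\P$ is an $\F_{\mathfrak{C}_\O}$-fibration by assumption, and $\varphi^{\**}$ is right Quillen by Corollary \ref{OPADJ_COR}\ref{OPCOCHADJ_LBL}, the pulled-back map is an $\F_{\mathfrak{C}}$-fibration, whence the required lift exists and transports back to $\mathsf{Op}^G_\bullet(\V)$. The main technical point, and essentially the only non-formal step, is the careful bookkeeping of color components under the adjunction translations; this should amount to a direct unpacking of Remark \ref{OP_MAP REM}.
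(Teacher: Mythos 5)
Your proof is correct and uses essentially the same approach as the paper: translating lifting squares between $\mathsf{Op}^G_\bullet(\V)$ and the fixed-color fibers via the Grothendieck fibration structure (the cartesian pullbacks $\varphi^{\**}$), and then invoking the fact that $\varphi^{\**}$ preserves (trivial) fibrations. The paper's proof is slightly more compact, packaging both directions at once as an equivalence of three lifting problems for any square whose left edge is color-fixed, but the substance is the same.
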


\begin{proof}
Note first that,
for a square in $\mathsf{Op}^G_{\bullet}(\V)$
as on the left below and where $A_1 \to A_2$ is a color fixed map, 
the lifting problems for all three squares given below are equivalent.
\begin{equation}
\begin{tikzcd}
A_1 \arrow[d] \arrow[r, "a"]
&
\mathcal{O} \arrow[d, "F"]
&&
A_1 \arrow[d] \arrow[r, "a"]
&
\mathcal{O} \arrow[d]
&&
A_1 \arrow[d] \arrow[r]
&
a^{\**} \mathcal{O} \arrow[d]
\\
A_2 \arrow[r]
&
\P
&&
A_2 \arrow[r]
&
F^{\**} \P
&&
A_2 \arrow[r]
&
a^{\**} F^{\**} \P
\end{tikzcd}
\end{equation}
Writing $\mathfrak{C}$ for the colors of the $A_i$ and
$\mathfrak{D}$ for the colors of $\O$,
the result follows since the pullback functors
$a^{\**} \colon \mathsf{Op}^G_{\mathfrak{D},\F}(\V)
\to \mathsf{Op}^G_{\mathfrak{C},\F}(\V)$
preserve (trivial) fibrations.
\end{proof}

We next turn to the homotopical notions of essential surjectivity and isofibration, 
which concern equivalences between objects within some 
$\O \in \mathsf{Op}^G_\bullet(\V)$.
We first recall some notions from \cite{BM13}.
As usual, we let $1_\V$ and $\emptyset$ denote, respectively, the unit object and initial object of $\V$.

\begin{notation}\label{1_NOT}
      We write $\mathbbm{1}$ (resp. $\widetilde{\mathbbm{1}}$)
      for the $\V$-category that represents arrows (resp. isomorphisms):
      it has two objects $0,1$
      and mapping objects
      $\mathbbm{1}(i,j)= 1_{\V}$
      if $i \leq j$
      and 
      $\mathbbm{1}(1,0)= \emptyset$
      (resp. $\widetilde{\mathbbm{1}}(i,j)= 1_{\V}$ for all $i,j$)
      and composition defined by the unit isomorphisms of $\otimes$.

      Further, we write $\eta$ for the $\V$-category that represents objects,
      with a single object $\**$ and $\eta(\**,\**) = 1_\V$.
\end{notation}

In the following, and throughout, 
we give $\Cat_{\set{0,1}}(\V)$
its projective model structure.

\begin{definition}\label{VINTER DEF}
      A {\em $\V$-interval} is a cofibrant object $\mathbb{J}$ in $\Cat_{\set{0,1}}(\V)$
      that is equivalent to $\widetilde{\mathbbm{1}}$.
\end{definition}

\begin{example}
	The prototypical example of a $\V$-interval is the simplicial category $W_!J \in \Cat_{\set{0,1}}(\sSet)$,
	where
	$J = N \widetilde{[1]} = N(0 \rightleftarrows 1) $ is the nerve of the walking isomorphism category 
	$\widetilde{[1]} =  (0 \rightleftarrows 1)$ and
	$W_! \colon \sSet \to \Cat_{\bullet}(\sSet)$ is the left adjoint to the homotopy coherent nerve
	of \cite{Cor82} (see e.g. \cite[\S 1]{Joy02}).
\end{example}

\begin{remark}
	We note that, since $\widetilde{\mathbbm{1}}$ is typically not fibrant,
	an arbitrary interval $\mathbb{J}$
	needs not admit a map to $\widetilde{\mathbbm{1}}$,
	but only a map $\mathbb{J} \to \widetilde{\mathbbm{1}}_f$,
	where $\widetilde{\mathbbm{1}}_f$ denotes some fixed chosen fibrant replacement.
\end{remark}

Informally, $\V$-intervals detect ``homotopical isomorphisms'' in a $\V$-category $\mathcal{C}$ 
(this idea is formalized in Definition \ref{EQUIV_DEF} below).
Mimicking the definitions of isofibration and essentially surjective functor of (unenriched) categories, we have the following.

\begin{definition}\label{PL_ES_DEFN}
We say a functor $F: \mathcal C \to \mathcal D$ in $\Cat(\V)$ is
\begin{itemize}
\item \textit{path-lifting}
	if it has the right lifting property against all maps of the form
	$\eta \xrightarrow{0} \mathbb{J}$, $\eta \xrightarrow{1} \mathbb{J}$
	where $\mathbb{J}$ is a $\V$-interval;
\item \textit{essentially surjective} 
	if, for any object $d \in \mathcal{D}$,
	there is an object $c \in \mathcal{C}$,
	$\V$-interval $\mathbb{J}$,
	and map $i \colon \mathbb{J} \to \mathcal D$
	such that $i(0) = F(c)$ and $i(1)=d$.
      \end{itemize}
\end{definition}

We now adapt the previous definition for $G$-operads.
Recall that $j^{\**} \colon \mathsf{Op}^G_\bullet(\V) \to \mathsf{Cat}^G_{\bullet}(\V)$
denotes the functor that forgets all non-unary operations and that, moreover, 
$j^{\**}$ commutes with all fixed points $(-)^H$.

\begin{definition}\label{FESSENSURJ DEF}
Let $\F$ be a $(G, \Sigma)$-family that has enough units
(Definition \ref{FAMRESUNI DEF}).

We say a map $F: \O \to \P$ in $\mathsf{Op}^G_\bullet(\V)$
is $\F$-essentially surjective (resp. $\F$-path-lifting)
if the maps
$j^{\**}\O^H \to j^{\**} \P^H$
in $\mathsf{Cat}(\V)$ are essentially surjective (path-lifting) for all $H \in \F_1$.
\end{definition}

We can finally define the classes of maps in the desired model structures on $\mathsf{Op}^G_\bullet(\V)$.

\begin{definition}\label{MODEL_DEFN}
Let $\F$ be a $(G, \Sigma)$-family that has enough units
(Definition \ref{FAMRESUNI DEF}).

We say a map $F: \O \to \P$ in $\mathsf{Op}^G_\bullet(\V)$ is:
\begin{itemize}
	\item a {\em $\F$-fibration} if it is both a local $\F$-fibration and $\F$-path lifting;
	\item a {\em $\F$-weak equivalence} if it is both a local $\F$-weak equivalence and $\F$-essentially surjective;
	\item a \textit{$\F$-cofibration} if it has the left lifting property against all trivial $\F$-fibrations (i.e. $\F$-fibrations that are also $\F$-weak equivalences).
\end{itemize}
\end{definition}

Throughout the remainder of \S \ref{MS_SEC}
we will prove that 
Definition \ref{MODEL_DEFN} describes the model structure on 
$\mathsf{Op}^G_\bullet(\V)$
in Theorem \ref{THMA},
which we denote by
$\mathsf{Op}^G_{\bullet, \F}(\V)$.

We first show that $\F$-trivial fibrations
indeed satisfy the characterization given in Theorem \ref{THMA},
adapting \cite[4.8]{Cav}, \cite[2.3]{BM13}, \cite[1.18]{CM13b}.

\begin{proposition}\label{FTRIVCHAR PROP}
A map in $F: \O \to \P$ in $\mathsf{Op}^G_\bullet(\V)$ is 
a $\F$-trivial fibration (i.e. both a $\F$-fibration and a $\F$-weak equivalence) 
iff it is a local $\F$-trivial fibration
such that the induced map (of sets) $\mathfrak C_\O^H \to \mathfrak C_\P^H$ on $H$-fixed colors is surjective
for all $H \in \F_1$.
\end{proposition}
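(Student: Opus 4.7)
The plan is to exploit the decomposition $\F$-trivial fibration $=$ local $\F$-trivial fibration $+$ $\F$-path lifting $+$ $\F$-essentially surjective, so that the proposition reduces to showing that, assuming local $\F$-triviality throughout, the conjunction of $\F$-path lifting and $\F$-essential surjectivity is equivalent to surjectivity of the induced maps $\mathfrak{C}_\O^H \to \mathfrak{C}_\P^H$ for all $H \in \F_1$. This mirrors the template of the non-equivariant arguments in \cite{Cav,BM13,CM13b}.

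For the forward implication, given $H \in \F_1$ and $d \in \mathfrak{C}_\P^H$, $\F$-essential surjectivity supplies $c \in \mathfrak{C}_\O^H$ together with a $\V$-interval $\mathbb{J}$ and a map $i \colon \mathbb{J} \to j^{\**} \P^H$ with $i(0) = F(c)$ and $i(1) = d$. Applying $\F$-path lifting to the lifting square formed by $\eta \xrightarrow{0} \mathbb{J}$, with the upper arrow $\eta \to j^{\**} \O^H$ picking out $c$, yields a lift $\tilde\imath \colon \mathbb{J} \to j^{\**} \O^H$ with $\tilde\imath(0) = c$, and hence $F(\tilde\imath(1)) = d$, proving the required surjectivity on $H$-fixed colors.

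For the reverse direction, $\F$-essential surjectivity is immediate from surjectivity: given $d \in \mathfrak{C}_\P^H$ and a preimage $c$ under $F$, compose any $\V$-interval $\mathbb{J} \to \widetilde{\mathbbm 1}_f$ with the constant map $\widetilde{\mathbbm 1}_f \to j^{\**} \P^H$ at $d$. The substantive task is $\F$-path lifting, for which I plan to upgrade $j^{\**} \O^H \to j^{\**} \P^H$ to a trivial fibration in the canonical model structure on $\Cat_\bullet(\V)$: it is surjective on objects by hypothesis, and its hom-object maps $\O(c;c')^H \to \P(F(c);F(c'))^H$ are trivial fibrations in $\V$ by applying the local $\F$-triviality condition to the subgroup $H \leq G \times \Sigma_1^{op} \cong G$, which stabilizes the unary signature $(c;c')$ precisely when both colors are $H$-fixed.

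The main obstacle will then be verifying that $\eta \xrightarrow{0} \mathbb{J}$ is a cofibration in the canonical model structure on $\Cat_\bullet(\V)$, so that $\F$-path lifting follows by right lifting against the trivial fibration just described. My approach will be, given a trivial fibration $p \colon X \to Y$ in $\Cat_\bullet(\V)$ and a lifting square with $\eta \to X$ picking $x_0$ and $\mathbb{J} \to Y$, to use surjectivity of $p$ on objects to choose $x_1 \in X$ mapping to the image of $1 \in \mathbb{J}$, and then restrict $p$ to the full subcategories of $X$ and $Y$ on $\{x_0, x_1\}$ and $\{p(x_0), p(x_1)\}$. The restricted map lives in $\Cat_{\{0,1\}}(\V)$ and remains a trivial fibration in its projective model structure (its hom-object maps are trivial fibrations in $\V$ by assumption), so cofibrancy of $\mathbb{J}$ there supplies the required lift, which by the label-preserving nature of $\Cat_{\{0,1\}}$-morphisms automatically sends $0 \mapsto x_0$ as needed.
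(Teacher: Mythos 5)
Your proof is correct and follows essentially the same route as the paper's: both reduce to showing that, for a local $\F$-trivial fibration, the conjunction of $\F$-path lifting and $\F$-essential surjectivity is equivalent to surjectivity on $H$-fixed colors; the forward direction uses path lifting to produce a preimage of a given $H$-fixed color, and the reverse direction for path lifting proceeds by first choosing a preimage $x_1$ via surjectivity and then lifting the remaining problem in $\Cat_{\{0,1\}}(\V)$, where $\eta \amalg \eta \to \mathbb{J}$ is a cofibration since $\eta \amalg \eta$ is initial and $\mathbb{J}$ is cofibrant by definition. The paper phrases this by explicitly factoring $\eta \to \mathbb{J}$ as $\eta \to \eta \amalg \eta \to \mathbb{J}$ and then invoking Proposition \ref{LOCALTCHAR PROP}, while you restrict $j^{\**}\O^H \to j^{\**}\P^H$ to the full subcategories on the two relevant objects and verify directly that the restriction is a projective trivial fibration; the two presentations encode the same content.
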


\begin{proof}
It is enough to show that,
if $F \colon \O \to \P$ is a local $\F$-trivial fibration,
then $F$ is both $\F$-path lifting and $\F$-essentially surjective
iff  the induced map on $H$-fixed colors is surjective for all $H \in \F_1$.

For the ``if'' direction, it is immediate that $F$ is
$\F$-essentially surjective, so it remains to show that
the maps $\O^H\to \P^H$ for $H \in \F_1$
have the right lifting property against the maps
$\eta \to \mathbb{J}$.
But this follows by factoring the latter maps as
$\eta \to \eta \amalg \eta \to \mathbb{J}$, 
since the lifting property against 
$\eta \to \eta \amalg \eta$ follows from surjectivity on $H$-fixed objects while the lifting property against
$\eta \amalg \eta \to \mathbb{J}$
follows from Proposition \ref{LOCALTCHAR PROP}
and the fact that 
$\eta \amalg \eta \to \mathbb{J}$
is a cofibration in $\mathsf{Cat}_{\{0,1\}}(\V)$
(given that $\eta \amalg \eta$ is the initial object of $\mathsf{Cat}_{\{0,1\}}(\V)$
while $\mathbb{J}$ is cofibrant by definition of $\V$-interval).

For the ``only if'' direction, let $y \in \P^H$ be an $H$-fixed object with $H \in \F_1$.
$\F$-essential surjectivity yields an $x \in \O^H$ and map 
$i \colon \mathbb{J} \to \P^H$
with $i(0)=F(x)$, $i(1)=y$.
The $\F$-path lifting property then gives a lift $\tilde{i}$ as below, 
so that $\tilde{i}(1)$ gives the desired lift of $y$.
\[
\begin{tikzcd}
	\eta \ar{d}[swap]{0} \ar{r}{x}  
&
	\O^H \arrow{d}{F}
\\
	\mathbb{J} \ar{r}[swap]{i} \ar[dashed]{ru}[swap]{\tilde{i}}
&
	\P^H
\end{tikzcd}
\]   
\end{proof}

\begin{proposition}\label{FIBERGLMOD PROP}
A fixed color map
$\O \to \P$ in
$\mathsf{Op}^G_{\mathfrak{C}}(\V) \subseteq \Op^G_\bullet(\V)$ is:
\begin{enumerate}[label=(\roman*)]
\item a $\F$-weak equivalence in the fiber
$\mathsf{Op}_{\mathfrak{C}}^G(\V)$
iff it is a $\F$-weak equivalence in
$\mathsf{Op}_{\bullet}^G(\V)$;
\item a $\F$-cofibration in the fiber 
$\mathsf{Op}_{\mathfrak{C}}^G(\V)$
iff it is a $\F$-cofibration in
$\mathsf{Op}_{\bullet}^G(\V)$;
\item a $\F$-fibration in the fiber 
$\mathsf{Op}_{\mathfrak{C}}^G(\V)$
whenever it is a $\F$-fibration in
$\mathsf{Op}_{\bullet}^G(\V)$.
\end{enumerate}
\end{proposition}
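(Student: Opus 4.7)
The plan is to handle each part separately, exploiting the key simplification that for a fixed-color map $F \colon \O \to \P$ with $\mathfrak{C}_\O = \mathfrak{C}_\P = \mathfrak{C}$, the pullback $F^{\**}\P$ equals $\P$ itself, so that the ``local $\F$-condition'' in $\mathsf{Op}^G_\bullet(\V)$ reduces verbatim to the corresponding fiber condition of Theorem~\ref{THMIREST}.

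For parts (i) and (iii), once this reduction is in hand it remains only to check the additional ``essential surjectivity'' and ``path lifting'' conditions. For (i), any $d \in j^{\**} \P^H$ with $H \in \F_1$ lies in $j^{\**} \O^H$ as well by equality of color sets, and choosing $c = d$ together with the constant composite $\mathbb{J} \to \eta \to j^{\**} \P^H$ at $d$ (which is a valid map for any $\V$-interval $\mathbb{J}$) witnesses $\F$-essential surjectivity of $F$; combined with the reduction above, this gives the stated equivalence. Part (iii) is even more immediate: being a $\F$-fibration in $\mathsf{Op}^G_\bullet(\V)$ requires in particular being a local $\F$-fibration, which for a fixed-color map is nothing but a fibration in the fiber.

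Part (ii) is the main content. The ``if'' direction reduces, via Proposition~\ref{FTRIVCHAR PROP}, to the observation that any trivial fibration in the fiber is, when viewed in $\mathsf{Op}^G_\bullet(\V)$, both a local $\F$-trivial fibration and (being the identity on colors) surjective on $H$-fixed colors, hence itself a $\F$-trivial fibration in $\mathsf{Op}^G_\bullet(\V)$; the LLP assumption then immediately yields the fiber lifting property. For the ``only if'' direction, I will convert a lifting problem of a fiber cofibration $F \colon A \to B$ against a $\F$-trivial fibration $G \colon X \to Y$ in $\mathsf{Op}^G_\bullet(\V)$ into one internal to the fiber $\mathsf{Op}^G_{\mathfrak{C},\F}(\V)$. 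Writing $\alpha \colon \mathfrak{C} \to \mathfrak{C}_X$ and $\beta = G_{\mathrm{col}} \circ \alpha \colon \mathfrak{C} \to \mathfrak{C}_Y$ for the color maps induced by the horizontal edges of the square, the Cartesian lifts $\alpha^{\**} X \to X$ and $\beta^{\**} Y \to Y$ transform the square into a lifting problem of $F$ against $\alpha^{\**} \tilde G$ in $\mathsf{Op}^G_{\mathfrak{C},\F}(\V)$, where $\tilde G \colon X \to G_{\mathrm{col}}^{\**} Y$ is the fiber component of $G$. Since $\tilde G$ is a trivial fibration in the fiber over $\mathfrak{C}_X$ (by the local $\F$-condition) and $\alpha^{\**}$ is right Quillen by Corollary~\ref{OPADJ_COR}(i), the map $\alpha^{\**} \tilde G$ is a trivial fibration in the fiber over $\mathfrak{C}$, so a fiber-level lift exists and transports back along $\alpha^{\**} X \to X$ to a lift of the original square. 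The main (though modest) obstacle will be carefully invoking the universal property of these Cartesian lifts to confirm that this transport recovers the two commutativity conditions of the original square; once this bookkeeping is done, (ii) and therefore the proposition are complete.
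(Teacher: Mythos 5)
Your proof is correct and follows the paper's own approach in each part, spelling out in more detail the lifting-problem conversion via cartesian lifts that the paper offloads to Proposition~\ref{LOCALTCHAR PROP} for part (ii). One small caveat in (i): the constant composite $\mathbb{J} \to \eta$ need not exist for an \emph{arbitrary} $\V$-interval $\mathbb{J}$ when $1_\V$ is not terminal in $\V$, but since essential surjectivity only requires \emph{some} interval, choosing $\mathbb{J}$ to admit a trivial fibration onto $\widetilde{\mathbbm{1}}$ (and hence a map to $\eta$) fixes this.
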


\begin{proof}
	(i) follows since fixed color maps are certainly essentially surjective while (iii) is tautological since 
	$\F$-fibrations in $\mathsf{Op}_{\bullet}^G(\V)$ must be local $\F$-fibrations.
	As for (ii), Proposition \ref{LOCALTCHAR PROP} yields
	the ``if'' direction,
	while the ``only if'' direction of (ii) follows from 
	Proposition \ref{FTRIVCHAR PROP},
	which implies that all $\F$-trivial fibrations in $\mathsf{Op}_{\mathfrak{C}}^G(\V)$
	are $\F$-trivial fibrations in 
	$\mathsf{Op}_{\bullet}^G(\V)$,
	together with the usual lifting 
	property characterization of cofibrations.
\end{proof}

\begin{remark}
In contrast to the other parts of 
Proposition \ref{FIBERGLMOD PROP},
the implication in part (iii) 
only holds in one direction. 
As a counterexample to its converse, 
consider the map $\eta \amalg \eta \to \widetilde{\mathbbm{1}}$
in $\mathsf{Cat}_{\bullet}(\mathsf{sSet})$. This is a local fibration, 
and thus a fibration in $\mathsf{Cat}_{\{0,1\}}(\mathsf{sSet})$,
but not path-lifting, and thus not a fibration in $\mathsf{Cat}_{\bullet}(\mathsf{sSet})$.

Nonetheless, Proposition \ref{FTRIVCHAR PROP}
guarantees that the analogue of Proposition \ref{FIBERGLMOD PROP}
for $\F$-trivial fibrations is indeed an iff.
\end{remark}

\begin{corollary}
      $\O \in \Op^G_{\mathfrak C}(\V)$ is cofibrant in
      $\O \in \Op^G_{\mathfrak C,\mathcal{F}}(\V)$
      iff $\O$ is cofibrant in $\Op^G_{\bullet,\F}(\V)$.
\end{corollary}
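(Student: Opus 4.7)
The plan is to deduce the statement as an essentially direct consequence of Proposition \ref{FIBERGLMOD PROP}(ii), applied to the canonical map $I_\mathfrak{C} \to \O$ from the initial object $I_\mathfrak{C}$ of the fiber $\Op^G_{\mathfrak{C},\F}(\V)$. By definition, $\O$ is cofibrant in $\Op^G_{\mathfrak{C},\F}(\V)$ iff this map is a $\F$-cofibration in the fiber; and since $I_\mathfrak{C} \to \O$ is a fixed color map, Proposition \ref{FIBERGLMOD PROP}(ii) translates this into the statement that $I_\mathfrak{C} \to \O$ is a $\F$-cofibration in $\Op^G_{\bullet,\F}(\V)$.

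It then remains to match this condition on $I_\mathfrak{C} \to \O$ with the cofibrancy of $\O$ in $\Op^G_{\bullet,\F}(\V)$, i.e. with $\emptyset \to \O$ being a global $\F$-cofibration. The plan is to handle the two directions separately. For ``$\O$ cofibrant in $\Op^G_{\bullet,\F}(\V)$ implies $I_\mathfrak{C} \to \O$ is a global $\F$-cofibration'', I would test the lifting property directly: given any global $\F$-trivial fibration $\P \to \Q$ and a diagram involving $I_\mathfrak{C} \to \P$ and $\O \to \Q$, apply the global LLP of $\emptyset \to \O$ to produce a lift $\O \to \P$, then argue compatibility with the given $I_\mathfrak{C} \to \P$ by invoking the universal property of $I_\mathfrak{C}$ (maps out of $I_\mathfrak{C}$ are determined entirely by their color component, and the color component of the produced lift is forced by commutativity). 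For the converse, it suffices to establish that $\emptyset \to I_\mathfrak{C}$ is itself a global $\F$-cofibration, since then composition with $I_\mathfrak{C} \to \O$ yields $\emptyset \to \O$ as a global cofibration.

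The main obstacle is the last step: proving that $\emptyset \to I_\mathfrak{C}$ is a global $\F$-cofibration. Via the characterization of global $\F$-trivial fibrations in Proposition \ref{FTRIVCHAR PROP}, this reduces to a color-lifting statement: for any global $\F$-trivial fibration $\P \to \Q$, every $G$-equivariant color map $\mathfrak{C} \to \mathfrak{C}_\Q$ should lift to a $G$-equivariant color map $\mathfrak{C} \to \mathfrak{C}_\P$. The orbit-by-orbit lifting of such a $G$-map into the color component of $\P \to \Q$ is precisely what the surjectivity on $H$-fixed colors for $H \in \F_1$ controls, and the enough units hypothesis (Definition \ref{FAMRESUNI DEF}) is the ingredient ensuring that the stabilizers relevant to $\mathfrak{C}$ are seen by $\F_1$ to the extent required.
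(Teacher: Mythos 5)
Your proof takes a different route from the paper's. The paper argues each direction from an inclusion of trivial-fibration classes: ``if'' because fiber $\F$-trivial fibrations are global $\F$-trivial fibrations (so a color-fixed lifting problem for $\O$ against a fiber trivial fibration has its global solution automatically color-fixed), and ``only if'' because global $\F$-trivial fibrations are local $\F$-trivial fibrations. You instead factor $\emptyset \to \O$ through the initial object $I_{\mathfrak C}$ of the fiber $\Op^G_{\mathfrak C}(\V)$ and appeal to Proposition \ref{FIBERGLMOD PROP}(ii), reducing the ``only if'' direction to the claim that $\emptyset \to I_{\mathfrak C}$ is a global $\F$-cofibration.

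That reduction is where the genuine gap lies. You attribute $\emptyset \to I_{\mathfrak C}$ being a cofibration to the ``enough units'' hypothesis, but Definition \ref{FAMRESUNI DEF} constrains only the family $\F$ (requiring $\pi_n(H)\in\F_1$ for $H\in\F_n$) and carries no information whatsoever about the $G$-set $\mathfrak C$, which is an independent input. In fact $\emptyset \to I_{\mathfrak C}$ is a global $\F$-cofibration precisely when every orbit stabilizer of $\mathfrak C$ lies in $\F_1$: a (C1),(C2)-cell complex built from $\emptyset$ has color $G$-set a coproduct of orbits $G/H$ with $H\in\F_1$ (the (C2) cells are color-fixed, and (C1) only adjoins such orbits), and this property is preserved by retracts. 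For a concrete failure, take $\V=\mathsf{Set}$, $G=\mathbb{Z}_{/2}$, a $(G,\Sigma)$-family with $\F_1=\{e\}$ (which can certainly have enough units), and $\mathfrak C=G/G$ a single fixed point, so $I_{\mathfrak C}=\eta$. The projection from the terminal $G/e$-colored operad (colors $\{a,b\}$ with $G$ swapping them) to the terminal one-colored operad is a local isomorphism and surjective on $\{e\}$-fixed colors, hence a global $\F$-trivial fibration by Proposition \ref{FTRIVCHAR PROP}, yet $\eta$ admits no map to it since the source has no $G$-fixed color. Separately, there is a smaller fixable issue in your ``if'' step: the global lift $\O\to\P$ supplied by cofibrancy of $\O$ has its color component constrained only up to lifting through $\mathfrak C_{\P}\to\mathfrak C_{\Q}$, so its compatibility with the chosen $I_{\mathfrak C}\to\P$ is not ``forced by commutativity''; the cleaner route, as in the paper, is to observe directly that a color-fixed lifting problem against a fiber trivial fibration (which is a global trivial fibration) admits a color-fixed solution.
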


\begin{proof}
	The ``if'' direction follows since
	$\F$-trivial fibrations in $ \Op^G_{\mathfrak C}(\V)$
	are
	$\F$-trivial fibrations in $ \Op^G_{\bullet}(\V)$.
	The ``only if'' direction follows since
	$\F$-trivial fibrations in $ \Op^G_{\bullet}(\V)$
	are local $\F$-trivial fibrations.
\end{proof}

The proof of Theorem \ref{THMA} will occupy most of 
the remainder of \S \ref{MS_SEC},
where we will show that the maps in Definition \ref{MODEL_DEFN}
do indeed define a model structure on $\mathsf{Op}^G{\V}$.
The following is the outline of the proof.

\begin{proof}[Proof of Theorem \ref{THMA}]
      As $\sSet$ satisfies all 
      hypotheses in Theorem \ref{THMA}, we prove the general case.

      We will verify the conditions in 
\cite[Theorem 2.1.19]{Hov99}, and we write (1),(2),(3),etc for the conditions therein.

Firstly, in \S \ref{GENCOF SEC} we identify
the generating (resp. trivial) cofibrations of $\mathsf{Op}^G_{\F}(\V)$,
which are given by the sets 
(C1) and (C2) (resp. (TC1) and (TC2))
found in Definition \ref{OPGENCOF DEF}.

The implicit claim that the maps
with the right lifting property against 
(TC1) and (TC2)
are the $\F$-fibrations as given by 
Definition \ref{MODEL_DEFN}
follows from Propositions \ref{LOCALTCHAR PROP} and \ref{GENIN PROP}.

Likewise, the fact that the maps
with the right lifting property against 
(C1) and (C2) are the 
$\F$-trivial fibrations as given by 
Definition \ref{MODEL_DEFN}
is Proposition \ref{FTRIVCHAR PROP}, 
establishing conditions (5),(6).

Lemma \ref{POINT_4_LEMMA} and Proposition \ref{J_CELL_PROP} establishes (4).

(2),(3) follow since colimits in $\mathsf{Op}^G_\bullet(\V)$ are created in $\Op_{\bullet}(\V)$, and it holds non-equivariantly.

Condition (1), i.e. the $2$-out-of-$3$ condition for $\F$-weak equivalences, 
is Proposition \ref{2OUTOF3 PROP}.

Lastly, the fact that the weak equivalences in 
Theorem \ref{THMA}
match the weak equivalences in
Definition \ref{MODEL_DEFN} is given by 
Corollary \ref{WEDKEQ COR}.
\end{proof}

\subsection{Generating cofibrations and trivial cofibrations}
\label{GENCOF SEC}

We next turn to the task of identifying sets of generating cofibrations and generating trivial cofibrations
for the desired model structures
on $\mathsf{Op}^G_{\bullet}(\V)$ determined by Definition \ref{MODEL_DEFN}.

Proposition \ref{LOCALTCHAR PROP} suggests that
the generating sets of maps
$\mathbb{F}^G_{\mathfrak{C}} \mathcal{I}_{\mathfrak{C},\mathcal{F}},
\mathbb{F}^G_{\mathfrak{C}} \mathcal{J}_{\mathfrak{C},\mathcal{F}}$
(cf. \eqref{FVGSIGF EQ}) of the fibers 
$\mathsf{Op}^G_{\mathfrak{C},\F}(\V)$
should be included
in the generating sets of maps for 
$\mathsf{Op}^G_{\bullet,\F}(\V)$.
However, it is inefficient to include all such maps, 
as there is a subset of those maps
that generates the remaining maps under pushouts along change of colors.

To see why, 
consider a representable functor
$\Sigma_{\mathfrak{C}}[G \cdot_{\mathfrak{C}} \vect{C}]$
in
$\mathsf{Sym}^G_{\mathfrak{C}}$
(cf. \eqref{REPALTDESC EQ}),
and write
$\mathfrak{c} \colon 
\boldsymbol{E}(G \cdot C) \to \mathfrak{C}$
for the coloring on the underlying forest $G \cdot C$.
By Remark \ref{GCDOTCATS REM},
the group $G \times \Sigma^{op}$
has a right action on $G \cdot C$ and, 
moreover, a subgroup
$\Lambda \leq G \times \Sigma^{op}$ stabilizes $\vect{C}$
precisely if $\mathfrak{c}$ is $\Lambda$-equivariant,
i.e. if it induces a map 
$\bar{\mathfrak{c}} \colon 
\boldsymbol{E}(G \cdot C)/\Lambda \to \mathfrak{C}$
on orbits
(indeed, this is simply the observation that the right vertical map in 
\eqref{COLCHSQ EQ} respects colors, specified to the case
$\vect{C'} = \vect{C}$).

Combining the identification
$\mathfrak{c}_! \Sigma_{\tau}[G \cdot C]
=
\Sigma_{\mathfrak{C}}[G \cdot_{\mathfrak{C}} \vect{C}]$
in 
\eqref{CANPUSH EQ}
with 
\eqref{LIMINFIBSUP EQ},
we now obtain that
\begin{equation}\label{CANPUSHQ EQ}
\bar{c}_{!} 
\left(
\Sigma_{\tau}[G \cdot C]/\Lambda
\right)
=
\Sigma_{\mathfrak{C}}[G \cdot_{\mathfrak{C}} \vect{C}]/\Lambda
\end{equation}
where we note that the quotient
$\Sigma_{\mathfrak{C}}[G \cdot_{\mathfrak{C}} \vect{C}]/\Lambda$
occurs in the fiber $\mathsf{Sym}^G_{\mathfrak{C}}$
while $\Sigma_{\tau}[G \cdot C]/\Lambda$
is not a fiber quotient.
In particular, the colors of the latter are
$\boldsymbol{E}(G \cdot C)/\Lambda$
rather than $\boldsymbol{E}(G \cdot C)$.

\eqref{CANPUSHQ EQ} now readily implies similar identifications for the generating sets
$\mathbb{F}^G_{\mathfrak{C}} \mathcal{I}_{\mathfrak{C},\mathcal{F}},
\mathbb{F}^G_{\mathfrak{C}} \mathcal{J}_{\mathfrak{C},\mathcal{F}}$.

Before describing the generating sets for 
$\mathsf{Op}^G_{\bullet,\mathcal{F}}(\V)$, however,
we need also address the path-lifting condition, 
requiring fibrations in $\mathsf{Op}^G_{\bullet,\F}(\V)$ to have the right lifting property against all maps 
$G/H \cdot (\eta \to \mathbb J)$
with $\mathbb{J}$ a $\V$-interval and $H \in \F_1$.
As the collection of all intervals form a class, one must be able to select a suitable representative set of intervals, leading to the following (cf. \cite{BM13}).

\begin{definition}\label{INTGENSET DEF}
	A set $\mathscr{G}$ of $\V$-intervals is \textit{generating} if,
	in the projective model category on $\Cat_{\set{0,1}}(\V)$,
	any $\V$-interval $\mathbb{J}$ is
	a retract of a trivial extension of some element
	$\mathbb{G} \in \mathscr{G}$.
	More explicitly, this means that there is a diagram in 
	$\Cat_{\set{0,1}}(\V)$ as below,
	where the left arrow is a trivial cofibration and
	$ri = id_{\mathbb{J}}$.
\begin{equation}\label{GTILGI EQ}
	\begin{tikzcd}
		\mathbb{G} \arrow[r,rightarrowtail, "\sim"]
	&
		\widetilde{\mathbb{G}} \arrow[r,yshift=-.3em, "r"']
	&
		\mathbb{J} \arrow[l,yshift=.3em, "i"']
	\end{tikzcd}
\end{equation}
\end{definition}

The following essentially recalls \cite[1.20]{CM13b}, \cite[\S 4.3]{Cav}.

\begin{remark}
      \label{SSETINT_REM}
When $\V$ is either $\mathsf{sSet}$ or $\mathsf{sSet}_{\**}$
one can take $\mathscr{G}$ to be a set of representatives of isomorphism classes of intervals with countably many cells.
Indeed, since in both cases the mapping spaces of a $\V$-interval
$\mathbb{J}$ are a simplicial set with (either one or two) contractible components,
a standard argument (see e.g. the argument between \cite[Lemmas 4.2,4.3]{Ber07b})
shows that $\mathbb{J}$ has a countable subcomplex $\mathbb{G}$ with contractile components and
for which the inclusion 
$\mathbb{G} \to \mathbb{J}$
is an equivalence in $\mathsf{Cat}_{\{0,1\}}(\V)$.
But then, forming the cofibration followed by trivial fibration factorization
$\mathbb{G} \rightarrowtail \widetilde{\mathbb{G}}
\overset{\sim}{\twoheadrightarrow} \mathbb{J}$
in $\mathsf{Cat}_{\{0,1\}}(\V)$,
one has that the first map is a trivial cofibration by $2$-out-of-$3$
and that the second has a section since $\mathbb{J}$ is cofibrant by assumption, yielding \eqref{GTILGI EQ}.

More generally, a more careful argument \cite[Lemma 1.12]{BM13}
shows that every combinatorial monoidal model category
has a generating set of intervals.
\end{remark}

\begin{proposition}\label{GENIN PROP}
If $\V$ has a generating set of intervals $\mathscr{G}$ then a local $\F$-fibration 
$F \colon \O \to \P$ in 
$\mathsf{Op}^G_{\mathfrak{C}}(\V)$
is $\F$-path lifting iff it has the right lifting property against the maps 
$\{G/H \cdot (\eta \to \mathbb{G})\}_{\mathbb{G}\in \mathscr{G},H \in \mathcal{F}_1}$.
\end{proposition}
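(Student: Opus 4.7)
The plan is to proceed via the retract argument. The ``only if'' direction will be immediate since each $\mathbb{G} \in \mathscr{G}$ is itself a $\V$-interval. For the ``if'' direction, I would first use the adjunction $(G/H \cdot -) \dashv (-)^H$ (together with the identification $j^{\**}\O^H(a,b) = \O(a;b)^H$ for $a,b \in \mathfrak{C}^H$) to recast the hypothesis as asserting that $F^H := j^{\**}\O^H \to j^{\**}\P^H$ has the right lifting property in $\Cat(\V)$ against the maps $\eta \to \mathbb{G}$, $\mathbb{G} \in \mathscr{G}$. The goal is then to upgrade this to RLP against $\eta \to \mathbb{J}$ for an arbitrary $\V$-interval $\mathbb{J}$.

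Given such $\mathbb{J}$, Definition \ref{INTGENSET DEF} provides a diagram $\mathbb{G} \overset{\sim}{\hookrightarrow} \widetilde{\mathbb{G}} \xrightarrow{r} \mathbb{J}$ with section $i$ in $\Cat_{\{0,1\}}(\V)$. A standard retract argument reduces the RLP against $\eta \to \mathbb{J}$ to the RLP against $\eta \to \widetilde{\mathbb{G}}$: precomposing a given lifting problem with $r$ produces a new problem against $\widetilde{\mathbb{G}}$, and composing its solution with $i$ solves the original. To construct the lift against $\eta \to \widetilde{\mathbb{G}}$, I would first apply the hypothesis along $\eta \to \mathbb{G}$ to produce a partial lift $\ell_0 \colon \mathbb{G} \to j^{\**}\O^H$, which in particular selects objects $a := \ell_0(0)$ and $b := \ell_0(1)$ of $\O^H$. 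The residual task of extending $\ell_0$ along $\mathbb{G} \overset{\sim}{\hookrightarrow} \widetilde{\mathbb{G}}$ can then be viewed as a lifting problem in $\Cat_{\{0,1\}}(\V)$ against the restriction $\bar{F}$ of $F^H$ to the full $\V$-subcategories on $\{a,b\} \subseteq \Ob(\O^H)$ and $\{F(a),F(b)\} \subseteq \Ob(\P^H)$.

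The crux of the argument will be to verify that $\bar{F}$ is a fibration in the projective model structure on $\Cat_{\{0,1\}}(\V)$, for then the desired lift exists because $\mathbb{G} \overset{\sim}{\hookrightarrow} \widetilde{\mathbb{G}}$ is a projective trivial cofibration by Definition \ref{INTGENSET DEF}. This is precisely where the local $\F$-fibration hypothesis will enter: for $H \in \F_1$ and $a, b \in \mathfrak{C}^H$, the subgroup $H \leq G \times \Sigma_1^{op}$ stabilizes the unary signature $(a;b)$, so Theorem \ref{THMIREST} guarantees that each mapping-object map $\O(a;b)^H \to \P(F(a);F(b))^H$ is a fibration in $\V$. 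Since $\bar{F}$ is assembled from the four such maps indexed by $(a;a), (a;b), (b;a), (b;b)$, it is levelwise and hence projectively a fibration, completing the argument.
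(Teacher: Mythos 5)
Your proof is correct, and the overall shape matches the paper's: reduce to $\eta \to \widetilde{\mathbb{G}}$ by a retract argument, obtain a partial lift over $\mathbb{G}$ from the hypothesis, and then lift the trivial cofibration $\mathbb{G} \overset{\sim}{\rightarrowtail} \widetilde{\mathbb{G}}$. The difference is in the last step. The paper handles it at the level of $\mathsf{Op}^G_\bullet(\V)$: via the adjunction $(G/H \cdot -) \dashv (-)^H$ it treats $G/H \cdot (\mathbb{G} \to \widetilde{\mathbb{G}})$ as a color-fixed $\F$-trivial cofibration (implicitly using that $G/H \cdot (-)$ is left Quillen, Corollary \ref{OPADJ_COR}\ref{OPCOMBADJ_LBL}), and then invokes the characterization of local $\F$-fibrations as maps with the right lifting property against the fiberwise generating trivial cofibrations (Proposition \ref{LOCALTCHAR PROP}). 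You instead stay in $\Cat_{\{0,1\}}(\V)$: pull back $F^H$ along the two $H$-fixed objects $a,b$ singled out by the partial lift and verify directly that the resulting map $\bar F$ is a projective fibration, because $H \in \F_1$ stabilizes each unary signature $(a;a),(a;b),(b;a),(b;b)$ and Theorem \ref{THMIREST} makes the four mapping-object maps into fibrations in $\V$. Your route is a bit more hands-on and bypasses both the left-Quillen-ness of $G/H \cdot (-)$ and Proposition \ref{LOCALTCHAR PROP}, at the cost of unpacking the fixed-point/restriction bookkeeping; the paper's route is shorter once the abstract machinery is already set up.
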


\begin{proof}
        The ``only if'' direction is immediate.
        Conversely, given some chosen interval $\mathbb{J}$,
let $\mathbb{G}, \widetilde{\mathbb{G}}$
be as in \eqref{GTILGI EQ}.
A standard argument concerning retractions shows that,
to solve a lifting problem against $\eta \to \mathbb{J}$,
it suffices to solve the induced lifting problem against
$\eta \to \widetilde{\mathbb{G}}$.
But now given a lifting problem against 
$G/H \cdot \left(\eta \to \widetilde{\mathbb{G}}\right)$,
we consider the diagram below, where the solid lift exists by hypothesis on $F$.
\[
\begin{tikzcd}
	\eta \ar{d} \ar{rr}  
&&
	\O^H 	\ar{d}{F}
\\
	\mathbb{G} \ar[rightarrowtail]{r}{\sim} \ar{rru}
&
	\widetilde{\mathbb{G}} \ar{r} \ar[dashed]{ru}
&
	\P^H
\end{tikzcd}
\]
But then, since $\mathbb{G} \overset{\sim}{\rightarrowtail} \widetilde{\mathbb{G}}$
is a trivial cofibration in $\mathsf{Cat}_{\{0,1\}}(\V)$
and $F$ is a local fibration,
the desired dashed lift exists
by Proposition \ref{LOCALTCHAR PROP}.
\end{proof}

We can now finally identify the generating (trivial) cofibrations of
$\mathsf{Op}^G_{\bullet,\F}$.

In the following we write $C_n \in \Sigma$ for the $n$-corolla.

\begin{definition}\label{OPGENCOF DEF}
Suppose that $\V$ has a generating set of intervals $\mathscr{G}$.

Then the generating cofibrations in $\mathsf{Op}^G_{\F}$
are the maps
\begin{itemize}
\item[(C1)] $\emptyset \to G/H \cdot \eta$ for $H \in \F_1$,
\item[(C2)] $\mathbb{F} \left( \Sigma_{\tau}[G \cdot C_n]/\Lambda \cdot i\right)$
for $n \geq 0$, $\Lambda \in \F_n$ and $i \in \mathcal{I}$,
\end{itemize}
while the generating trivial cofibrations are the maps 
\begin{itemize}
\item[(TC1)] 
$G/H \cdot \left(\eta \to \mathbb{G}\right)$ for $H \in \F_1$ and $\mathbb{G} \in \mathscr{G}$,
\item[(TC2)] 
$\mathbb{F} \left( \Sigma_{\tau}[G \cdot C_n]/\Lambda \cdot j\right)$
for $n \geq 0$, $\Lambda \in \F_n$ and $j \in \mathcal{J}$.
\end{itemize}
\end{definition}


\begin{lemma}[{cf. \cite[1.19]{CM13b}}]\label{POINT_4_LEMMA}
	The maps in (TC1),(TC2) are in the saturation of (C1),(C2).
\end{lemma}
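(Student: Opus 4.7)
The plan is to handle (TC2) and (TC1) separately. (TC2) is essentially formal, while (TC1) requires the substantive argument.

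For (TC2), fix $n \geq 0$, $\Lambda \in \F_n$, and $j \in \mathcal{J}$. Since every trivial cofibration in $\V$ is in particular a cofibration, $j$ lies in the saturation of $\mathcal{I}$. The functor
\[X \mapsto \mathbb{F}\left(\Sigma_\tau[G \cdot C_n]/\Lambda \cdot X\right)\]
from $\V$ to $\mathsf{Op}^G_\bullet(\V)$ is a composite of left adjoints (tensor with a fixed object, followed by $\mathbb{F}$) and hence preserves transfinite compositions, pushouts, and retracts. Applying this functor to a presentation of $j$ as a retract of an $\mathcal{I}$-cell complex shows that $\mathbb{F}(\Sigma_\tau[G \cdot C_n]/\Lambda \cdot j)$ lies in the saturation of (C2).

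For (TC1), given $H \in \F_1$ and $\mathbb{G} \in \mathscr{G}$, I factor $G/H \cdot (\eta \to \mathbb{G})$ as
\[G/H \cdot \eta \xrightarrow{a} G/H \cdot (\eta \amalg \eta) \xrightarrow{b} G/H \cdot \mathbb{G}.\]
The map $a$ is a pushout of the (C1) map $\emptyset \to G/H \cdot \eta$. For $b$, observe that $\mathbb{G}$ is cofibrant in the projective model structure on $\mathsf{Cat}_{\{0,1\}}(\V)$ (being a $\V$-interval) and $\eta \amalg \eta$ is initial there, so $\eta \amalg \eta \to \mathbb{G}$ lies in the saturation of the generating cofibrations $F_{a,b}(i)$ for $(a,b) \in \{0,1\}^2$ and $i \in \mathcal{I}$, where $F_{a,b}(Z)$ denotes the free $\V$-category with a single generating morphism $a \to b$ of space $Z$. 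Applying the colimit-preserving functor $G/H \cdot -$, the map $b$ then lies in the saturation of $\{G/H \cdot F_{a,b}(i)\}$ in $\mathsf{Op}^G_\bullet(\V)$, so it suffices to show each such map lies in the saturation of (C2).

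When $a \neq b$, relabeling the two tautological colors of $G \cdot C_1$ as $\{a,b\}$ and applying \eqref{CANPUSH EQ} yields the identification $G/H \cdot F_{a,b}(i) = \mathbb{F}(\Sigma_\tau[G \cdot C_1]/H \cdot i)$, which is directly a (C2) map with $n=1$ and $\Lambda = H$. For the endomorphism case $a = b$, the map $G/H \cdot F_{a,a}(i)$ is realized as a pushout of the same (C2)-cell $\mathbb{F}(\Sigma_\tau[G \cdot C_1]/H \cdot i)$ along an attaching map that identifies the root and leaf tautological colors of $C_1$ to a single color in the target; formally, this attaching map factors through the non-injective change-of-color functor $\check{\varphi}_!$ for $\varphi \colon \{\text{root},\text{leaf}\} \to \{\ast\}$, as in \eqref{CFS_EQ}. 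The main obstacle is verifying this last pushout claim, which requires carefully tracking how the free operad functor interacts with $\check{\varphi}_!$ to confirm that the proposed pushout in $\mathsf{Op}^G_\bullet(\V)$ does recover $G/H \cdot F_{a,a}(i)$.
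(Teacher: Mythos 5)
Your factorization $G/H \cdot \eta \to G/H \cdot (\eta \amalg \eta) \to G/H \cdot \mathbb{G}$, the handling of (TC2) via colimit preservation, and the observation that the first map is a pushout of (C1) are exactly the paper's proof; the paper's only additional content is to assert, without elaboration, that the second map lies in the saturation of (C2), which is precisely where you flag a gap.

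That gap does close, and the $a=b$ and $a\neq b$ cases are handled uniformly by the mechanism already set up around \eqref{CANPUSHQ EQ}. Every generating cofibration $\mathbb{F}^G_{\mathfrak C}\bigl(\Sigma_{\mathfrak C}[G\cdot_{\mathfrak C}\vect{C}]/\Lambda\cdot i\bigr)$ of a fiber $\Op^G_{\mathfrak C,\F}(\V)$, and each $G/H\cdot F_{a,b}(i)$ is one of these with $n=1$ and $\Lambda=H$, identifies with $\check{\bar{\mathfrak c}}_!$ applied to the (C2) cell $\mathbb{F}\bigl(\Sigma_\tau[G\cdot C_n]/\Lambda\cdot i\bigr)$: this is \eqref{CANPUSHQ EQ} together with the (purely formal) identity $\check\varphi_!\,\mathbb{F} = \mathbb{F}^G_{\mathfrak D}\,\varphi_!$, both sides being left adjoint to $\varphi^{\**}\circ\fgt = \fgt\circ\varphi^{\**}$. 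Moreover, for any color map $\varphi\colon\mathfrak C\to\mathfrak D$ and fixed-color map $f\colon A\to B$ in $\Op^G_{\mathfrak C}(\V)$, the map $\check\varphi_! f$ is a \emph{single} pushout of $f$, namely along the adjunction unit $A\to\check\varphi_! A$; indeed, that pushout has colors $\mathfrak D$ and, computed via \eqref{LIMINFIBSUP EQ}, equals $\check\varphi_! B\amalg_{\check\varphi_! A}\check\varphi_! A\cong\check\varphi_! B$. So $\check\varphi_!$ of a (C2) cell is a pushout of that cell, hence every generating fiber cofibration lies in the saturation of (C2), and the second map in your factorization (a cofibration in the fiber $\Op^G_{G/H\cdot\{0,1\},\F}(\V)$ by Corollary \ref{OPADJ_COR}) does as well. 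In particular, you need not split into $a=b$ and $a\neq b$: the $a\neq b$ case is just the degenerate instance where $\bar{\mathfrak c}$ is a bijection.
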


\begin{proof}
        Clearly (TC2) is in the saturation of (C2).
	For (TC1), one has factorizations
\begin{equation}
	\begin{tikzcd}
	G/H \cdot \eta \arrow[r, rightarrowtail]
&
	G/H \cdot (\eta \amalg \eta) \arrow[r, rightarrowtail]
&
	G/H \cdot \mathbb{G}
	\end{tikzcd}
\end{equation}
with the first map a pushout of a map in (C1) and 
the second map in the saturation of (C2),
as
$\mathbb F(\Sigma_\tau[G \cdot C_1]/H \cdot i) = G/H \cdot \mathbb F(\Sigma_\tau[C_1] \cdot i)$
and thus the saturation of $(C2)$ contains $G/H \cdot F$ for all cofibrations $F \in \Cat_{\set{0,1}}\V$.
\end{proof}

\subsection{Interval cofibrancy and trivial cofibrations}
\label{TRIVCOF_SEC}

In this section we establish Proposition \ref{J_CELL_PROP},
stating that maps built cellularly out of
(TC1) and (TC2) are $\F$-weak equivalences.
We first recall the following
technical result from \cite{BM13}.

\begin{theorem}
[Interval Cofibrancy Theorem {\cite[Thm. 1.15]{BM13}}]
\label{INTCOF THM}
Let $(\V,\otimes)$ be a cofibrantly generated monoidal model category that
satisfies the monoid axiom and has cofibrant unit.

If $\mathbb{J} \in \mathsf{Cat}_{\{0,1\}}(\V)$
is cofibrant then 
$\mathbb{J}(0,0)$ 
is a cofibrant monoid, i.e. cofibrant
in $ \mathsf{Cat}_{\{0\}}(\V)$.
\end{theorem}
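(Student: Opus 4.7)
The plan is to proceed by cellular induction on the construction of $\mathbb{J}$. By the small object argument, $\mathbb{J}$ is a retract of a transfinite composition $\mathbb{J}_0 \to \mathbb{J}_1 \to \cdots$, where $\mathbb{J}_0$ is the initial object of $\Cat_{\set{0,1}}(\V)$ and each step $\mathbb{J}_\alpha \to \mathbb{J}_{\alpha+1}$ is a pushout along a coproduct of generating cofibrations $F^{(i,j)}(k) \colon F^{(i,j)}(A) \to F^{(i,j)}(B)$, for $k \colon A \to B$ ranging over $\mathcal{I}$ and $(i,j) \in \set{0,1}^2$, where $F^{(i,j)}$ denotes the free $\V$-category functor on a $\V$-graph with a single morphism object in position $(i,j)$. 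Since the object set remains $\set{0,1}$ throughout, the $(0,0)$-component functor $e \colon \Cat_{\set{0,1}}(\V) \to \Cat_{\set{0}}(\V)$, sending $\mathbb{J}$ to the monoid $\mathbb{J}(0,0)$, preserves the relevant transfinite compositions (since filtered colimits of color-fixed maps are computed at the level of mapping objects) and retracts, and moreover $e(\mathbb{J}_0) = 1_\V$, which is initial and hence trivially cofibrant in $\Cat_{\set{0}}(\V)$ by the cofibrant unit hypothesis. It thus suffices to show that each cell attachment induces a cofibration of monoids at the $(0,0)$-component.

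The attachment at position $(0,0)$ is immediate: the composite $e \circ F^{(0,0)}$ is the free monoid functor $V \mapsto \coprod_{n\geq 0} V^{\otimes n}$, so the change at $(0,0)$ is the free extension of the monoid $\mathbb{J}_\alpha(0,0)$ along $k$, which is a generating cofibration of $\Cat_{\set{0}}(\V)$. For the remaining positions $(i,j) \in \set{(0,1),(1,0),(1,1)}$, the $(0,0)$-component of $\mathbb{J}_{\alpha+1}$ acquires new morphisms corresponding to paths that traverse the newly attached $B$-cell some positive number of times. Filtering $\mathbb{J}_{\alpha+1}(0,0)$ by the number of traversals expresses the inclusion $\mathbb{J}_\alpha(0,0) \into \mathbb{J}_{\alpha+1}(0,0)$ as a transfinite composition of pushouts along maps of the form $M \otimes k \otimes N$ for the $(0,1)$ and $(1,0)$ cases, or iterated pushout-products of $k$ interleaved with tensor factors $M_i$ for the $(1,1)$ case, where $M$, $N$, $M_i$ are tensor products of entries $\mathbb{J}_\alpha(a,b)$ recording the segments of the path that stay in $\mathbb{J}_\alpha$ between consecutive traversals.

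The main obstacle is that these sandwich factors need not themselves be cofibrant in $\V$, so the pushout-product axiom alone does not directly yield cofibrations in $\Cat_{\set{0}}(\V)$. This is exactly the situation addressed by the Schwede-Shipley monoid axiom (which produces the transferred model structure on $\Cat_{\set{0}}(\V)$ in the first place): the class of maps obtained from cofibrations of $\V$ by tensoring with arbitrary $\V$-objects, closed under pushouts and transfinite compositions, consists of cofibrations in the transferred monoid model structure. Combined with the cofibrant unit hypothesis (which both initializes the induction and handles the degenerate sandwich factor $1_\V$ corresponding to the identity at object $1$ between consecutive traversals of the cell at $(1,1)$), this verifies that each filtration layer is a monoid cofibration. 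Assembling the layers and the cellular steps and passing to the retract then yields the desired cofibrancy of $\mathbb{J}(0,0)$.
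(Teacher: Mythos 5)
Your cellular-induction setup is reasonable, and the reduction to showing that each cell attachment at position $(i,j)$ yields a cofibration at the $(0,0)$-component is the right framing. However, the argument breaks at precisely the point you flag as the ``main obstacle.'' The Schwede--Shipley monoid axiom does \emph{not} say that the regular-cofibrant closure of $\{\text{cofibration}\otimes X : X \in \V\}$ consists of cofibrations in $\mathsf{Cat}_{\{0\}}(\V)$. The monoid axiom concerns \emph{trivial} cofibrations tensored with arbitrary objects, and asserts that the resulting cell maps are \emph{weak equivalences} in $\V$; its role is to guarantee the existence of the transferred model structure, not to identify which maps are cofibrations of monoids. In fact, a cofibration in $\V$ tensored with a non-cofibrant object is generally not even a cofibration in $\V$, let alone a cofibration in the transferred structure on monoids, so the step ``each filtration layer is a monoid cofibration'' is unsupported.

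The missing idea — and the reason the paper explicitly defers to the proof in \cite[\S 3.6--3.8]{BM13} rather than reproving the theorem — is that one must strengthen the induction hypothesis to a \emph{joint} cofibrancy claim: one proves simultaneously that $\mathbb{J}_\alpha(0,0)$ and $\mathbb{J}_\alpha(1,1)$ are cofibrant monoids \emph{and} that $\mathbb{J}_\alpha(0,1)$, $\mathbb{J}_\alpha(1,0)$ are cofibrant as one-sided modules over them. This is exactly what controls your ``sandwich factors'' $\mathbb{J}_\alpha(1,0)$, $\mathbb{J}_\alpha(0,1)$, $\mathbb{J}_\alpha(1,1)$ appearing in the traversal filtration: module cofibrancy (together with the cofibrant unit hypothesis) gives the needed positivity to conclude that the filtration layers are cofibrations of monoids. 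The paper's remark immediately following the theorem statement makes this explicit: the additional module cofibrancy conditions in \cite[Thm.~1.15]{BM13} ``are essential for their proof, but not needed for our application,'' i.e.\ they are a needed strengthening of the induction hypothesis even though only the monoid conclusion is used downstream. Your single-statement induction cannot close, and no appeal to the monoid axiom will repair it.
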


Our assumptions on $\V$
in Theorem \ref{INTCOF THM} differ slightly
from those in the original formulation \cite[Thm. 1.15]{BM13},
as we replace the \emph{adequacy} condition in
\cite[Def. 1.1]{BM13} with the monoid axiom.
Nonetheless, the proof therein (occupying \S 3.6,\S 3.7,\S 3.8 in \cite{BM13}) still follows as written.
This is because adequacy is never used directly, 
serving only to guarantee existence of the 
model structures 
on $\mathsf{Cat}_{\{0,1\}}(\V),\mathsf{Cat}_{\{0\}}(\V)$
and 
on modules $\mathsf{Mod}_{R}(\V)$, $_R\mathsf{Mod}(\V)$
over a monoid $R$.
However, the monoid axiom suffices for these claims
\cite[Thm. 1.3]{Mur11},\cite[Thm. 4.1]{SS00}.

%

\begin{remark}
By symmetry, one also has that $\mathbb{J}(1,1)$ is cofibrant.
Moreover, the formulation in \cite[Thm. 1.15]{BM13}
includes additional cofibrancy conditions for
$\mathbb{J}(0,1),\mathbb{J}(1,0)$
as modules over $\mathbb{J}(0,0),\mathbb{J}(1,1)$.
These conditions are essential for their proof, 
but not needed for our application.
\end{remark}

We note that the Interval Cofibrancy Theorem is a particular case of the following conjecture when $\mathfrak{C} \to \mathfrak{D}$
is the inclusion $\{0\} \to \{0,1\}$.

\begin{conjecture}\label{CATOP CONJ}
Let $\varphi \colon \mathfrak{C} \to \mathfrak{D}$
be an injection of colors.
Then the pullback functors
\[
	\mathsf{Cat}_{\mathfrak{D},\F}^G(\V)
	\xrightarrow{\varphi^{\**}}
	\mathsf{Cat}_{\mathfrak{C},\F}^G(\V)
\qquad
	\mathsf{Op}_{\mathfrak{D},\F}^G(\V)
	\xrightarrow{\varphi^{\**}}
	\mathsf{Op}_{\mathfrak{C},\F}^G(\V)
\]
preserve cofibrations between cofibrant objects.
\end{conjecture}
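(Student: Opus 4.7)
The plan is to generalize the Interval Cofibrancy Theorem (Theorem \ref{INTCOF THM}) of Berger--Moerdijk, which is precisely the special case of the conjecture for $\varphi \colon \{0\} \hookrightarrow \{0,1\}$ in $\mathsf{Cat}$. The first reduction is to observe that every cofibration between cofibrant objects is a retract of a relative cell complex built from the generating cofibrations of Theorem \ref{THMIREST}, and that $\varphi^{\**}$ (in the fixed-color fibers) commutes with filtered colimits---these being computed levelwise in $\mathsf{Sym}^G_{\mathfrak{D}}(\V)$, on which $\varphi^{\**}$ is simply restriction of diagrams. It therefore suffices to show that if $\mathcal{O}$ is cofibrant and $\mathcal{O} \to \mathcal{P}$ is a single-cell attachment, then $\varphi^{\**} \mathcal{O} \to \varphi^{\**} \mathcal{P}$ is a cofibration in $\mathsf{Op}^G_{\mathfrak{C},\F}(\V)$.

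For such a pushout $\mathcal{P} = \mathcal{O} \amalg_{\mathbb{F}(A)} \mathbb{F}(B)$ with $A \to B$ a generating cofibration in $\mathsf{Sym}^G_{\mathfrak{D},\F}(\V)$, I would employ the classical Schwede--Shipley style filtration $\mathcal{O} = \mathcal{P}^{(0)} \to \mathcal{P}^{(1)} \to \cdots$, where $\mathcal{P}^{(n)}$ is the subobject generated by operations using at most $n$ copies of the new cell. Each stage $\mathcal{P}^{(n-1)} \to \mathcal{P}^{(n)}$ is a pushout-product attachment indexed by trees with $n$ designated new-cell vertices and all remaining vertices decorated by operations of $\mathcal{O}$---this is the operadic refinement of the categorical picture used in \S 3.7 of \cite{BM13}, grounded in the tree-level description of $\mathbb{F}$ recalled in \cite[App. \ref{OC-MONAD_APDX}]{BP_FCOP}. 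The key combinatorial observation is that the value of $\varphi^{\**} \mathcal{P}^{(n)}$ at a $\mathfrak{C}$-signature $\vect{C}$ is indexed by trees whose root and leaf edges are colored in $\mathfrak{C}$, but whose \emph{internal} edges may carry any colors in $\mathfrak{D}$. This yields an explicit presentation of $\varphi^{\**}\mathcal{P}^{(n)}$ as a pushout over $\varphi^{\**}\mathcal{P}^{(n-1)}$ of a pushout-product of $A \to B$ with ``$\mathfrak{C}$-externalized $\mathcal{O}$-trees.''

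The main obstacle is verifying that these pushout-product attachments are genuine cofibrations in $\mathsf{Sym}^G_{\mathfrak{C},\F}(\V)$. This splits into two sub-challenges. First, one must argue that the tree automorphism groups act sufficiently freely, which is where the cofibrant symmetric pushout powers axiom (v) of Theorem \ref{THMA} enters, combined with a careful check that the stabilizers of the $\mathfrak{C}$-external data lie in the appropriate $\F_{\vect{C}}$---a step in which the ``enough units'' condition of Definition \ref{FAMRESUNI DEF} plays a role via the projection $\pi_1$. Second, cofibrancy of $\mathcal{O}$ must be leveraged to guarantee that the internal-edge decorations contribute cofibrantly via Proposition \ref{SYMGFV PROP}; this likely requires an additional inductive hypothesis controlling the cellular structure of $\mathcal{O}$ at each arity.

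For the categorical statement on $\mathsf{Cat}^G_\bullet(\V)$, the trees degenerate to linear strings and the pushout-product reduces to iterated tensor products of mapping objects of $\mathcal{O}$ at internal colors lying in $\mathfrak{D} \setminus \varphi(\mathfrak{C})$, so the argument specializes essentially to the bimodule analysis in \cite[\S 3.6--3.8]{BM13} enhanced with the action of $G$ and the family $\F_1$; this is where I would expect the cleanest proof to emerge first, with the operadic case obtained as a tree-indexed elaboration.
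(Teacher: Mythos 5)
This statement is labeled a \emph{Conjecture} in the paper, and the paper does not prove it. The accompanying remark only sketches a plausibility argument (that $\varphi^{\**}$ sends free objects to free objects, hence generating cofibrations to cofibrations) and then explicitly disclaims it: ``the argument just given \emph{does not} outline a proof of Conjecture \ref{CATOP CONJ}, due to $\varphi^{\**}$ not preserving pushouts,'' and a genuine proof ``would need a careful analysis of the interaction of $\varphi^{\**}$ with pushouts of free categories/operads, as in the proof of \cite[Thm. 1.15]{BM13}.'' So there is no ``paper's own proof'' to compare against; you are attempting to fill in an open problem.

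Your roadmap is the right one, and it aligns with what the authors suggest would be needed: reduce to cell attachments, filter $\mathcal{O} \to \mathcal{O}\amalg_{\mathbb{F}(A)}\mathbb{F}(B)$ by the number of new cells, and identify the stages of $\varphi^{\**}$ applied to this filtration in terms of trees whose root and leaves carry $\mathfrak{C}$-colors while internal edges range over $\mathfrak{D}$. However, the proposal remains at the level of a plan; the hard step---showing that each filtration stage is a pushout along a (pushout-product) cofibration in $\mathsf{Sym}^G_{\mathfrak{C},\F}(\V)$, with the correct equivariance structure---is flagged but not carried out, and your own hedges (``this likely requires an additional inductive hypothesis,'' ``I would expect the cleanest proof to emerge first'') confirm that. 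Two concrete issues to fix. First, your reduction step invokes commutation of $\varphi^{\**}$ with \emph{filtered} colimits, but that is not where the problem lies: the danger is precisely the \emph{pushout} $\mathcal{O} \to \mathcal{P}$, which $\varphi^{\**}$ does not preserve; the proposed filtration must be shown to compute $\varphi^{\**}\mathcal{P}$ directly rather than merely being compatible with $\varphi^{\**}$. Second, the appeal to the ``enough units'' condition is misplaced: in the paper that condition ties local equivalences to essential surjectivity (cf. Remark \ref{WHYEU REM}, Proposition \ref{23HARDCASE PROP}) and plays no role in either the conjecture or the paper's remark on it; invoking it via $\pi_1$ does not address the automorphism-group freeness you actually need, which concerns stabilizers in $G\times\Sigma_n^{op}$ of labeled trees and should be handled via the cofibrant symmetric pushout powers axiom and a careful description of the attaching groupoids, not via $\F_1$.
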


\begin{remark}
To see why Conjecture \ref{CATOP CONJ} is at least plausible,
we argue that $\varphi^{\**}$ sends free objects to free objects, 
which is essentially tantamount to
sending generating cofibrations \eqref{VGSIGF EQ} to generating cofibrations.
To see this, consider the simplest example 
with
$\varphi \colon \{0\} \to \{0,1\}$
and a free $\mathbb{F}_{\{0,1\}}X $ in $\mathsf{Cat}_{\{0,1\}}(\V)$.
Then one can check that 
$\varphi^{\**} \left( \mathbb{F}_{\{0,1\}}X \right)$ in $\mathsf{Cat}_{\{0\}}(\V)$ is the free monoid
\begin{equation}\label{PULLFREEEX EQ}
	\varphi^{\**} \left( \mathbb{F}_{\{0,1\}}X \right)
\simeq
	\mathbb{F}_{\{0\}}
	\left(X(0,0) \amalg 
	\coprod_{n \geq 0}
	X(0,1)\otimes X(1,1)^{\otimes n} \otimes X(1,0) 
	\right)
\end{equation}
where we note that the expression inside
$\mathbb{F}_{\{0\}}$
in \eqref{PULLFREEEX EQ}
can be intuitively described as the formal composites
$0 \to 1 \to 1 \to \cdots \to 1 \to 0$
of ``arrows'' in $X$ that start and end at $0$ and where all intermediate objects are $1$.
More generally, for an inclusion of colors 
$\varphi \colon \mathfrak{C} \to \mathfrak{D}$
one has that 
$\varphi^{\**} \left(\mathbb{F}_{\mathfrak{D}} X\right)$
is similarly free on formal composites
$c_0 \to d_1 \to d_2 \to \cdots \to d_n \to c_{n+1}$
of arrows in $X$
where $c_i \in \mathfrak{C}$
and $d_j \in \mathfrak{D} \setminus \mathfrak{C}$,
while for operads the analogue claim involves labeled trees whose root and leaves are labeled by $\mathfrak{C}$
and whose inner edges are labeled by 
$\mathfrak{D} \setminus \mathfrak{C}$.
 
It is then straightforward to check that, under mild assumptions on $\V$,
$\varphi^{\**} \left(\mathbb{F}_{\mathfrak{D}} X \right)$
will be a (trivial) cofibration in 
$\mathsf{Cat}^G_{\mathfrak{C},\F}(\V)$
(resp. $\mathsf{Op}^G_{\mathfrak{C},\F}(\V)$))
when $\mathbb{F}_{\mathfrak{D}} X$
is a generating (trivial) cofibration
in $\mathsf{Cat}^G_{\mathfrak{D},\F}(\V)$
(resp. $\mathsf{Op}^G_{\mathfrak{D},\F}(\V)$)).
However, the argument just given \emph{does not} outline a proof of Conjecture \ref{CATOP CONJ},
due to $\varphi^{\**}$ not preserving pushouts, 
so that, to actually prove Conjecture \ref{CATOP CONJ},
one would need a careful analysis of the interaction of $\varphi^{\**}$ with pushouts of free categories/operads, 
as in the proof of \cite[Thm. 1.15]{BM13}.

Lastly, we make note of a very similar conjecture:
\begin{conjecture}
        Let $\mathfrak C$ be a $G$-set of colors.
        Then the restriction functor
        \[
                j^{\**} \colon \mathsf{Op}_{\mathfrak{C},\F}^G 
                \longto
                \mathsf{Cat}_{\mathfrak{C},\F}^G
        \]
        preserves cofibrations between cofibrant objects.
\end{conjecture}
Again,
one has that $j^{\**}$ 
sends generating (trivial) cofibrations to (trivial) cofibrations.
However, since our operads have $0$-ary operations, 
$j^{\**}$ does not preserve pushouts
(indeed, this would be tantamount to the claim that trees with a single leaf are linear trees, which is not true if we allow for trees with stumps).
\end{remark}

In the next result we write 
$\partial_i \colon \{0,1\} \to \{0,1,2\}$
for the ordered inclusion that omits $i$,
and  
$\widetilde{\mathbbm{2}} \in
\mathsf{Cat}_{\{0,1,2\}}(\V)$
for the ``double isomorphism category''
where all mapping objects are $\widetilde{\mathbbm{2}}(i,j)=1_{\V}$.

In the following note that, since the $\partial_i$
are injective, one has 
$\check{\partial}_{i,!} \simeq \partial_{i,!}$,
cf. Remark \ref{OP_MAP REM}.

\begin{lemma}[Interval Amalgamation Lemma {\cite[Lemma 1.16]{BM13}}]
\label{AMALGLEM LEM}

Let $\V$ be as in Theorem \ref{INTCOF THM} and
$\mathbb{I},\mathbb{J}$ be $\V$-intervals.

Then the coproduct 
$\mathbb{K} = \partial_{2,!} \mathbb{I} \amalg \partial_{0,!} \mathbb{J}$
in $\mathsf{Cat}_{\{0,1,2\}}(\V)$
is cofibrant and weakly equivalent to $\widetilde{\mathbbm{2}}$.

In particular, 
$\mathbb{I} \star \mathbb{J} = \partial_1^{\**}\mathbb{K}$
is weakly equivalent to $\widetilde{\mathbbm{1}}$
in $\mathsf{Cat}_{\{0,1\}}(\V)$.
\end{lemma}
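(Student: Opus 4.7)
The plan is to establish separately that $\mathbb{K}$ is cofibrant in $\Cat_{\{0,1,2\}}(\V)$ and that it is weakly equivalent to $\widetilde{\mathbbm{2}}$, after which the ``in particular'' clause follows immediately by restriction along $\partial_1$.

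For cofibrancy, I would first note that $\mathbb{I}$ and $\mathbb{J}$ are cofibrant in $\Cat_{\{0,1\}}(\V)$ by definition of $\V$-interval. Since the maps $\partial_i\colon \{0,1\}\to \{0,1,2\}$ are injective, one has $\check{\partial}_{i,!}=\partial_{i,!}$ (cf.\ Remark \ref{OP_MAP REM} applied to $\Cat$), and these are simply extension by an isolated object. By the categorical analogue of Corollary \ref{OPADJ_COR}\ref{OPCOCHADJ_LBL} (cf.\ Remark \ref{RESTTOCATS REM}), $\partial_{i,!}$ is left Quillen, so both $\partial_{2,!}\mathbb{I}$ and $\partial_{0,!}\mathbb{J}$ are cofibrant in $\Cat_{\{0,1,2\}}(\V)$, and their coproduct $\mathbb{K}$ is then cofibrant.

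For the weak equivalence $\mathbb{K}\simeq \widetilde{\mathbbm{2}}$, I would proceed by levelwise analysis of the mapping objects. Unpacking the coproduct in $\Cat_{\{0,1,2\}}(\V)$, each $\mathbb{K}(i,j)$ admits a presentation as a colimit over zigzag paths $i=i_0,i_1,\ldots,i_n=j$ alternating between the two categories, with the constraint that the paths stay in $\{0,1\}$ while using $\partial_{2,!}\mathbb{I}$ (since $2$ is isolated there) and in $\{1,2\}$ while using $\partial_{0,!}\mathbb{J}$; in particular every alternation pivots on the vertex $1$. The key input is the Interval Cofibrancy Theorem (Theorem \ref{INTCOF THM}) applied to $\mathbb{I}$ and $\mathbb{J}$, which guarantees that the monoids $\mathbb{I}(0,0)$, $\mathbb{I}(1,1)$, $\mathbb{J}(0,0)$, $\mathbb{J}(1,1)$ are all cofibrant and weakly equivalent to $1_\V$. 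An induction on zigzag length, using the monoid axiom to control iterated tensor products of the bimodule factors $\mathbb{I}(0,1), \mathbb{J}(1,2)$, etc., against the cofibrant contractible monoids, then shows that each $\mathbb{K}(i,j)$ is weakly equivalent to the unit $1_\V=\widetilde{\mathbbm{2}}(i,j)$, yielding a levelwise weak equivalence $\mathbb{K}\to \widetilde{\mathbbm{2}}_f$ (through a chosen fibrant replacement).

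The main obstacle is precisely this zigzag analysis: controlling the homotopy type of a free-product-style amalgamation of the interval monoids $\mathbb{I}(1,1)$ and $\mathbb{J}(1,1)$, together with the associated bimodule factors, so that the total colimit is still weakly contractible. This is exactly what the cofibrancy supplied by Theorem \ref{INTCOF THM} and the monoid axiom are designed to make possible. Finally, the ``in particular'' clause follows because $\partial_1^*\mathbb{K}$ has mapping objects $(\partial_1^*\mathbb{K})(i,j)=\mathbb{K}(\partial_1 i,\partial_1 j)$ for $i,j\in\{0,1\}$ and $\partial_1^*\widetilde{\mathbbm{2}}=\widetilde{\mathbbm{1}}$; since weak equivalences in the projective model structures on $\Cat_{\{0,1,2\}}(\V)$ and $\Cat_{\{0,1\}}(\V)$ are levelwise, $\partial_1^*$ preserves them, so $\mathbb{I}\star\mathbb{J}=\partial_1^*\mathbb{K}\simeq \widetilde{\mathbbm{1}}$.
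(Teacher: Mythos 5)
Your cofibrancy argument is correct and matches the paper's (both observe that $\partial_{i,!}$ is left Quillen, so $\mathbb{K}$ is a coproduct of cofibrant objects). The ``in particular'' clause is also handled correctly. However, your approach to the weak equivalence $\mathbb{K}\simeq\widetilde{\mathbbm{2}}$ has a genuine gap that the paper explicitly warns against.

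The zigzag/free-product analysis you sketch for the mapping objects $\mathbb{K}(i,j)$ is essentially the strategy of Berger--Moerdijk's original proof, and to control the iterated tensor products of bimodule factors $\mathbb{I}(0,1), \mathbb{I}(1,0), \mathbb{J}(1,2), \mathbb{J}(2,1)$ one needs \emph{cofibrancy of these bimodules} over the endomorphism monoids, not just cofibrancy of the monoids $\mathbb{I}(k,k)$, $\mathbb{J}(k,k)$ themselves. But the paper's formulation of the Interval Cofibrancy Theorem (Theorem \ref{INTCOF THM}) deliberately states \emph{only} the monoid cofibrancy of $\mathbb{J}(0,0)$ and omits the module cofibrancy conditions present in \cite[Thm 1.15]{BM13}; the remark immediately preceding the proof of Lemma \ref{AMALGLEM LEM} says exactly this. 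So your inductive zigzag argument would be invoking inputs that the stated hypotheses do not supply.

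The paper's route avoids this entirely. It first notes that, since $\mathbb{K}$ is a homotopy coproduct, one may replace $\mathbb{I},\mathbb{J}$ by intervals equipped with trivial fibrations to $\widetilde{\mathbbm{1}}$, obtaining a canonical comparison map $\mathbb{K}\to\widetilde{\mathbbm{2}}$. It then computes \emph{only} the shared-vertex component: $\mathbb{K}(1,1)=\mathbb{I}(1,1)\amalg\mathbb{J}(0,0)$ as a coproduct of monoids (via \cite[Cor. \ref{OC-FGTPUSH_COR}]{BP_FCOP} applied twice), and Theorem \ref{INTCOF THM} gives that both summands are acyclic cofibrant monoids, hence so is the coproduct. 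Finally, the weak equivalence at $(1,1)$ is transported to \emph{all} $(i,j)$ by pre/post-composing with the homotopy equivalences $\alpha,\beta,\bar{\alpha},\bar{\beta}$ supplied by the trivial fibrations and appealing to \cite[Lemma 2.12]{BM13} (equivalently, Corollary \ref{ALBEETA COR}), which says such (co)composition maps are weak equivalences. This substitution of a ``compute one component, transport to the rest'' step for the global zigzag analysis is precisely what lets the paper get by with the weaker form of interval cofibrancy. If you want to stick with a levelwise computation of all $\mathbb{K}(i,j)$, you would need to reinstate the module cofibrancy conditions of \cite[Thm 1.15]{BM13}.
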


\begin{remark}
$\mathbb{I} \star \mathbb{J} = 
\partial_1^{\**} \mathbb{K} =
\partial_1^{\**} \left(\partial_{2,!} \mathbb{I} \amalg \partial_{0,!} \mathbb{J}\right)$
is called the \emph{amalgamation} of $\mathbb{I}$ and $\mathbb{J}$,
so that Lemma \ref{AMALGLEM LEM} can be phrased as saying that an amalgamation of intervals is,
up to cofibrant replacement, again an interval
(Conjecture \ref{CATOP CONJ} would imply that $\mathbb{I} \star \mathbb{J}$ is already cofibrant, 
but we will not need to know this).
\end{remark}

The original proof of this result \cite[Lemma 1.16]{BM13} uses the cofibrancy of modules conditions on
$\mathbb{J}(0,1),\mathbb{J}(1,0)$
in \cite[Thm 1.15]{BM13}.
Here we present an alternative argument requiring only the cofibrancy of $\mathbb{J}(0,0)$ as a monoid,
as stated in our formulation of Theorem \ref{INTCOF THM}.

\begin{proof}
Since $\mathbb{I},\mathbb{J}$ are cofibrant and the $\partial_{i,!}$
preserve cofibrations by
(the category version of)
Corollary \ref{OPADJ_COR}(i), 
the coproduct 
$\partial_{2,!} \mathbb{I} \amalg \partial_{0,!} \mathbb{J}$
is a homotopy coproduct, 
so we are free to replace $\mathbb{I},\mathbb{J}$
with any chosen intervals.
In particular, we may thus assume there are (local) trivial fibrations
$\mathbb{I} \overset{\sim}{\twoheadrightarrow} \widetilde{\mathbbm{1}}$,
$\mathbb{J} \overset{\sim}{\twoheadrightarrow} \widetilde{\mathbbm{1}}$.
One then has a map
\[
\mathbb{K}=
\partial_{2,!} \mathbb{I} \amalg \partial_{0,!} \mathbb{J}
\to
\partial_{2,!} \widetilde{\mathbbm{1}} \amalg \partial_{0,!} \widetilde{\mathbbm{1}}
= \widetilde{\mathbbm{2}}
\]
which we will show to be a weak equivalence.

Firstly, by applying \cite[Cor. \ref{OC-FGTPUSH_COR}]{BP_FCOP} twice,
one has that $\mathbb{K}(1,1)= \mathbb{I}(1,1) \amalg \mathbb{J}(0,0)$,
where the coproduct is taken in $\Cat_{\{1\}}(\V)$.
Since Theorem \ref{INTCOF THM} says $\mathbb{I}(1,1),\mathbb{J}(0,0)$
are acyclic cofibrant 
(i.e. the map from the initial object 
$\eta \in \mathsf{Cat}_{\{0\}}(\V)$
is a trivial cofibration),
so is $\mathbb{K}(1,1)$.

Next, the trivial fibrations
$\mathbb{I} \overset{\sim}{\twoheadrightarrow} \widetilde{\mathbbm{1}}$,
$\mathbb{J} \overset{\sim}{\twoheadrightarrow} \widetilde{\mathbbm{1}}$
allow us to find factorizations
of the identity $1_{\V} \xrightarrow{=} 1_{\V}$
\[
1_{\V} \xrightarrow{\alpha}
\mathbb{I}(0,1)
\to 
\widetilde{\mathbbm{1}}(0,1)
\quad
1_{\V} \xrightarrow{\beta}
\mathbb{I}(1,0)
\to 
\widetilde{\mathbbm{1}}(1,0)
\quad
1_{\V} \xrightarrow{\bar{\alpha}}
\mathbb{J}(0,1)
\to 
\widetilde{\mathbbm{1}}(0,1)
\quad
1_{\V} \xrightarrow{\bar{\beta}}
\mathbb{I}(1,0)
\to 
\widetilde{\mathbbm{1}}(1,0).
\]
For any choice of $i,j$ in $\{0,1,2\}$,
by pre and postcomposing with $\alpha,\beta,\bar{\alpha},\bar{\beta}$
as appropriate, one gets a commutative diagram
\begin{equation}\label{INTAM EQ}
\begin{tikzcd}[column sep = 45pt]
\mathbb{K}(1,1)
\arrow{d}[swap]{\sim}
\arrow{r}
&
\mathbb{K}(i,j) 
\arrow[d]
\\
\widetilde{\mathbbm{2}}(1,1)
\arrow[equal]{r}
&
\widetilde{\mathbbm{2}}(i,j).
\end{tikzcd}
\end{equation}
We now note that  
$\alpha,\beta,\bar{\alpha},\bar{\beta}$
are homotopy equivalences
in the sense of \cite[Def. 2.6]{BM13}
(or Definition \ref{EQUIV_DEF} below),
since they represent the 
identity homotopy class of maps
$[id_{1_{\V}}] \in \Ho \V(1_{\V},\widetilde{\mathbbm {1}}(k,l))$,
which are isomorphisms between $0,1$ in the category
$\pi_0 \widetilde{\mathbbm {1}}$
of \cite[Rem. 2.7]{BM13}
(or Definition \ref{HTPY_DEFN} below). 
See also Remark \ref{NATISO REM}.
But now \cite[Lemma 2.12]{BM13}
(or its generalization Corollary \ref{ALBEETA COR})
implies that the top horizontal map in 
\eqref{INTAM EQ} is a weak equivalence,
and thus so are the maps
$\mathbb{K}(i,j) \to \widetilde{\mathbbm{2}}(i,j)$,
establishing that $\mathbb{K} \to \widetilde{\mathbbm{2}}$
is indeed a weak equivalence.
\end{proof}

\begin{lemma}[cf. {\cite[4.17]{Cav}}] \label{TRANSCOMP_ES_LEM}
	A transfinite composition of $\F$-essentially surjective maps in $\Op^G_\bullet(\V)$ is $\F$-essentially surjective.
\end{lemma}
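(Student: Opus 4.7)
The plan is to proceed by transfinite induction on the length of the composition. Given a $\lambda$-sequence $\O_0 \to \O_1 \to \cdots$ of $\F$-essentially surjective maps with colimit $\O_\lambda$, I will show that $\O_0 \to \O_\alpha$ is $\F$-essentially surjective for every $\alpha \leq \lambda$. Fix $H \in \F_1$ throughout; the argument is carried out on the underlying categories $j^{*}\O_\alpha^H$.

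For the successor step, suppose $\O_0 \to \O_\alpha$ is $\F$-essentially surjective and consider an $H$-fixed object $d$ of $\O_{\alpha+1}$. Using essential surjectivity of $\O_\alpha \to \O_{\alpha+1}$, produce $c_\alpha \in \O_\alpha^H$ and a $\V$-interval $\mathbb{J}'$ with a map $\mathbb{J}' \to j^{*}\O_{\alpha+1}^H$ sending $0,1$ to the image of $c_\alpha$ and to $d$. By inductive hypothesis, produce $c \in \O_0^H$ and a $\V$-interval $\mathbb{J}$ with a map $\mathbb{J} \to j^{*}\O_\alpha^H$ sending $0,1$ to the image of $c$ and to $c_\alpha$. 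Pushing $\mathbb{J}$ forward along $\O_\alpha \to \O_{\alpha+1}$ gives two interval maps into $j^{*}\O_{\alpha+1}^H$ meeting at $c_\alpha$, so together they induce a map out of the amalgamation $\mathbb{J}\star\mathbb{J}'$ of Lemma \ref{AMALGLEM LEM}, with endpoints the image of $c$ and $d$. Since $\mathbb{J}\star\mathbb{J}'$ need not be cofibrant, I would then take a cofibrant replacement $\widetilde{\mathbb{L}} \overset{\sim}{\twoheadrightarrow} \mathbb{J}\star\mathbb{J}'$ in $\Cat_{\{0,1\}}(\V)$; the trivial fibration here is necessarily the identity on the object set $\{0,1\}$, so $\widetilde{\mathbb{L}}$ is itself a $\V$-interval (it is cofibrant and, by Lemma \ref{AMALGLEM LEM} and two-out-of-three, weakly equivalent to $\widetilde{\mathbbm{1}}$) and the composite $\widetilde{\mathbb{L}} \to j^{*}\O_{\alpha+1}^H$ still has the required endpoints.

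For the limit step, fix a limit ordinal $\beta \leq \lambda$, suppose $\O_0 \to \O_\alpha$ is $\F$-essentially surjective for all $\alpha < \beta$, and take an $H$-fixed object $d$ of $\O_\beta$. Since colimits in $\Op^G_\bullet(\V)$ are computed on colors in $\Set^G$ and the indexing diagram is linearly ordered (hence filtered), $d$ is represented by some object $d_\alpha$ at stage $\alpha < \beta$, and because $H \leq G$ is finite each of the equations $h d_\alpha = d_\alpha$ holds at some later stage; taking the maximum over $h \in H$ produces an $\alpha' < \beta$ with an $H$-fixed lift $d_{\alpha'} \in \O_{\alpha'}^H$ mapping to $d$. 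The inductive hypothesis applied at $\alpha'$ yields $c \in \O_0^H$ and an interval path in $j^{*}\O_{\alpha'}^H$ from $F(c)$ to $d_{\alpha'}$; pushing forward to $\O_\beta$ completes the step.

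The main obstacle is the successor case: the amalgamation of two intervals is only known to be weakly equivalent to $\widetilde{\mathbbm{1}}$, not itself cofibrant, so the cofibrant replacement detour is essential, and one has to verify that this does not disturb the chosen endpoints — which is where the fact that $\Cat_{\{0,1\}}(\V)$ has a fixed object set becomes convenient. The limit case is routine once one observes that filteredness together with finiteness of $H$ suffices to promote a lift to an $H$-fixed lift.
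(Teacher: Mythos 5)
Your argument is correct and follows the same two-step transfinite induction the paper uses, amalgamating intervals at successor stages via Lemma \ref{AMALGLEM LEM} and lifting $H$-fixed colors to a finite stage at limits. The paper's own proof is terser: it simply names the amalgamated map $\mathbb{I}\star\mathbb{J}$ without discussing the cofibrant replacement that turns it into an honest $\V$-interval, and asserts the $H$-fixed lift at limit ordinals without spelling out the filteredness-plus-finiteness-of-$H$ argument; both of your elaborations correctly fill those small gaps.
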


\begin{proof}
Let $\kappa$ be a limit ordinal and consider a transfinite composition
$\O_0 \to \O_1 \to \cdots 
\to \colim_{\alpha < \kappa} \O_{\alpha} = \O_{\kappa}$
of $\F$-essentially surjective maps
$F_{\alpha} \colon \O_{\alpha} \to \O_{\alpha +1}$, where, as usual, 
we assume that $\O_{\alpha} = \colim_{\beta < \alpha} \O_{\beta}$ whenever $\alpha < \kappa$ is a limit ordinal.
We argue by transfinite induction on $\alpha \leq \kappa$
that the composite maps $\bar{F}_{\alpha} \colon \O_0 \to \O_{\alpha}$
are $\F$-essentially surjective. Fix $H \in \F_1$.

For a successor ordinal $\alpha+1$, given $c \in \O_{\alpha+1}^H$
one can find $b \in \O_\alpha^H$ and map from an interval 
$\mathbb{J} \xrightarrow{j} \O_{\alpha+1}^H$ with $j(0) = F_{\alpha}(b)$, $j(1)=c$
and, by the induction hypothesis, can likewise find
$a \in \O_0^H$ and map from an interval $\mathbb{I} \xrightarrow{i} \O_\alpha^H$
with $i(0)=\bar{F}_{\alpha}a$, $i(1)=b$.
The amalgamated map
$\mathbb{I} \star \mathbb{J} \xrightarrow{F_{\alpha}i \star j} \O_{\alpha+1}^H$
now connects $\bar{F}_{\alpha+1}(a)$ and $c$, as desired.

The case of a limit ordinal $\alpha$ is immediate: any object $b \in \O_{\alpha}^H$
lifts to an object $\bar{b} \in \O_{\bar{\alpha}}^H$ for some $\bar{\alpha} < \alpha$,
so noting that by induction there exists $a \in \O_0^H$ and map from an interval $\mathbb{I} \xrightarrow{i} \O_{\bar{\alpha}}^H$
with $i(0) = F_{\bar{\alpha}}a$, $i(1) = \bar{b}$ yields the result.
\end{proof}

\begin{remark}\label{LOCOTIMESTRI REM}
	Say a map $F \colon X \to Y$ in 
	$\mathsf{Sym}^{G}_{\bullet}(\V)$
	is a \emph{local genuine $\otimes$-trivial cofibration}
	if  
	$X \to F^{\**}Y$ is a 
	genuine $\otimes$-trivial cofibration in 
	$\mathsf{Sym}^{G}_{\mathfrak{C}_X}(\V) = \V^{G \ltimes \Sigma^{op}_{\mathfrak{C}_{X}}}$
	(cf. Definition \ref{GGENOTITC DEF}).
One then has that local genuine $\otimes$-trivial cofibrations are closed under transfinite composition.
Indeed, given such a transfinite composite as on the left
\[
X_0 \xrightarrow{F_1} 
X_1 \xrightarrow{F_2}
X_2 \xrightarrow{F_3}
X_3 \to \cdots 
\qquad \qquad 
X_0 \to  
F_1^{\**} X_1 \to
F_1^{\**}F_2^{\**}X_2 \to
F_1^{\**}F_2^{\**}F_3^{\**}X_3 \to \cdots 
\]
the induced transfinite composite in $\mathsf{Sym}^{G}_{\mathfrak{C}_{X_0}}(\V)$
on the right consists of 
genuine $\otimes$-trivial cofibrations,
since these are preserved under pullback
(Proposition \ref{GOTC_PROP}).
\end{remark}

\begin{proposition}[{c.f. \cite[4.20]{Cav}}]\label{J_CELL_PROP}
	Suppose $\V$ satisfies the conditions in Theorem \ref{THMIREST}.
      
	Then maps in the saturation of (TC1),(TC2) 
	are $\F$-weak equivalences in $\mathsf{Op}^G_{\bullet}(\V)$.
\end{proposition}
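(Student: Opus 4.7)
The plan is to prove the following strengthening: every map in the saturation of (TC1)$\cup$(TC2) is $\F$-essentially surjective \emph{and}, after pullback to its source's color set, is a genuine $\otimes$-trivial cofibration in $\mathsf{Sym}^G_{\bullet}(\V)$. This strengthening suffices, since by Proposition \ref{GOTC_PROP}(i) the latter condition implies a genuine weak equivalence on symmetric sequences, hence a local $\F$-weak equivalence. Moreover, these two strengthened conditions are closed under transfinite composition via Remark \ref{LOCOTIMESTRI REM} and Lemma \ref{TRANSCOMP_ES_LEM} respectively, and together with Proposition \ref{GOTC_PROP}(ii) this gives closure under coproducts and pushouts, so the problem reduces to the two base cases.

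Pushouts of (TC2) are the easy base case. Given an attachment $\bar{\mathfrak{c}} : \boldsymbol{E}(G \cdot C_n)/\Lambda \to \mathfrak{C}_{\mathcal{O}}$ into an operad $\mathcal{O} \in \Op^G_\bullet(\V)$, the color set is unchanged, so the pushout lives in the fixed-color fiber $\Op^G_{\mathfrak{C}_\mathcal{O}}(\V)$. The identification \eqref{CANPUSHQ EQ}, together with Corollary \ref{OPADJ_COR}\ref{OPCOCHADJ_LBL}, identifies the pushforward of the (TC2) map with a generating trivial cofibration of the $\F$-model structure $\Op^G_{\mathfrak{C}_\mathcal{O},\F}(\V)$ from Theorem \ref{THMIREST}, so the pushout is an $\F$-trivial cofibration in the fixed-color model structure, and hence an underlying genuine $\otimes$-trivial cofibration by Remark \ref{GOTC_REM}. $\F$-essential surjectivity is automatic since no colors are added.

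Pushouts of (TC1) are the hard base case, and the main obstacle. Given a pushout of $G/H \cdot (\eta \to \mathbb{G})$ along $G/H \cdot \eta \to \mathcal{O}$, the resulting $\mathcal{P}$ adjoins a fresh $G/H$-orbit of colors connected to the image of the attaching map through the interval $\mathbb{G}$, so $\F$-essential surjectivity is immediate. The substantive work is verifying that the restriction $\mathcal{O} \to \varphi^* \mathcal{P}$, where $\varphi : \mathfrak{C}_\mathcal{O} \to \mathfrak{C}_\mathcal{P}$ is the color inclusion, is a genuine $\otimes$-trivial cofibration in $\mathsf{Sym}^G_{\mathfrak{C}_\mathcal{O}}(\V)$. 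I would filter the operations of $\mathcal{P}$ by the number of inner edges carrying the new color---equivalently, the number of times operations traverse $\mathbb{G}$ during grafting---producing a sequence of pushouts whose successive subquotients are built from iterated mapping objects of $\mathbb{G}$ tensored with operations of $\mathcal{O}$. The Interval Cofibrancy Theorem \ref{INTCOF THM} ensures these mapping objects possess the required cofibrancy as monoids, while the Interval Amalgamation Lemma \ref{AMALGLEM LEM} ensures that their iterated amalgamations remain homotopically trivial, so that each cell attachment factors through a map of the form $j \otimes X$ with $j \in \mathcal{J}$---a generator of $\mathcal{J}^\otimes_{\mathfrak{C}_\mathcal{O}}$ in Definition \ref{GGENOTITC DEF}. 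The delicate technical step, which is the heart of the obstacle, is arranging this filtration compatibly with the $G \times \Sigma^{op}$-action and the stabilizer structure imposed by $\F$.
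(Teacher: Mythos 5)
Your overall reduction is correct and matches the paper: you invoke Proposition \ref{GOTC_PROP}(i), Lemma \ref{TRANSCOMP_ES_LEM}, and Remark \ref{LOCOTIMESTRI REM} to reduce to the two base cases of a single pushout, and your (TC2) argument (pushout lives in the fiber $\Op^G_{\mathfrak C_{\O}}(\V)$, $\check a_!$ is left Quillen, Remark \ref{GOTC_REM} finishes it) is exactly the paper's. One small simplification you omit: the paper first reduces to $\F=\F_{all}$, which maximizes the generating sets and minimizes the weak equivalences, so that only a single family needs to be handled.

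The (TC1) base case is where the proposal has a genuine gap. You propose to filter the operations of $\P$ by the number of inner edges carrying the new color, and to control the successive cell attachments via Interval Cofibrancy and Interval Amalgamation. You then write that ``the delicate technical step, which is the heart of the obstacle, is arranging this filtration compatibly with the $G\times\Sigma^{op}$-action and the stabilizer structure imposed by $\F$'' --- but this is precisely where the entire difficulty of the (TC1) case resides, and you leave it unresolved. As written, the proposal does not prove the proposition. Moreover, the Interval Amalgamation Lemma \ref{AMALGLEM LEM} is the wrong tool here: in the paper it is used only to prove transitivity of (virtual) equivalence, not to control pushouts of intervals.

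The paper's route is both simpler and complete, and sidesteps the filtration entirely. It factors the pushout of $G/H\cdot(\eta\to\mathbb{G})$ into two pushouts via the intermediate object $\mathbb{G}_{\{0\}}$, the full $\V$-subcategory of $\mathbb{G}$ on the object $0$. The first pushout, along $G/H\cdot(\eta\to\mathbb{G}_{\{0\}})$, is color-fixed, and the Interval Cofibrancy Theorem \ref{INTCOF THM} shows $\eta\to\mathbb{G}_{\{0\}}$ is a trivial cofibration in $\Cat_{\{0\}}(\V)$; inducing up along $G/H\cdot(-)$, which is left Quillen by Corollary \ref{OPADJ_COR}\ref{OPCOMBADJ_LBL}, this step reduces to the already-established (TC2) argument. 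The second pushout, along $G/H\cdot(\mathbb{G}_{\{0\}}\to\mathbb{G})$, is along a \emph{local isomorphism}, since $\mathbb{G}_{\{0\}}(0,0)=\mathbb{G}(0,0)$, and pushouts of operads along local isomorphisms are again local isomorphisms by \cite[Cor.~\ref{OC-LOCALISO_COR}]{BP_FCOP} (see also \cite[Prop.~B.22]{Cav}), hence certainly local genuine $\otimes$-trivial cofibrations. (The intuition: once $\mathbb{G}(0,0)$ has been absorbed into the operad, any formal composite through the new color that starts and ends at the old color is already captured in $\mathbb{G}(0,0)$.) The $\F_{all}$-essential surjectivity of the second pushout requires a short explicit check, which the paper carries out by identifying, for each $K$-fixed new color $[g]_1$, the induced map $\mathbb{G}\to(G/H\cdot\mathbb{G})^K\to\P^K$ as the required interval connecting it to the $K$-fixed old color $a([g]_0)$. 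In short: where you propose to count traversals of the new color, the paper arranges never to have to.
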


\begin{proof}
	We reduce to the case $\F=\F_{all}$,
	as that makes (TC1),(TC2) in Definition \ref{OPGENCOF DEF} as large as possible 
	and $\mathcal{F}$-weak equivalences as small as possible,
	cf. Remark \ref{FALLMAXMIN REM}.
		
	By Proposition \ref{GOTC_PROP}(i)
	and the closure under transfinite composition properties in Lemma \ref{TRANSCOMP_ES_LEM} and Remark \ref{LOCOTIMESTRI REM},
	it suffices to show that, for every pushout
\begin{equation}\label{JUSTAPUSH EQ}
\begin{tikzcd}
	J_1 \ar{r}{a} \arrow{d}[swap]{j}
&
	\O \arrow{d}
\\
	J_2 \ar{r} 
&
	\P
\end{tikzcd}
\end{equation}
	where $j$ is one of the generating trivial cofibrations 
	in (TC1),(TC2),
	one has that 
	$\O \to \mathcal{P}$ is both a local genuine $\otimes$-trivial cofibration and $\F_{all}$-essentially surjective.

	Firstly, if $j$ happens to be a map in (TC2),
	then this pushout
	can be alternatively calculated as the pushout below
	in the fixed color category $\mathsf{Op}^{G}_{\mathfrak C_{\O}}(\V)$.
\begin{equation}
\begin{tikzcd}
\check{a}_!J_1 \ar{r} \arrow{d}[swap]{\check{a}_!j}
&
\O \arrow{d}
\\
\check{a}_!J_2 \ar{r} 
&
\P
\end{tikzcd}
\end{equation}      
      And, since $\check{a}_!$ is left Quillen (cf. Corollary \ref{OPADJ_COR}\ref{OPCOCHADJ_LBL}), 
      this is the pushout of a trivial cofibration in the fiber model structure
      $\mathsf{Op}^{G}_{\mathfrak C_{\O},\F}(\V)$.
	The essential surjectivity claim is then obvious,
	while the $\otimes$-trivial cofibration
	claim follows from 
	Remark \ref{GOTC_REM} and Proposition \ref{GOTC_PROP}.

	Secondly, in the more interesting case of $j$ a map in (TC1), 
	i.e. of the form $G/H \cdot (\eta \to \mathbb{G})$ for $\mathbb{G}$ a generating $\V$-interval,
	we split the pushout \eqref{JUSTAPUSH EQ} as a composition of two pushouts 
\begin{equation}\label{BROKENPUSH EQ}
\begin{tikzcd}
	G/H \cdot \eta \arrow[r, "a"] \arrow[d, "G/H \cdot \phi"']
&
	\O \arrow[d,"\phi'"]
\\
	G/H \cdot \mathbb{G}_{\set{0}} \arrow[r] \arrow[d, "G/H \cdot \psi"']
&
	\O' \arrow[d,"\psi'"]
\\
	G/H \cdot \mathbb{G} \arrow[r]
&
	\P
\end{tikzcd}
\end{equation}
	where $\mathbb{G}_{\set{0}}$ is the full $\V$-subcategory of $\mathbb{G}$ spanned by the object $0$.
	It now suffices to show that the desired properties hold individually for $\phi'$ and $\psi'$.

	For the top pushout in \eqref{BROKENPUSH EQ}, Theorem \ref{INTCOF THM} implies that $\eta \to \mathbb{G}$
	is a trivial cofibration in $\mathsf{Cat}_{\{0\}}(\V)$
	so that, since Corollary \ref{OPADJ_COR}\ref{OPCOMBADJ_LBL} 
      says that $G/H \cdot (-) \colon \mathsf{Cat}_{\{0\}}(\V) \to \mathsf{Cat}^G_{G/H,\F_{all}}(\V)$ is left Quillen,
      we have that $G/H \cdot \phi$ is a $\F_{all}$-trivial cofibration with fixed objects,
      and thus the (TC2) argument above implies $\phi'$ is
      a local genuine $\otimes$-trivial cofibration and
      $\F_{all}$-essentially surjective.
      
	Now consider the bottom pushout in \eqref{BROKENPUSH EQ}.
	Since $\psi$ is a local isomorphism (i.e. $\mathbb{G}_{\{0\}} \to \psi^{\**}\mathbb{G}$ is an isomorphism),
	so is $G/H \cdot \psi$ and thus, by 
	\cite[Cor. \ref{OC-LOCALISO_COR}]{BP_FCOP}
	(see also \cite[Prop. B.22]{Cav}),
	the map $\psi' \colon \O' \to \mathcal{P}$
	is itself a local isomorphism, and thus certainly a local genuine $\otimes$-trivial cofibration.
	To address $\F_{all}$-essential surjectivity, 
	we write $[g]_0$ and $[g]_1$ for $[g] \in G/H$
	to denote the objects of $G/H \cdot \mathbb{G}$,
	so that $\C_{\P} = \C_{\O} \amalg \{[g]_1\}_{[g] \in G/H}$.
	Clearly one needs only verify the essential surjectivity condition for the $[g]_1$.
	Given $K \leq G$,
	$[g]_1$ is $K$-fixed in $\P$ iff it is $K$-fixed in $G/H \cdot \mathbb{G}$,
	in which case $[g]$ induces a map
	$\mathbb{G} \xrightarrow{[g]} \left(G/H \cdot \mathbb{G}\right)^K$.
	Writing $i$ for the composite 
	$\mathbb{G} \xrightarrow{[g]} \left(G/H \cdot \mathbb{G}\right)^K \to \P^K$,
	one then has $i(0) = a([g]_0)$ and $i(1)=[g]_1$,
	establishing $\F_{all}$-essential surjectivity.
\end{proof}

\begin{remark}\label{JCELLSM REM}
	If $\O \in \mathsf{sOp}^G_{\bullet}$
	in \eqref{JUSTAPUSH EQ}
	is underlying cofibrant in $\mathsf{Sym}^G_{\mathfrak{C}_{\O}}(\V)$
	then, by \cite[Rem. \ref{OC-THMISM REM}]{BP_FCOP},
	the map $\O \to \P$ in \eqref{JUSTAPUSH EQ}
	is actually a local genuine trivial cofibration 
	(rather than just a local genuine $\otimes$-trivial cofibration).
	Hence, the claim that 
	``a map \emph{with $\F$-cofibrant domain} in the saturation
	of (TC1),(TC2) is a $\F$-weak equivalence in
	$\mathsf{sOp}^G_{\bullet}$'' 
	does not require the global monoid axiom
	in \cite[Def. \ref{OC-GLOBMONAX_DEF}]{BP_FCOP}
	and in (iv) of Theorem \ref{THMA}.
\end{remark}

\subsection{Equivalences of objects}\label{EQUIVOBJ_SEC}

Our next task is to show that the
$\F$-weak equivalences
in Definition \ref{MODEL_DEFN}
satisfy $2$-out-of-$3$,
with the main difficulty coming 
from the fact that essential surjectivity is 
defined using $\V$-intervals.
To address this, this section relates the $\F$-weak equivalences
in Definition \ref{MODEL_DEFN} with the $\F$-Dwyer-Kan equivalences
in the statement of Theorem \ref{THMA},
for which $2$-out-of-$3$ is easier to establish
(though we note that this claim is more subtle in 
the equivariant setting;
see Proposition \ref{23HARDCASE PROP}).

\begin{definition}\label{HTPY_DEFN}
	Suppose that $(\V,\otimes)$ has a cofibrant unit.

	Given $\mathcal C \in \Cat_{\mathfrak{C}}(\V)$,
	we define $\pi_0 \mathcal C \in \Cat_{\mathfrak{C}}(\mathsf{Set})$ 
	to be the ordinary category with the same objects and
\[
	\pi_0(\mathcal{C})(c,d)=
	\Ho(\V)(1_\V, \mathcal C(c,c'))=
	[1_\V, \mathcal{C}_f(c,c')]
\]
where $[-,-]$ denotes homotopy equivalence classes of maps,
and $\mathcal{C}_f$ denotes some fibrant replacement of
$\mathcal C$ in $\Cat_{\mathfrak{C}}(\V)$.
The composition $[g]\circ [f]$
in $\pi_0(\mathcal{C})$
of classes $[f],[g]$
represented by
$1_{\mathcal{V}} \xrightarrow{f} \mathcal{C}_f({c,c'})$
and 
$1_{\mathcal{V}} \xrightarrow{g} \mathcal{C}_f({c',c''})$
is given by the class $[gf]$, where $gf$ denotes the composite
\begin{equation}\label{COMPI0 EQ}
1_{\mathcal{V}} \simeq
1_{\mathcal{V}} \otimes 1_{\mathcal{V}} \xrightarrow{g \otimes f}
\mathcal{C}_f({c',c''}) \otimes  \mathcal{C}_f({c,c'}) \xrightarrow{\circ}
\mathcal{C}_f({c,c''}).
\end{equation}
\end{definition}

The assumption that $1_{\mathcal{V}}$ is cofibrant
is needed to prove that \eqref{COMPI0 EQ}
respects equivalence classes. 

Moreover, since any two fibrant replacements are connected by a zigzag of weak equivalences,
(the isomorphism class of) $\pi_0 \mathcal{C}$ does not depend on the choice of fibrant replacement $\mathcal{C}_f$.

\begin{remark}
      The assignment
      $\pi_0\colon \mathsf{Cat}_{\mathfrak{C}}(\V)
\to \mathsf{Cat}_{\mathfrak{C}}(\mathsf{Set})$ is functorial,
i.e. a $\V$-functor
$\mathcal{C} \to \mathcal{D}$
induces a functor 
$\pi_0\mathcal{C} \to \pi_0\mathcal{D}$.
Moreover, $\pi_0$ sends weak equivalences to isomorphisms.
\end{remark}

\begin{remark}\label{NATISO REM}
The map $\widetilde{\mathbbm{1}} \to \widetilde{\mathbbm{1}}_f$
shows that the identity
$id_{1_{\V}}\colon 1_{\V} \xrightarrow{=} 1_{\V} = \widetilde{\mathbbm{1}}(0,1) = \widetilde{\mathbbm{1}}(1,0)$
induces two inverse arrows
$[id_{1_{\V}}] \in \pi_0 \widetilde{\mathbbm{1}}(0,1)$
and
$[id_{1_{\V}}] \in \pi_0 \widetilde{\mathbbm{1}}(1,0)$.

We refer to these arrows as the
\emph{natural isomorphisms} between $0$ and $1$ in $\pi_0 \widetilde{\mathbbm{1}}$.
\end{remark}

Following \cite[Def. 2.6]{BM13} (also \cite{Cav}),
we make the following definitions.

\begin{definition}\label{EQUIV_DEF}
	Given $\mathcal{C}$ in  $\Cat(\V)$ and $c,c'\in \mathcal C$, we say $c$ and $c'$ are
\begin{itemize}
	\item {\em equivalent} if there exists a $\V$-interval $\mathbb{J}$
	and map $i: \mathbb{J} \to \mathcal C$ such that
	$i(0)= c$, $i(1)= c'$;
	\item {\em virtually equivalent} if $c,c'$ are equivalent in some fibrant replacement
	$\mathcal C_f$ of $\mathcal C$ in $\Cat_{\mathfrak{C}_{\mathcal{C}}}(\V)$;
	\item {\em homotopy equivalent} if $c,c'$ are isomorphic in the unenriched category $\pi_0 \mathcal C$.

	Explicitly, this means there are maps 
	$1_\V \xrightarrow{\alpha} \mathcal C_f(c,c')$, 
	$1_\V \xrightarrow{\beta} \mathcal C_f(c',c)$ such that
	$1_{\V} \xrightarrow{\beta \alpha} \mathcal C_f(c,c)$,
	$1_{\V} \xrightarrow{\alpha \beta} \mathcal C_f(c',c')$
	are homotopic to the identities
	$1_{\V} \xrightarrow{id_c} \mathcal C_f(c,c)$,
	$1_{\V} \xrightarrow{id_{c'}} \mathcal C_f(c',c')$.
\end{itemize}
\end{definition}

\begin{remark}\label{VIRTEQRESTA REM}
Given $c,c' \in \mathcal{C}$, write $\iota_{c,c'} \ \colon \{0,1\} \to \mathfrak{C}_{\mathcal{C}}$
for the induced map
(for $\mathfrak{C}_{\mathcal{C}}$ the object set of 
$\mathcal{C}$).
The condition that $c,c'$ are equivalent can then be restated as saying that
there is a $\V$-interval $\mathbb{J}$
together with some map $\mathbb{J} \to \iota^{\**}_{c,c'}\mathcal{C}$ in $\mathsf{Cat}_{\{0,1\}}(\V)$. 

We can similarly restate the notion of virtual equivalence.
Since a $\V$-interval $\mathbb{J}$ is a cofibrant replacement of $\widetilde{\mathbbm{1}}$ in $\mathsf{Cat}_{\{0,1\}}(\V)$
while $\iota^{\**}_{c,c'} \mathcal{C}_f$ is a fibrant replacement of $\iota^{\**}_{c,c'} \mathcal{C}$,
the condition that $c,c'$ are virtually equivalent is precisely the statement that
\[
\Ho \left(\mathsf{Cat}_{\{0,1\}}(\V)\right)\left(\widetilde{\mathbbm{1}},\iota^{\**}_{c,c'} \mathcal{C}\right)
=
[\mathbb{J},\iota^{\**}_{c,c'} \mathcal{C}_f] 
\neq
 \emptyset
\]
i.e. that, up to homotopy, there is at least one map from $\widetilde{\mathbbm{1}}$ to $\iota^{\**}_{c,c'} \mathcal{C}$
in $\mathsf{Cat}_{\{0,1\}}(\V)$.

Note in particular that this does not depend on the choice of 
replacements 
$\mathbb{J}$ and $\mathcal{C}_f$.
\end{remark}

\begin{remark}
That homotopy equivalence of objects is an equivalence relation 
follows from the fact that composites of isomorphisms are isomorphisms.

On the other hand, when checking that equivalence and virtual equivalence are likewise equivalence relations,
the transitive property is not elementary, being instead 
a straightforward consequence of Interval Amalgamation, cf. Lemma \ref{AMALGLEM LEM}.
\end{remark}

\begin{remark}\label{EQUIVNEST_REM}
The notions in Definition \ref{EQUIV_DEF}
are nested: the map $\mathcal{C} \to \mathcal{C}_f$
yields that equivalence implies virtual equivalence;
a map $\mathbb{J} \to \mathcal{C}_f$
with $\mathbb{J}$ a $\mathcal{V}$-interval
induces a map
$\pi_0 \widetilde{\mathbbm{1}} \simeq \pi_0 \mathbb{J} \to \pi_0 \mathcal{C}$,
so that (since $0,1 \in \pi_0 \widetilde{\mathbbm{1}}$ are isomorphic)
virtual equivalence implies homotopy equivalence.

Moreover, \cite{BM13,Cav} show that, under suitable assumptions on $\V$, the converse implications also hold, as summarized below.
We discuss these converse results in what follows.
\begin{equation}\label{BASCOMP EQ}
\begin{tikzcd}[column sep = large]
            \mbox{equivalent}
            \arrow[r, Rightarrow, shift left=2, "\mathrm{always}"]
            &
            \mbox{ virtually equivalent}
            \arrow[r, Rightarrow, shift left=2, "\mathrm{always}"]
            \arrow[l, Rightarrow, shift left = 2, "\substack{$\V\phantom{ }$\mathrm{right} \\ \mathrm{proper}}"]
            &
            \mbox{ homotopy equivalent}
            \arrow[l, Rightarrow, shift left = 2, "\substack{\mathrm{coherence} \\ \mathrm{condition}}"]
\end{tikzcd}
\end{equation}
\end{remark}

\begin{proposition}[{cf. \cite[4.12]{Cav}, \cite[2.10]{BM13}}] \label{RIGHTPROPER PROP}
If $\V$ is right proper, then two colors in a $\V$-category $\mathcal{C}$ are virtually equivalent iff they are equivalent. 
\end{proposition}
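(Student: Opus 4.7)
The ``only if'' direction is immediate from Remark \ref{EQUIVNEST_REM}. For the converse, suppose $c, c' \in \mathcal{C}$ are virtually equivalent, so that by Remark \ref{VIRTEQRESTA REM} there is a $\V$-interval $\mathbb{J}$ together with a map $\mathbb{J} \to \iota_{c,c'}^{\**} \mathcal{C}_f$ in $\Cat_{\set{0,1}}(\V)$, for some chosen fibrant replacement $\mathcal{C} \xrightarrow{\sim} \mathcal{C}_f$ in $\Cat_{\mathfrak{C}_{\mathcal{C}}}(\V)$. The plan is to produce a $\V$-interval mapping into $\iota_{c,c'}^{\**} \mathcal{C}$ itself by pulling $\mathbb{J}$ back along the weak equivalence $\iota_{c,c'}^{\**} \mathcal{C} \to \iota_{c,c'}^{\**} \mathcal{C}_f$, with right properness doing the key work.

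Concretely, we first factor $\mathbb{J} \to \iota_{c,c'}^{\**} \mathcal{C}_f$ in $\Cat_{\set{0,1}}(\V)$ as a trivial cofibration $\mathbb{J} \xrightarrow{\sim} \mathbb{J}'$ followed by a fibration $\mathbb{J}' \twoheadrightarrow \iota_{c,c'}^{\**} \mathcal{C}_f$. Note $\mathbb{J}'$ is still a $\V$-interval, as cofibrancy is preserved by trivial cofibrations and $2$-out-of-$3$ forces $\mathbb{J}'$ to remain weakly equivalent to $\widetilde{\mathbbm{1}}$. We then form the pullback
\[
\begin{tikzcd}
\mathbb{J}'' \arrow[r] \arrow[d,"p"'] & \iota_{c,c'}^{\**} \mathcal{C} \arrow[d,"\sim"] \\
\mathbb{J}' \arrow[r, twoheadrightarrow] & \iota_{c,c'}^{\**} \mathcal{C}_f
\end{tikzcd}
\]
in $\Cat_{\set{0,1}}(\V)$, where the right vertical map is the weak equivalence obtained by restricting the fibrant replacement of $\mathcal{C}$.

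Since weak equivalences, fibrations, and limits in $\Cat_{\set{0,1}}(\V)$ are all computed levelwise from $\V$, right properness of $\V$ transfers to $\Cat_{\set{0,1}}(\V)$; consequently $p \colon \mathbb{J}'' \to \mathbb{J}'$ is a weak equivalence. A cofibrant replacement $\widetilde{\mathbb{J}} \xrightarrow{\sim} \mathbb{J}''$ in $\Cat_{\set{0,1}}(\V)$ is then cofibrant and, via the zigzag $\widetilde{\mathbb{J}} \xrightarrow{\sim} \mathbb{J}'' \xrightarrow{\sim} \mathbb{J}'$ combined with $\mathbb{J}'$ being a $\V$-interval, weakly equivalent to $\widetilde{\mathbbm{1}}$, hence itself a $\V$-interval. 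The composite $\widetilde{\mathbb{J}} \to \mathbb{J}'' \to \iota_{c,c'}^{\**} \mathcal{C}$ then exhibits $c$ and $c'$ as equivalent in $\mathcal{C}$. The main subtle step is the verification of right properness for $\Cat_{\set{0,1}}(\V)$, though once this is established (by the levelwise nature of its model structure) the rest is a standard combination of factorization, pullback, and cofibrant replacement.
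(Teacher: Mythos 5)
Your proof is correct and follows essentially the same route as the paper's: factor the map from the interval into $\iota_{c,c'}^{\**}\mathcal{C}_f$ as a trivial cofibration followed by a fibration, pull back along $\iota_{c,c'}^{\**}\mathcal{C} \to \iota_{c,c'}^{\**}\mathcal{C}_f$, invoke right properness (which passes to $\Cat_{\{0,1\}}(\V)$ since weak equivalences, fibrations, and limits there are created in $\V$), and take a cofibrant replacement to get the desired interval mapping into $\iota_{c,c'}^{\**}\mathcal{C}$. The only cosmetic difference is that you introduce an explicit intermediate object $\mathbb{J}'$ for the factorization and note it is still an interval, whereas the paper absorbs this by simply assuming without loss of generality that the original map is already a fibration.
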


\begin{proof}
Let $c,c'\in \mathcal{C}$ be two colors and (following Remark \ref{VIRTEQRESTA REM})
let $\mathbb{J} \overset{\sim}{\twoheadrightarrow} \iota_{c,c'} \mathcal{C}_F$
exhibit a virtual equivalence between them, where we note
that we can assume the map is a fibration in $\mathsf{Cat}_{\{0,1\}}(\mathcal{C})$
by using the factorization of any map as a ``trivial cofibration followed by a fibration''.
Forming the pullback
\begin{equation}\label{RIGHTPROPER EQ}
\begin{tikzcd}
		 \mathbb{J}' \ar{r} \arrow{d}
	&
		\iota_{c,c'} \mathcal{C} \arrow{d}{\sim}
\\
		\mathbb{J} \ar[twoheadrightarrow]{r} 
	&
		\iota_{c,c'}\mathcal{C}_f
\end{tikzcd}
\end{equation}
right properness of $\V$ implies 
that $\mathbb{J}' \to \mathbb{J}$ is a weak equivalence in 
$\mathsf{Cat}_{\{0,1\}}(\V)$,
and thus choosing a cofibrant replacement
$\mathbb{J}'_c \to \mathbb{J}'$
yields that $c,c'$ are equivalent.
\end{proof}

We now discuss the requirement for homotopy equivalence and virtual equivalence to coincide.
Informally, this will be the case provided any isomorphism 
$c \to c'$ in $\pi_0 \mathcal{C}$
can be suitably lifted to a map 
$\mathbb{J} \to \mathcal C_f$ for some $\V$-interval $\mathbb{J}$.
The following makes this idea precise, cf. \cite[\S 2]{BM13}.

\begin{definition}\label{COH DEF}
	Let $\mathbbm{1}$ be the free $\V$-arrow category and $\mathbb{J}$ a $\V$-interval.
	A cofibration $\mathbbm{1} \to \mathbb{J}$ in $\Cat_{\set{0,1}}(\V)$
	is called \textit{natural} if it fits into a commutative diagram in $\Cat_{\set{0,1}}(\V)$ as on the left below.
\begin{equation}
\begin{tikzcd}
	\mathbbm{1} \arrow[d, tail] \arrow[r]
&
	\widetilde{\mathbbm{1}} \arrow[d, "\sim"]
&&
	\mathbbm{1} \arrow[r, "\alpha"] \arrow[d, tail, dashed]
&
	\mathcal C_f
\\
	\mathbb{J} \arrow[r, "\sim"']
&
	\widetilde{\mathbbm{1}}_f
&&
	\mathbb{J} \arrow[ur, dashed]
\end{tikzcd}
\end{equation}
A homotopy equivalence between two objects in a $\V$-category $\mathcal C$ is called \textit{coherent} if
there is a representative $\alpha: \mathbbm{1} \to \mathcal C_f$ that factors along a natural cofibration,
as on the right above.

Lastly, the monoidal model category $\V$ is said to satisfy the \textit{coherence axiom} if
all homotopy equivalences in every $\V$-category are coherent.
\end{definition}

\begin{remark}
      \label{COH_EX_REM}
      The coherence axiom originates with the work of Boardman-Vogt, who showed that it holds for compactly-generated weak Hausdorff spaces $(\Top, \times)$ \cite[Lem. 4.16]{BV73}.
      
      The coherence axiom is also a consequence of Lurie's \textit{invertibility hypothesis} \cite[A.3.2.12]{Lur09}, a stronger hypothesis, by an argument of Berger-Moerdijk \cite[Rem. 2.19]{BM13}.
\end{remark}

\begin{remark}
Ignoring the more technical requirements,
Definition \ref{COH DEF} loosely says that  
$\mathbbm{1} \rightarrowtail \mathbb{J}$
is natural if the map $1_{\V} = \mathbbm{1}(0,1) \to \mathbb{J}(0,1)$
represents the natural isomorphism
$[id_{1_{\V}}]$ from $0$ to $1$ in $ \pi_0 \widetilde{\mathbbm{1}} \simeq \pi_0 \mathbb{J}$ (cf. Remark \ref{NATISO REM}),
and that the homotopy equivalence $\alpha$ is coherent
if there exists a map $\mathbb{J} \to \mathcal C_f$
such that
$\pi_0 \widetilde{\mathbbm{1}} \simeq
\pi_0 \mathbb{J} \to \pi_0 \mathcal C$
sends the natural isomorphism $[id_{1_{\V}}]$ to $[\alpha]$.

If, in addition, $\V$ is also right proper, we can slightly strengthen this observation, as follows.
\end{remark}

\begin{proposition}\label{ALTCOH PROP}
Suppose $\V$ is right proper and satisfies the coherence axiom.

Then, for any $\V$-category $\mathcal{C}$ and isomorphism  
$[\alpha]$ in $\pi_0 \mathcal{C}$,
there exists a map from an interval 
$\mathbb{J} \to \mathcal{C}$
such that 
$\pi_0 \mathbb{J} \to \pi_0\mathcal{C}$
sends the natural isomorphism $[id_{1_{\V}}]$ to $[\alpha]$.
\end{proposition}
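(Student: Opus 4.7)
The plan is to combine the coherence axiom, which provides a map from an interval into the fibrant replacement $\mathcal{C}_f$ realizing a prescribed homotopy equivalence, with the right-properness pullback trick used in Proposition \ref{RIGHTPROPER PROP} to descend such a map back to $\mathcal{C}$ itself. Writing $\iota = \iota_{c,c'} \colon \{0,1\} \to \mathfrak{C}_{\mathcal{C}}$ for the map associated to the source and target of $[\alpha]$ (as in Remark \ref{VIRTEQRESTA REM}), I first invoke the coherence axiom to produce a natural cofibration $\mathbbm{1} \rightarrowtail \mathbb{J}_0$ in $\mathsf{Cat}_{\{0,1\}}(\V)$ together with an extension $\mathbb{J}_0 \to \iota^{\**} \mathcal{C}_f$ whose restriction to $\mathbbm{1}$ represents $[\alpha]$.

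Next, I factor this map as $\mathbb{J}_0 \overset{\sim}{\rightarrowtail} \mathbb{J}_1 \twoheadrightarrow \iota^{\**} \mathcal{C}_f$ in $\mathsf{Cat}_{\{0,1\}}(\V)$, noting that $\mathbb{J}_1$ remains a $\V$-interval by two-out-of-three. Mimicking the pullback square \eqref{RIGHTPROPER EQ} in the proof of Proposition \ref{RIGHTPROPER PROP}, I pull the fibration $\mathbb{J}_1 \twoheadrightarrow \iota^{\**} \mathcal{C}_f$ back along the weak equivalence $\iota^{\**} \mathcal{C} \to \iota^{\**} \mathcal{C}_f$, obtaining $\mathbb{J}_2 \to \iota^{\**} \mathcal{C}$ with $\mathbb{J}_2 \to \mathbb{J}_1$ a weak equivalence by right properness of $\V$ (weak equivalences and fibrations in $\mathsf{Cat}_{\{0,1\}}(\V)$ being detected levelwise on mapping objects). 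A cofibrant replacement $\mathbb{J} \overset{\sim}{\twoheadrightarrow} \mathbb{J}_2$ then yields a $\V$-interval $\mathbb{J}$ equipped with a composite map $\mathbb{J} \to \iota^{\**}\mathcal{C} \to \mathcal{C}$.

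The main obstacle will be verifying that $\pi_0 \mathbb{J} \to \pi_0 \mathcal{C}$ sends the natural isomorphism $[id_{1_\V}]$ to $[\alpha]$. The strategy is to pass through $\pi_0 \iota^{\**}\mathcal{C}_f$, which is canonically isomorphic to $\pi_0 \iota^{\**}\mathcal{C}$, and track the natural isomorphism along the zigzag $\mathbb{J} \xrightarrow{\sim} \mathbb{J}_2 \xrightarrow{\sim} \mathbb{J}_1 \xleftarrow{\sim} \mathbb{J}_0$. Any weak equivalence between $\V$-intervals fixing the objects $0,1$ automatically induces an isomorphism on $\pi_0$ sending natural isomorphism to natural isomorphism, simply because $\pi_0$ of an interval has a unique isomorphism between $0$ and $1$. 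On the other hand, the defining property of a natural cofibration in Definition \ref{COH DEF} forces $\mathbb{J}_0 \to \iota^{\**}\mathcal{C}_f$ to send the natural isomorphism to $[\alpha]$, so commutativity of the pullback square (after applying $\pi_0$), combined with the bijection $\pi_0 \iota^{\**}\mathcal{C} \xrightarrow{\simeq} \pi_0 \iota^{\**}\mathcal{C}_f$, gives the desired conclusion. The tricky part is the bookkeeping in this last step, since it must simultaneously invoke Definition \ref{COH DEF}, invariance of $\pi_0$ under weak equivalences between intervals, and commutativity of the pullback, but once disentangled the argument is routine.
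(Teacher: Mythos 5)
Your proof is correct and follows the same approach as the paper's: invoke coherence to obtain $\mathbbm{1} \rightarrowtail \mathbb{J}_0 \to \mathcal{C}_f$, factor the second map into a trivial cofibration followed by a fibration, form the right-properness pullback exactly as in the proof of Proposition \ref{RIGHTPROPER PROP}, and cofibrantly replace. The paper's proof is a two-line sketch deferring the bookkeeping to ``arguing as in the proof of Proposition \ref{RIGHTPROPER PROP}''; your explicit tracking of the natural isomorphism along the zigzag of weak equivalences between intervals is precisely the detail it leaves implicit.
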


\begin{proof}
By coherence, we can find a factorization of 
$\alpha$ as 
$\1 \rightarrowtail \mathbb J \twoheadrightarrow \mathcal C_f$
where, just as in the proof of
Proposition \ref{RIGHTPROPER PROP},
we are free to assume the second map is a fibration.
The result then follows by forming the pullback \eqref{RIGHTPROPER EQ} and arguing as in the proof of Proposition \ref{RIGHTPROPER PROP}.
\end{proof}

Berger and Moerdijk then prove the following,
which depends on a careful technical analysis of the $W$-construction applied to free isomorphism $\V$-category
$\widetilde{\mathbbm{1}}$.

\begin{proposition}[{cf. \cite[Prop. 2.24]{BM13}}]
\label{COHAX PROP}
If a cofibrantly generated monoidal model category $\V$
satisfies the monoid axiom and has cofibrant unit,
then $\V$ satisfies the coherence axiom.
\end{proposition}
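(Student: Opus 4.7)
The plan is to construct a canonical $\V$-interval $W\widetilde{\mathbbm{1}}$ via the Boardman-Vogt $W$-construction applied to $\widetilde{\mathbbm{1}} \in \Cat_{\{0,1\}}(\V)$, together with a canonical natural cofibration $\mathbbm{1} \rightarrowtail W\widetilde{\mathbbm{1}}$, such that any homotopy equivalence representative $\alpha\colon \mathbbm{1} \to \mathcal{C}_f$ (which by Definition \ref{EQUIV_DEF} comes packaged with a partner $\beta$ and homotopies $H_0\colon \beta\alpha \simeq id_c$, $H_1\colon \alpha\beta \simeq id_{c'}$) extends along this cofibration. Since $W\widetilde{\mathbbm{1}}$ will, by construction, come equipped with a weak equivalence $W\widetilde{\mathbbm{1}} \xrightarrow{\sim} \widetilde{\mathbbm{1}}_f$ through which $\mathbbm{1} \to \widetilde{\mathbbm{1}}_f$ factors naturally, this will witness every homotopy equivalence as coherent.

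Concretely, I would define $W\widetilde{\mathbbm{1}}(0,1)$ as a sequential colimit of mapping objects indexed by the length $n$ of alternating "zig-zag words" $\alpha, \beta\alpha, \alpha\beta\alpha, \ldots$, where at each stage one glues in a cylinder object $C(1_\V)$ (i.e., a cofibrant replacement of the fold $1_\V \amalg 1_\V \to 1_\V$ tensored suitably) to encode a homotopy between the shorter word and the longer one. Concretely each generator of length $n+1$ is obtained from one of length $n$ by attaching along one of the two relations $\beta\alpha \simeq id$ or $\alpha\beta \simeq id$. The four mapping objects $W\widetilde{\mathbbm{1}}(i,j)$ are built in parallel, with composition induced by concatenation of words. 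The construction is standard; the key point is that, because $1_\V$ is cofibrant, each gluing is a pushout of a cofibration between cofibrant objects in $\V$, so the Interval Cofibrancy Theorem \ref{INTCOF THM} (which is where the monoid axiom enters) ensures that $W\widetilde{\mathbbm{1}}$ is cofibrant in $\Cat_{\{0,1\}}(\V)$; contractibility of its mapping objects then follows by a standard telescope argument, so $W\widetilde{\mathbbm{1}}$ is indeed a $\V$-interval. The inclusion $\mathbbm{1} \to W\widetilde{\mathbbm{1}}$ sending the generator $1_\V \to \mathbbm{1}(0,1)$ to the length-$1$ word is then a cofibration by construction, and naturality in the sense of Definition \ref{COH DEF} is immediate from the fact that both $W\widetilde{\mathbbm{1}}$ and $\widetilde{\mathbbm{1}}_f$ send the generator of $\mathbbm{1}(0,1)$ to the class of $[id_{1_\V}]$ in $\pi_0$.

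To extend $\alpha\colon \mathbbm{1} \to \mathcal{C}_f$ to a map $W\widetilde{\mathbbm{1}} \to \mathcal{C}_f$, I would inductively lift the zig-zag words: the length-$1$ generators go to $\alpha$ and (after choosing $\beta$ as in Definition \ref{EQUIV_DEF}) to $\beta$, while the cylinder glued at stage $n+1$ is mapped using the homotopies $H_0, H_1$ provided by the hypothesis that $\alpha$ represents a homotopy equivalence. Fibrancy of $\mathcal{C}_f$ ensures that each successive lift can be solved as a standard lifting problem of cofibrations against a fibration between fibrant objects.

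The main obstacle is verifying that the $W$-construction sketched above is truly cofibrant and that its mapping objects are weakly contractible, so that it genuinely qualifies as a $\V$-interval. Both facts hinge on the monoid axiom: cofibrancy requires that each attaching map be a cofibration between cofibrant objects (ensured by the cofibrant unit together with the fact, supplied by Theorem \ref{INTCOF THM}, that the endomorphism monoids appearing as tensor factors are themselves cofibrant), while contractibility is proved by exhibiting explicit contracting homotopies on the telescope, using that the monoid axiom permits smashing cofibrations into trivial cofibrations. Once these technical points are settled, the extension argument in the previous paragraph is formal, and the proposition follows.
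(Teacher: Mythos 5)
Your high-level strategy — build a universal $\V$-interval via a Boardman--Vogt $W$-construction on $\widetilde{\mathbbm{1}}$, equip it with a natural cofibration $\mathbbm{1} \rightarrowtail W\widetilde{\mathbbm{1}}$, and extend any homotopy equivalence $\alpha \colon \mathbbm{1} \to \mathcal{C}_f$ along that cofibration — is exactly the strategy of \cite[Prop.~2.24]{BM13}, and the paper's proof simply delegates to that result. But the paper does not redo that argument: it observes that the entire BM13 proof applies verbatim once one replaces the adequacy hypothesis by the monoid axiom (which is only used to supply the existence of the relevant model structures), with the single exception of \cite[Lemma~2.23]{BM13}, which builds the \emph{segment} (a monoid in $(\V_\**,\wedge)$, cf.\ \cite[Def.~4.1]{BM06}) needed to \emph{define} the $W$-construction in the first place. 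The paper's entire contribution is showing that this segment exists under the monoid axiom, via the semi-model structure on $\mathsf{Seg}_\V$. Your proposal replaces the segment with an unstructured cylinder $C(1_\V)$, but the monoid structure on the interval is what makes composition in $W\widetilde{\mathbbm{1}}$ functorial (lengths must add under concatenation); an unstructured cylinder does not suffice, and your ad hoc telescope of ``zig-zag words'' is not cleanly identified with the genuine $W$-construction.

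Two concrete steps in your proposal are unsound as written. First, you invoke Theorem~\ref{INTCOF THM} to deduce that your construction is cofibrant in $\Cat_{\{0,1\}}(\V)$, but that theorem runs in the opposite direction: it \emph{consumes} a cofibrant $\mathbb{J}$ and concludes that $\mathbb{J}(0,0)$ is a cofibrant monoid. It cannot be used to \emph{produce} cofibrancy of $W\widetilde{\mathbbm{1}}$, and levelwise cofibrancy of the mapping objects in $\V$ is much weaker than cofibrancy of the $\V$-category in $\Cat_{\{0,1\}}(\V)$. (In the actual $W$-construction, cofibrancy is built in because the construction is a bar-type resolution by free $\V$-categories and cofibration-cell attachments; establishing this is precisely where the segment is needed.) Second, the claim that ``the monoid axiom permits smashing cofibrations into trivial cofibrations'' misstates what the monoid axiom says. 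It concerns transfinite composites of pushouts of (trivial cofibration)$\otimes X$ being weak equivalences, and its role here is solely to guarantee the existence of the relevant (semi-)model structures, as the paper emphasizes. Finally, in your extension step, the claim that ``fibrancy of $\mathcal{C}_f$ ensures each successive lift can be solved as a standard lifting problem of cofibrations against a fibration'' is not automatic: fibrancy gives lifting against \emph{trivial} cofibrations, not arbitrary ones, so either the attaching maps must be trivial cofibrations (they are generally not) or one must use the homotopy-equivalence data $H_0, H_1$ to build the extension by hand, which is the careful inductive argument in BM13 rather than a formal lifting.
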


Just as in Theorem \ref{INTCOF THM},
we have replaced the adequacy assumption \cite[Def. 1.1]{BM13}
with the monoid axiom, which suffices for the existence of the relevant model structures. 
Yet again the proof in loc. cit. makes no direct use of adequacy, 
though special mention should be made of \cite[Lemma 2.23]{BM13}, 
which builds the \emph{interval} \cite[Def. 4.1]{BM06} in $\V$
required to define the $W$-construction.

The pointed category $\V_{\**}=\V_{1_{\V}//1_{\V}}$
of factorizations
$1_{\V} \to X \to 1_{\V}$
of the identity
$1_{\V} \xrightarrow{=} 1_{\V}$
has a monoidal structure
$X \wedge Y  = \mathop{\mathrm{coeq}} \left(1_{\V} \leftarrow X \amalg_{1_{\V}} Y \to X \otimes Y\right)$ with unit $1_{\V} \amalg 1_{\V}$,
which is readily seen to define a cofibrantly generated monoidal model category with cofibrant unit whenever $(\V,\otimes)$ is one.
A \emph{segment} \cite[Def. 4.1]{BM06} is then a monoid
in $(\V_{\**},\wedge)$,
while an interval is a segment $H$ for which the two natural maps
$1_{\V} \amalg 1_{\V} \rightarrowtail H \xrightarrow{\sim} 1_{\V}$
are a cofibration and weak equivalence in $\V$.
Our hypothesis are not quite strong enough 
to guarantee that the category of segments $\mathsf{Seg}_{\V}$
has a full model structure 
(the monoid axiom for $(\V,\otimes)$ does not imply the monoid axiom for $(\V_{\**},\wedge)$) but,
by Remarks \ref{SEMI_REM}, \ref{GTRIV REM}
we nonetheless have a semi-model structure. This is enough to build an interval as a cofibrant replacement
$H \xrightarrow{\sim} 1_{\V}$
of the terminal segment $1_{V}$, with the fact that the forgetful functor
$\mathsf{Seg}_{\V} \to \V_{\**}$
preserves cofibrations between cofibrant objects 
(by the semi-model structure version of \cite[Thm. \ref{OC-THMII}]{BP_FCOP};
see Remarks \ref{SEMI_REM} and \ref{GTRIV REM})
implying that
$1_{\V} \amalg 1_{\V} \rightarrowtail H$
is indeed a cofibration in $\V$.

\vskip 10pt

Replacing the notion of equivalence of objects
in Definition \ref{MODEL_DEFN}
with that of homotopy equivalence, 
one obtains the notion of Dwyer-Kan equivalence
in the formulation of Theorem \ref{THMA}.

\begin{definition}\label{DKEQUIV_DEF}
Let $\F$ be a $(G,\Sigma)$-family with enough units.
We say a map $\O \to \P$ in $\Op^G_\bullet(\V)$ is:
\begin{itemize}
\item \textit{$\F$-$\pi_0$-essentially surjective} if
	$j^{\**}\pi_0 \O^H \to j^{\**}\pi_0 \P^H$
	is essentially surjective for $H \in \F_1$;
\item a \textit{$\F$-Dwyer-Kan equivalence} if
	it is a local $\F$-weak equivalence and $\F$-$\pi_0$-essentially
	surjective.
\end{itemize}
\end{definition}

\begin{remark}\label{CATEQUIV REM}
The requirement that 
a $\F$-Dwyer-Kan equivalence $\O \to \P$
be a local $\F$-weak equivalence
implies that the maps of categories 
$j^{\**}\pi_0 \O^H \to j^{\**}\pi_0 \P^H$
must be local isomorphisms, 
and thus equivalences in the category $\mathsf{Cat}$
of (unenriched) categories in the usual sense.
\end{remark}

\begin{corollary}\label{WEDKEQ COR}
$\F$-weak equivalences in $\mathsf{Op}^G_\bullet(\V)$
are $\F$-Dwyer-Kan equivalences.
	
Further, the converse holds provided that
$\V$ is a cofibrantly generated monoidal model category
that satisfies the monoid axiom, is right proper,
and has a cofibrant unit.	
\end{corollary}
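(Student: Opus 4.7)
The proof will proceed by observing that both notions share the local $\F$-weak equivalence requirement, so one only needs to compare the two forms of essential surjectivity, namely $\F$-essential surjectivity (using $\V$-intervals) and $\F$-$\pi_0$-essential surjectivity (using isomorphisms in $\pi_0$). Both conditions are indexed by the same collection $\F_1$ and concern the underlying $\V$-categories $j^{\**} \O^H \to j^{\**} \P^H$, so the argument reduces to comparing, for a $\V$-functor $\mathcal{C} \to \mathcal{D}$, the following two conditions on an object $d \in \mathcal{D}$: that there is some $c$ together with an interval $\mathbb{J} \to \mathcal{D}$ with endpoints $F(c)$ and $d$, versus that there is some $c$ such that $F(c)$ and $d$ are isomorphic in $\pi_0 \mathcal{D}$.

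For the forward direction, which holds in full generality, I would invoke the string of implications in Remark \ref{EQUIVNEST_REM}: equivalence of objects implies virtual equivalence (via $\mathcal{D} \to \mathcal{D}_f$), which in turn implies homotopy equivalence (via the induced map $\pi_0 \widetilde{\mathbbm{1}} \simeq \pi_0 \mathbb{J} \to \pi_0 \mathcal{D}$). Concretely, given $d \in j^{\**}\P^H$ and a witness $c \in j^{\**}\O^H$ together with an interval map $i \colon \mathbb{J} \to j^{\**}\P^H$ satisfying $i(0) = F(c)$ and $i(1) = d$ as provided by $\F$-essential surjectivity, the induced arrow in $\pi_0 j^{\**}\P^H$ exhibits $F(c)$ and $d$ as isomorphic, yielding $\F$-$\pi_0$-essential surjectivity.

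The converse direction is the main obstacle, as one must promote an abstract isomorphism class in $\pi_0$ back to a genuine interval map. The assumptions on $\V$ (cofibrantly generated, monoid axiom, cofibrant unit) allow us to invoke Proposition \ref{COHAX PROP} to conclude that $\V$ satisfies the coherence axiom. Combined with right properness, Proposition \ref{ALTCOH PROP} then applies: given any isomorphism $[\alpha] \colon F(c) \to d$ in $\pi_0 j^{\**}\P^H$ supplied by $\F$-$\pi_0$-essential surjectivity, we obtain a $\V$-interval $\mathbb{J}$ and a map $i \colon \mathbb{J} \to j^{\**}\P^H$ such that $\pi_0 \mathbb{J} \to \pi_0 j^{\**}\P^H$ sends the natural isomorphism $[\mathrm{id}_{1_{\V}}]$ to $[\alpha]$; in particular $i(0) = F(c)$ and $i(1) = d$, which is precisely the data required by $\F$-essential surjectivity. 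This completes the equivalence of the two notions under the stated hypotheses.
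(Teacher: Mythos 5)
Your proof is correct and takes essentially the same route as the paper's: both directions reduce to the chain of implications among equivalence, virtual equivalence, and homotopy equivalence in Remark \ref{EQUIVNEST_REM}, with the converse handled via the coherence axiom (Proposition \ref{COHAX PROP}) and right properness. The only cosmetic difference is that you invoke Proposition \ref{ALTCOH PROP}, which packages Proposition \ref{RIGHTPROPER PROP} together with the coherence axiom into a single statement, whereas the paper's one-line proof cites Propositions \ref{RIGHTPROPER PROP} and \ref{COHAX PROP} and the diagram \eqref{BASCOMP EQ} directly; the underlying content is identical.
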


\begin{proof}
	This follows by \eqref{BASCOMP EQ} for the first claim
	and by Propositions \ref{RIGHTPROPER PROP} and \ref{COHAX PROP}
	for the converse.
\end{proof}

\begin{proposition}\label{2OUTOF3 PROP}
Suppose that $\V$ satisfies conditions
(i) through (v) and (vii) in Theorem \ref{THMA} and 
let $\F$ be a $(G,\Sigma)$-family that has enough units. 
Then:
\begin{enumerate}[label=(\roman*)]
\item $\F$-weak equivalences between fibrant objects in $\mathsf{Op}^G_\bullet(\V)$
satisfy $2$-out-of-$3$;
\item if $\V$ is right proper, $\F$-weak equivalences in $\mathsf{Op}^G_\bullet(\V)$
satisfy $2$-out-of-$3$;
\item $\F$-Dwyer-Kan equivalences in $\mathsf{Op}^G_\bullet(\V)$
satisfy $2$-out-of-$3$.
\end{enumerate}
\end{proposition}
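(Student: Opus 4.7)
The plan is to first prove part (iii) for $\F$-Dwyer-Kan equivalences, and then deduce (i) and (ii) using Corollary \ref{WEDKEQ COR} together with the chain of implications in Remark \ref{EQUIVNEST_REM} between ``equivalent'', ``virtually equivalent'', and ``homotopy equivalent''.

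For (iii), I would handle each of the three $2$-out-of-$3$ cases for a composable pair $F : \O \to \P$, $G : \P \to \Q$ separately. The $\F$-$\pi_0$-essentially surjective condition inherits $2$-out-of-$3$ from the corresponding property in unenriched categories: by Remark \ref{WETRFCAN REM} a local $\F$-weak equivalence is fully faithful on each $j^{\**}\pi_0(-)^H$, and essential surjectivity together with full faithfulness in $\mathsf{Cat}$ satisfies $2$-out-of-$3$. For the local $\F$-weak equivalence portion, the cases in which the two known equivalences are $\{F,G\}$ or $\{GF,G\}$ are direct, following by applying $2$-out-of-$3$ in $\V$ levelwise at each $\mathfrak{C}_\O$-signature $\vect{C}$ with stabilizer $\Lambda$. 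The main case is showing that $G$ is a local $\F$-weak equivalence assuming $F$ and $GF$ are, together with $F$ being $\F$-$\pi_0$-essentially surjective. For a $\mathfrak{C}_\P$-signature $\vect{D}$ stabilized by some $\Lambda \in \F_n$, the $\pi_0$-essential surjectivity of $F^H$, for $H$ the projection of the stabilizers of each $i \in \underline{n}_+$ to $G$ (which lie in $\F_1$ precisely by the enough units hypothesis, cf. Remark \ref{CHOOSESIGN REM}), produces a $\mathfrak{C}_\O$-signature $\vect{C}$ such that $F\vect{C}$ is colorwise homotopy equivalent to $\vect{D}$.

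The main obstacle, deferred to \S \ref{HMTYEQ SEC}, is a homotopical invariance statement ensuring that $\P(-)^\Lambda$ and $\Q(-)^\Lambda$ convert such colorwise homotopy equivalences of signatures into weak equivalences in $\V$; this is precisely the motivation for the enough units condition (Remark \ref{WHYEU REM}). Granting this, one obtains weak equivalences $\P(F\vect{C})^\Lambda \simeq \P(\vect{D})^\Lambda$ and $\Q(GF\vect{C})^\Lambda \simeq \Q(G\vect{D})^\Lambda$, so that $2$-out-of-$3$ in $\V$ applied at $\vect{C}$ to the composite $\O(\vect{C})^\Lambda \to \P(F\vect{C})^\Lambda \to \Q(GF\vect{C})^\Lambda$ delivers the required weak equivalence at $\vect{D}$.

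For (i) and (ii), I invoke Corollary \ref{WEDKEQ COR}, which identifies the $\F$-weak equivalences with the $\F$-Dwyer-Kan equivalences whenever the three notions of equivalence of objects in Remark \ref{EQUIVNEST_REM} coincide on the targets. For (ii), right properness gives the equivalence $\Leftrightarrow$ virtual equivalence via Proposition \ref{RIGHTPROPER PROP}, while Proposition \ref{COHAX PROP} (applicable under hypotheses (i)--(iv) of Theorem \ref{THMA}) gives virtual equivalence $\Leftrightarrow$ homotopy equivalence, so (ii) reduces to (iii). For (i), when source and target are fibrant one may take $\mathcal{C}_f = \mathcal{C}$, making virtual equivalence automatically coincide with equivalence without needing right properness, and Proposition \ref{COHAX PROP} again provides the comparison with homotopy equivalence, reducing (i) to (iii) restricted to fibrant operads. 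The genuine difficulty throughout remains the homotopical invariance result of \S \ref{HMTYEQ SEC}, where the enough units condition is essential.
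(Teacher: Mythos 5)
Your proposal is correct and arrives at the same hard technical bottleneck (the claim deferred to Proposition \ref{23HARDCASE PROP}, needing enough units) by essentially the same route, but it reorganizes the deduction of parts (i) and (ii) around part (iii). The paper instead proves (i) and (iii) in parallel --- handling the essential surjectivity assertions of part (i) directly in each case (via Lemma \ref{TRANSCOMP_ES_LEM} for composites, and, for the case where $\bar{F}$ and $\bar{F}F$ are known, by lifting the interval map $\mathbb{J}\to\iota^{\**}_{Fa,b}(j^{\**}\bar{F}^{\**}\mathcal{Q}^H)$ to $\iota^{\**}_{Fa,b}(j^{\**}\mathcal{P}^H)$ using that the latter is a weak equivalence of fibrant objects in $\mathsf{Cat}_{\{0,1\}}(\V)$) --- invoking the forward direction of Corollary \ref{WEDKEQ COR} only for the hard case, and then deduces (ii) from (i) by a color-fixed fibrant replacement together with Proposition \ref{RIGHTPROPER PROP}. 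Your route replaces those direct (i)-specific arguments with the observation that, between $\F$-fibrant operads, fibrancy lets one take $\mathcal{C}_f=\mathcal{C}$ so that virtual equivalence equals equivalence, and coherence (Proposition \ref{COHAX PROP}, available under (i)--(iv)) then collapses it to homotopy equivalence, making $\F$-essential surjectivity and $\F$-$\pi_0$-essential surjectivity agree. This yields a fibrant-objects variant of the converse direction of Corollary \ref{WEDKEQ COR} (not stated in the paper, but following exactly as you sketch), reducing (i) outright to (iii). The upshot of your organization is that it isolates (iii) as the genuinely operadic claim and treats (i), (ii) as corollaries; the paper's version avoids having to articulate the unstated fibrant-objects variant of Corollary \ref{WEDKEQ COR} but pays with a longer case-by-case argument. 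Both deposit the actual difficulty in Proposition \ref{23HARDCASE PROP}, and your reduction is non-circular since neither \ref{COHAX PROP} nor \ref{RIGHTPROPER PROP} nor \ref{WEDKEQ COR} depends on $2$-out-of-$3$.
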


\begin{proof}
Consider the diagram in $\mathsf{Op}^G_\bullet(\V)$ on the left below,
and the induced maps on the right for each
$\mathfrak{C}_{\O}$-profile $\vect{C}$ and 
$\Lambda \in \F_{\vect{C}}$.
\begin{equation}\label{23EASYDIAG EQ}
	\begin{tikzcd}[row sep=5pt]
		\O \arrow{rr}{\bar{F}F}
		\arrow{dr}[swap]{F}
	&&
		\mathcal{Q} 
&&%
		\O(\vect{C})^{\Lambda} \arrow{rr}
		\arrow{dr}
	&&
		\mathcal{Q} (\bar{F}F(\vect{C}))^{\Lambda}
\\
	&
		\mathcal{P} \ar{ru}[swap]{\bar{F}}
	&
&&%
	&
		\mathcal{P}(F(\vect{C}))^{\Lambda} \ar{ru}
	&
	\end{tikzcd}
\end{equation}
We first address (i) and (iii) in parallel
(where for (i) we assume $\O,\P,\mathcal{Q}$ are fibrant).

Suppose first that $F,\bar{F}$ are $\F$-weak equivalences/$\F$-Dwyer Kan equivalences.
Then $2$-out-of-$3$ applied to the right diagram in 
\eqref{23EASYDIAG EQ}
implies that $\bar{F}F$ is a 
local $\F$-weak equivalence.
Moreover, in the $\F$-weak equivalence case
Lemma \ref{TRANSCOMP_ES_LEM}
implies $\bar{F}F$ is $\F$-essentially surjective,
while in the $\F$-Dwyer-Kan equivalence case
Remark \ref{CATEQUIV REM}
and $2$-out-of-$3$ for (unenriched categories)
implies 
$\bar{F}F$ is $\F$-$\pi_0$-essentially surjective.

Suppose next that $\bar{F},\bar{F}F$ are $\F$-weak equivalences/$\F$-Dwyer Kan equivalences.
It is again immediate that $F$ is a 
local $\F$-weak equivalence and that, 
in the $\F$-Dwyer Kan equivalence case,
$F$ is $\F$-$\pi_0$-essentially surjective.
It remains to establish the
$\F$-essential surjectivity of $F$ in the $\F$-weak equivalence case.
Given $b \in \mathcal{P}^H$,
the $\F$-essential surjectivity of $\bar{F}F$
yields $a \in \O^H$ and a map
$\mathbb{J} 
	\to 
\iota^{\**}_{\bar{F}Fa,\bar{F}b} 
\left( j^{\**} \mathcal{Q}^H \right)
	=
\iota^{\**}_{Fa,b} 
\left( j^{\**} \bar{F}^{\**}\mathcal{Q}^H \right)$
in
$\mathsf{Cat}_{\{0,1\}}(\V)$
(see Remark \ref{VIRTEQRESTA REM}).
But since 
$\P \to \bar{F}^{\**} \mathcal{Q}$
is a weak equivalence of fibrant objects in 
$\mathsf{Op}^G_{\mathfrak{C}_{\P}}(\V)$,
the map
$
\iota^{\**}_{Fa,b} 
\left( j^{\**} \mathcal{P}^H \right)
	\to
\iota^{\**}_{Fa,b} 
\left( j^{\**} \bar{F}^{\**}\mathcal{Q}^H \right)$
is likewise a weak equivalence of
fibrant objects in $\mathsf{Cat}_{\{0,1\}}(\V)$,
and one thus also has a map
$\mathbb{J} 
	\to 
\iota^{\**}_{Fa,b} 
\left( j^{\**} \mathcal{P}^H \right)
$,
showing that $F$ is $\F$-essential surjective.

Consider now the remaining case where
$F,\bar{F}F$ are
$\F$-weak equivalences/$\F$-Dwyer Kan equivalences.
The $\F$-essential surjectivity/$\F$-$\pi_0$-essential surjectivity
of $\bar{F}F$
states that, for any $c \in \mathcal{Q}^H$,
there exists $a \in \mathcal{O}^H$ and a map
$\mathbb{J} \to \iota^{\**}_{\bar{F}Fa,c} j^{\**}\mathcal{Q}^H$
/isomorphism between $\bar{F}Fa$ and 
$c$ in $j^{\**}\pi_0\mathcal{Q}^H$.
Hence, setting $b = Fa \in \mathcal{P}^H$,
we see that $\bar{F}$ is also
$\F$-essential surjective/$\F$-$\pi_0$-essential surjective.
It remains to show that
$F$ is a local $\F$-weak equivalence.
In contrast with the previous cases,
applying $2$-out-of-$3$ to the right diagram in \eqref{23EASYDIAG EQ}
only yields equivalences 
$\P(\vect{D})^{\Lambda} \to \mathcal{Q}(\bar{F}(\vect{D}))^{\Lambda}$
for $\mathfrak{C}_{\P}$-profiles of the form
$\vect{D} = \bar{F} (\vect{C})$ for some $\mathfrak{C}_{\O}$-profile $\vect{C}$ 
and $\Lambda \in \F_{\vect{C}}$.
To show that we have equivalences for all
$\mathfrak{C}_{\P}$-profiles $\vect{D}$
and all $\Lambda \in \F_{\vect{D}}$
(note that, even if $\vect{D} = F(\vect{C})$,
one can only guarantee $\F_{\vect{C}} \subseteq \F_{\vect{D}}$,
rather than $\F_{\vect{C}} = \F_{\vect{D}}$),
one needs a careful modification of the analogous non-equivariant argument \cite[Lemma 4.14]{Cav}.
As such, we postpone this claim to 
Proposition \ref{23HARDCASE PROP} below
(note that, by Corollary \ref{WEDKEQ COR}, 
only the $\F$-Dwyer Kan equivalence case needs be considered),
and dedicate the entirety of \S \ref{HMTYEQ SEC}
to proving that result.
We note that this is the case that requires that $\F$ has enough units (Definition \ref{FAMRESUNI DEF}).

Lastly, we address (ii).
Given a map of operads $F\colon \O \to \P$ and a color fixed fibrant replacement
$F_f \colon \O_f \to \P_f$,
it is immediate that 
$F$ is a $\F$-local weak equivalence iff $F_f$ is.
Moreover, if $\V$ is right proper, Proposition \ref{RIGHTPROPER PROP}
implies that $F$ is $\F$-essentially surjective iff $F_f$ is.
In other words, if $\V$ is right proper,
$F$ is a $\F$-weak equivalence iff $F_f$ is.
But (ii) is now immediate from (i).
\end{proof}

\subsection{Homotopy equivalences and fully faithfulness}
\label{HMTYEQ SEC}

This section is dedicated to proving the following, 
which is the remaining claim in the proof of Proposition \ref{2OUTOF3 PROP},
and is the claim requiring
the ``enough units'' condition in Definition \ref{FAMRESUNI DEF}.

\begin{proposition}\label{23HARDCASE PROP}
Suppose $\V$ satisfies the conditions 
(i) through (v) and (vii) in Theorem \ref{THMA},
and let $\F$ be a $(G,\Sigma)$-family that has enough units. 
Consider the diagram below 
in $\mathsf{Op}^G_\bullet(\V)$.
\begin{equation}\label{23HARDDIAG EQ}
	\begin{tikzcd}[row sep=5pt]
		\O \arrow{rr}{\bar{F}F}
		\arrow{dr}[swap]{F}
	&&
		\mathcal{Q} 
	\\
	&
		\mathcal{P} \ar{ru}[swap]{\bar{F}}
	\end{tikzcd}
\end{equation}
If $F$ and $\bar{F}F$ are $\F$-Dwyer-Kan equivalences
then $\bar{F}$ is a local $\F$-weak equivalence.
\end{proposition}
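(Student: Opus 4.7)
The plan is to reduce to the case already handled by 2-out-of-3 on levels by replacing the $\mathfrak{C}_{\P}$-signature $\vect{D}$ with a signature of the form $F(\vect{C})$ for some $\Lambda$-stable $\vect{C} \in \Sigma_{\mathfrak{C}_{\O}}$, up to homotopy. So fix $\vect{D} = (\mathfrak{d}_i)_{0 \leq i \leq n}$ and $\Lambda \in \F_{\vect{D}}$, and write $H_i = \pi_G(\Lambda_i)$ for the isotropy groups of Remark \ref{CHOOSESIGN REM}. The ``enough units'' condition forces $\pi_G(\Lambda) \in \F_1$, so that each $H_i$ lies in $\F_1$ since families are closed under subgroups. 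Using $\F$-$\pi_0$-essential surjectivity of $F$ at each level $H_{i_j}$ for $i_j$ an orbit representative of $\underline{n}_+/\Lambda$, choose $\mathfrak{c}_{i_j} \in \O^{H_{i_j}}$ and a representative $\alpha_{i_j} \colon F\mathfrak{c}_{i_j} \to \mathfrak{d}_{i_j}$ of an isomorphism in $\pi_0 j^{\**} \P^{H_{i_j}}$ (after fibrant replacement). Transporting via the $\Lambda$-action by setting $\mathfrak{c}_i = g \mathfrak{c}_{i_j}$ and $\alpha_i = g \alpha_{i_j}$ whenever $\sigma(i_j) = i$ for a chosen $(g,\sigma) \in \Lambda$ assembles, via Remark \ref{CHOOSESIGN REM}, a $\Lambda$-stable signature $\vect{C}$ together with a $\Lambda$-equivariant family $\{\alpha_i\}$ of ``color equivalences''.

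The heart of the proof is then showing that this family induces weak equivalences
\[
	\P(F\vect{C})^{\Lambda} \xrightarrow{\sim} \P(\vect{D})^{\Lambda},
	\qquad
	\Q(\bar F F \vect{C})^{\Lambda} \xrightarrow{\sim} \Q(\bar F \vect{D})^{\Lambda}
\]
in $\V$, compatibly with the maps induced by $\bar F$. The comparison arises from operadic composition with $\alpha_0$ at the root and with chosen homotopy inverses of the $\alpha_i$ (for $i \geq 1$) at the leaves, yielding a $\Lambda$-equivariant zigzag whose passage to $\Lambda$-fixed points is a weak equivalence; this uses the global monoid axiom (iv), the fixed-points-preserve-genuine-trivial-cofibrations condition (vii), and the genuine $\otimes$-trivial cofibration machinery of Proposition \ref{GOTC_PROP}. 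Naturality in $\P \to \bar F^{\**}\Q$ furnishes compatibility with $\bar F$. Combined with the weak equivalence $\P(F\vect{C})^\Lambda \to \Q(\bar F F \vect{C})^\Lambda$ provided by $F, \bar F F$ being local $\F$-weak equivalences (via 2-out-of-3 in $\V$, valid since $\Lambda \in \F_{\vect{C}}$ by construction), 2-out-of-3 in $\V$ now yields that $\P(\vect{D})^\Lambda \to \Q(\bar F \vect{D})^\Lambda$ is a weak equivalence.

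The principal obstacle is the ``homotopy invariance of $\Lambda$-fixed levels under coherent color replacement'' in the equivariant setting; this is precisely where the ``enough units'' condition is essential, as it permits the local color replacements to be performed at each isotropy level $H_i \in \F_1$, with the required $\Lambda$-equivariance of the family $\{\alpha_i\}$ arising automatically from orbit-transport rather than from an independent coherence argument. In the non-equivariant analogue \cite[Lemma 4.14]{Cav} this subtlety is invisible, but equivariantly the isotropy conditions on colors are the reason this ``third case'' of 2-out-of-3 is nontrivial.
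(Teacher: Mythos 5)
Your proposal is correct and takes essentially the same route as the paper: you use the ``enough units'' condition to place each isotropy group $H_i$ in $\F_1$, invoke $\F$-$\pi_0$-essential surjectivity of $F$ at orbit representatives of $\underline{n}_+/\Lambda$ to produce colors in $\mathfrak{C}_{\O}$ and homotopy equivalences, extend $\Lambda$-equivariantly by orbit transport (Remark \ref{CHOOSESIGN REM}), compare $\Lambda$-fixed levels via equivariant pre/postcomposition (which the paper packages as Lemma \ref{LAMBEQMAPS LEM} and Corollary \ref{ALBEETA COR}), and finish by 2-out-of-3 -- exactly the paper's argument. The one point you handle only parenthetically (``after fibrant replacement'') the paper makes explicit up front: it first replaces $\O,\P,\mathcal{Q}$ by a functorial color-fixing $\F$-fibrant replacement of the whole diagram, which is what lets the chosen representatives $\alpha_i,\beta_i,\eta_i,\bar\eta_i$ live in the operads themselves (rather than in separate, possibly incompatible fibrant replacements of $\P$ and $\mathcal{Q}$) and makes the square commute strictly; you should make that reduction explicit rather than leave it implicit.
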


The proof of this result will adapt the proof of the non-equivariant analogue \cite[Lemma 4.14]{Cav},
but one must be more careful since equivariance introduces a number of subtleties.
For the sake of motivation, 
we first discuss a concrete example.

\begin{example}
Let $\V=\mathsf{sSet}$, 
$G = \{1,i,-1,-i\} \simeq \mathbb{Z}_{/4}$ 
be the group of quartic roots of unit,
and $\mathfrak{C} = \{\mathfrak{a}, \mathfrak{b}, i \mathfrak{b}, 
\mathfrak{c} \}$,
where we implicitly have
$i\mathfrak{a} = \mathfrak{a}$,
$-\mathfrak{b} = \mathfrak{b}$,
$i\mathfrak{c} = \mathfrak{c}$.
Consider the $\mathfrak{C}$-corollas below.
\begin{equation}
	\begin{tikzpicture}[auto,grow=up, level distance = 2.2em,
	every node/.style={font=\scriptsize,inner sep = 2pt}]%
		\tikzstyle{level 2}=[sibling distance=3em]%
			\node at (0,0) [font = \normalsize] {$\vect{B}$}%
				child{node [dummy] {}%
					child{node {}%
					edge from parent node [swap] {$\mathfrak{b}$}}%
					child[level distance = 2.9em]{node {}%
					edge from parent node [swap,	near end] {$i\mathfrak{b}$}}%
					child[level distance = 2.9em]{node {}%
					edge from parent node [near end] {$i\mathfrak{b}$}}%
					child{node {}%
					edge from parent node  {$\mathfrak{b}$}}%
				edge from parent node [swap] {$\mathfrak{a}$}};%
			\node at (4.5,0) [font = \normalsize] {$\vect{C}$}%
				child{node [dummy] {}%
					child{node {}%
					edge from parent node [swap] {$\mathfrak{c}$}}%
					child[level distance = 2.9em]{node {}%
					edge from parent node [swap,	near end] {$\mathfrak{c}$}}%
					child[level distance = 2.9em]{node {}%
					edge from parent node [near end] {$\mathfrak{c}$}}%
					child{node {}%
					edge from parent node  {$\mathfrak{c}$}}%
				edge from parent node [swap] {$\mathfrak{a}$}};%
	\end{tikzpicture}%
\end{equation}%
Let
$\O \xrightarrow{F} \P \xrightarrow{\bar{F}} \mathcal{Q}$
be as in \eqref{23HARDDIAG EQ},
and suppose 
$\mathfrak{C}_{\O} = \{\mathfrak{a},\mathfrak{c}\}$,
$\mathfrak{C}_{\P} = \mathfrak{C} = \{\mathfrak{a},\mathfrak{b},i\mathfrak{b},\mathfrak{c}\}$.
Then, if $F$ and $\bar{F}F$ are local Kan equivalences,
it is clear that
$\P(\vect{C}) \to \mathcal{Q}(\bar{F}(\vect{C}))$
is a Kan equivalence, since 
$\vect{C}$ is in the image of $F$.
However, to ensure that
$\P(\vect{B}) \to \mathcal{Q}(\bar{F}(\vect{B}))$
is also a Kan equivalence, 
we need an essential surjectivity condition on $F$.
For concreteness,
suppose there was a homotopy equivalence
$\alpha \colon \mathfrak{b} \to \mathfrak{c}$
in $\P$
(cf. Definition \ref{EQUIV_DEF}; note that $\alpha \in \P(\mathfrak{b};\mathfrak{c})$).
Then, by $G$-equivariance, one also has a homotopy equivalence
$i\alpha \colon i\mathfrak{b} \to \mathfrak{c}$ in $\P$,
so that
(cf. Corollary \ref{ALBEETA COR}; 
see also \cite[Lemma 2.1]{BM13},
\cite[Lemma 4.14]{Cav})
precomposing  
with $\alpha$, $i \alpha$ gives a string of Kan equivalences
\begin{equation}\label{ITERWE EQ}
\P(\vect{C})
=
\P(\mathfrak{c},\mathfrak{c},\mathfrak{c},\mathfrak{c};\mathfrak{a})
\sim 
\P(\mathfrak{b},\mathfrak{c},\mathfrak{c},\mathfrak{c};\mathfrak{a})
\sim
\P(\mathfrak{b},i\mathfrak{b},\mathfrak{c},\mathfrak{c};\mathfrak{a})
\sim
\P(\mathfrak{b},i\mathfrak{b},i\mathfrak{b},\mathfrak{c};\mathfrak{a})
\sim
\P(\mathfrak{b},i\mathfrak{b},i\mathfrak{b},\mathfrak{b};\mathfrak{a})
=
\P(\vect{B})
\end{equation}
and similarly
$\mathcal{Q}(\bar{F}(\vect{C})) 
\sim
\cdots 
\sim
\mathcal{Q}(\bar{F}(\vect{B}))$,
so that
$\P(\vect{B}) \to \mathcal{Q}(\bar{F}(\vect{B}))$
is indeed a Kan equivalence.

Our discussion thus far has ignored a key feature of the equivariant setting: the choice of $(G,\Sigma)$-family $\F$.
Given a general such $\F$, 
and assuming $F$ and $\bar{F} F$ are local $\F$-Kan equivalences,
it is again clear that
$\P(\vect{C})^{\Lambda} \to \mathcal{Q}(\bar{F}(\vect{C}))^{\Lambda}$
is a Kan equivalence for all $\Lambda \in \F_{\vect{C}}$.
And, yet again, to conclude that
$\P(\vect{B})^{\Lambda} \to \mathcal{Q}(\bar{F}(\vect{B}))^{\Lambda}$
is also a Kan equivalence for $\Lambda \in \F_{\vect{B}}$ we further need an essential surjectivity requirement on $F$.
As it turns out, this requirement on $F$ depends on 
$\Lambda \leq G \times \Sigma_4^{op}$ itself so that, for concreteness, we set (writing $(g,\sigma) \in G \times \Sigma_4^{op}$ simply as $g\sigma$)
\[
\Lambda = \langle (14)(23), i (12)(34) \rangle.
\]
and assume that $\Lambda \in \F_4$.
Note that $\Lambda$ stabilizes both 
$\vect{B}$ and $\vect{C}$
(cf. Definition \ref{STABS DEF}),
so that 
$\Lambda \leq \mathsf{Aut}_{G \ltimes \Sigma^{op}_{\mathfrak{C}}}(\vect{B})$,
$\Lambda \leq \mathsf{Aut}_{G \ltimes \Sigma^{op}_{\mathfrak{C}}}(\vect{C})$
and $\P(\vect{B})^{\Lambda}, \P(\vect{C})^{\Lambda}$
are well-defined.
At this point it may be tempting to think that, 
given a homotopy equivalence  
$\alpha \colon \mathfrak{b} \to \mathfrak{c}$,
one may simply apply $\Lambda$-fixed points 
to \eqref{ITERWE EQ}
to obtain a Kan equivalence 
$\P(\vect{B})^{\Lambda} \simeq \P(\vect{C})^{\Lambda}$.
However, this argument fails: not only are the intermediate objects
in \eqref{ITERWE EQ} not $\Lambda$-equivariant, so that the intermediate maps therein can not possibly be $\Lambda$-equivariant,
neither is it necessarily the case that the composite Kan equivalence
$\P(\vect{C}) \simeq \P(\vect{B})$ is $\Lambda$-equivariant,
unless one makes a further assumption on the homotopy equivalence
$\alpha\colon \mathfrak{b} \to \mathfrak{c}$.
Namely, since $\Lambda$
must contain the square of the element $i(12)(34)$,
which is $-1 \in G \leq G \times \Sigma_4^{op}$
one must have that 
$-\alpha =\alpha$
(note that $-\alpha \colon \mathfrak{b} \to \mathfrak{c}$
since $-\mathfrak{b}=\mathfrak{b},-\mathfrak{c}=\mathfrak{c}$).

It is then easy to check that, if $-\alpha = \alpha$,
the composite \eqref{ITERWE EQ}
is indeed $\Lambda$-equivariant
(this also follows from Lemma \ref{LAMBEQMAPS LEM},
which covers the general case), so that one indeed has
$\P(\vect{C})^{\Lambda} \simeq \P(\vect{B})^{\Lambda}$.
\end{example}

In the following, 
recall from Definition \ref{STABS DEF} that
for 
$\vect{C} = (\mathfrak{c}_1,\cdots,\mathfrak{c}_n;\mathfrak{c}_0)$
one has that $\Lambda$ stabilizes $\vect{C}$
iff 
$g \mathfrak{c}_{\sigma(i)} = \mathfrak{c}_i$
for all $(g,\sigma) \in \Lambda, 0\leq i \leq n$,
motivating the condition $g \kappa_{\sigma(i)} = \kappa_i$
in Lemma \ref{LAMBEQMAPS LEM}(i).

\begin{lemma}\label{LAMBEQMAPS LEM}
Fix a $G$-set of colors $\mathfrak{C}$.
Let $\P \in \mathsf{Op}^G_{\mathfrak{C}}(\V)$ be an operad,
$\vect{B}=(\mathfrak{b}_1,\cdots,\mathfrak{b}_n;\mathfrak{b}_0),\vect{C}=(\mathfrak{c}_1,\cdots,\mathfrak{c}_n;\mathfrak{c}_0)$ be $\mathfrak{C}$-profiles,
and suppose 
$\Lambda \leq G \times \Sigma_n^{op}$
stabilizes $\vect{B},\vect{C}$, i.e. 
$\Lambda \leq \mathsf{Aut}_{G\ltimes \Sigma_{\mathfrak{C}}^{op}}(\vect{B})$, 
$\Lambda \leq \mathsf{Aut}_{G\ltimes \Sigma_{\mathfrak{C}}^{op}}(\vect{C})$.

Moreover, suppose that for some $\mathbb{K} \in \V$
one has maps 
$\mathbb{K} \xrightarrow{\kappa_i} \P(\mathfrak{b}_i;\mathfrak{c}_i)$,  $0\leq i \leq n$
such that $g\kappa_{\sigma(i)} = \kappa_i$ for all $(g,\sigma) \in \Lambda$, $0\leq i \leq n$. Then:
\begin{enumerate}[label=(\roman*)]
\item if $\vect{B},\vect{C}$ have a common target $\mathfrak{b}_0=\mathfrak{c}_0$,
one has
$\Lambda$-equivariant maps as below, where the right map is the composition in $\P$
and the action of $(g,\sigma) \in \Lambda$ on $\mathbb{K}^{\otimes n}$
is the permutation action of $\sigma$.
\begin{equation}\label{LAMBEQMAPS EQ}
	\P(\vect{C}) \otimes 
	\mathbb{K}^{\otimes n}
\xrightarrow{id \otimes \underset{1\leq i \leq n}{\bigotimes} \kappa_i}
	\P(\vect{C}) \otimes \bigotimes_{1\leq i \leq n} \P(\mathfrak{b}_i;\mathfrak{c}_i)
	\xrightarrow{\circ}
	\P(\vect{B})
\end{equation}
\item
if $\vect{B},\vect{C}$ have common sources $\mathfrak{b}_i=\mathfrak{c}_i,1\leq i \leq n$,
one has
$\Lambda$-equivariant maps as below, where the right map is the composition in $\P$.
\[
	\mathbb{K} \otimes \P(\vect{B}) 
\xrightarrow{\kappa \otimes id}
	\P(\mathfrak{b}_0;\mathfrak{c}_0) \otimes \P(\vect{B}) 
	\xrightarrow{\circ}
	\P(\vect{C})
\]
\end{enumerate}
\end{lemma}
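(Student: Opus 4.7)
The strategy is that each composite factors as a substitution-of-$\kappa$'s followed by operadic composition, decoupling the verification of $\Lambda$-equivariance into (a) equivariance of the operadic composition (immediate from the operad axioms once we restrict to subgroups $\Lambda$ stabilizing the relevant signatures) and (b) equivariance of the $\kappa$-substitution, which is precisely what the hypothesis $g \kappa_{\sigma(i)} = \kappa_i$ enforces.

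For part (i), I would first note that the operadic composition $\mu \colon \P(\vect{C}) \otimes \bigotimes_{1 \leq i \leq n} \P(\mathfrak{b}_i;\mathfrak{c}_i) \to \P(\vect{B})$ is $(G \times \Sigma_n^{op})$-equivariant when its source is given the action sending $p \otimes (q_1,\ldots,q_n)$ under $(g,\sigma)$ to $((g,\sigma) \cdot p) \otimes (g q_{\sigma(1)}, \ldots, g q_{\sigma(n)})$, encoding the combined $\Sigma$- and $G$-equivariance of operadic composition in a $G$-operad. Since $\Lambda$ stabilizes both $\vect{C}$ and $\vect{B}$, this action makes $\mu$ a $\Lambda$-equivariant map into $\P(\vect{B})$ with its stabilizer action. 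Then $\Id \otimes \bigotimes_i \kappa_i$ from $\P(\vect{C}) \otimes \mathbb{K}^{\otimes n}$ (with $\Lambda$ acting via the stabilizer on $\P(\vect{C})$ and by $\sigma$-permutation on $\mathbb{K}^{\otimes n}$) is $\Lambda$-equivariant precisely when, for each $(g,\sigma) \in \Lambda$ and $1 \leq i \leq n$, the composite $g \cdot \kappa_{\sigma(i)} \colon \mathbb{K} \to \P(\mathfrak{b}_i;\mathfrak{c}_i)$ equals $\kappa_i$, which is exactly the hypothesis.

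Part (ii) is analogous and strictly simpler: since $\sigma(0)=0$ forces $\sigma$ to permute only positions $1, \ldots, n$, none of which appear in this tensor product, the permutation component of $(g,\sigma)$ does not act on the $\mathbb{K}$ factor. The unary operadic composition $\P(\mathfrak{b}_0;\mathfrak{c}_0) \otimes \P(\vect{B}) \to \P(\vect{C})$ is $\Lambda$-equivariant for the diagonal action (using that $\mathfrak{b}_0, \mathfrak{c}_0$ are fixed by $\pi_G(\Lambda)$), and $\Lambda$-equivariance of $\kappa_0 \otimes \Id$ reduces to the $i=0$ case of the hypothesis, namely $g \kappa_0 = \kappa_0$ for all $(g,\sigma) \in \Lambda$. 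The only genuine difficulty is notational: one must carefully fix the convention for how $\sigma \in \Sigma_n^{op}$ permutes factors of an indexed tensor product, ensuring consistency with $\vect{C}\sigma = (\mathfrak{c}_{\sigma(i)})$ from Remark \ref{GLOBSIG REM} and with the ``induced'' action that makes $\mu$ equivariant; once this is pinned down, the argument is a routine diagram chase.
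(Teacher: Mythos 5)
Your proof is correct and follows the same two-step decomposition as the paper: factor the composite into the $\kappa$-substitution map followed by the operadic composition, and verify $\Lambda$-equivariance of each separately using the hypothesis $g\kappa_{\sigma(i)} = \kappa_i$ for the first and the fact that $\Lambda$ stabilizes both signatures for the second. The paper packages the first verification abstractly via functoriality of the tensor product $(\Sigma_n \wr \V^{op})^{op} \to \V$ (while explicitly noting that your direct-calculation route also works) and packages the second by exhibiting a two-level $\mathfrak{C}$-tree $\vect{T}$ stabilized by $\Lambda$ whose associated operadic multiplication is the composition map, but these are presentational variants of the argument you give.
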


Note that, if $\mathbb{K}=1_{\V}$
in Lemma \ref{LAMBEQMAPS LEM},
we get $\Lambda$-equivariant maps
$\P(\vect{C}) \to \P(\vect{B})$,
$\P(\vect{B}) \to \P(\vect{C})$.

\begin{proof}
We discuss only (i), with (ii) following from a similar but easier argument. 

The fact that the left map in \eqref{LAMBEQMAPS EQ} is
$\Lambda$-equivariant can be deduced
from the $g \kappa_{\sigma(i)} = \kappa_i$ requirement
via direct calculation, but we prefer a more abstract argument.
The maps 
$\P(\mathfrak{b}_i;\mathfrak{c}_i) \to 
\P(g\mathfrak{b}_{\sigma(i)};g\mathfrak{c}_{\sigma(i)}) $
for $(g,\sigma) \in \Lambda$
make the tuple
$\left(\P(\mathfrak{b}_i;\mathfrak{c}_i) \right)_{1\leq i \leq n}$
into a $\Lambda$-equivariant object of 
$(\Sigma_n \wr \V^{op})^{op}$,
and likewise
$(\mathbb{K})_{1\leq i \leq n}
\xrightarrow{(\kappa_i)}
\left(\P(\mathfrak{b}_i;\mathfrak{c}_i) \right)_{1\leq i \leq n}
$
into a $\Lambda$-equivariant map in $(\Sigma_n \wr \V^{op})^{op}$.
Hence $\Lambda$-equivariance of the left map in 
\eqref{LAMBEQMAPS EQ}
follows by functoriality of 
$(\Sigma_n \wr \V^{op})^{op} \xrightarrow{\otimes} \V$.

To check that the right map in \eqref{LAMBEQMAPS EQ}
is also $\Lambda$-equivariant,
consider the $\mathfrak{C}$-trees below, including the profiles
$\vect{B},\vect{C}$.
\[
\begin{tikzpicture}
      [grow=up,auto,level distance=2.3em,every node/.style = {font=\footnotesize},dummy/.style={circle,draw,inner sep=0pt,minimum size=1.75mm}]
      
      \node at (0,0) [font=\normalsize]{$\vect{B}$}
		child{node [dummy] {}
			child{
			edge from parent node [swap,near end] {$\mathfrak{b}_n$} node [name=Kn] {}}
			child{
			edge from parent node [near end] {$\mathfrak{b}_1$}
node [name=Kone,swap] {}}
		edge from parent node [swap] {$\mathfrak{c}_0$}
		};
		\draw [dotted,thick] (Kone) -- (Kn) ;
	\node at (4.5,0) [font=\normalsize]{$\vect{C}$}
		child{node [dummy] {}
			child{
			edge from parent node [swap,near end] {$\mathfrak{c}_n$} node [name=Kn] {}}
			child{
			edge from parent node [near end] {$\mathfrak{c}_1$}
node [name=Kone,swap] {}}
		edge from parent node [swap] {$\mathfrak{c}_0$}
		};
		\draw [dotted,thick] (Kone) -- (Kn) ;
		\draw [dotted,thick] (Kone) -- (Kn) ;
	\node at (9,0) [font=\normalsize]{$\vect{T}$}
		child{node [dummy] {}
			child{node [dummy] {}
				child{
				edge from parent node [swap] {$\mathfrak{b}_n$} node {}}
			edge from parent node [swap,near end] {$\mathfrak{c}_n$} node [name=Kn] {}}
			child{node [dummy] {}
				child{
				edge from parent node {$\mathfrak{b}_1$} node {}}
			edge from parent node [near end] {$\mathfrak{c}_1$}
node [name=Kone,swap] {}}
		edge from parent node [swap] {$\mathfrak{c}_0$}
		};
		\draw [dotted,thick] (Kone) -- (Kn) ;
\end{tikzpicture}
\]
Clearly the fact that $\Lambda$ stabilizes
$\vect{B},\vect{C}$
implies that $\Lambda$ stabilizes $\vect{T}$ as well.
Thus, the $\Lambda$-equivariance of the right map in 
\eqref{LAMBEQMAPS EQ}
follows by noting that said map is the multiplication
$\bigotimes_{v \in \boldsymbol{V}(T)} \P(\vect{T}_v)
\to 
\mathcal{P}(\mathsf{lr}(\vect{T}))
=
\mathcal{P}(\vect{B})$
encoded by the tree $\vect{T}$.
\end{proof}

\begin{remark}\label{CHOOSEKAPPA REM}
Generalizing Remark \ref{CHOOSESIGN REM},
a choice of 
$\kappa_i$ as in Lemma \ref{LAMBEQMAPS LEM}
is in bijection with a 
choice of $H_i$-equivariant maps
$\kappa_i \colon \mathbb{K} \to \mathcal{P}(\mathfrak{b}_i;\mathfrak{c}_i)$ for $i$ ranging over a set of representatives of
$\underline{n}_+/\Lambda$.
\end{remark}

In the next result we let $\mathbb{C}$ denote a good cylinder object for $1_{\V}$ (cf. \cite[Def. 4.2]{DS95}), 
meaning that one has a factorization
$1_{\V} \amalg 1_{\V} \rightarrowtail \mathbb{C} \xrightarrow{\sim} 1_{\V}$
of the fold map, where the first map is a cofibration
and the second map is a weak equivalence.

\begin{corollary}\label{ALBEETA COR}
Assume that $\V$ is a closed monoidal model category with cofibrant pushout powers,
and such that fixed points in $\V^G$ send genuine trivial cofibrations to trivial cofibrations
(i.e. $\V$ satisfies (ii),(iii),(v),(vii) in Theorem \ref{THMA}). Additionally, suppose 
that $\V$ satisfies the usual monoid axiom of \cite{SS00} (see also \cite[Rem. \ref{OC-MONAX_REM}]{BP_FCOP}).
      
Let $\P$, 
$\vect{B}=(\mathfrak{b}_1,\cdots,\mathfrak{b}_n;\mathfrak{b}_0)$,
$\vect{C}=(\mathfrak{c}_1,\cdots,\mathfrak{c}_n;\mathfrak{c}_0)$ 
and $\Lambda$ be as in 
Lemma \ref{LAMBEQMAPS LEM}.

Moreover, suppose 
$\vect{B},\vect{C}$
are ``$\Lambda$-homotopy equivalent'',
by which we mean that there exist 
\[
\alpha_i \colon 1_{\V} \to \P(\mathfrak{b}_i;\mathfrak{c}_i)
\qquad 
\beta_i \colon 1_{\V} \to \P(\mathfrak{c}_i;\mathfrak{b}_i)
\qquad
\eta_i \colon \mathbb{C} \to \P(\mathfrak{b}_i;\mathfrak{b}_i)
\qquad
\bar{\eta}_i \colon \mathbb{C} \to \P(\mathfrak{c}_i;\mathfrak{c}_i)
\qquad
0\leq i \leq n
\] 
with $\eta_i$ (resp. $\bar{\eta}_i$) a left homotopy between 
$\beta_i\alpha_i$ and $id_{\mathfrak{b}_i}$
(resp. 
$\alpha_i\beta_i$ and $id_{\mathfrak{c}_i}$)
and such that
\[
g \alpha_{\sigma(i)} = \alpha_i \qquad
g \beta_{\sigma(i)} = \beta_i \qquad
g \eta_{\sigma(i)} = \eta_i \qquad
g \bar{\eta}_{\sigma(i)} = \bar{\eta}_i \qquad
(g,\sigma) \in \Lambda,0\leq i \leq n.
\]
Then:
\begin{enumerate}[label=(\roman*)]
\item if $\vect{B},\vect{C}$ have a common target $\mathfrak{b}_0=\mathfrak{c}_0$, the precomposition maps
\[
\P(\vect{C})^{\Lambda} \xrightarrow{(\alpha_i)^{\**}} \P(\vect{B})^{\Lambda}
\qquad
\P(\vect{B})^{\Lambda} \xrightarrow{(\beta_i)^{\**}}
\P(\vect{C})^{\Lambda}
\]
induced by $\alpha_i,\beta_i,1\leq i \leq n$ 
(cf. Lemma \ref{LAMBEQMAPS LEM}(i)) are weak equivalences in $\V$;
\item if $\vect{B},\vect{C}$ have common sources $\mathfrak{b}_i=\mathfrak{c}_i, 1 \leq i \leq n$, the postcomposition maps
\[
\P(\vect{B})^{\Lambda} \xrightarrow{(\alpha_0)_{\**}}
\P(\vect{C})^{\Lambda}
\qquad
\P(\vect{C})^{\Lambda} \xrightarrow{(\beta_0)_{\**}}
\P(\vect{B})^{\Lambda}
\]
induced by $\alpha_0,\beta_0$
(cf. Lemma \ref{LAMBEQMAPS LEM}(ii))
are weak equivalences in $\V$.
\end{enumerate}
\end{corollary}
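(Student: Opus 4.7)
The plan is to adapt the classical argument (cf.\ \cite[Lemma 2.1]{BM13}, \cite[Lemma 4.14]{Cav}) to the equivariant setting. Specifically, I will use Lemma \ref{LAMBEQMAPS LEM} to produce $\Lambda$-equivariant left homotopies between the two composites $(\beta_i)^{\**}(\alpha_i)^{\**}$ and $(\alpha_i)^{\**}(\beta_i)^{\**}$ and the respective identities, then show these homotopies descend to actual homotopies after taking $\Lambda$-fixed points, allowing me to conclude by $2$-out-of-$6$. I treat only part (i); part (ii) is dual.

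First I would apply Lemma \ref{LAMBEQMAPS LEM}(i) with $\mathbb{K}=1_{\V}$ and structure maps $\alpha_i$ (resp.\ $\beta_i$) to produce the desired $\Lambda$-equivariant precomposition maps $(\alpha_i)^{\**}, (\beta_i)^{\**}$. Applying the same lemma with $\mathbb{K}=\mathbb{C}$, signatures $\vect{B}'=\vect{C}'=\vect{C}$ (resp.\ $\vect{B}'=\vect{C}'=\vect{B}$), and structure maps $\bar{\eta}_i$ (resp.\ $\eta_i$) yields $\Lambda$-equivariant ``cylinder maps''
\[
\bar{H}\colon \P(\vect{C}) \otimes \mathbb{C}^{\otimes n} \to \P(\vect{C}), \qquad H\colon \P(\vect{B}) \otimes \mathbb{C}^{\otimes n} \to \P(\vect{B})
\]
whose restrictions along the two endpoint inclusions $1_{\V} \hookrightarrow \mathbb{C}^{\otimes n}$ (induced by raising $1_{\V} \amalg 1_{\V} \rightarrowtail \mathbb{C}$ to the $n$-th tensor power) recover respectively $(\beta_i)^{\**}(\alpha_i)^{\**}, id_{\P(\vect{C})}$ and $(\alpha_i)^{\**}(\beta_i)^{\**}, id_{\P(\vect{B})}$.

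Next I would argue that $A \otimes \mathbb{C}^{\otimes n}$ functions as a good cylinder for $A \in \{\P(\vect{B}),\P(\vect{C})\}$ in the genuine model structure on $\V^{\Lambda}$. Since $1_{\V}$ is cofibrant and $1_{\V} \amalg 1_{\V} \rightarrowtail \mathbb{C}$ is a cofibration, $\mathbb{C}$ is cofibrant in $\V$, so the cofibrant pushout power axiom (v) implies that $\mathbb{C}^{\otimes n}$ is cofibrant in $\V^{\Sigma_n^{op}}$ with the genuine model structure and that $\mathbb{C}^{\otimes n} \xrightarrow{\sim} 1_{\V}$ is a genuine weak equivalence; restriction along $\Lambda \to \Sigma_n^{op}$ preserves both properties. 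Combining with the monoid axiom then shows that the endpoint inclusions $A \rightarrowtail A \otimes \mathbb{C}^{\otimes n}$ are genuine trivial cofibrations in $\V^{\Lambda}$, and condition (vii) forces $(-)^{\Lambda}$ to send these to trivial cofibrations in $\V$. Hence $\bar{H}^{\Lambda}$ and $H^{\Lambda}$ constitute honest left homotopies in $\V$ witnessing $(\beta_i)^{\**}(\alpha_i)^{\**}|_{\Lambda} \simeq id$ and $(\alpha_i)^{\**}(\beta_i)^{\**}|_{\Lambda} \simeq id$; in particular both composites are weak equivalences in $\V$, and $2$-out-of-$6$ then forces $(\alpha_i)^{\**}|_{\Lambda}$ and $(\beta_i)^{\**}|_{\Lambda}$ to be weak equivalences, completing (i). Part (ii) follows analogously via Lemma \ref{LAMBEQMAPS LEM}(ii).

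The main obstacle is the verification sketched in the last paragraph, namely that $A \otimes \mathbb{C}^{\otimes n}$ really functions as a genuine cylinder for $A$ in $\V^{\Lambda}$ when the $\Lambda$-action on $\mathbb{C}^{\otimes n}$ is only through its projection to $\Sigma_n^{op}$. This requires a careful interplay of the cofibrant pushout power axiom (v), the monoid axiom, and the fixed-point condition (vii); in particular, care is needed since the operadic levels $A = \P(\vect{B}), \P(\vect{C})$ are not a priori cofibrant in $\V^{\Lambda}$, so one will likely need a cofibrant replacement argument (or a more delicate application of the monoid axiom in the genuine setting) to ensure that the left homotopies survive taking $\Lambda$-fixed points.
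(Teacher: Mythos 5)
Your blueprint (produce $\Lambda$-equivariant homotopies via Lemma~\ref{LAMBEQMAPS LEM} with $\mathbb{K}=\mathbb{C}$, then show they survive $(-)^\Lambda$, then conclude by $2$-out-of-$3$) is the right one, but your cylinder step has a genuine gap. You assert that the monoid axiom shows the endpoint inclusions $A\rightarrowtail A\otimes\mathbb{C}^{\otimes n}$ are \emph{genuine trivial cofibrations} in $\V^\Lambda$, so that (vii) applies. This fails twice over: the (usual or global) monoid axiom only produces \emph{weak equivalences}, never trivial cofibrations, so (vii) is simply not applicable to such maps; and the Corollary assumes only the ordinary Schwede--Shipley monoid axiom in $\V$, \emph{not} the global monoid axiom, so tensoring the non-cofibrant object $\P(\vect{B})$ with the genuine trivial cofibration $1_{\V}\to\mathbb{C}^{\otimes n}$ does not even yield a genuine weak equivalence in $\V^\Lambda$. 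Your suggested fix --- cofibrantly replace $\P(\vect{B})$ in $\V^\Lambda$ --- is also not obviously workable, as you would then need to compare $Q\otimes\mathbb{C}^{\otimes n}$ with $\P(\vect{B})\otimes\mathbb{C}^{\otimes n}$, which requires Quillen/Ken Brown arguments not available under these hypotheses.

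The paper avoids all this by applying (vii) to the \emph{small} map $1_{\V}\to\mathbb{C}^{\otimes n}$ (a $\Sigma_n$-genuine trivial cofibration by (v)) rather than to the tensored map, to obtain a trivial cofibration $1_{\V}\to(\mathbb{C}^{\otimes n})^\Lambda$ in $\V$; it then uses $\P(\vect{B})^\Lambda\otimes(\mathbb{C}^{\otimes n})^\Lambda$ (tensor product \emph{in} $\V$, after taking fixed points of both factors) as the cylinder object, where the usual monoid axiom in $\V$ applies directly. The homotopy is recovered by composing with the lax-monoidal comparison $\P(\vect{B})^\Lambda\otimes(\mathbb{C}^{\otimes n})^\Lambda\to(\P(\vect{B})\otimes\mathbb{C}^{\otimes n})^\Lambda$ and then with the $\Lambda$-fixed points of the $\Lambda$-equivariant map $(\eta_i)^{\**}$ from Lemma~\ref{LAMBEQMAPS LEM}. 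The correct form of your cofibrant-replacement idea --- replacing $\P(\vect{B})^\Lambda$ in $\V$, not $\P(\vect{B})$ in $\V^\Lambda$ --- is exactly what Remark~\ref{MONAXSUP REM} uses to dispense with the monoid axiom entirely.
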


\begin{proof}
We address only (i), with (ii) being similar but easier.

Applying Lemma \ref{LAMBEQMAPS LEM},
one obtains diagrams as below,
which will show that
$(\alpha_i)^{\**}$ and
$(\beta_i)^{\**}$
are inverse up to homotopy provided we show
$\P(\vect{B})^\Lambda \otimes 
\left(\mathbb{C}^{\otimes n}\right)^{\Lambda}$
is a cylinder on $\P(\vect{B})^\Lambda$, and likewise for
$\P(\vect{C})^\Lambda$.
\begin{equation}\label{ALBEETA EQ}
\begin{tikzcd}[column sep=31pt]
	\P(\vect{B})^\Lambda \amalg \P(\vect{B})^\Lambda 
	\arrow[d] \arrow[r, "{((\beta_i)^{\**}(\alpha_i)^{\**}, id)}"]
&
	\P(\vect{B})^\Lambda
&
	\P(\vect{C})^\Lambda \amalg \P(\vect{C})^\Lambda 
	\arrow[d] \arrow[r, "{((\alpha_i)^{\**}(\beta_i)^{\**}, id)}"]
&
	\P(\vect{C})^\Lambda
\\                  
	\P(\vect{B})^\Lambda \otimes 
	\left(\mathbb{C}^{\otimes n}\right)^{\Lambda}
	\arrow[r]
&
	\left(\P(\vect{B}) \otimes 
	\mathbb{C}^{\otimes n}\right)^{\Lambda}
	\arrow{u}[swap]{(\eta_i)^{\**}}
&
	\P(\vect{C})^\Lambda \otimes 
	\left(\mathbb{C}^{\otimes n}\right)^{\Lambda}
	\arrow[r]
&
	\left(\P(\vect{C}) \otimes 
	\mathbb{C}^{\otimes n}\right)^{\Lambda}
	\arrow{u}[swap]{(\bar{\eta}_i)^{\**}}
\end{tikzcd}
\end{equation}
This amounts to checking that the canonical map
$\P(\vect{B})^\Lambda \otimes 
\left(\mathbb{C}^{\otimes n}\right)^{\Lambda}
\to \P(\vect{B})^\Lambda$
induced by $\mathbb{C} \to 1_{\V}$
is a weak equivalence, and by $2$-out-of-$3$ it suffices to show this for one of the sections
$\P(\vect{B})^\Lambda \to
\P(\vect{B})^\Lambda \otimes 
\left(\mathbb{C}^{\otimes n}\right)^{\Lambda}$.
But, since 
$1_{\V} \simeq 1_{\V}^{\otimes} \to \mathbb{C}^{\otimes n}$
is a $\Sigma_n$-genuine trivial cofibration  
\cite[Prop. \ref{OC-SIGMAWRGF PROP}(ii)]{BP_FCOP},
the fixed point map
$1_{\V} \to \left(\mathbb{C}^{\otimes n}\right)^{\Lambda}$
is a trivial cofibration in $\V$ by assumption on $\V$,
and thus the required claim follows by the monoid axiom.
\end{proof}


\begin{remark}\label{MONAXSUP REM}
	The monoid axiom assumption in Corollary \ref{ALBEETA COR} 
	is actually superfluous.
	To see this, note first that if
	$\mathcal{P}(\vect{B})^{\Lambda}$ happens to be cofibrant in $\V$,
	then the claim that
	$\mathcal{P}(\vect{B})^{\Lambda} \otimes (\mathbb{C}^{\otimes n})^{\Lambda}$
	is a cylinder follows from $\V$ being a monoidal model category.
	But then, writing $Q \xrightarrow{\sim} \mathcal{P}(\vect{B})^{\Lambda}$
	for a cofibrant replacement,
	by prepending 
	$Q \amalg Q \to Q \otimes (\mathbb{C}^{\otimes n})^{\Lambda}$
	to the left square in \eqref{ALBEETA EQ}
	one still has that 
	$(\beta_i)^{\**}(\alpha_i)^{\**}$
	and
	$id$
	are homotopic, and similarly for the right square in \eqref{ALBEETA EQ}.
\end{remark}

\begin{remark}\label{ALBEETA_REM}
If in Corollary \ref{ALBEETA COR}
one has that $\P \in \mathsf{Cat}^G_{\mathfrak{C}}(\V)$
is a $\V$-category,
the only interesting case is that of
$\vect{B},\vect{C}$ unary profiles.
But then in the proof it is $n=1$,
and $\Lambda$ necessarily acts trivially on
$\mathbb{C}^{\otimes n} = \mathbb{C}$,
so in this case the result follows without using either condition (v) or (vii) in Theorem \ref{THMA}.
\end{remark}

\begin{proof}[Proof of Proposition \ref{23HARDCASE PROP}]
Set $\mathfrak{C} = \mathfrak{C}_{\P}$.
We need to show that,
for every $\mathfrak{C}$-profile $\vect{C}$ and
$\Lambda \in \F_{\vect{C}}$,
the map 
$\P(\vect{C})^{\Lambda} \to \mathcal{Q}(\bar{F}(\vect{C}))^{\Lambda}$
is a weak equivalence in $\V$.

Moreover, since one has a functorial $\F$-fibrant replacement functor
(fixing object sets)
and natural transformation $\O \to \O_f$,
we reduce to the case where all of $\O,\P,\mathcal{Q}$
are $\F$-fibrant.

As in Remarks \ref{CHOOSESIGN REM} and \ref{CHOOSEKAPPA REM},
write $\Lambda_i$ for the stabilizer of $i \in \underline{n}_+$ under the action of $\Lambda$,
and $H_i =\pi_G(\Lambda_i)$ for the projection.
Note that the requirement that $\F$ has enough units 
says precisely that $H_i \in \F_1$ for all $i$.
Using $\F$-$\pi_0$-essential surjectivity allows us to,
for $i$ ranging over a set of representatives of
$\underline{n}_+/\Lambda$,
find $H_i$-fixed $\mathfrak{a}_i\in \mathfrak{C}_{\O}$
together with $H_i$-fixed isomorphisms 
$F(\mathfrak{a}_i) =  \mathfrak{b}_i \xrightarrow{[\alpha_i]} \mathfrak{c}_i$
in $\pi_0 j^{\**}\P^H$.
These now yield $\alpha_i,\beta_i,\eta_i,\bar{\eta}_i$
as in Corollary \ref{ALBEETA COR}
(note that we first choose these for $i$ in the set of representatives of $\underline{n}_+/\Lambda$,
then extend them to all $i$ by conjugation,
cf. Remark \ref{CHOOSEKAPPA REM})
so that by Corollary \ref{ALBEETA COR}
we have the following commutative square,
where the horizontal maps are weak equivalences in $\V$.
\begin{equation}\label{23SQDIAG EQ}
\begin{tikzcd}[column sep = 45pt]
	\P(\vect{C})^{\Lambda}
	\arrow[d]
	\arrow{r}{\sim}[swap]{(\alpha_i)^{\**}(\beta_0)_{\**}}
&
	\P(\vect{B})^{\Lambda}
	\arrow[d]
\\
	\mathcal{Q}\left(\bar{F}(\vect{C})\right)^{\Lambda}
	\arrow{r}{\sim}[swap]{(\bar{F}\alpha_i)^{\**}(\bar{F}\beta_0)_{\**}}
&
	\mathcal{Q}\left(\bar{F}(\vect{B})\right)^{\Lambda}
\end{tikzcd}
\end{equation}
%
%
Next write $\vect{A} = \{\mathfrak{a}_1,\cdots,\mathfrak{a}_n;\mathfrak{a}_0\}$
for the $\mathfrak{C}_{\O}$-profile determined by the 
$\mathfrak{a}_i$, where we again use Remark \ref{CHOOSESIGN REM}
(recall that the $\mathfrak{a}_i$ were only chosen for $i$ in a set of representatives of $\underline{n}_+/\Lambda$),
which moreover implies that $\Lambda$ stabilizes $\vect A$, and thus $\Lambda \in \F_{\vect{A}}$.
The result now follows by applying $2$-out-of-$3$ to both the following diagram (where $\vect{B} = F(\vect{A})$, so that the arrows marked $\sim$ are weak equivalences by assumption) and \eqref{23SQDIAG EQ},
yielding that $\P(\vect{C})^{\Lambda} \to \mathcal{Q}(\bar{F}(\vect{C}))^{\Lambda}$
is also a weak equivalence, as desired.
\begin{equation}
	\begin{tikzcd}[row sep=0]
		\O(\vect{A})^{\Lambda} \arrow{rr}{\sim}
		\arrow{dr}[swap]{\sim}
	&&
		\mathcal{Q}(\bar{F}(\vect{B}))^{\Lambda}
	\\
	&
		\P(\vect{B}) \ar{ru}
	\end{tikzcd}
\end{equation}
\end{proof}

\subsection{Characterizing fibrations}\label{ISOFIB_SEC}

In addition to the fibrations in Definition \ref{MODEL_DEFN},
and as noted in Remark \ref{FIBSALT REM},
there is another natural notion of fibration in $\mathsf{Op}^G_{\bullet}(\mathcal{V})$,
which parallels the notion of Dwyer-Kan equivalence
by replacing the 
$\F$-path lifting condition
with the analogous condition on the
$j^{\**} \pi_0(-)$ categories.

\begin{definition}
	Let $\F$ be a $(G,\Sigma)$-family that has enough units
	(Definition \ref{FAMRESUNI DEF}).
	
	We say a map $\O \to \P$ in $\Op_\bullet^G(\V)$ is an \textit{$\F$-isofibration} if
	it is a local $\F$-fibration and
	$j^{\**}\pi_0 \O^H \to j^{\**}\pi_0\P^H$
	is an isofibration of categories for all $H \in \F_1$.
\end{definition}

Our goal in this section is to compare the 
notions of $\F$-fibration and $\F$-isofibration,
with the relevant results given by Propositions \ref{ISOFIBEASY PROP} and \ref{ISOFIBHARD PROP}.

We start with the easier direction.

\begin{proposition}[{cf. \cite[Prop. 2.3]{Ber07b}}]
	\label{ISOFIBEASY PROP}
	$\F$ be a $(G,\Sigma)$-family, 
	suppose $\V$ satisfies the coherence axiom,
	and let 
	$F: \O \to \P$ in $\Op^G_\F(\V)$
	be $\F$-path lifting.
	
	Then $F$ is also an $\F$-isofibration
	provided that either $\mathcal{P}$ is $\F$-fibrant
	or $\V$ is right proper.
\end{proposition}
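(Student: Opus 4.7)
The plan is to verify the $\F$-isofibration condition by combining the coherence axiom with the $\F$-path lifting hypothesis. Since $F$ is assumed to be a local $\F$-fibration, only the isofibration condition on $j^{\**}\pi_0 (-)^H$ for $H \in \F_1$ needs work. Fix such $H$, an object $a \in j^{\**}\O^H$, and an isomorphism $[\alpha] \colon F(a) \to b'$ in $\pi_0 j^{\**}\P^H$; the goal is to produce a lift $a' \in j^{\**}\O^H$ of $b'$ together with an isomorphism $a \to a'$ in $\pi_0 j^{\**}\O^H$ mapping to $[\alpha]$.

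The key step is to represent $[\alpha]$ by a map from a $\V$-interval landing directly in $j^{\**}\P^H$, rather than merely in a fibrant replacement. In the right proper case, this is exactly Proposition \ref{ALTCOH PROP} applied to $\mathcal{C} = j^{\**}\P^H$. In the case where $\P$ is $\F$-fibrant, $j^{\**}\P^H$ already has fibrant mapping objects, hence is its own fibrant replacement in $\mathsf{Cat}_{\mathfrak{C}_{\P}^H}(\V)$, and the coherence axiom applied directly to $[\alpha]$ provides a factorization through a $\V$-interval $\mathbb{J}$. Either way, one obtains a map $i \colon \mathbb{J} \to j^{\**}\P^H$ with $i(0) = F(a)$, $i(1) = b'$, and such that the induced $\pi_0 \mathbb{J} \to \pi_0 j^{\**}\P^H$ sends the natural isomorphism (cf. Remark \ref{NATISO REM}) to $[\alpha]$.

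The $\F$-path lifting property of $F$ then solves the lifting problem against $\eta \xrightarrow{0} \mathbb{J}$, yielding $\tilde{i} \colon \mathbb{J} \to j^{\**}\O^H$ with $\tilde{i}(0) = a$ and $F \tilde{i} = i$. Setting $a' = \tilde{i}(1)$ one has $F(a') = b'$, and $\pi_0(\tilde{i})$ sends the natural isomorphism in $\pi_0 \mathbb{J}$ to an isomorphism $[\tilde{\alpha}] \colon a \to a'$ in $\pi_0 j^{\**}\O^H$ with $F[\tilde{\alpha}] = [\alpha]$ by functoriality of $\pi_0$, finishing the verification.

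The main obstacle is the coherence step above. The coherence axiom in its raw form (Definition \ref{COH DEF}) only produces a $\V$-interval mapping into a fibrant replacement of the target $\V$-category, whereas the path lifting hypothesis can only be applied to maps landing in $j^{\**}\P^H$ itself. The two alternative hypotheses of the proposition handle this obstruction in different ways: fibrancy of $\P$ eliminates the need for a replacement outright, while right properness permits transferring an interval from the replacement back to $j^{\**}\P^H$ via a pullback argument, as in the proof of Proposition \ref{RIGHTPROPER PROP}.
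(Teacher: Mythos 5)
Your proposal follows the paper's proof essentially verbatim: reduce to the map $j^{\**}\O^H \to j^{\**}\P^H$ of $\V$-categories, use the coherence axiom directly (in the fibrant case) or Proposition \ref{ALTCOH PROP} (in the right proper case) to represent the given isomorphism $[\alpha]$ by a map from a $\V$-interval landing in $j^{\**}\P^H$ itself, and then apply the $\F$-path-lifting property to lift that interval and read off the required isomorphism in $\pi_0 j^{\**}\O^H$. This is the same decomposition and the same use of the two alternative hypotheses as in the paper.
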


\begin{proof}
	Considering each map 
	$j^{\**} \pi_0\left(\O \to \P \right)^H$,
	we reduce to 
	the case of $F \colon \mathcal{C} \to \mathcal{D}$
	a map in $\mathsf{Cat}_{\bullet} (\V)$.
	Given $a \in \mathcal{C}$
	and isomorphism $[\alpha]$ in $\pi_0 \mathcal{D}$
	with $[\alpha](0)=F(a)$,
	using either the definition of coherence 
	if $\mathcal{D}$ is fibrant or 
	Proposition \ref{ALTCOH PROP},
	we obtain a bottom horizontal map below 
	\begin{equation}
	\begin{tikzcd}[column sep = 45pt]
	\eta \ar{d} \ar{r}{a}
	&
	\mathcal{C} \ar{d}{F}
	\\
	\mathbb{J} \ar{r} \ar[dashed]{ru}
	&
	\mathcal{D}
	\end{tikzcd}
	\end{equation}
	such that $\pi_0 \mathbb{J} \to \pi_0\mathcal{D}$
	maps the natural isomorphism $[id_{1_{\V}}]$ to $[\alpha]$.
	And,
	since the lift in the diagram exists due to $F$ being path lifting,
	the image $[\bar{\alpha}]$
	of the natural isomorphism $[id_{1_{\V}}]$
	under $\pi_0 \mathbb{J} \to \pi_0\mathcal{C}$
	lifts $[\alpha]$
	and satisfies $[\bar{\alpha}](0) = a$,
	showing that
	$\pi_0 \mathcal{C} \to \pi_0 \mathcal{D}$
	is indeed an isofibration.
\end{proof}

Proposition \ref{ISOFIBEASY PROP}
implies that, if $\V$ is right proper,
all $\F$-fibrations $F \colon \mathcal{O} \to \mathcal{P}$ are $\F$-isofibrations.
%
The converse direction
requires some preparation.
We first list two necessary lemmas.

\begin{lemma}[{cf. \cite[Lemma 2.6]{Ber07b}}]
	\label{TRANSFLIFT LEMMA}
	Let $\V$ be a model category.
        \begin{enumerate}[(i)]
        \item Consider the diagram below, 
                where the map marked $\sim$ is a weak equivalence, 
                $\rightarrowtail$ is a cofibration
                and $\twoheadrightarrow$ is a fibration.
                \begin{equation}\label{TRANSFLIFT EQ}
                        \begin{tikzcd}[column sep = 45pt]
                                A
                                \ar{d}
                                \arrow[equal]{r}
                                &
                                A \ar{r}
                                \arrow[rightarrowtail]{d}
                                &
                                X \arrow[twoheadrightarrow]{d}
                                \\
                                B'
                                \arrow{r}[swap]{\sim}
                                &
                                B\ar{r}
                                &
                                Y
                        \end{tikzcd}
                \end{equation}
                Then a lift $B \to X$ 
                exists iff
                a lift $B' \to X$ 
                exists.
        \item Dually, given the diagram below (and following the same conventions for arrows)
                \begin{equation}
                        \label{TRANSLIFTDUAL EQ}
                        \begin{tikzcd}[column sep = 45pt]
                                A \arrow[r] \arrow[d, rightarrowtail]
                                &
                                X \arrow[r, "\sim"] \arrow[d, twoheadrightarrow]
                                &
                                X' \arrow[d]
                                \\
                                B \arrow[r]
                                &
                                Y \arrow[r, equal]
                                &
                                Y
                        \end{tikzcd}
                \end{equation}
                a lift $B \to X$ exists iff a lift $B \to X'$ exists.
        \end{enumerate}
\end{lemma}

\begin{remark}\label{UNDEROVER REM}
	Recall (cf. \cite[Rem. 3.10]{DS95})
	that for any $A \in \V$ the undercategory $\V_{A/}$
	has a model structure such that a map is a 
	weak equivalence or (co)fibration iff it becomes one under the 
	forgetful functor
	$\V_{A/} \to \V$ given by $(A \to X) \mapsto X$,
	and dually for overcategories $\V_{/Y}$.
	
	Hence, for any map $A \to Y$,
	the category 
	$\V_{A//Y} = \left(\V_{A/}\right)_{/(A\to Y)}$
	of factorizations $A \to C \to Y$
	likewise has a model structure determined by the forgetful functor
	$\V_{A//Y} \to \V$.
\end{remark}

\begin{proof}[Proof of Lemma \ref{TRANSFLIFT LEMMA}]
        It suffices to prove (i).
	We will argue using Remark \ref{UNDEROVER REM}
	(\cite{Ber07b} gives a more explicit argument).
	
	Only the ``if'' direction requires proof.
	A lift $B' \to X$ implies that,
	in the homotopy category of $\V_{A//Y}$, it is
	$\Ho \V_{A//Y}(B',X) \neq \emptyset$. 
	And since $B,B'$ are weak equivalent in 
	$\V_{A//Y}$, it is also 
	$\Ho \V_{A//Y}(B,X) \neq \emptyset$.
	But the (co)fibrancy assumptions in \eqref{TRANSFLIFT EQ}
	say that $B$ is cofibrant in $\V_{A//Y}$ while 
	$X$ is fibrant, so any map in $\Ho \V_{A//Y}(B,X) \neq \emptyset$
	is represented by an actual lift $B \to X$.
\end{proof}

\begin{remark}\label{NOTMATCH REM}
	We caution that, in the previous proof, when given a lift $f'\colon B' \to X$,
	the induced lift $f \colon B \to X$
	needs not be such that the composite $B' \to B \xrightarrow{f} X$
	equals $f'$. Rather, these need only be homotopic in 
	$\V_{A//Y}$.
\end{remark}

\begin{lemma}\label{HOMINPULL LEM}
	Consider a diagram 
	in $\mathsf{Cat}_{\{0,1\}}(\V)$
	\begin{equation}
	\begin{tikzcd}[column sep = 45pt]
	\mathbbm{1}
	\arrow{d}[swap]{\bar{\alpha}}
	\arrow{r}{\alpha}
	&
	\mathcal{C}
	\arrow[twoheadrightarrow]{d}{F}
	\\
	\bar{\mathcal{C}}
	\arrow{r}[swap]{\bar{F}}
	&
	\mathcal{D}
	\end{tikzcd}
	\end{equation}
	such that $\alpha,\bar{\alpha}$ encode homotopy equivalences in 
	$\mathcal{C}, \bar{\mathcal{C}}$ and
	$F \colon \mathcal{C} \twoheadrightarrow \mathcal{D}$
	is a (local) fibration.
	
	Then the induced map 
	$\mathbbm{1} \xrightarrow{(\alpha,\bar{\alpha})}
	\mathcal{C} \times_{\mathcal{D}}\bar{\mathcal{C}}$
	encodes a homotopy equivalence in $\mathcal{C} \times_{\mathcal{D}}\bar{\mathcal{C}}$
	provided that either:
	\begin{enumerate*}[label = (\roman*)]
		\item $\mathcal{D}$, $\bar{\mathcal{C}}$ are fibrant or;
		\item $\V$ is right proper.
	\end{enumerate*}
\end{lemma}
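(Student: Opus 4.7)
Plan. The strategy is to reduce to the case where all four objects $\mathcal{C}, \bar{\mathcal{C}}, \mathcal{D}, \mathcal{E} := \mathcal{C} \times_\mathcal{D} \bar{\mathcal{C}}$ are fibrant in $\mathsf{Cat}_{\{0,1\}}(\V)$ and then exhibit an inverse to $[(\alpha,\bar\alpha)]$ in $\pi_0\mathcal{E}$, leveraging the fact that the fibration $F$ makes the pullback on mapping spaces a homotopy pullback.

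Reduction to the fibrant case. Under (i), $F \colon \mathcal{C} \twoheadrightarrow \mathcal{D}$ is a fibration into a fibrant object, so $\mathcal{C}$ is fibrant and hence so is $\mathcal{E}$. Under (ii), factor $\bar{F}$ through a fibrant replacement $\mathcal{D} \xrightarrow{\sim} \mathcal{D}_f$ as a weak equivalence followed by a fibration, and similarly factor the composite $\mathcal{C} \xrightarrow{F} \mathcal{D} \xrightarrow{\sim} \mathcal{D}_f$. By right properness, pullbacks of weak equivalences along fibrations are weak equivalences, so the induced map from $\mathcal{E}$ to the pullback of the replaced diagram is a weak equivalence in $\mathsf{Cat}_{\{0,1\}}(\V)$. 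By Proposition \ref{RIGHTPROPER PROP}, virtual equivalence agrees with equivalence, so the conclusion transports along this weak equivalence.

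Construction of an inverse. With all four objects fibrant and $F$ a local fibration, for every pair of objects $(a,b)$ the mapping space $\mathcal{E}(a,b) = \mathcal{C}(a,b) \times_{\mathcal{D}(Fa,Fb)} \bar{\mathcal{C}}(a,b)$ is a homotopy pullback in $\V$. Since $1_{\V}$ is cofibrant, this yields that the map
\[
\pi_0 \mathcal{E}(a,b) \to \pi_0 \mathcal{C}(a,b) \times_{\pi_0 \mathcal{D}(Fa,Fb)} \pi_0 \bar{\mathcal{C}}(a,b)
\]
is surjective. Choose $[\alpha'] \in \pi_0 \mathcal{C}(1,0)$ and $[\bar{\alpha}'] \in \pi_0 \bar{\mathcal{C}}(1,0)$ inverse to $[\alpha], [\bar{\alpha}]$ respectively. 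Both $[F\alpha']$ and $[\bar{F}\bar{\alpha}']$ are inverses to $[F\alpha] = [\bar{F}\bar{\alpha}]$ in the category $\pi_0 \mathcal{D}$, hence coincide by uniqueness of inverses, so $([\alpha'], [\bar{\alpha}'])$ lies in the strict pullback above and lifts to some $[(\gamma, \bar{\gamma})] \in \pi_0 \mathcal{E}(1,0)$, our candidate inverse. Verifying that the two composites with $[(\alpha,\bar\alpha)]$ equal the identities in $\pi_0 \mathcal{E}(0,0)$ and $\pi_0 \mathcal{E}(1,1)$ then proceeds by the same scheme, using the fibration $F$ to lift homotopies $\bar{F}\bar{\eta} \colon \mathbb{C} \to \mathcal{D}(F0,F0)$ (for $\bar{\eta}$ a chosen homotopy $\bar{\gamma}\bar{\alpha} \simeq \mathrm{id}_0$ in $\bar{\mathcal{C}}(0,0)$) along $F \colon \mathcal{C}(0,0) \twoheadrightarrow \mathcal{D}(F0,F0)$ starting at $\gamma\alpha$, where the inclusion $1_{\V} \to \mathbb{C}$ of one endpoint of a good cylinder is a trivial cofibration by cofibrancy of $1_{\V}$.

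The main obstacle is the final identification in $\pi_0 \mathcal{E}(1,1)$: the map from $\pi_0\mathcal{E}(1,1)$ to the pullback of $\pi_0$'s has fibers controlled by $\pi_1 \mathcal{D}(F1,F1)$, so merely agreeing on projections does not force equality of classes. Resolving this requires using the freedom in the choice of the lift $(\gamma, \bar{\gamma})$: its fiber in $\pi_0 \mathcal{E}(1,0)$ over $([\alpha'], [\bar{\alpha}'])$ is a torsor for the $\pi_1$ indeterminacy, and the composition with $[(\alpha, \bar{\alpha})]$ induces the corresponding torsor action on the fiber of $\pi_0 \mathcal{E}(1,1)$ over $([\mathrm{id}_1], [\mathrm{id}_1])$, so a suitable adjustment kills the obstruction. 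The argument is symmetric for $[(\gamma,\bar\gamma)][(\alpha,\bar\alpha)] = [\mathrm{id}_{(0,0)}]$, completing the verification.
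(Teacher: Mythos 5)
Your reduction to the fibrant case is fine, and your setup of the candidate inverse $[(\gamma,\bar\gamma)]$ by lifting along the surjection $\pi_0\mathcal{E}(1,0)\to\pi_0\mathcal{C}(1,0)\times_{\pi_0\mathcal{D}}\pi_0\bar{\mathcal{C}}(1,0)$ is the right starting point; you also correctly locate the crux of the matter in the failure of that map to be injective. However, the resolution you give is asserted rather than proved, and in my view it cannot be made to work without substantially more input. Two issues. First, the fiber of $\pi_0\mathcal{E}(1,0)$ over $([\alpha'],[\bar\alpha'])$ is not a torsor for ``the $\pi_1$-indeterminacy'' — for a homotopy pullback it is a double quotient of the form $\pi_1\mathcal{C}(1,0)\backslash\pi_1\mathcal{D}(1,0)/\pi_1\bar{\mathcal{C}}(1,0)$, on which the group of interest acts but not freely, and the claim that composition with $[(\alpha,\bar\alpha)]$ is an equivariant bijection onto the corresponding quotient for $\pi_0\mathcal{E}(1,1)$ is exactly the content one needs to prove, not a formal consequence. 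Second, and more fundamentally, the $\pi_1$-language does not transparently exist in the generality of the paper: the hypotheses on $\V$ only guarantee a $\pi_0$-type invariant $\Ho(\V)(1_\V,-)$, and there is no supplied notion of ``$\pi_1\mathcal{D}(F1,F1)$'' nor a long exact sequence to appeal to. Your argument implicitly works in $\mathsf{sSet}$ or $\mathsf{Top}$.

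This is precisely the point where the paper introduces Lemma \ref{HOMTPOFHOMTP LEM}, which it explicitly flags as ``the key to proving Lemma \ref{HOMINPULL LEM}.'' Given two left homotopy inverses $F\beta,\bar F\bar\beta$ of $F\alpha=\bar F\bar\alpha$ in $\mathcal{D}$ with exhibiting cylinder homotopies $FH,\bar F\bar H$, that lemma produces a compatible homotopy-of-homotopies $(B,\mathcal{H})$ out of a cosimplicial frame $\mathbb{C}_{\bullet}$ on $1_\V$; the main proof then lifts $(B,\mathcal{H})$ through the local fibration $F$ by a square in the arrow category $\V^{\bullet\to\bullet}$ with a projective trivial cofibration on one side and a projective fibration on the other, yielding a replacement $\beta'$ of $\beta$ together with a replacement homotopy $H'$ satisfying $F\beta'=\bar F\bar\beta$ and $FH'=\bar F\bar H$ on the nose. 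That equality of exhibiting homotopies, not merely of their $\pi_0$-classes, is what lets $(\beta',\bar\beta)$ and $(H',\bar H)$ glue in the strict pullback to give the required inverse. Your proposal never constructs anything playing the role of $\mathcal{H}'$, and without it the adjustment you gesture at has no concrete realization. To repair the argument you would essentially have to prove Lemma \ref{HOMTPOFHOMTP LEM} (or an equivalent), after which you are doing what the paper does.
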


The proof of Lemma \ref{HOMINPULL LEM} requires preparation,
and is postponed to the end of the section.

We now discuss a further assumption on 
the unit $1_{\V}$ that is needed to guarantee that 
\emph{all} $\F$-isofibrations are $\F$-fibrations.

\begin{definition}
	Let $\V$ be a model category with $A$ a cofibrant object and 
	$X$ any object.
	We say \emph{$X$ is fibrant with respect to $A$}
	if a fibrant replacement $X\to X_f$
	induces an isomorphism
	\[
	[A,X] \xrightarrow{\simeq} [A,X_f] = \Ho \V (A,X)
	\]
	of left homotopy classes of maps.
	Explicitly, this means that any 
	map $A \xrightarrow{[f]} X$ in $\Ho \V (A,X)$
	is represented by a map 
	$A \xrightarrow{f} X$ in $\V$
	and that, if $[f]=[g]$, there is an exhibiting left homotopy
	$A \otimes \mathbb{C} \xrightarrow{H} X$
	(where, cf. Corollary \ref{ALBEETA COR}, 
	$\mathbb{C}$ denotes a good cylinder object for $1_{\mathcal{V}}$ \cite[Def. 4.2]{DS95}). 
\end{definition}

\begin{example}
	In the Kan model structure on $\V=\mathsf{sSet}$   
	all objects are fibrant with respect to $\** = 1_{\V}$.
	Further, fibrant objects are always fibrant with respect to any cofibrant object.
	Thus, in the canonical model structures on 
	$\mathsf{Set}, \mathsf{Cat}, \mathsf{Top}$,
	all objects are fibrant with respect to the unit $1_{\V}$.
\end{example}

\begin{remark}\label{RELFIBLIFT REM}
	Suppose $F\colon X \to Y$ is a fibration between two objects
	that are fibrant with respect to $A$.
	Then, given 
	$f\colon A \to X$ and $g \colon A \to Y$
	such that
	$[Ff] = [g]$
	one can find 
	$f'\colon A \to X$ such that $Ff' =g$.
	Indeed, this follows from the existence of a 
	lift in 
	\begin{equation}
	\begin{tikzcd}[column sep = 45pt]
	A \ar[>->]{d}[swap]{\sim} \ar{r}{f}
	&
	X \ar[twoheadrightarrow]{d}{F}
	\\
	A \otimes \mathbb{C} \ar{r}[swap]{H} \ar[dashed]{ru}
	&
	Y
	\end{tikzcd}
	\end{equation}
	where $H$ is a left homotopy between $Ff$ and $g$.
\end{remark}

We can now prove a partial converse to Proposition \ref{ISOFIBEASY PROP}, 
adapting \cite[Prop. 2.5]{Ber07b}.

\begin{proposition}
	\label{ISOFIBHARD PROP}
	Let $\F$ be a $(G,\Sigma)$-family that has enough units.
	Moreover, suppose $\V$ has cofibrant unit
	and satisfies the coherence axiom,
	and let $F: \O \to \P$ in $\Op^G_\F(\V)$
	be an $\F$-isofibration.
	
	Then $F$ is also an $\F$-fibration
	provided that either:
	\begin{enumerate*}[label = (\roman*)]
		\item $\P$ is fibrant or;
		\item $\V$ is right proper and
		all objects in $\V$ are fibrant with respect to $1_{\V}$.
	\end{enumerate*}
\end{proposition}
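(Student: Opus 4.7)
The plan is to reduce to the corresponding statement at the level of $\V$-categories---that a local fibration $F \colon \mathcal{C} \to \mathcal{D}$ in $\Cat(\V)$ whose induced functor $\pi_0 F$ is an isofibration must be path-lifting---by applying $j^{\ast}(-)^{H}$ componentwise for $H \in \F_1$. Given such $F$ together with a path-lifting problem specified by $a \in \mathcal{C}$ and $\alpha\colon \mathbb{J} \to \mathcal{D}$ satisfying $\alpha(0) = F(a)$, the goal will be to produce a lift $\mathbb{J} \to \mathcal{C}$ extending $a$.

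First I would apply the isofibration property of $\pi_0 F$ to the isomorphism $[\alpha]\colon F(a) \to \alpha(1)$ in $\pi_0 \mathcal{D}$, obtaining $b \in \mathcal{C}$ with $F(b) = \alpha(1)$ and an isomorphism $[\bar\alpha]\colon a \to b$ in $\pi_0 \mathcal{C}$ that lifts $[\alpha]$. Writing $\iota\colon \mathbbm{1} \to \mathbb{J}$ for the natural cofibration afforded by the $\V$-interval structure on $\mathbb{J}$, the composite $\alpha \iota\colon \mathbbm{1} \to \mathcal{D}$ is a distinguished representative of $[\alpha]$, and the next step will be to produce a representative $\bar\alpha\colon \mathbbm{1} \to \mathcal{C}$ of $[\bar\alpha]$ satisfying $F\bar\alpha = \alpha \iota$ on the nose. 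This is where the fibrancy hypotheses enter: the fibration $\mathcal{C}(a,b) \to \mathcal{D}(F(a),\alpha(1))$ is between objects fibrant with respect to $1_{\V}$---in case (i) because $\P$ fibrant forces $\mathcal{D}(F(a),\alpha(1))$, and hence $\mathcal{C}(a,b)$ via the local fibration property, to be fibrant in $\V$; in case (ii) by the direct hypothesis on $\V$---so Remark \ref{RELFIBLIFT REM} yields the required strict lift.

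The pair $(\bar\alpha, \iota)$ then forms a commutative square from $\mathbbm{1}$ into the cospan $\mathcal{C} \xrightarrow{F} \mathcal{D} \xleftarrow{\alpha} \mathbb{J}$, to which Lemma \ref{HOMINPULL LEM} applies (after restricting $\mathcal{C}$ and $\mathcal{D}$ to the pairs of objects $\{a,b\}$ and $\{F(a),\alpha(1)\}$), yielding a homotopy equivalence between $(a,0)$ and $(b,1)$ in the pullback $\mathcal{E} := \mathcal{C} \times_{\mathcal{D}} \mathbb{J}$. The fibrancy hypothesis of Lemma \ref{HOMINPULL LEM} is verified in case (ii) by right properness; in case (i) one first replaces the cofibrant interval $\mathbb{J}$ by a bifibrant $\V$-interval $\mathbb{J}'$ and extends $\alpha$ along the weak equivalence $\mathbb{J} \xrightarrow{\sim} \mathbb{J}'$ (possible since $\mathcal{D}$ is fibrant) before invoking the lemma. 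The coherence axiom, in the form of Proposition \ref{ALTCOH PROP} in the right proper case or directly from Definition \ref{COH DEF} when $\mathcal{E}$ admits a suitable fibrant replacement in $\Cat_{\{0,1\}}(\V)$, will then realize this homotopy equivalence as an interval map $\mathbb{I} \to \mathcal{E}$.

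Finally, post-composing $\mathbb{I} \to \mathcal{E}$ with the projection $\mathcal{E} \to \mathbb{J}$ produces a map $\mathbb{I} \to \mathbb{J}$ between $\V$-intervals which fixes endpoints, and is therefore a weak equivalence in $\Cat_{\{0,1\}}(\V)$ (both sides being cofibrant replacements of $\widetilde{\mathbbm{1}}$, with any endpoint-fixing map representing the identity class in $\Ho \Cat_{\{0,1\}}(\V)(\widetilde{\mathbbm{1}}, \widetilde{\mathbbm{1}})$). Lemma \ref{TRANSFLIFT LEMMA} will then transport the lift $\mathbb{I} \to \mathcal{E}$ over this weak equivalence $\mathbb{I} \xrightarrow{\sim} \mathbb{J}$ into a section $\mathbb{J} \to \mathcal{E}$ of $\mathcal{E} \to \mathbb{J}$ extending $(a,0)\colon \eta \to \mathcal{E}$, and composing with $\mathcal{E} \to \mathcal{C}$ will deliver the desired path-lift $\mathbb{J} \to \mathcal{C}$ of $\alpha$. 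The main obstacle here is the on-the-nose matching $F\bar\alpha = \alpha \iota$ in the second step, since the isofibration and coherence machinery only yield compatibility up to homotopy; achieving strict equality is precisely what the fibrancy-with-respect-to-$1_{\V}$ assumption is designed to supply, and it is also what allows the cases (i) and (ii) to be treated in a largely parallel fashion.
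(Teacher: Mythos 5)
Your proof follows essentially the same structure as the paper's: reduce to a lifting problem in $\Cat_{\{0,1\}}(\V)$, use the isofibration hypothesis to produce the lifted isomorphism $[\bar\alpha]$, realize both $[\bar\alpha]$ and the natural isomorphism as strict maps from $\mathbbm{1}$ compatible with $F$ (via the fibrancy-with-respect-to-$1_\V$ assumption and Remark \ref{RELFIBLIFT REM}), form the pullback $\mathcal{E}$, realize the induced homotopy equivalence as an interval map via coherence, and transport the resulting lift along a weak equivalence of intervals using Lemma \ref{TRANSFLIFT LEMMA}. Your formulation of the transport step—as a section of $\mathcal{E} \twoheadrightarrow \mathbb{J}'$ rather than a lift against $\iota^*_{a,b}\mathcal{C} \twoheadrightarrow \iota^*_{Fa,Fb}\mathcal{D}$—is equivalent by standard pullback nonsense. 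Two points deserve comment.

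First, an ordering issue in case (i): you invoke ``the natural cofibration $\iota\colon \mathbbm{1}\to\mathbb{J}$ afforded by the $\V$-interval structure on $\mathbb{J}$'' at the outset, but an arbitrary $\V$-interval does not come equipped with such a map. To choose a representative $\mathbbm{1}\to\mathbb{J}$ of the natural isomorphism you already need $\mathbb{J}(0,1)$ fibrant with respect to $1_\V$, which in case (i) forces the replacement $\mathbb{J}\rightarrowtail\mathbb{J}'$ to happen \emph{before} this choice is made, not only before invoking Lemma \ref{HOMINPULL LEM} as you state. The paper avoids this ambiguity by factoring $\mathbb{J}\to\iota^*_{Fa,Fb}\mathcal{D}$ as a trivial cofibration followed by a fibration at the very start.

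Second, and more substantively: the final step, where you assert that the induced map $\mathbb{I}\to\mathbb{J}'$ ``is therefore a weak equivalence \ldots with any endpoint-fixing map representing the identity class in $\Ho\Cat_{\{0,1\}}(\V)(\widetilde{\mathbbm{1}},\widetilde{\mathbbm{1}})$,'' is not a proof. There is no a priori reason that hom-set is a singleton (its underlying monoid involves $\Ho\V(1_\V,1_\V)$, which can be nontrivial), and a map between a cofibrant and a non-bifibrant object representing a homotopy-category isomorphism is not automatically a weak equivalence. What the paper actually does here is track that the map $\mathbbm{1}\to\mathcal{E}$ obtained from the coherence axiom is natural, so that the composite $\mathbb{J}''\to\mathcal{E}\to\mathbb{J}'$ sends the class of the natural isomorphism to itself; by $2$-out-of-$3$ this makes $\mathbb{J}''(0,1)\to\mathbb{J}'(0,1)$ a weak equivalence of mapping objects, and then Corollary \ref{ALBEETA COR} (or \cite[Lemma 2.12]{BM13}) upgrades this to a weak equivalence in $\Cat_{\{0,1\}}(\V)$. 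Your conclusion is correct, but you should replace the appeal to the triviality of the homotopy hom-set with this naturality-plus-Corollary-\ref{ALBEETA COR} argument, which is precisely the content being used.
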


\begin{proof}
	As in the proof of Proposition \ref{ISOFIBEASY PROP},
	we reduce to the case of $F \colon \mathcal{C} \to \mathcal{D}$
	an isofibration in $\mathsf{Cat}_{\bullet}(\V)$.
	And, since $F$ is a local fibration by assumption, 
	the task is to show that if
	$F$ is a $\pi_0$-isofibration then it is also path lifting, 
	i.e. that we can solve any lifting problem as on the left below
	(where $\mathbb{J}$ is an interval, as usual).
	\begin{equation}\label{FIRSTRED EQ}
	\begin{tikzcd}[column sep = 45pt]
	\eta \ar{d} \ar{r}{a}
	&
	\mathcal{C} \ar{d}{F}
	&
	&
	&
	\iota^{\**}_{a,b} \mathcal{C} \ar[twoheadrightarrow]{d}
	\\
	\mathbb{J} \ar{r} \ar[dashed]{ru}
	&
	\mathcal{D}
	&
	\mathbb{J} \ar[rightarrowtail]{r}{\sim}
	&
	\mathbb{J}' \ar[twoheadrightarrow]{r} \ar[dashed]{ru}
	&
	\iota^{\**}_{Fa,Fb} \mathcal{D}
	\end{tikzcd}
	\end{equation}
	
	Writing $[\alpha]$ for the image in
	$\pi_0 \mathcal{D}$
	of the natural isomorphism (cf. Remark \ref{NATISO REM})
	in $\pi_0 \mathbb{J} \simeq \pi_0 \widetilde{\mathbbm{1}}$,
	the fact that $F$ is an isofibration
	yields a lifted isomorphism
	$[\bar{\alpha}]$ in $\pi_0 \mathcal{C}$
	between $a \in \mathcal{C}$ and some other object $b \in \mathcal{C}$.
	This allows us to form the lifting problem 
	in $\mathsf{Cat}_{\{0,1\}}(\V)$
	on the right of 
	\eqref{FIRSTRED EQ}
	(where we factor the bottom map as a trivial cofibration followed by a fibration),
	and it clearly suffices to solve this alternate problem.
	
	We now claim that we can form a diagram in $\mathsf{Cat}_{\{0,1\}}(\V)$ as below
	\begin{equation}\label{GETMAP EQ}
	\begin{tikzcd}
	\mathbbm{1} \ar{d} \ar{r}
	&
	\iota^{\**}_{a,b} \mathcal{C} \ar[twoheadrightarrow]{d}
	\\
	\mathbb{J}' \ar[twoheadrightarrow]{r} 
	&
	\iota^{\**}_{Fa,Fb} \mathcal{D}
	\end{tikzcd}
	\end{equation}
	and for which $\mathbbm{1} \to \mathbb{J}'$
	represents the natural isomorphism $[id_{1_{\V}}]$ of 
	$\pi_0 \mathbb{J}' \simeq \pi_0 \widetilde{\mathbbm{1}}$ 
	and 
	$\mathbbm{1} \to \iota^{\**}_{a,b} \mathcal{C}$
	represents the isomorphism $[\bar{\alpha}]$ in $\mathcal{C}$.
	Indeed, in either case (i) or (ii)
	our assumptions guarantee that 
	the mapping objects of all three of 
	$\mathbb{J}',\iota^{\**}_{Fa,Fb} \mathcal{D},\iota^{\**}_{a,b} \mathcal{C}$
	are fibrant with respect to $1_{\V}$,
	so one can certainly choose maps 
	$\mathbbm{1} \to \mathbb{J}'$,
	$\mathbbm{1} \to \iota^{\**}_{a,b} \mathcal{C}$
	representing the natural isomorphism $[id_{1_{\V}}]$ and $[\bar{\alpha}]$, 
	though a priori one has no guarantee that the composites 
	$\mathbbm{1} \to \mathbb{J}' \to \iota^{\**}_{Fa,Fb} \mathcal{D}$,
	$\mathbbm{1} \to \iota^{\**}_{a,b} \mathcal{C} \to
	\iota^{\**}_{Fa,Fb} \mathcal{D}$
	coincide.
	Nonetheless, since both composites represent 
	$[\alpha]$,
	by Remark \ref{RELFIBLIFT REM}
	we can make the square commute by replacing one of the maps up to homotopy.
	
	We now form the following solid square
	\begin{equation}
	\begin{tikzcd}
	\mathbb{J}'' \ar[dashed]{r}
	&
	\mathcal{E} \ar{d} \ar{r}
	&
	\iota^{\**}_{a,b} \mathcal{C} \ar[twoheadrightarrow]{d}
	\\
	&
	\mathbb{J}' \ar[twoheadrightarrow]{r} 
	&
	\iota^{\**}_{Fa,Fb} \mathcal{D}
	\end{tikzcd}
	\end{equation}
	where $\mathcal{E}$ is simply the pullback.
	The diagram 
	\eqref{GETMAP EQ}
	yields a map $\mathbbm{1} \to \mathcal{E}$ that,
	by Lemma \ref{HOMINPULL LEM},
	encodes a homotopy equivalence.
	Therefore, using either the definition of coherence in case (i) or Proposition \ref{ALTCOH PROP} in case (ii),
	we obtain a map out of an interval 
	$\mathbb{J}'' \to \mathcal{E}$
	with the property that 
	the composite 
	$\pi_0\mathbb{J}'' \to \pi_0\mathcal{E} \to \pi_0\mathbb{J}'$
	sends the natural isomorphism to itself.
	In particular, this
	$\mathbb{J}''(0,1) \xrightarrow{\sim} \mathbb{J}'(0,1)$ 
	is a weak equivalence in $\V$
	so that \cite[Lemma 2.12]{BM13}
	(or its generalization Corollary \ref{ALBEETA COR})
	implies
	$\mathbb{J}'' \xrightarrow{\sim} \mathbb{J}'$ 
	is itself a weak equivalence.
	
	The required lift in \eqref{FIRSTRED EQ} now follows from 
	Lemma \ref{TRANSFLIFT LEMMA}(i)
	applied to the category
	$\mathsf{Cat}_{\{0,1\}}(\V)$
	with $A$ the initial object,
	$B' \xrightarrow{\sim} B$ the map
	$\mathbb{J}'' \xrightarrow{\sim} \mathbb{J}'$,
	and $X \twoheadrightarrow Y$
	the map 
	$\iota^{\**}_{a,b} \mathcal{C}
	\twoheadrightarrow
	\iota^{\**}_{Fa,Fb} \mathcal{D}$.
\end{proof}

The remainder of this section addresses the postponed proof of
Lemma \ref{HOMINPULL LEM}.
We first make some remarks about 
the model structures
on $\V_{A/}$, $\V_{A//Y}$ in Remark \ref{UNDEROVER REM}.

\begin{remark}\label{UNDFGT REM}
	Since the forgetful functor $\V_{A//Y} \to \V$
	preserves all weak equivalences, 
	it preserves left and right homotopies \cite[\S 4.1,\S 4.12]{DS95} between maps.
\end{remark}

\begin{remark} \label{LEFTQUILUND REM}
	For any map $A \to A'$ the induced adjunction
	$A' \amalg_A (-) \colon \V_{A/} 
	\rightleftarrows 
	\V_{A'/} \colon \mathsf{fgt}$
	is Quillen.
	In particular,
	given a cofibration 
	$A \rightarrowtail B$
	and map $A' \to X$ with fibrant $X$, one has
	\[
	\Ho \V_{A/}\left(A \rightarrowtail B, A \to X\right)
	\simeq
	\Ho \V_{A'/}\left(A' \rightarrowtail A' \amalg_A B, A' \to X\right).
	\]
\end{remark}

\begin{lemma}\label{LIFTEQUIV LEM}
	Let $\V$ be a model category and
	consider the lifting problems below, where $A,B$ are cofibrant, 
	the common map $A \rightarrowtail B$ is a cofibration, and $X$ is fibrant.
	\begin{equation}
	\begin{tikzcd}[column sep = 45pt]
	A
	\ar[rightarrowtail]{d}
	\arrow{r}{f}
	&
	X
	&%
	A
	\ar[rightarrowtail]{d}
	\arrow{r}{g}
	&
	X
	\\
	B \ar[dashed]{ru}[swap]{F}
	&
	&%
	B \ar[dashed]{ru}[swap]{G}
	&
	\end{tikzcd}
	\end{equation}
	Then, if $f$ and $g$ are homotopic 
	(i.e. they coincide in $\Ho \V(A,X)$),
	a lift $F$ exists iff a lift $G$ exists.
\end{lemma}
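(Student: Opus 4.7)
The plan is straightforward: a lift $F$ of $f$ together with a left homotopy from $f$ to $g$ should extend to a homotopy on $B$, from which the required lift $G$ is read off at the other end. By symmetry in $f$ and $g$, only the forward direction requires proof.

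The key construction is that of a cylinder on $B$ compatible with a chosen one on $A$. First, choose a good cylinder $A \amalg A \rightarrowtail \mathrm{Cyl}(A) \xrightarrow{\sim} A$; since $A$ is cofibrant, both end inclusions $i_0, i_1 \colon A \to \mathrm{Cyl}(A)$ are trivial cofibrations. The fibrancy of $X$ together with cofibrancy of $A$ ensures that the equality $[f] = [g]$ in $\Ho \V(A,X)$ is realized by an actual left homotopy $H \colon \mathrm{Cyl}(A) \to X$ with $H \circ i_0 = f$ and $H \circ i_1 = g$. Next, form the pushout $C := \mathrm{Cyl}(A) \cup_{A \amalg A} (B \amalg B)$ and factor the induced map $C \to B$ as a cofibration $C \rightarrowtail \mathrm{Cyl}(B)$ followed by a trivial fibration $\mathrm{Cyl}(B) \xrightarrow{\sim} B$, yielding a good cylinder on $B$. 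The intermediate pushout $P := B \cup_{A, i_0} \mathrm{Cyl}(A)$ fits into factorizations $B \to P \to C \rightarrowtail \mathrm{Cyl}(B)$, where $B \to P$ is a trivial cofibration (pushout of $i_0$) and $P \to C$ is a cofibration (pushout of $A \rightarrowtail B$). Two applications of 2-out-of-3 show that $P \rightarrowtail \mathrm{Cyl}(B)$ is a trivial cofibration.

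With these ingredients in hand, the maps $F$ and $H$ glue along their common restriction $f$ to yield $(F, H) \colon P \to X$. Since $X$ is fibrant and $P \rightarrowtail \mathrm{Cyl}(B)$ is a trivial cofibration, this extends to some $\tilde F \colon \mathrm{Cyl}(B) \to X$. Precomposing with the other end inclusion $j_1 \colon B \to \mathrm{Cyl}(B)$ gives the desired map $G := \tilde F \circ j_1 \colon B \to X$, whose restriction to $A$ traces through the composite $A \xrightarrow{i_1} \mathrm{Cyl}(A) \to P \to \mathrm{Cyl}(B) \xrightarrow{\tilde F} X$ and therefore equals $H \circ i_1 = g$.

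The only nontrivial step is the construction of the compatible cylinder $\mathrm{Cyl}(B)$ and the verification that $P \rightarrowtail \mathrm{Cyl}(B)$ is a trivial cofibration; this is essentially the homotopy extension property in disguise, and the bookkeeping above is the main place where one must be careful. No further obstacles are expected, as the remainder of the argument is a direct lifting against a trivial cofibration.
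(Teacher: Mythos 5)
Your proof is correct, but it takes a genuinely different route from the paper's. The paper chooses a path object $X \overset{\sim}{\rightarrowtail} PX \twoheadrightarrow X \times X$ and a right homotopy $H \colon A \to PX$ between $f$ and $g$, then invokes the dual of Lemma~\ref{TRANSFLIFT LEMMA} (twice, once for $p_1$ and once for $p_2$) to see that a lift $B \to X$ of either $f$ or $g$ exists iff a lift $B \to PX$ of $H$ exists; this makes the argument essentially a two-line corollary of the earlier transfer lemma (itself proved via the model structure on $\V_{A//Y}$). You instead work with a left homotopy out of a cylinder on $A$ and run the homotopy extension property by hand: you build the mapping cylinder $C = \mathrm{Cyl}(A) \cup_{A\amalg A}(B \amalg B)$, factor $C \to B$ to get a compatible cylinder on $B$, show via two applications of 2-out-of-3 that the partial mapping cylinder $P = B \cup_{A,i_0}\mathrm{Cyl}(A) \rightarrowtail \mathrm{Cyl}(B)$ is a trivial cofibration, glue $F$ and $H$ on $P$, extend over $\mathrm{Cyl}(B)$, and read off $G$ at the other end. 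Your argument is self-contained and more explicit about where the lift $G$ comes from, at the cost of the cylinder bookkeeping; the paper's is shorter because it outsources exactly that bookkeeping to the already-established Lemma~\ref{TRANSFLIFT LEMMA}. Both are valid; your verification that $P \rightarrowtail \mathrm{Cyl}(B)$ is a trivial cofibration and that $G|_A = H\circ i_1 = g$ is correct.
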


\begin{proof}
	Let 
	$X \overset{\sim}{\rightarrowtail} PX 
	\twoheadrightarrow X \times X$
	be a choice of path object for $X$ \cite[\S 4.12]{DS95},
	and $A \xrightarrow{H} PX$
	be a right homotopy between $f,g$,
	i.e. writing $p_1,p_2 \colon PX \to X$ for the two projections,
	one has
	$p_1H=f$, $p_2H=g$.
	The result now follows from Lemma \ref{TRANSFLIFT LEMMA}(ii) 
	applied to the following
	\begin{equation}
	\begin{tikzcd}[column sep = 45pt]
	A
	\ar[rightarrowtail]{d}
	\arrow{r}{H}
	&
	PX
	\arrow[twoheadrightarrow]{d} \ar{r}{\sim}[swap]{p_1}
	&
	X \ar{d}
	&%
	A
	\ar[rightarrowtail]{d}
	\arrow{r}{H}
	&
	PX
	\arrow[twoheadrightarrow]{d} \ar{r}{\sim}[swap]{p_2}
	&
	X \ar{d}
	\\
	B
	\arrow{r}
	&
	\**\ar{r}
	&
	\**
	&%
	B
	\arrow{r}
	&
	\**\ar{r}
	&
	\**
	\end{tikzcd}
	\end{equation}
	that shows that lifts $B \to X$ in either diagram
	exist iff a lift $B \to PX$ exists.
\end{proof}

In the remainder of the section we write
$\mathbb{C}_{\bullet} \in \V^{\Delta}$
for a cosimplicial frame on $1_{\V}$
\cite[Def. 16.6.1]{Hir03}.
In particular, this means that 
$\mathbb{C}_0 = 1_{\V}$
and that the degeneracy maps
$\mathbb{C}_n \to \mathbb{C}_0$ 
are weak equivalences.
Moreover, $\mathbb{C}_{\bullet} \in \V^{\Delta}$ is Reedy cofibrant so that, writing
$\mathbb{C}_{K} = \colim_{[n] \to K} \mathbb{C}_n$
for $K \in \mathsf{sSet}$,
one has that 
$\mathbb{C}_{K} \to \mathbb{C}_{L}$
is a cofibration in $\V$
whenever $K\to L$ is a monomorphism in $\mathsf{sSet}$.
In addition, $\mathbb{C} = \mathbb{C}_1$
is then a good cylinder on $1_{\V}$, in the sense of \cite[Def. 4.2(i)]{DS95}.

To avoid the need to label arrows, 
we will write
$\mathbb{C}_{\{i,j\}} \to \mathbb{C}_{\{0,1,2\}}$
to denote the map
$\mathbb{C}_{1} \to \mathbb{C}_{2}$
induced by the inclusion $\{i,j\} \subset \{0,1,2\}$,
and similarly for $\mathbb{C}_{\{i\}} \to \mathbb{C}_{\{0,1,2\}}$.

The following is the key to proving Lemma \ref{HOMINPULL LEM}.

\begin{lemma}\label{HOMTPOFHOMTP LEM}
	Let $\mathcal{D} \in \mathsf{Cat}_{\{0,1\}}(\V)$
	be fibrant and suppose 
	$\alpha \colon 1_{\V} \to \mathcal{D}(0,1)$
	is a homotopy equivalence. 
	Moreover, let 
	\[\beta \colon 1_{\V} \to \mathcal{D}(1,0), \qquad
	\bar{\beta} \colon 1_{\V} \to \mathcal{D}(1,0),\qquad
	H \colon \mathbb{C} \to \mathcal{D}(0,0), \qquad
	\bar{H} \colon \mathbb{C} \to \mathcal{D}(0,0)
	\]
	be two left homotopy inverses $\beta,\bar{\beta}$ to $\alpha$
	together with exhibiting homotopies 
	$H,\bar{H}$
	between $id_0$ and $\beta \alpha$, $\bar{\beta}\alpha$.
	Then there exists a commutative diagram (with $\alpha^{\**}$ the precomposition with $\alpha$)
	\begin{equation}
	\begin{tikzcd}[column sep = 45pt]
	\mathbb{C}_{\{1,2\}}
	\ar{d}
	\arrow{r}{B}
	&
	\mathcal{D}(1,0)
	\arrow{d}{\alpha^{\**}}[swap]{\sim}
	\\
	\mathbb{C}_{\{0,1,2\}}
	\arrow{r}[swap]{\mathcal{H}}
	&
	\mathcal{D}(0,0)
	\end{tikzcd}
	\end{equation}
	that satisfies the following compatibilities with restrictions
	\[
	B|_{\mathbb{C}_{\{1\}}} = \beta, \qquad
	B|_{\mathbb{C}_{\{2\}}} = \bar{\beta}, \qquad
	\mathcal{H}|_{\mathbb{C}_{\{0,1\}}} = H, \qquad
	\mathcal{H}|_{\mathbb{C}_{\{0,2\}}} = \bar{H}.
	\]
\end{lemma}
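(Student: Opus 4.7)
The plan is to build $B$ and $\mathcal{H}$ in three stages: first extend $(H, \bar H)$ to a candidate $\mathcal{H}_0\colon \mathbb{C}_{\{0,1,2\}} \to \mathcal{D}(0,0)$; second, lift its $\{1,2\}$-restriction through $\alpha^*$ up to a controlled relative homotopy $K$; and third, use $K$ together with a homotopy extension argument to deform $\mathcal{H}_0$ into the required $\mathcal{H}$.

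For the first stage, I use that the Reedy cofibrant cosimplicial frame $\mathbb{C}_\bullet$ on $1_\V$ makes $K \mapsto \mathbb{C}_K$ a left Quillen functor $\sSet \to \V$. Applied to the horn inclusion $\Lambda^0[2] \hookrightarrow \Delta[2]$, this produces a trivial cofibration $\mathbb{C}_{\Lambda^0[2]} = \mathbb{C}_{\{0,1\}} \cup_{\mathbb{C}_{\{0\}}} \mathbb{C}_{\{0,2\}} \hookrightarrow \mathbb{C}_{\{0,1,2\}}$. Since $\mathcal{D}$ is fibrant, so is $\mathcal{D}(0,0)$, and $(H, \bar H)$ extends to the desired $\mathcal{H}_0$; I then set $B_0 = \mathcal{H}_0|_{\mathbb{C}_{\{1,2\}}}$, a path from $\beta\alpha$ to $\bar\beta\alpha$.

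For the second stage, Corollary \ref{ALBEETA COR} applied to the homotopy equivalence $\alpha$ gives that $\alpha^*\colon \mathcal{D}(1,0) \to \mathcal{D}(0,0)$ is a weak equivalence between fibrant objects. I factor $\alpha^* = q \circ i$ with $i\colon \mathcal{D}(1,0) \overset{\sim}{\rightarrowtail} N$ a trivial cofibration and $q\colon N \overset{\sim}{\twoheadrightarrow} \mathcal{D}(0,0)$ a trivial fibration. Lifting $B_0$ along $q$, against the cofibration $\mathbb{C}_{\{1\}} \amalg \mathbb{C}_{\{2\}} \hookrightarrow \mathbb{C}_{\{1,2\}}$ with prescribed data $i\beta \amalg i\bar\beta$, yields $\tilde B\colon \mathbb{C}_{\{1,2\}} \to N$ with $q \tilde B = B_0$. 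Fibrancy of $\mathcal{D}(1,0)$ provides a retraction $r\colon N \to \mathcal{D}(1,0)$ of $i$, and the pushout-product axiom applied to $i$ and an auxiliary good cylinder $\{0,1\} \hookrightarrow \mathbb{C}'$ on $1_\V$ supplies a homotopy $ir \simeq \mathrm{id}_N$ relative to $\mathcal{D}(1,0)$. Setting $B = r \tilde B$ produces a map with the correct endpoints, and whiskering the relative homotopy by $\tilde B$ and postcomposing with $q$ produces the desired $K\colon \mathbb{C}_{\{1,2\}} \otimes \mathbb{C}' \to \mathcal{D}(0,0)$ from $\alpha^* B$ to $B_0$, constant on $\mathbb{C}_{\{1\}} \amalg \mathbb{C}_{\{2\}}$.

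For the third stage, since $K$ is constant on the endpoints $\{1\}$ and $\{2\}$, it glues with the constant-in-time homotopies $H \otimes \mathbb{C}'$ and $\bar H \otimes \mathbb{C}'$ along the edges $\{0,1\}$ and $\{0,2\}$ to form a map $K'\colon \mathbb{C}_{\partial\Delta[2]} \otimes \mathbb{C}' \to \mathcal{D}(0,0)$ whose time-$1$ slice equals $\mathcal{H}_0|_{\mathbb{C}_{\partial\Delta[2]}}$. Together with $\mathcal{H}_0$, this defines a map out of $\mathbb{C}_{\{0,1,2\}} \otimes \{1\} \cup_{\mathbb{C}_{\partial\Delta[2]} \otimes \{1\}} \mathbb{C}_{\partial\Delta[2]} \otimes \mathbb{C}'$. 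The pushout-product axiom, applied to the cofibration $\mathbb{C}_{\partial\Delta[2]} \hookrightarrow \mathbb{C}_{\{0,1,2\}}$ (Reedy cofibrancy of $\mathbb{C}_\bullet$) and the trivial cofibration $\{1\} \hookrightarrow \mathbb{C}'$, makes the inclusion into $\mathbb{C}_{\{0,1,2\}} \otimes \mathbb{C}'$ a trivial cofibration, so fibrancy of $\mathcal{D}(0,0)$ yields a lift, whose time-$0$ slice is the desired $\mathcal{H}$. The most delicate point is coordinating the cosimplicial frame $\mathbb{C}_\bullet$ with the auxiliary cylinder $\mathbb{C}'$ via the pushout-product axiom, together with the bookkeeping needed to check that the constant-in-time pieces on the two horn edges glue with $K$ into a coherent map on $\mathbb{C}_{\partial\Delta[2]} \otimes \mathbb{C}'$; both are formal once Reedy cofibrancy of $\mathbb{C}_\bullet$ and the monoidal model category axioms on $\V$ are invoked.
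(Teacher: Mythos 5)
Your proof is correct, and it takes a genuinely different route from the paper's. The first stage (using the horn inclusion $\mathbb{C}_{\Lambda^0[2]} \rightarrowtail \mathbb{C}_{\{0,1,2\}}$ to produce $\mathcal{H}_0$ from $(H,\bar H)$) matches the paper's construction of $\bar{\mathcal{H}}$ exactly. But where the paper then appeals to its preparatory Lemmas~\ref{TRANSFLIFT LEMMA} and~\ref{LIFTEQUIV LEM} (abstract transfer-of-lift statements in the slice model categories $\V_{A//Y}$, proved via path objects), you instead construct everything explicitly: you obtain $B$ by a cofibration--trivial-fibration factorization of $\alpha^*$ plus a retraction argument, then build a relative homotopy $K$ from $\alpha^*B$ to $B_0$ by whiskering, and finally obtain $\mathcal{H}$ by a homotopy extension lift via the pushout-product of $\mathbb{C}_{\partial\Delta[2]}\rightarrowtail\mathbb{C}_{\{0,1,2\}}$ with the cylinder $\{1\}\rightarrowtail\mathbb{C}'$. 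The ingredients (trivial cofibrancy of the horn inclusion, Reedy cofibrancy of $\mathbb{C}_\bullet$, the pushout-product axiom, fibrancy of $\mathcal{D}$, cofibrancy of $1_\V$) are all available under the paper's hypotheses, and the bookkeeping of endpoints and vertex compatibilities all checks out. The trade-off is that the paper's route is more modular---it packages the homotopy-theoretic content into reusable lemmas that are also needed in Proposition~\ref{ISOFIBHARD PROP}---while yours is more self-contained, working directly with cylinder objects and the homotopy extension property at the cost of more explicit diagram-chasing; in effect, you inline a concrete proof of the abstract transfer lemmas specialized to the case at hand.
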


We note that, informally, $B$ is an homotopy between $\beta$ and $\bar{\beta}$
while $\mathcal{H}$
is a compatible homotopy of homotopies 
between $H$ and $\bar{H}$
(except encoded by a ``triangle'' rather than a ``square'').

\begin{proof}
	We first build the dashed lift $\bar{\mathcal{H}}$
	as on the left below (that exists since we are lifting a trivial cofibration against a fibrant object).
	Using this $\bar{\mathcal{H}}$ we now consider the right diagram,
	\begin{equation}
	\begin{tikzcd}[column sep = 35pt]
	\mathbb{C}_{\{0,2\}} 
	\amalg_{\mathbb{C}_{\{0\}}}
	\mathbb{C}_{\{0,1\}} 
	\ar[rightarrowtail]{d}[swap]{\sim}
	\arrow{r}{(\bar{H},H)}
	&
	\mathcal{D}(0,0)
	&%
	\mathbb{C}_{\{1\}} \amalg \mathbb{C}_{\{2\}}
	\ar[rightarrowtail]{d}
	\arrow{r}{(\beta,\bar{\beta})}
	&
	\mathcal{D}(1,0)
	\ar{r}{\sim}[swap]{\alpha^{\**}}
	&
	\mathcal{D}(0,0) 
	\\
	\mathbb{C}_{\{0,1,2\}}
	\arrow[dashed]{ur}[swap]{\widetilde{\mathcal{H}}}
	&
	&%
	\mathbb{C}_{\{1,2\}}
	\ar[dashed]{ru}{B}
	\ar[bend right=2]{rru}[swap]{\bar{\mathcal{H}}|_{\mathbb{C}_{\{1,2\}}}}
	&
	&
	\end{tikzcd}
	\end{equation}
	where $B$ making the top left triangle commute exists by Lemma \ref{TRANSFLIFT LEMMA}(ii). 
	Moreover, note that while 
	$\alpha^{\**}B$ and $\bar{\mathcal{H}}_{\mathbb{C}_{\{1,2\}}}$
	need not match (cf. Remark \ref{NOTMATCH REM}),
	we nonetheless know that these are homotopic maps
	in the undercategory $\V_{\left(\mathbb{C}_{\{1\}} \amalg \mathbb{C}_{\{2\}}\right)/}$.
	Now consider the following lifting problems, 
	where we note that $\mathbb{C}_{\partial \Delta[2]}$ 
	can be thought of as the informal ``union''
	$\mathbb{C}_{\{1,2\}} \cup \mathbb{C}_{\{0,2\}} \cup \mathbb{C}_{\{0,1\}}$.
	\begin{equation}\label{GOTTALIFT EQ}
	\begin{tikzcd}[column sep = 65pt]
	\mathbb{C}_{\partial \Delta[2]}
	\ar[rightarrowtail]{d}
	\arrow{r}{(\bar{\mathcal{H}}|_{\mathbb{C}_{\{1,2\}}},\bar{H},H)}
	&
	\mathcal{D}(0,0)
	&%
	\mathbb{C}_{\partial \Delta[2]}
	\ar[rightarrowtail]{d}
	\arrow{r}{(\alpha^{\**}B,\bar{H},H)}
	&
	\mathcal{D}(0,0)
	\\
	\mathbb{C}_{\{0,1,2\}} \ar{ru}[swap]{\bar{\mathcal{H}}}
	&
	&%
	\mathbb{C}_{\{0,1,2\}} \ar[dashed]{ru}[swap]{\mathcal{H}}
	&
	\end{tikzcd}
	\end{equation}
	Since $\alpha^{\**}B$ and $\bar{\mathcal{H}}|_{\mathbb{C}_{\{1,2\}}}$
	are homotopic in 
	$\V_{\left(\mathbb{C}_{\{1\}} \amalg \mathbb{C}_{\{2\}}\right)/}$,
	Remark \ref{LEFTQUILUND REM}
	implies that the top maps in \eqref{GOTTALIFT EQ}
	are homotopic in 
	$\V_{\left(
		\mathbb{C}_{\{0,2\}} \amalg_{\mathbb{C}_{\{0\}}} \mathbb{C}_{\{0,1\}}
		\right)/}$ and thus,
	by Remark \ref{UNDFGT REM}, also homotopic in 
	$\V$.
	Lemma \ref{LIFTEQUIV LEM} applied to \eqref{GOTTALIFT EQ}
	now gives the desired lift $\mathcal{H}$.
\end{proof}

\begin{proof}[Proof of Lemma \ref{HOMINPULL LEM}]
	Note first that
	$\mathcal{C} \times_{\mathcal{D}}\bar{\mathcal{C}}$
	is a homotopy pullback
	\cite[Prop. A.2.4.4]{Lur09},
	so we may assume that
	$\mathcal{C},\bar{\mathcal{C}},\mathcal{D}$
	are fibrant and both maps $F,\bar{F}$ are fibrations.
	
	We need to show that $(\alpha,\bar{\alpha})$
	admits left and right inverses up to homotopy.
	By symmetry, we need only address the left inverse case.
	
	Let
	$\beta \colon 1_{\V} \to \mathcal{C}(1,0)$,
	$\bar{\beta} \colon 1_{\V} \to \bar{\mathcal{C}}(1,0)$
	be left homotopy inverses to $\alpha,\bar{\alpha}$,
	with 
	$H \colon \mathbb{C} \to \mathcal{C}(0,0)$,
	$\bar{H} \colon \mathbb{C} \to \bar{\mathcal{C}}(0,0)$
	the homotopies between 
	$id_0$ and $\beta\alpha$, $\bar{\beta}\bar{\alpha}$.
	We now note that, for $\beta$ and $\bar{\beta}$ to induce the desired 
	left homotopy inverse to 
	$(\alpha,\bar{\alpha})$,
	we would need to know not just that 
	$F\beta=\bar{F}\bar{\beta}$
	but also 
	$FH=\bar{F}\bar{H}$.
	The proof will follow by showing that one can modify 
	$\beta,H$ so as to achieve this.
	
	We now consider the diagram below, 
	where $B, \mathcal{H}$ in the bottom square
	are obtained by applying Lemma \ref{HOMTPOFHOMTP LEM}
	to the homotopy equivalence 
	$F\alpha = \bar{F} \bar{\alpha}$ in $\mathcal{D}$,
	its two homotopy left inverses
	$F\beta, \bar{F} \bar{\beta}$,
	and exhibiting homotopies
	$F H, \bar{F} \bar{H}$.
	\begin{equation}\label{BIGISHDIAG EQ}
	\begin{tikzcd}[row sep = 15pt,column sep = 40pt]
	\mathbb{C}_{\{1\}} 	\ar{rr}{\beta} \ar{rd} \ar{dd}&&
	\mathcal{C}(1,0) \ar[twoheadrightarrow]{dd} \ar{dr}{\alpha^{\**}}
	\ar[dashed,leftarrow,bend left =15]{ddll}
	\\
	&
	\mathbb{C}_{\{0,1\}} \arrow[crossing over]{rr}[near end]{H} &&
	\mathcal{C}(0,0)  
	\ar[twoheadrightarrow]{dd}
	\\
	\mathbb{C}_{\{1,2\}} \ar{rr}[swap,near end]{B} \ar{rd} &&
	\mathcal{D}(1,0) \ar{rd}{(F\alpha)^{\**}}
	\\
	&
	\mathbb{C}_{\{0,1,2\}}
	\ar[dashed,bend right =15,crossing over]{uurr}
	\ar{rr}[swap]{\mathcal{H}} \arrow[uu, leftarrow, crossing over] &&
	\mathcal{D}(0,0)
	\end{tikzcd}
	\end{equation}
	We now claim that the curved dashed arrows in 
	\eqref{BIGISHDIAG EQ}
	making the diagram commute exist
	(explicitly, this means the dashed arrows are lifts in the front and back squares, and that the slanted square with two dashed sides commutes).
	To see this, regarding the diagonal $\searrow$ arrows as objects in the arrow category $\V^{\bullet \to \bullet}$, 
	\eqref{BIGISHDIAG EQ}
	is reinterpreted as a square in $\V^{\bullet \to \bullet}$,
	with the desired dashed arrows being precisely a lift of said square.
	But the right vertical arrow in that square
	(i.e. the right side face of \eqref{BIGISHDIAG EQ}) is a projective fibration 
	while the left vertical arrow 
	(i.e. the left side face)
	is a projective trivial cofibration
	(this amounts to the claim that the maps
	$\mathbb{C}_{\{1\}} \overset{\sim}{\rightarrowtail} \mathbb{C}_{\{1,2\}}$ and
	$\mathbb{C}_{\{0,1\}} \amalg_{\mathbb{C}_{\{1\}}} \mathbb{C}_{\{1,2\}} \overset{\sim}{\rightarrowtail} \mathbb{C}_{\{0,1,2\}}$
	are trivial cofibrations),
	so the dashed lifts indeed exist.
	
	Writing 
	$B'\colon \mathbb{C}_{\{1,2\}} \to \mathcal{C}(1,0)$,
	$\mathcal{H}' \colon \mathbb{C}_{\{0,1,2\}} \to \mathcal{C}(0,0)$
	for the lifts,
	we now set
	$\beta' = B'|_{\mathbb{C}_{\{2\}}}$,
	which is a left inverse of $\alpha$
	as exhibited by $H' = \mathcal{H}'|_{\mathbb{C}_{\{0,2\}}}$.
	Moreover, by construction, we now have
	$F\beta' = B|_{\mathbb{C}_{\{2\}}} = \bar{F} \bar{\beta}$
	and 
	$FH' = \mathcal{H}|_{\mathbb{C}_{\{0,2\}}} = \bar{F} \bar{H}$,
	hence
	$(\beta',\bar{\beta})\colon \mathbbm{1} \to
	\mathcal{C} \times_{\mathcal{D}}\bar{\mathcal{C}}$
	now defines the desired left homotopy inverse to 
	$(\alpha,\bar{\alpha}) \colon \mathbbm{1} \to
	\mathcal{C} \times_{\mathcal{D}}\bar{\mathcal{C}}$,
	as exhibited by
	$(H',\bar{H}) \colon \mathbb{C} \to
	\mathcal{C} \times_{\mathcal{D}}\bar{\mathcal{C}}$.
\end{proof}

\providecommand{\bysame}{\leavevmode\hbox to3em{\hrulefill}\thinspace}
\providecommand{\MR}{\relax\ifhmode\unskip\space\fi MR }
\providecommand{\MRhref}[2]{%
  \href{http://www.ams.org/mathscinet-getitem?mr=#1}{#2}
}
\providecommand{\doi}[1]{%
  doi:\href{https://dx.doi.org/#1}{#1}}
\providecommand{\arxiv}[1]{%
  arXiv:\href{https://arxiv.org/abs/#1}{#1}}
\providecommand{\href}[2]{#2}

\makeatletter\@input{labels-AllColors.tex}\makeatother


\begin{thebibliography}{HHR16}

\bibitem[BB17]{BB17}
M.~A. Batanin and C.~Berger, \emph{Homotopy theory for algebras over polynomial
  monads}, Theory Appl. Categ. \textbf{32} (2017), Paper No. 6, 148--253.

\bibitem[BM06]{BM06}
C.~Berger and I.~Moerdijk, \emph{The {B}oardman-{V}ogt resolution of operads in
  monoidal model categories}, Topology \textbf{45} (2006), no.~5, 807--849.
  \doi{10.1016/j.top.2006.05.001}

\bibitem[BM07]{BM07}
\bysame, \emph{Resolution of coloured operads and rectification of homotopy
  algebras}, Categories in algebra, geometry and mathematical physics, Contemp.
  Math., vol. 431, Amer. Math. Soc., Providence, RI, 2007, pp.~31--58.
  \doi{10.1090/conm/431/08265}

\bibitem[BM13]{BM13}
\bysame, \emph{On the homotopy theory of enriched categories}, Q. J. Math.
  \textbf{64} (2013), no.~3, 805--846. \doi{10.1093/qmath/hat023}

\bibitem[Ber07]{Ber07b}
J.~E. Bergner, \emph{A model category structure on the category of simplicial
  categories}, Trans. Amer. Math. Soc. \textbf{359} (2007), no.~5, 2043--2058.
  \doi{10.1090/S0002-9947-06-03987-0}

\bibitem[BH15]{BH15}
A.~J. Blumberg and M.~A. Hill, \emph{Operadic multiplications in equivariant
  spectra, norms, and transfers}, Adv. Math. \textbf{285} (2015), 658--708.
  \doi{10.1016/j.aim.2015.07.013}

\bibitem[BV73]{BV73}
J.~M. Boardman and R.~M. Vogt, \emph{Homotopy invariant algebraic structures on
  topological spaces}, Lecture Notes in Mathematics, vol. 347, Springer-Verlag,
  1973.

\bibitem[BPa]{BP_TAS}
P.~Bonventre and L.~A. Pereira, \emph{Equivariant dendroidal sets and
  simplicial operads}, arXiv preprint
  \href{https://arxiv.org/abs/1911.06399v3}{1911.06399v3}.

\bibitem[BPb]{BP_FCOP}
\bysame, \emph{Homotopy theory of equivariant operads with fixed colors}, arXiv
  preprint \href{https://arxiv.org/abs/1908.05440v3}{1908.05440v3}.

\bibitem[BP20]{BP20}
\bysame, \emph{Equivariant dendroidal {S}egal spaces and
  {G}--{$\infty$}--operads}, Algebr. Geom. Topol. \textbf{20} (2020), no.~6,
  2687--2778. \doi{10.2140/agt.2020.20.2687}

\bibitem[BP21]{BP21}
\bysame, \emph{Genuine equivariant operads}, Adv. Math. \textbf{381} (2021),
  107502, 133. \doi{10.1016/j.aim.2020.107502}

\bibitem[Cav]{Cav}
G.~Caviglia, \emph{A model structure for enriched coloured operads}, arXiv
  preprint \href{https://arxiv.org/abs/1401.6983}{1401.6983}.

\bibitem[CM11]{CM11}
D.-C. Cisinski and I.~Moerdijk, \emph{Dendroidal sets as models for homotopy
  operads}, J. Topol. \textbf{4} (2011), no.~2, 257--299.
  \doi{10.1112/jtopol/jtq039}

\bibitem[CM13a]{CM13a}
\bysame, \emph{Dendroidal {S}egal spaces and {$\infty$}-operads}, J. Topol.
  \textbf{6} (2013), no.~3, 675--704. \doi{10.1112/jtopol/jtt004}

\bibitem[CM13b]{CM13b}
\bysame, \emph{Dendroidal sets and simplicial operads}, J. Topol. \textbf{6}
  (2013), no.~3, 705--756. \doi{10.1112/jtopol/jtt006}

\bibitem[Cor82]{Cor82}
J.-M. Cordier, \emph{Sur la notion de diagramme homotopiquement coh\'{e}rent},
  Cahiers Topologie G\'{e}om. Diff\'{e}rentielle \textbf{23} (1982), no.~1,
  93--112, Third Colloquium on Categories, Part VI (Amiens, 1980).

\bibitem[DS95]{DS95}
W.~G. Dwyer and J.~Spali\'{n}ski, \emph{Homotopy theories and model
  categories}, Handbook of algebraic topology, North-Holland, Amsterdam, 1995,
  pp.~73--126. \doi{10.1016/B978-044481779-2/50003-1}

\bibitem[GW18]{GW18}
J.~J. Guti\'{e}rrez and D.~White, \emph{Encoding equivariant commutativity via
  operads}, Algebr. Geom. Topol. \textbf{18} (2018), no.~5, 2919--2962.
  \doi{10.2140/agt.2018.18.2919}

\bibitem[HHR16]{HHR16}
M.~A. Hill, M.~J. Hopkins, and D.~C. Ravenel, \emph{On the nonexistence of
  elements of {K}ervaire invariant one}, Ann. of Math. (2) \textbf{184} (2016),
  no.~1, 1--262. \doi{10.4007/annals.2016.184.1.1}

\bibitem[Hir03]{Hir03}
P.~S. Hirschhorn, \emph{Model categories and their localizations}, Mathematical
  Surveys and Monographs, vol.~99, American Mathematical Society, Providence,
  RI, 2003.

\bibitem[Hov99]{Hov99}
M.~Hovey, \emph{Model categories}, Mathematical Surveys and Monographs,
  vol.~63, American Mathematical Society, Providence, RI, 1999.

\bibitem[Joy02]{Joy02}
A.~Joyal, \emph{Quasi-categories and {K}an complexes}, vol. 175, 2002, Special
  volume celebrating the 70th birthday of Professor Max Kelly, pp.~207--222.
  \doi{10.1016/S0022-4049(02)00135-4}

\bibitem[Lur09]{Lur09}
J.~Lurie, \emph{Higher topos theory}, Annals of Mathematics Studies, vol. 170,
  Princeton University Press, Princeton, NJ, 2009. \doi{10.1515/9781400830558}

\bibitem[MW07]{MW07}
I.~Moerdijk and I.~Weiss, \emph{Dendroidal sets}, Algebr. Geom. Topol.
  \textbf{7} (2007), 1441--1470. \doi{10.2140/agt.2007.7.1441}

\bibitem[MW09]{MW09}
\bysame, \emph{On inner {K}an complexes in the category of dendroidal sets},
  Adv. Math. \textbf{221} (2009), no.~2, 343--389.
  \doi{10.1016/j.aim.2008.12.015}

\bibitem[Mur11]{Mur11}
F.~Muro, \emph{Homotopy theory of nonsymmetric operads}, Algebr. Geom. Topol.
  \textbf{11} (2011), no.~3, 1541--1599. \doi{10.2140/agt.2011.11.1541}

\bibitem[Per18]{Per18}
L.~A. Pereira, \emph{Equivariant dendroidal sets}, Algebr. Geom. Topol.
  \textbf{18} (2018), no.~4, 2179--2244. \doi{10.2140/agt.2018.18.2179}

\bibitem[Rez]{Rez}
C.~Rezk, \emph{A model category for categories}, Accessed \today. Available at
  \href{https://faculty.math.illinois.edu/~rezk/papers.html}{https://faculty.math.illinois.edu/~rezk/papers.html}.

\bibitem[Rob]{Rob}
M.~Robertson, \emph{The homotopy theory of simplicially enriched
  multicategories}, arXiv preprint
  \href{https://arxiv.org/abs/1111.4146}{1111.4146}.

\bibitem[SS00]{SS00}
S.~Schwede and B.~E. Shipley, \emph{Algebras and modules in monoidal model
  categories}, Proc. London Math. Soc. (3) \textbf{80} (2000), no.~2, 491--511.
  \doi{10.1112/S002461150001220X}

\bibitem[Spi]{Spi}
M.~Spitzweck, \emph{Operads, algebras and modules in general model categories},
  arXiv preprint \href{https://arxiv.org/abs/math/0101102}{0101102}.

\bibitem[Ste16]{Ste16}
M.~Stephan, \emph{On equivariant homotopy theory for model categories},
  Homology Homotopy Appl. \textbf{18} (2016), no.~2, 183--208.
  \doi{10.4310/HHA.2016.v18.n2.a10}

\bibitem[Whi17]{Whi17}
D.~White, \emph{Model structures on commutative monoids in general model
  categories}, J. Pure Appl. Algebra \textbf{221} (2017), no.~12, 3124--3168.
  \doi{10.1016/j.jpaa.2017.03.001}

\bibitem[WY18]{WY18}
D.~White and D.~Yau, \emph{Bousfield localization and algebras over colored
  operads}, Appl. Categ. Structures \textbf{26} (2018), no.~1, 153--203.
  \doi{10.1007/s10485-017-9489-8}

\bibitem[Yau]{Yau}
D.~Yau, \emph{Dwyer-kan homotopy theory of algebras over operaidc collections},
  arXiv preprint \href{https://arxiv.org/abs/1608.01867}{1608.01867}.

\end{thebibliography}
\end{document}
